\newcommand{\commment}[1]{}
\def\aol{\rule[0.5865ex]{1.38ex}{0.1ex}}
\def\pdra{\mbox{$\,>\mkern-8mu\raisebox{-0.065ex}{\aol}\,$}}
\def\pdla{\mbox{\rotatebox[origin=c]{180}{$\,>\mkern-8mu\raisebox{-0.065ex}{\aol}\,$}}}
\def\mANDORatom#1{\hbox{\hbox to 0pt{$#1\TriangleUp$\hss}$#1\TriangleDown$}}
\newcommand{\mcAND}{%
\mathrel{\ooalign{\raisebox{-0.39ex}{$\mbox{\TriangleUp}$}\cr\kern4.2pt{\raisebox{-0.13ex}{$\cdot$}}}}}
\newcommand{\mcand}{%
\mathrel{\ooalign{$\vartriangle$\cr\kern1.99pt{\raisebox{-0.17ex}{$\cdot$}}}}}
\newcommand{\nAND}{%
\mathrel{\ooalign{$\mbox{\TriangleUp}$\cr\kern0pt$\mbox{\rotatebox[origin=c]{180}{\TriangleUp}}$}}}
\newcommand{\nand}{%
\mathrel{\ooalign{$\vartriangle$\cr\kern0pt$\triangledown$}}}
\newcommand{\mcBAND}{%
\mathrel{\ooalign{\raisebox{-0.39ex}{$\mbox{\FilledTriangleUp}$}\cr\kern4.2pt{\raisebox{-0.13ex}{${\color{white}\cdot}$}}}}}
\newcommand{\mcband}{%
\mathrel{\ooalign{$\blacktriangle$\cr\kern1.99pt{\raisebox{-0.17ex}{${\color{white}\cdot}$}}}}}
\newcommand{\mcRA}{%
\mathrel{\ooalign{
                  \raisebox{-0.3ex}{$\rotatebox[origin=c]{-90}{$\mbox{{\TriangleUp}}$}$}
                                                                            \cr\kern2.7pt{\raisebox{0.2ex}{$\cdot\mkern1.3mu$}}}}}
\newcommand{\mcra}{%
\mathrel{\ooalign{$\,{\vartriangleright\,}$\cr\kern3pt{\raisebox{0ex}{$\cdot$}}}}}
\newcommand{\mcraline}{%
-{\mkern-6mu{\mathrel{\ooalign{$\,{\vartriangleright\,}$\cr\kern3pt{\raisebox{0ex}{$\cdot$}}}}}}}
\newcommand{\mdraline}{%
{\mathrel{\ooalign{$\,{\vartriangleright\,}$\cr\kern3pt{\raisebox{0ex}{$\cdot$}}}}}{\mkern-6mu}-}
\newcommand{\cra}{%
\mathrel{\ooalign{$\,-{\mkern-3mu\vartriangleright\,}$\cr\kern8pt{\raisebox{0ex}{$\cdot$}}}}}
\newcommand{\mcBRA}{%
\mathrel{\ooalign{
                  \raisebox{-0.3ex}{$\rotatebox[origin=c]{-90}{$\mbox{\FilledTriangleUp}$}$}
                                                                            \cr\kern2.7pt{\raisebox{0.2ex}{${\color{white}\cdot}$}}}}}
\newcommand{\mcbra}{%
\mathrel{\ooalign{$\,-{\mkern-3mu\blacktriangleright\,}$\cr\kern8pt{\raisebox{0ex}{$\cdot$}}}}}
\newcommand{\mcLA}{%
\mathrel{\ooalign{
                  \raisebox{-0.3ex}{$\rotatebox[origin=c]{90}{$\mbox{\TriangleUp}$}$}
                                                                                     \cr\kern5.5pt{\raisebox{0.2ex}{$\cdot$}}
                                                                                                                              }}}
\newcommand{\mcla}{%
\mathrel{\ooalign{$\,{\vartriangleleft\,}$\cr\kern5pt{\raisebox{0ex}{$\cdot$}}}}}
\newcommand{\mclaline}{%
-{\mkern-6mu{\mathrel{\ooalign{$\,{\vartriangleleft\,}$\cr\kern5pt{\raisebox{0ex}{$\cdot$}}}}}}}
\newcommand{\mcBLA}{%
\mathrel{\ooalign{
                  \raisebox{-0.3ex}{$\rotatebox[origin=c]{90}{$\mbox{\FilledTriangleUp}$}$}
                                                                                     \cr\kern5.5pt{\raisebox{0.2ex}{${\color{white}\cdot}$}}
                                                                                                                                            }}}
  \numberwithin{equation}{section}
\newcommand\val[1]{{\lbrack\!\lbrack} {#1}{\rbrack\!\rbrack}}
\newcommand{\pand}{\wedge}
\newcommand{\por}{\vee}
\newcommand{\pra}{\rightarrow}
\newcommand{\pla}{\leftarrow}
\newcommand{\gI}{%
\mathrel{\ooalign{$\mbox{T}$\cr\kern0pt$\mbox{\rotatebox[origin=c]{180}{T}}$}}}
\newcommand{\gbot}{\rotatebox[origin=c]{180}{$\tau$}}
\def\aol{\rule[0.5865ex]{1.38ex}{0.1ex}}
\newcommand{\WKnowProxy}[2]{%
  {\mathbin{\ooalign{$#1\circ#2 $\cr\hidewidth
   \raise.155ex\hbox{$#1{\scriptstyle{\ast}}#2$}\hidewidth\cr  }}}}
\newcommand{\BKnowProxy}[2]{%
  {\mathbin{\ooalign{$#1\bullet#2 $\cr\hidewidth
   \raise.155ex\hbox{$#1{\scriptstyle{\color{white}{\ast}}}#2$}\hidewidth\cr  }}}}
\newcommand{\fns}{\footnotesize}
\newcommand{\mc}{\multicolumn}
\def\fCenter{{\mbox{$\ \vdash\ $}}}
\renewcommand{\epsilon}{\varepsilon}
\newcommand{\ox}{\overline{x}}
\newcommand{\oz}{\overline{z}}
\newcommand{\diamdot}{\Diamond\!\!\!\cdot\ }
\newcommand{\lhddot}{{\lhd}\!\!\cdot\ }
\newcommand{\rhddot}{{\rhd}\!\!\!\cdot\ }
\newcommand{\nomh}{\mathbf{h}}
\newcommand{\nomi}{\mathbf{i}}
\newcommand{\nomj}{\mathbf{j}}
\newcommand{\nomk}{\mathbf{k}}
\newcommand{\noml}{\mathbf{l}}
\newcommand{\cnomm}{\mathbf{m}}
\newcommand{\cnomn}{\mathbf{n}}
\newcommand{\cnomo}{\mathbf{o}}
\newcommand{\blhd}{\blacktriangleleft}
\newcommand{\brhd}{\blacktriangleright}
\newcommand{\rhu}{\rightharpoonup}
\newcommand{\lhu}{\leftharpoonup}
\newcommand{\bba}{\mathbb{A}}
\newcommand{\bbA}{\mathbb{A}}
\newcommand{\jty}{J^{\infty}}
\newcommand{\mty}{M^{\infty}}
\newcommand{\bigamp}{\mathop{\mbox{\Large \&}}}
\newcommand{\amp}{\mathop{\&}}
\newcommand{\marginnote}[1]{\marginpar{\raggedright\tiny{#1}}}
\newcommand{\Scdot}{\bigcirc\!\!\!\cdot}
\newcommand{\Sstar}{\bigcirc\!\!\!\!\star}
\newcommand{\SDiamond}{\bigcirc\!\!\!\!\!\Diamond}
\newcommand{\SBox}{\bigcirc\!\!\!\!\!\Box}
\newcommand{\Slhd}{\bigcirc\!\!\!\!\!\lhd}
\newcommand{\Srhd}{\bigcirc\!\!\!\!\rhd}
\newcommand{\SDiamondblack}{\bigcirc\!\!\!\!\Diamondblack}
\newcommand{\Sblacksquare}{\bigcirc\!\!\!\!\blacksquare}
\newcommand{\Sblacktriangleleft}{\bigcirc\!\!\!\!\!\!\blacktriangleleft}
\newcommand{\Sblacktriangleright}{\bigcirc\!\!\!\!\!\!\blacktriangleright}
\newcommand*\circled[1]{\tikz[baseline=(char.base)]{
		\node[shape=circle ,draw, minimum size=3.5mm, inner sep=0pt] (char) {#1};}}
\newcommand*\dcircled[1]{\tikz[baseline=-3pt]{
		\node[shape=circle ,draw, minimum size=3.5mm, inner sep=0pt] (char) {#1};}}
\tikzset{
	treenode/.style = {align=center, inner sep=0pt, text centered},
	Ske/.style = {treenode, ellipse, double, draw=black,
		minimum width=6pt, thick},
	PIA/.style = {treenode, ellipse, black, draw=black,
		minimum width=6pt},
	Crit/.style = {treenode, rectangle, draw=black,
		minimum width=0.5em, minimum height=0.5em}
}
\theoremstyle{plain}
\newtheorem{thm}{Theorem}
\newtheorem{lem}[thm]{Lemma}
\newtheorem{cor}[thm]{Corollary}
\newtheorem{prop}[thm]{Proposition}
\newtheorem{fact}[thm]{Fact}
\newtheorem{lemma}[thm]{Lemma}
\theoremstyle{definition}
\newtheorem{dfn}[thm]{Definition}
\newtheorem{definition}[thm]{Definition}
\newtheorem{exa}[thm]{Example}
\newtheorem{example}[thm]{Example}
\newtheorem{remark}[thm]{Remark}
\title{Unified Correspondence as a Proof-Theoretic Tool}
\author[1]{Giuseppe Greco}
\author[2]{Minghui Ma}
\author[1,3]{Alessandra Palmigiano}
\author[1]{Apostolos Tzimoulis}
\author[1]{Zhiguang Zhao\footnote{The research of the first, third, fourth and fifth author has been made possible by the NWO Vidi grant 016.138.314, by the NWO Aspasia grant 015.008.054, and by a Delft Technology Fellowship awarded in 2013. The second author is supported by the China national funding of social sciences (grant no.12CZ054).}\footnote{We would like to thank Agata Ciabattoni for inviting three of the authors to a research visit to her group in Vienna. The feedback and questions we received from her and Revantha Ramanayake during this visit significantly improved the paper.}}
\affil[1]{\small Faculty of Technology, Policy and Management, Delft University of Technology, the Netherlands}
\affil[2]{\small Institute for Logic and Intelligence, Southwest University,Chongqing, China}
\affil[3]{\small Department of Pure and Applied Mathematics, University of Johannesburg, South Africa}
\date{}
\begin{document}
	\maketitle
	\begin{abstract}
		The present paper aims at establishing formal connections between correspondence phenomena, well known from the area of modal logic, and the theory of display calculi, originated by Belnap.
		
		These connections have been seminally observed and exploited by Marcus Kracht, in the context of his characterization of the modal axioms (which he calls primitive formulas) which can be effectively transformed into `analytic' structural rules of display calculi. In this context, a rule is `analytic' if adding it to a display calculus preserves Belnap's cut-elimination theorem.
		
		In recent years, the state-of-the-art in correspondence theory has been uniformly extended from classical modal logic to diverse families of nonclassical logics, ranging from (bi-)intuitionistic (modal) logics, linear, relevant and other substructural logics, to hybrid logics and mu-calculi. This generalization has given rise to a theory called unified correspondence, the most important technical tools of which are the algorithm ALBA, and the syntactic characterization of Sahlqvist-type classes of formulas and inequalities which is uniform in the setting of normal DLE-logics (logics the algebraic semantics
of which is based on bounded distributive lattices).
		
		We apply unified correspondence theory, with its tools and insights, to extend Kracht's results and prove his claims in the setting of DLE-logics.
The results of the present paper characterize the space of properly displayable DLE-logics. \\
		{\em Keywords:} Display calculi, unified correspondence, distributive lattice expansions, properly displayable logics.\\
		{\em Math. Subject Class.} 03B45, 06D50, 06D10, 03G10, 06E15.
	\end{abstract}
	\tableofcontents
	
	\section{Introduction}
	
	The present paper applies the results and insights of unified correspondence theory \cite{CoGhPa13} to establish formal connections between correspondence phenomena, well known from the area of modal logic, and the theory of display calculi, introduced by Belnap \cite{Belnap}.

	\paragraph{Sahlqvist correspondence theory.}
	Sahlqvist theory \cite{Sah75} is among the most celebrated and useful results of the classical theory of modal logic, and one of the hallmarks of its success. It provides an algorithmic, syntactic identification of a class of modal formulas whose associated normal modal logics are {\em strongly complete} with respect to {\em elementary} (i.e.\ first-order definable) classes of frames.
	
	\paragraph{Unified correspondence.}
	In recent years, building on duality-theoretic insights \cite{ConPalSou12}, an encompassing perspective has emerged which has made it possible to export the state-of-the-art in Sahlqvist theory from modal logic to a wide range of logics which includes, among others, intuitionistic and distributive lattice-based (normal modal) logics \cite{CoPa10}, non-normal (regular) modal logics \cite{PaSoZh15r}, substructural logics \cite{CoPa11}, 
	hybrid logics \cite{ConRob}, and mu-calculus \cite{CoCr14,CFPS}.
	
	The breadth of this work has stimulated many and varied applications. Some are closely related to the core concerns of the theory itself, such as the understanding of the relationship between different methodologies for obtaining canonicity results \cite{PaSoZh14,CPZ:constructive}, or of the phenomenon of pseudo-correspondence \cite{CGPSZ14}. Other, possibly surprising applications include the dual characterizations of classes of finite lattices \cite{FrPaSa14}.
Finally, the insights of unified correspondence theory have made it possible to determine the extent to which the Sahlqvist theory of classes of normal DLEs can be reduced to the Sahlqvist theory of normal Boolean expansions, by means of G\"{o}del-type translations \cite{CPZ:Trans}.
	These and other results have given rise to a theory called \emph{unified correspondence} \cite{CoGhPa13}.
	
	\paragraph{Tools of unified correspondence theory.}
	The most important technical tools in unified correspondence are: (a) a very general syntactic definition of the class of Sahlqvist formulas, which applies uniformly to each logical signature and is given purely in terms of the order-theoretic properties of the algebraic interpretations of the logical connectives; (b) the algorithm ALBA, which effectively computes first-order correspondents of input term-inequalities, and is guaranteed to succeed on a wide class of inequalities (the so-called \emph{inductive} inequalities) which, like the Sahlqvist class, can be defined uniformly in each mentioned signature, and which properly and significantly extends the Sahlqvist class.
	
	\paragraph{Unified correspondence and display calculi.}
	The present paper aims at applying the tools of unified correspondence to address the identification of the syntactic shape of axioms which can be translated into analytic structural rules\footnote{\emph{Analytic} rules (cf.\ Definition \ref{def:analytic}) are those which can be added to a {\em proper display calculus} (cf.\ Section \ref{PS:para:CanonicalCutElimination}) obtaining another proper display calculus.} of a display calculus, and the definition of an effective procedure for transforming axioms into such rules. In recent years, these questions have been intensely investigated in the context of various proof-theoretic formalisms (cf.\ \cite{negri2005proof,ciabattoni2008axioms,ciabattoni2009expanding,gore2011correspondence,ciabattoni2012algebraic,lellmann2013correspondence,lahav2013frame,marin2014label,lellmann2014axioms}).  Perhaps the first paper in this line of research is \cite{Kracht}, which addresses these questions in the setting of display calculi
	for basic normal modal and tense logic. Interestingly, in \cite{Kracht}, the connections between Sahlqvist theory and display calculi started to be observed, but have not been systematically explored there nor (to the knowledge of the authors) in subsequent papers in the same research line. 


	\paragraph{Contributions.}
	The two tools of unified correspondence can be put to use to generalize Kracht's transformation procedure from axioms into analytic rules. This generalization concerns more than one aspect. Firstly, in the same way in which the definitions of Sahlqvist and inductive inequalities can be given uniformly in each logical signature, the definition of primitive formulas/inequalities is introduced for any logical framework the algebraic semantics of which is based on distributive lattices with operators (these will be referred to as {\em DLE-logics}, (cf.\ Definition \ref{def:DLE:logic:general} and Footnote \ref{footnote:DLE vs DLO} for terminology). Secondly, in the context of each such logical framework, we introduce a hierarchy of subclasses of inductive inequalities, progressively extending the primitive inequalities, the largest of which is the class of so-called \emph{analytic inductive inequalities}. This is a syntactic generalization of the class of primitive formulas/inequalities. We provide an effective procedure, based on ALBA, which transforms each analytic inductive inequality into an equivalent set of analytic rules. Moreover, we show that any analytic rule can be effectively and equivalently transformed into some analytic inductive inequality. Finally, we show that any analytic rule can be effectively and equivalently transformed into one of a particularly nice shape, collectively referred to as {\em special} rules.

	\paragraph{Structure of the paper.} In Section \ref{PS:ssec:DisplayLogic}, preliminaries on display calculi are collected.
In Section \ref{sec:DLE-logics and ALBA prelim}, the setting of basic DLE-logics is introduced, and the algorithm ALBA for them. In Section
\ref{sec:display calculi DL DL ast}, the display calculi $\mathbf{DL}$ and $\mathbf{DL}^\ast$ for DLE-logics are introduced, and their basic properties are proven.
	In Section \ref{sec:primitive special main strategy}, 
Kracht's notion of primitive formulas is generalized to primitive inequalities in each DLE-language, as well as their connection with special structural rules
for display calculi (cf.\ Definition \ref{def:special}). It is also shown that, for any language $\mathcal{L}_{\mathrm{DLE}}$, each primitive
$\mathcal{L}_{\mathrm{DLE}}$-inequality is equivalent on perfect $\mathcal{L}_{\mathrm{DLE}}$-algebras to a set of special structural rules in the language
of the associated display calculus $\mathbf{DL}$, and that the validity of each such special structural rule is equivalent to the validity of some primitive
$\mathcal{L}_{\mathrm{DLE}}$-inequality. In Section \ref{sec:extended classes} we extend the algorithm generating special structural rules in the language of $\mathbf{DL}$ from input
primitive $\mathcal{L}_{\mathrm{DLE}}$-inequalities to a hierarchy of classes of non-primitive $\mathcal{L}_{\mathrm{DLE}}$-inequalities, the most general of which
is referred to as
{\em restricted analytic inductive inequalities} (cf.\ Definition \ref{def:type4}). Our procedure for obtaining this extension makes use of ALBA to equivalently
transform any restricted analytic inductive $\mathcal{L}_{\mathrm{DLE}}$-inequality into one or more primitive $\mathcal{L}^\ast_{\mathrm{DLE}}$-inequalities.
In Section \ref{sec:analytic}, the class of restricted analytic inductive inequalities is further extended to the {\em analytic inductive inequalities} (cf.\ Definition
\ref{def:type5}). Each analytic inductive inequality can be equivalently transformed into some analytic
rule of a restricted shape, captured in the notion of
{\em quasi-special} structural rule (cf.\ Definition \ref{def:quasispecial}) in the language of $\mathbf{DL}$. Once again, the key step of the latter procedure
makes use of ALBA, this time to equivalently transform any analytic inductive inequality into one or more suitable quasi-inequalities in
$\mathcal{L}^*_{\mathrm{DLE}}$.
We also show that each analytic rule is equivalent to some analytic inductive inequality. This back-and-forth correspondence between analytic rules and analytic
inductive inequalities characterizes the space of properly displayable DLE-logics as the axiomatic extensions of the basic DLE-logic obtained by means of analytic
inductive inequalities.
In Section \ref{sec:special rules as expressive as analytic}, we show that
for any language $\mathcal{L}_\mathrm{DLE}$, any properly displayable DLE-logic is specially
displayable, which implies that any properly displayable $\mathcal{L}^\ast_\mathrm{DLE}$-logic can be axiomatized by means of primitive
$\mathcal{L}^\ast_\mathrm{DLE}$-inequalities. This last result generalizes an analogous statement made by Kracht in the setting of properly displayable tense
modal logics, which was proven in \cite{CiRa13, CiRa14} in the same setting. In Section \ref{sec:comparison}, we outline a comparison between the present treatment and that
of \cite{CiRa13, CiRa14}. In Section \ref{sec:conclusions} we present our conclusions. Various proofs are collected in Appendices \ref{appendix:cut elim}--\ref{appedix:analytic and I2}.

	

	\section{Preliminaries on display calculi}
	\label{PS:ssec:DisplayLogic}
	
	In the present section, we provide an informal introduction to the main features of display calculi without any attempt at being self-contained. We refer the reader to \cite{Wa98} for an expanded treatment. Our presentation follows \cite[Section 2.2]{FGKPS14a}.
	
	Display calculi are among the approaches in structural proof theory aimed at the uniform development of an inferential theory of meaning of logical constants (logical connectives) aligned with the principles of proof-theoretic semantics \cite{schroeder2006validity,sep-proof-theoretic-semantics}. Display calculi have been successful in giving adequate proof-theoretic semantic accounts of logics---such as certain modal and substructural logics \cite{Gore1}, and more recently also Dynamic Epistemic Logic \cite{Multitype} and PDL \cite{PDL}---which have notoriously been difficult to treat with other approaches. Here we mainly report and elaborate on the work of Belnap \cite{Belnap}, Wansing \cite{Wa98}, Gor\'{e} \cite{Gore1, Gore98}, and Restall \cite{Restall}.
	
	\subsection{Belnap's display logic} Nuel Belnap introduced the first display calculus, which he calls {\em Display Logic} \cite{Belnap}, as a sequent system augmenting and refining Gentzen's basic observations on structural rules. Belnap's refinement is based on the introduction of a special syntax for the constituents of each sequent. Indeed, his calculus treats sequents $ X \vdash Y$ where $X$ and $Y$ are so-called \emph{structures}, i.e.\ syntactic objects inductively defined from formulas using an array of special meta-logical connectives. Belnap's basic idea is that, in the standard Gentzen formulation, the comma symbol `$,$' separating formulas in the precedent and in the succedent of sequents can be recognized as a metalinguistic connective,  the behaviour of which is defined by the structural rules.
	
	Belnap took this idea further by admitting not only the comma, but also other meta-logical connectives to build up structures out of formulas, and called them {\em structural connectives}. Just like the comma in standard Gentzen sequents is interpreted contextually (that is, as conjunction when occurring on the left-hand side and as disjunction when occurring on the right-hand side), each structural connective typically corresponds to a pair of logical connectives, and is interpreted as one or the other of them contextually (more of this in Section \ref{ssec:soundness}). Structural connectives maintain relations with one another, the most fundamental of which take the form of adjunctions and residuations. These relations make it possible for the calculus to enjoy the powerful property which gives it its name, namely, the {\em display property}. Before introducing it formally, let us agree on some auxiliary definitions and nomenclature: \emph{structures} are defined much in the same way as formulas, taking formulas as atomic components and closing under the given structural connectives; therefore, each structure can be uniquely associated with a generation tree. Every node of such a generation tree defines a {\em substructure}. A \emph{sequent} $X\vdash Y$ is a pair of structures $X,Y$. The display property, stated similarly to the one below, appears in \cite[Theorem 3.2]{Belnap}: 
	
	\begin{definition} \label{PS:def: display prop} A proof system enjoys the {\em display property} iff for every sequent $X \vdash Y$ and every substructure $Z$ of either $X$ or $Y$, the sequent $X \vdash Y$ can be equivalently transformed, using the rules of the system, into a sequent which is either of the form $Z \vdash W$ or of the form $W \vdash Z$, for some structure $W$. In the first case, $Z$ is \emph{displayed in precedent position}, and in the second case, $Z$ is \emph{displayed in succedent position}.\footnote{In the following sections, we will find it useful to differentiate between the full and the relativized display property (cf.\ discussion before Proposition \ref{prop: DL has relativized display property}).}
		The rules enabling this equivalent rewriting are called \emph{display postulates}.
	\end{definition}
	
	Thanks to the fact that display postulates are semantically based on adjunction and residuation, exactly one of the two alternatives mentioned in the definition above can soundly occur. In other words, in a calculus enjoying the display property, any substructure of any sequent $X \vdash Y$ is always displayed either only in precedent position or only in succedent position. This is why we can talk about occurrences of substructures in {\em precedent} or in {\em succedent} position, even if they are nested deep within a given sequent, as illustrated in the following example which is based on the display postulates between the structural connectives $;$ and $>$:
	\begin{center}
		{\AX$Y \fCenter X > Z$
			\UI$X\,; Y \fCenter Z$
			\UI$Y\,; X \fCenter Z$
			\UI$X \fCenter Y > Z$
			\DisplayProof}
		\label{PS:example adj}
	\end{center}
	\noindent In the derivation above, the structure $X$ is on the right side of the turnstile, but it is displayable on the left, and therefore is in precedent position.
	The display property is a crucial technical ingredient
	for Belnap's cut elimination metatheorem: for instance, it provides the core mechanism for the satisfaction of the crucial condition C$_8$, discussed in the following subsection. 
	
	\subsection{Proper display calculi and canonical cut elimination}
	\label{PS:para:CanonicalCutElimination}
	In \cite{Belnap}, a metatheorem is proven, which gives sufficient conditions in order for a sequent calculus to enjoy cut elimination.\footnote{As Belnap observed on page 389 in \cite{Belnap}: `The eight conditions are supposed to be a reminiscent of those of Curry' in \cite{Curry}.} This metatheorem captures the essentials of the Gentzen-style cut elimination procedure, and is the main technical motivation for the design of Display Logic. Belnap's metatheorem gives a set of eight conditions on sequent calculi, which are relatively easy to check, since most of them are verified by inspection on the shape of the rules. Together, these conditions guarantee that the cut is eliminable in the given sequent calculus, and that the calculus enjoys the subformula property. When Belnap's metatheorem can be applied, it provides a much smoother and more modular route to cut elimination than the Gentzen-style proofs. Moreover, as we will see later, a Belnap style cut elimination theorem is robust with respect to adding a general class of structural rules, and with respect to adding new logical connectives, whereas a Gentzen-style cut elimination proof for the modified system cannot be deduced from the old one, but must be proved from scratch.
	
	In a slogan, we could say that Belnap-style cut elimination is to ordinary cut elimination what canonicity is to completeness: indeed, canonicity provides a {\em uniform strategy} to achieve completeness. In the same way, the conditions required by Belnap's metatheorem ensure that {\em one and the same} given set of transformation steps is enough to achieve Gentzen-style cut elimination for any system satisfying them. 
	
	In what follows, we review and discuss eight conditions which are stronger in certain respects than those in \cite{Belnap},\footnote{See also \cite{Be2, Restall} and the `second formulation' of condition C6/7 in Section 4.4 of \cite{Wa98}.} and which define the notion of {\em proper display calculus} in \cite{Wa98}.\footnote{See the `first formulation' of conditions C6, C7 in Section 4.1 of \cite{Wa98}.}
	
	\paragraph*{ C$_1$: Preservation of formulas.} This condition requires each formula occurring in a premise of a given inference to be the subformula of some formula in the conclusion of that inference. That is, structures may disappear, but not formulas. This condition is not included in the list of sufficient conditions of the cut elimination metatheorem, but, in the presence of cut elimination, it guarantees the subformula property of a system.
	Condition $C_1$ can be verified by inspection on the shape of the rules. In practice, condition C$_1$ bans rules in which structure variables occurring in some premise to not occur also in the conclusion, since in concrete derivations these are typically instantiated with (structures containing) formulas which would then disappear in the application of the rule.
	\paragraph*{ C$_2$: Shape-alikeness of parameters.}
	This condition is based on the relation of {\em congruence} between {\em parameters} (i.e., non-active parts) in inferences; 
	the congruence relation is an equivalence relation which is meant to identify the different occurrences of the same formula or substructure along the branches of a derivation \cite[Section 4]{Belnap}, \cite[Definition 6.5]{Restall}. 
	Condition C$_2$ requires that congruent parameters be occurrences of the same structure. This can be understood as a condition on the {\em design} of the rules of the system if the congruence relation is understood as part of the specification of each given rule; that is, each schematic rule of the system comes with an explicit specification of which elements are congruent to which (and then the congruence relation is defined as the reflexive and transitive closure of the resulting relation). In this respect, C$_2$ is nothing but a sanity check, requiring that the congruence is defined in such a way that indeed identifies the occurrences which are intuitively ``the same''.\footnote{Our convention throughout the paper is that congruent parameters are denoted by the same letter. For instance, in the rule $$\frac{X;Y\vdash Z}{Y;X\vdash Z}$$ the structures $X,Y$ and $Z$ are parametric and the occurrences of $X$ (resp.\ $Y$, $Z$) in the premise and the conclusion are congruent.} 
	
	\paragraph*{C$_3$: Non-proliferation of parameters.} Like the previous one, also this condition is actually about the definition of the congruence relation on parameters. Condition C$_3$ requires that, for every inference (i.e.\ rule application), each of its parameters is congruent to at most one parameter in the conclusion of that inference. Hence, the condition stipulates that for a rule such as the following,
	\begin{center}
		\AX$X \fCenter Y$
		\UI$X\,, X \fCenter Y$
		\DisplayProof
	\end{center}
	\noindent the structure $X$ from the premise is congruent to {\em only one} occurrence of $X$ in the conclusion sequent. Indeed, the introduced occurrence of $X$ should be considered congruent only to itself. Moreover, given that the congruence is an equivalence relation, condition C$_3$ implies that, within a given sequent, any substructure is congruent only to itself. In practice, in the general schematic formulation of rules, we will use the same structure variable for two different parametric occurrences if and only if they are congruent, so a rule such as the one above is de facto banned.
	
	\begin{remark}
		\label{PS:rem: history tree}
		Conditions C$_2$ and C$_3$ make it possible to follow the history of a formula along the branches of any given derivation. In particular, C$_3$ implies that the the history of any formula within a given derivation has the shape of a tree, which we refer to as the \emph{history-tree} of that formula in the given derivation. Notice, however, that the history-tree of a formula might have a different shape than the portion of the underlying derivation corresponding to it; for instance, the following application of the Contraction rule gives rise to a bifurcation of the history-tree of $A$ which is absesent in the underlying branch of the derivation tree, given that Contraction is a unary rule.
		\medskip
		
		\begin{tabular}{lcr}
			\ \ \ \ \ \ \ \ \ \ \ \ \ \ \ \ \ \ \ \ \ \ \ \ \ \ \ \ \ \ \ \ \ \ \ \ \ \ \ \ \ \ \ \ \ \ \ \ \ \ \
			\bottomAlignProof
			\AXC{$\vdots$}
			\noLine
			\UI$A\,, A \fCenter X$
			\UI$A \fCenter X$
			\DisplayProof
			
			& \ \ \ \ \ \ \ \ &
			
			$\unitlength=0.80mm
			\put(0.00,10.00){\circle*{2}}
			\put(10.00,0.00){\circle*{2}}
			\put(20.00, 10.00){\circle*{2}}
			\put(40.00, 10.00){\circle*{2}}
			\put(0.00,10.00){\line(1,-1){10}}
			\put(10.00,0.00){\line(1,1){10}}
			\put(40.00, 0.00){\circle*{2}}
			\put(40.00,0.00){\line(0,1){10}}
			$
			\\
		\end{tabular}
		
	\end{remark}
	
	\paragraph*{C$_4$: Position-alikeness of parameters.} This condition bans any rule in which a (sub)structure in precedent (resp.~succedent) position in a premise is congruent to a (sub)structure in succedent (resp.\ precedent) position in the conclusion.
	
	\bigskip
	
	\paragraph*{C$_5$: Display of principal constituents.} This condition requires that any principal occurrence (that is, a non-parametric formula occurring in the conclusion of a rule application, cf.\ \cite[Condition C5]{Belnap})  be always either the entire antecedent or the entire consequent part of the sequent in which it occurs. In the following section, a generalization of this condition will be discussed, in view of its application to the main focus of interest of the present chapter.
	
	\bigskip
	The following conditions C$_6$ and C$_7$ are not reported below as they are stated in the original paper \cite{Belnap}, but as they appear in \cite[Section 4.1]{Wa98}. 
	%
	
	\paragraph*{C$_6$: Closure under substitution for succedent parameters.} This condition requires each rule to be closed under simultaneous substitution of arbitrary structures for congruent formulas which occur in succedent position.
	Condition C$_6$ ensures, for instance, that if the following inference is an application of the rule $R$:
	
	\begin{center}
		\AX$(X \fCenter Y) \big([A]^{suc}_{i} \,|\, i \in I\big)$
		\RightLabel{$R$}
		\UI$(X' \fCenter Y') [A]^{suc}$
		\DisplayProof
	\end{center}
	
	\noindent and $\big([A]^{suc}_{i} \,|\, i \in I\big)$ represents all and only the occurrences of $A$ in the premiss which are congruent to the occurrence of $A$ in the conclusion\footnote{Clearly, if $I = \varnothing$, then the occurrence of $A$ in the conclusion is congruent to itself.}, 
	then also the following inference is an application of the same rule $R$:
	
	\begin{center}
		\AX$(X \fCenter Y) \big([Z/A]^{suc}_{i} \,|\, i \in I\big)$
		\RightLabel{$R$}
		\UI$(X' \fCenter Y') [Z/A]^{suc}$
		\DisplayProof
	\end{center}
	
	\noindent where the structure $Z$ is substituted for $A$.
	
	\noindent This condition caters for the step in the cut elimination procedure in which the cut needs to be ``pushed up'' over rules in which the cut-formula in succedent position is parametric. Indeed, condition C$_6$ guarantees that, in the picture below, a well-formed subtree $\pi_1[Y/A]$ can be obtained from $\pi_1$ by replacing any occurrence of $A$ corresponding to a node in the history tree of the cut-formula $A$ by $Y$, and hence the following transformation step is guaranteed go through uniformly and ``canonically'':
	
	\begin{center}
		\bottomAlignProof
		\begin{tabular}{lcr}
			\AXC{\ \ \ $\vdots$ \raisebox{1mm}{$\pi'_1$}}
			\noLine
			\UI$X' \fCenter A$
			\noLine
			\UIC{\ \ \ $\vdots$ \raisebox{1mm}{$\pi_1$}}
			\noLine
			\UI$X \fCenter A$
			\AXC{\ \ \ $\vdots$ \raisebox{1mm}{$\pi_2$}}
			\noLine
			\UI$A \fCenter Y$
			\BI$X \fCenter Y$
			\DisplayProof
			& $\rightsquigarrow$ &
			\bottomAlignProof
			\AXC{\ \ \ $\vdots$ \raisebox{1mm}{$\pi'_1$}}
			\noLine
			\UI$X' \fCenter A$
			\AXC{\ \ \ $\vdots$ \raisebox{1mm}{$\pi_2$}}
			\noLine
			\UI$A \fCenter Y$
			\BI$X' \fCenter Y $
			\noLine
			\UIC{\ \ \ \ \ \ \ \ \ $\vdots$ \raisebox{1mm}{$\pi_1[Y/A]$}}
			\noLine
			\UI$X \fCenter Y$
			\DisplayProof
			\\
		\end{tabular}
	\end{center}
	\noindent if each rule in $\pi_1$ verifies condition C$_6$.
	
	\paragraph*{C$_7$: Closure under substitution for precedent parameters.} This condition requires each rule to be closed under simultaneous substitution of arbitrary structures for congruent formulas which occur in precedent position.
	Condition C$_7$ can be understood analogously to C$_6$, relative to formulas in precedent position. Therefore, for instance, if the following inference is an application of the rule $R$:
	
	\begin{center}
		\AxiomC{$(X \fCenter Y) \big([A]^{pre}_{i} \,|\, i \in I\big)$}
		\RightLabel{$R$}
		\UnaryInfC{$ (X' \vdash Y') [A]^{pre}$}
		\DisplayProof
	\end{center}
	
	\noindent then also the following inference is an instance of $R$:
	
	\begin{center}
		\AxiomC{$(X \fCenter Y) \big([Z/A]^{pre}_{i} \,|\, i \in I\big)$}
		\RightLabel{$R$}
		\UnaryInfC{$ (X' \vdash Y') [Z/A]^{pre}$}
		\DisplayProof
	\end{center}
	
	\noindent Similarly to what has been discussed for condition C$_6$, condition C$_7$ caters for the step in the cut elimination procedure in which the cut needs to be ``pushed up'' over rules in which the cut-formula in precedent position is parametric.
	
	\paragraph*{C$_8$: Eliminability of matching principal constituents.}
	
	This condition requests a standard Gentzen-style checking, which is now limited to the case in which both cut formulas are {\em principal}, i.e.\ each of them has been introduced with the last rule application of each corresponding subdeduction. In this case, analogously to the proof Gentzen-style, condition C$_8$ requires being able to transform the given deduction into a deduction with the same conclusion in which either the cut is eliminated altogether, or is transformed in one or more applications of cut involving proper subformulas of the original cut-formulas.

	\begin{thm} (cf.\ \cite[Section 3.3, Appendix A]{Wan02})
		\label{thm:meta}
		Any calculus satisfying conditions C$_2$, C$_3$, C$_4$, C$_5$, C$_6$, C$_7$, C$_8$ enjoys cut elimination. If C$_1$ is also satisfied, then the calculus enjoys the subformula property.
	\end{thm}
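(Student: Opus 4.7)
The plan is to establish cut-elimination by a double induction on a suitable complexity measure of cut-instances, following the standard Belnap--Wansing strategy. I would define, for each uppermost instance of cut in a derivation, its \emph{rank} as the lexicographically ordered pair $(c,h)$, where $c$ is the complexity of the cut-formula and $h$ is the sum of the heights of its two history-trees (well-defined by C$_2$ and C$_3$, as noted in Remark \ref{PS:rem: history tree}). The goal is to show that any uppermost cut can be locally rewritten to one or several cuts of strictly smaller rank; iterating removes all cuts.

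First I would dispose of the \emph{principal case}, in which both cut-formula occurrences are principal in the last rules of their respective subdeductions. By C$_5$ each principal occurrence already constitutes the entire antecedent (respectively succedent) of the sequent in which it appears, so the cut-instance is legitimate as it stands; condition C$_8$ then explicitly supplies a transformation replacing this cut by zero or more cuts whose cut-formulas are proper subformulas of the original, strictly reducing the first component $c$ of the rank.

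Next, the \emph{parametric case}: at least one of the two cut-formula occurrences, say the right one, is parametric in the last rule $R$ of its subdeduction. Using the congruence relation, trace the right cut-formula up through its history-tree until its introductions are reached. The key move is to push the cut above $R$ by replacing every congruent ancestor of the cut-formula by the structure coming from the other cut premise (as illustrated in the picture in the discussion of C$_6$). Condition C$_2$ guarantees that all congruent occurrences have the same shape, so the substitution is well-defined; C$_4$ guarantees that they all occur in the same position, so polarity is preserved; C$_3$ guarantees that the ancestors form a tree and no spurious identifications occur; and C$_6$ (respectively C$_7$ when the parametric side is on the left) is precisely the closure condition ensuring that each rule application along the way remains an instance of the same schematic rule after the substitution. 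The resulting derivation ends in cuts whose uppermost instance has the same cut-formula but strictly smaller second component $h$ of the rank. The display property, secured through C$_5$, plays the supporting role of guaranteeing that cut-formulas can always be displayed as the whole antecedent or succedent, so that the substitution along the history-tree makes structural sense. Once cut-elimination is established, the subformula property follows immediately from C$_1$ by a straightforward induction on cut-free derivations.

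The main obstacle is the detailed bookkeeping of the parametric step: one needs C$_2$, C$_3$, C$_4$, C$_6$, C$_7$ to conspire so that the \emph{simultaneous} substitution along an entire history-tree yields a well-formed derivation whose new cuts have strictly smaller rank, and in particular one must verify that no transformation step reintroduces a cut at or above the current rank. This is why the eight conditions are stated in the exact combined form above, and why weakening any one of them would break the uniform, ``canonical'' character of the procedure.
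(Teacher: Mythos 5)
Your outline is the standard Belnap--Wansing cut-elimination argument---principal case via C$_5$/C$_8$, parametric case by pushing the cut along the history-tree using C$_2$--C$_4$ and C$_6$/C$_7$, with a lexicographic measure on cut-formula complexity and parametric height, and the subformula property from C$_1$ afterwards---which is exactly the proof the paper relies on: it does not reprove the theorem but cites it (Wansing \cite{Wan02}, going back to Belnap \cite{Belnap}), and its surrounding discussion of the conditions matches your decomposition point for point. So your proposal is correct in approach and essentially coincides with the (cited) proof the paper intends.
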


	\paragraph*{Rules introducing logical connectives.} 
	In display calculi, these rules, sometimes referred to as {\em operational} or \emph{logical rules} as opposed to structural rules, typically occur in two flavors: operational rules which translate one structural connective in the premises in the corresponding connective in the conclusion, and operational rules in which both the operational connective and its structural counterpart are introduced in the conclusion. An example of this pattern is provided below for the case of the modal operator `diamond':
	\[
	\AX$\circ A \fCenter X$
	\RightLabel{$\Diamond_{L}$}
	\UI$\Diamond A \fCenter X$
	\DisplayProof
	\qquad
	\AX$X \fCenter A$
	\RightLabel{$\Diamond_{R}$}
	\UI$\circ X \fCenter \Diamond A$
	\DisplayProof
	\]
	
	\noindent In Section \ref{sec:display calculi DL DL ast}, this introduction pattern will be justified from a semantic viewpoint and generalized to logical connectives of arbitrary arity and polarity of their coordinates. From this example, it is clear that the introduction rules capture the rock bottom behavior of the logical connective in question; additional properties (for instance, normality, in the case in point), which might vary depending on the logical system, are to be captured at the level of additional (purely structural) rules. This enforces a clear-cut division of labour between operational rules, which only encode the basic proof-theoretic meaning of logical connectives, and structural rules, which account for all extra relations and properties, and which can be modularly added or removed, thus accounting for the space of axiomatic extensions of a given base logic. Besides being important from the viewpoint of a proof-theoretic semantic account of logical connectives, this neat division of labour is also key to the research program in proof theory aimed at developing more robust versions of Gentzen's cut-elimination theory. Indeed, as we have seen, Belnap's strategy in this respect precisely pivots on the identification of conditions (mainly on the structural rules of a display calculus) which guarantee that structural rules satisfying them can be safely added in a modular fashion to proper display calculi without disturbing the canonical cut elimination. In the following subsection, we will expand on the consequences of these conditions on the design of structural rules. Specifically, we report on three general shapes of structural rules. Identifying axioms or formulas which can be effectively translated into rules of one of these shapes is the main goal of the present paper.
	
	
	
	\subsection{Analytic, special and quasi-special structural rules}\label{ssec:rules}
	In the remainder of the paper, we will adopt the following convention regarding structural variables and terms: variables $X,Y,Z,W$ denote structures, and so do $S,T,U,V$. However, when describing rule schemas in abstract terms, we will often write e.g.\ $X\vdash S$, and in this context we understand that $X,Y,Z,W$ denote structure variables actually occurring in the given rule scheme, whereas $S,T,U,V$ are used as meta-variables for (possibly) compound structural terms such as $X\, ;\, Y$.
	
	\begin{definition}[Analytic structural rules]\label{def:analytic}
		(cf.\ \cite[Definition 3.13]{CiRa14}) A structural rule which satisfies conditions C$_1$-C$_7$ is an \emph{analytic structural rule}.
	\end{definition}
	
	Clearly, adding analytic structural rules to a proper display calculus (cf.\ Section \ref{PS:para:CanonicalCutElimination}) yields a proper display calculus.

	\begin{remark} In the setting of calculi with the relativized display property\footnote{cf.\ discussion before Proposition \ref{prop: DL has relativized display property}}, if a given analytic structural rule $\rho$ can be applied in concrete derivations of the calculus
then $\rho$ is interderivable, modulo applications of display postulates, with a rule of the following form:
		\begin{center}
			\begin{tabular}{ccc}
				\AxiomC{$(S^i_j\vdash Y^i\mid 1\leq i\leq n\textrm{ and } 1\leq j\leq n_i)\quad (X^k\vdash T_\ell^k\mid 1\leq k\leq m\textrm{ and } 1\leq\ell\leq m_k)$}
				\UnaryInfC{$(S\vdash T)[Y^i]^{suc}[X^k]^{pre}$}
				\DisplayProof \\
			\end{tabular}
		\end{center}
		where $X^k$ (resp.\ $Y^i$) might occur in $S^i_j$ or in $T^k_\ell$ in precedent (resp.\ succedent) position for some $i,j,k,\ell$ and moreover,  $X^k$ and $Y^j$ occur exactly once in $S\vdash T$ in  precedent and succedent position respectively for all $j,k$.
	\end{remark}
	The most common analytic rules occur in the following proper subclass:
	\begin{definition}[Special structural rules]\label{def:special}
		(cf.\ \cite[Section 5, discussion after Theorem 15]{Kracht} ) \emph{Special structural rules} are analytic structural rules of one of the following forms:
		\[
		\mbox{\AxiomC{$(X\vdash T_i\mid 1\leq i\leq n)$}
			\UnaryInfC{$X\vdash T$}
			\DisplayProof}
		\qquad
		\mbox{\AxiomC{$(S_i\vdash Y\mid 1\leq i\leq n)$}
			\UnaryInfC{$S\vdash Y$}
			\DisplayProof}
		\]
		where $X$ (resp.\ $Y$) does not occur in any $T_i$ (resp.\ $S_i$) for $1\leq i\leq n$ nor in $T$ (resp.\ $S$).
	\end{definition}
	In \cite{Kracht}, Kracht establishes a correspondence between special rules and primitive formulas in the setting of tense modal logic, which will be generalized in Section \ref{ssec:left right primitive} below.
	\begin{remark}
\label{rmk:two understandings of special}
		An alternative  way to define special rules, which would also be perhaps more in line with the spirit of display calculi, would be as those rules \[
		\mbox{\AxiomC{$(S_i\vdash T_i\mid 1\leq i\leq n)$}
			\UnaryInfC{$S\vdash T$}
			\DisplayProof}
		\]
		such that some variable $X$ occurs exactly once in each premise and in the conclusion, and always in the same (antecedent or consequent) position. In this way, the class of special rules would be closed under under application of display postulates. Applying the general procedure described in Section \ref{ssec:type5} to primitive inequalities (cf.\ Definition \ref{def:primitive}) always yields special rules in the less restrictive sense here specified, but not in the sense of Definition \ref{def:special} above. This fact might be taken as a motivation for adopting the less restrictive definition. However, the more restrictive definition can be immediately verified of a concrete rule, which is the reason why we prefer it over the less restricted one.
	\end{remark}
	In \cite{Kracht}, Kracht states without proof that any analytic structural rules in the language of classical tense logic $Kt$ is equivalent to some special structural rule. Kracht's claim has been proved with model-theoretic techniques in \cite{CiRa14}, \cite{ramanayake2011cut}. In Section \ref{sec:special rules as expressive as analytic}, we generalize these results using ALBA from classical tense logic to arbitrary DLE-logics. 
	The following definition is instrumental in achieving this generalization:
	\begin{definition}[Quasi-special structural rules]\label{def:quasispecial}
		\emph{Quasi-special structural rules} are analytic structural rules of the following form:
		\begin{center}
			\begin{tabular}{ccc}
				\AxiomC{$(S^i_j\vdash Y^i\mid 1\leq i\leq n\textrm{ and } 1\leq j\leq n_i)\quad (X^k\vdash T_\ell^k\mid 1\leq k\leq m\textrm{ and } 1\leq\ell\leq m_k)$}
				\UnaryInfC{$(S\vdash T)[Y^i]^{suc}[X^k]^{pre}$}
				\DisplayProof \\
			\end{tabular}
		\end{center}
		where $X^k$ and $Y^i$ \emph{do not} occur in any $S^i_j$, $T^k_\ell$ (and occur in $S\vdash T$ exactly once).
	\end{definition}

	\section{Preliminaries on DLE-logics and ALBA}
\label{sec:DLE-logics and ALBA prelim}
	In the present section, we collect preliminaries on logics for distributive lattice expansions (or {\em DLE-logics}), reporting in particular on their language, axiomatization and algebraic semantics. Then we report on the definition of inductive DLE-inequalities, and outline, without any attempt at being self-contained, the algorithm ALBA\footnote{ALBA is the acronym of Ackermann Lemma Based Algorithm.} (cf.\ \cite{CoPa10, CoGhPa13}) for each DLE-language.
	
	\subsection{Syntax and semantics for DLE-logics}
	\label{subset:language:algsemantics}
	Our base language is an unspecified but fixed language $\mathcal{L}_\mathrm{DLE}$, to be interpreted over distributive lattice expansions of compatible similarity type. This setting uniformly accounts for many well known logical systems, such as distributive and positive modal logic, intuitionistic and bi-intuitionistic (modal) logic, tense logic, and (distributive) full Lambek calculus.
	
	In our treatment, we will make heavy use of the following auxiliary definition: an {\em order-type} over $n\in \mathbb{N}$\footnote{Throughout the paper, order-types will be typically associated with arrays of variables $\vec p: = (p_1,\ldots, p_n)$. When the order of the variables in $\vec p$ is not specified, we will sometimes abuse notation and write $\varepsilon(p) = 1$ or $\varepsilon(p) = \partial$.} is an $n$-tuple $\epsilon\in \{1, \partial\}^n$. For every order type $\epsilon$, we denote its {\em opposite} order type by $\epsilon^\partial$, that is, $\epsilon^\partial_i = 1$ iff $\epsilon_i=\partial$ for every $1 \leq i \leq n$. For any lattice $\bba$, we let $\bba^1: = \bba$ and $\bba^\partial$ be the dual lattice, that is, the lattice associated with the converse partial order of $\bba$. For any order type $\varepsilon$, we let $\bba^\varepsilon: = \Pi_{i = 1}^n \bba^{\varepsilon_i}$.
	
	The language $\mathcal{L}_\mathrm{DLE}(\mathcal{F}, \mathcal{G})$ (from now on abbreviated as $\mathcal{L}_\mathrm{DLE}$) takes as parameters: 1) a denumerable set of proposition letters $\mathsf{AtProp}$, elements of which are denoted $p,q,r$, possibly with indexes; 2) disjoint sets of connectives $\mathcal{F}$ and $\mathcal{G}$.\footnote{It will be clear from the treatment in the present and the following sections that the connectives in $\mathcal{F}$ (resp.\ $\mathcal{G}$) correspond to those referred to as {\em positive} (resp.\ {\em negative}) connectives in \cite{ciabattoni2008axioms}. The reason why this terminology is not adopted in the present paper is explained later on in Footnote \ref{footnote: why not adopt terminology of CiGaTe}. Our assumption that the sets $\mathcal{F}$ and $\mathcal{G}$ are disjoint is motivated by the desideratum of generality and modularity. Indeed, for instance, the order theoretic properties of Boolean negation $\neg$ guarantee that this connective belongs both to $\mathcal{F}$ and to $\mathcal{G}$. In such cases we prefer to define two copies $\neg_\mathcal{F}\in\mathcal{F}$ and $\neg_\mathcal{G}\in\mathcal{G}$, and introduce structural rules which encode the fact that these two copies coincide.} Each $f\in \mathcal{F}$ and $g\in \mathcal{G}$ has arity $n_f\in \mathbb{N}$ (resp.\ $n_g\in \mathbb{N}$) and is associated with some order-type $\varepsilon_f$ over $n_f$ (resp.\ $\varepsilon_g$ over $n_g$).\footnote{Unary $f$ (resp.\ $g$) will be sometimes denoted as $\Diamond$ (resp.\ $\Box$) if the order-type is 1, and $\lhd$ (resp.\ $\rhd$) if the order-type is $\partial$.} The terms (formulas) of $\mathcal{L}_\mathrm{DLE}$ are defined recursively as follows:
	\[
	\phi ::= p \mid \bot \mid \top \mid \phi \wedge \phi \mid \phi \vee \phi \mid f(\overline{\phi}) \mid g(\overline{\phi})
	\]
	where $p \in \mathsf{AtProp}$, $f \in \mathcal{F}$, $g \in \mathcal{G}$. Terms in $\mathcal{L}_\mathrm{DLE}$ will be denoted either by $s,t$, or by lowercase Greek letters such as $\varphi, \psi, \gamma$ etc. In the context of sequents and prooftrees, $\mathcal{L}_\mathrm{DLE}$-formulas will be denoted by uppercase
letters $A$, $B$, etc.
	
	\begin{definition}
		\label{def:DLE}
		For any tuple $(\mathcal{F}, \mathcal{G})$ of disjoint sets of function symbols as above, a {\em distributive lattice expansion} (abbreviated as DLE) is a tuple $\bba = (D, \mathcal{F}^\bbA, \mathcal{G}^\bbA)$ such that $D$ is a bounded distributive lattice, $\mathcal{F}^\bbA = \{f^\bbA\mid f\in \mathcal{F}\}$ and $\mathcal{G}^\bbA = \{g^\bbA\mid g\in \mathcal{G}\}$, such that every $f^\bbA\in\mathcal{F}^\bbA$ (resp.\ $g^\bbA\in\mathcal{G}^\bbA$) is an $n_f$-ary (resp.\ $n_g$-ary) operation on $\bbA$. A DLE is {\em normal} if every $f^\bbA\in\mathcal{F}^\bbA$ (resp.\ $g^\bbA\in\mathcal{G}^\bbA$) preserves finite joins (resp.\ meets) in each coordinate with $\epsilon_f(i)=1$ (resp.\ $\epsilon_g(i)=1$) and reverses finite meets (resp.\ joins) in each coordinate with $\epsilon_f(i)=\partial$ (resp.\ $\epsilon_g(i)=\partial$).\footnote{\label{footnote:DLE vs DLO} Normal DLEs are sometimes referred to as {\em distributive lattices with operators} (DLOs). This terminology directly derives from the setting of Boolean algebras with operators, in which operators are understood as operations which preserve finite meets in each coordinate. However, this terminology results somewhat ambiguous in the lattice setting, in which primitive operations are typically maps which are operators if seen as $\bbA^\epsilon\to \bbA^\eta$ for some order-type $\epsilon$ on $n$ and some order-type $\eta\in \{1, \partial\}$. Rather than speaking of distributive lattices with $(\varepsilon, \eta)$-operators, we then speak of normal DLEs.} Let $\mathbb{DLE}$ be the class of DLEs. Sometimes we will refer to certain DLEs as $\mathcal{L}_\mathrm{DLE}$-algebras when we wish to emphasize that these algebras have a compatible signature with the logical language we have fixed.
	\end{definition}
In the remainder of the paper, 
we will abuse notation and write e.g.\ $f$ for $f^\bbA$.
Normal DLEs constitute the main semantic environment of the present paper. Henceforth, every DLE is assumed to be normal; hence the adjective `normal' will be typically dropped. The class of all DLEs is equational, and can be axiomatized by the usual distributive lattice identities and the following equations for any $f\in \mathcal{F}$ (resp.\ $g\in \mathcal{G}$) and $1\leq i\leq n_f$ (resp.\ for each $1\leq j\leq n_g$):
	\begin{itemize}
		\item if $\varepsilon_f(i) = 1$, then $f(p_1,\ldots, p\vee q,\ldots,p_{n_f}) = f(p_1,\ldots, p,\ldots,p_{n_f})\vee f(p_1,\ldots, q,\ldots,p_{n_f})$ and $f(p_1,\ldots, \bot,\ldots,p_{n_f}) = \bot$,
		\item if $\varepsilon_f(i) = \partial$, then $f(p_1,\ldots, p\wedge q,\ldots,p_{n_f}) = f(p_1,\ldots, p,\ldots,p_{n_f})\vee f(p_1,\ldots, q,\ldots,p_{n_f})$ and $f(p_1,\ldots, \top,\ldots,p_{n_f}) = \bot$,
		\item if $\varepsilon_g(j) = 1$, then $g(p_1,\ldots, p\wedge q,\ldots,p_{n_g}) = g(p_1,\ldots, p,\ldots,p_{n_g})\wedge g(p_1,\ldots, q,\ldots,p_{n_g})$ and $g(p_1,\ldots, \top,\ldots,p_{n_g}) = \top$,
		\item if $\varepsilon_g(j) = \partial$, then $g(p_1,\ldots, p\vee q,\ldots,p_{n_g}) = g(p_1,\ldots, p,\ldots,p_{n_g})\wedge g(p_1,\ldots, q,\ldots,p_{n_g})$ and $g(p_1,\ldots, \bot,\ldots,p_{n_g}) = \top$.
	\end{itemize}
	Each language $\mathcal{L}_\mathrm{DLE}$ is interpreted in the appropriate class of DLEs. In particular, for every DLE $\bba$, each operation $f^\bba\in \mathcal{F}^\bbA$ (resp.\ $g^\bba\in \mathcal{G}^\bbA$) is finitely join-preserving (resp.\ meet-preserving) in each coordinate when regarded as a map $f^\bba: \bba^{\varepsilon_f}\to \bba$ (resp.\ $g^\bba: \bba^{\varepsilon_g}\to \bba$).
	
	 The generic DLE-logic is not equivalent to a sentential logic. Hence the consequence relation of these logics cannot be uniformly captured in terms of theorems, but rather in terms of sequents, which motivates the following definition:
	\begin{definition}
		\label{def:DLE:logic:general}
		For any language $\mathcal{L}_\mathrm{DLE} = \mathcal{L}_\mathrm{DLE}(\mathcal{F}, \mathcal{G})$, the {\em basic}, or {\em minimal} $\mathcal{L}_\mathrm{DLE}$-{\em logic} is a set of sequents $\phi\vdash\psi$, with $\phi,\psi\in\mathcal{L}_\mathrm{DLE}$, which contains the following axioms:
		\begin{itemize}
			\item Sequents for lattice operations:\footnote{In what follows we will use the turnstile symbol $\vdash$ both as sequent separator and also as the consequence relation of the logic.}
			\begin{align*}
				&p\vdash p, && \bot\vdash p, && p\vdash \top, & & p\wedge (q\vee r)\vdash (p\wedge q)\vee (p\wedge r), &\\
				&p\vdash p\vee q, && q\vdash p\vee q, && p\wedge q\vdash p, && p\wedge q\vdash q, &
			\end{align*}
			\item Sequents for additional connectives:
			\begin{align*}
				& f(p_1,\ldots, \bot,\ldots,p_{n_f}) \vdash \bot,~\mathrm{for}~ \varepsilon_f(i) = 1,\\
				& f(p_1,\ldots, \top,\ldots,p_{n_f}) \vdash \bot,~\mathrm{for}~ \varepsilon_f(i) = \partial,\\
				&\top\vdash g(p_1,\ldots, \top,\ldots,p_{n_g}),~\mathrm{for}~ \varepsilon_g(i) = 1,\\
				&\top\vdash g(p_1,\ldots, \bot,\ldots,p_{n_g}),~\mathrm{for}~ \varepsilon_g(i) = \partial,\\
				&f(p_1,\ldots, p\vee q,\ldots,p_{n_f}) \vdash f(p_1,\ldots, p,\ldots,p_{n_f})\vee f(p_1,\ldots, q,\ldots,p_{n_f}),~\mathrm{for}~ \varepsilon_f(i) = 1,\\
				&f(p_1,\ldots, p\wedge q,\ldots,p_{n_f}) \vdash f(p_1,\ldots, p,\ldots,p_{n_f})\vee f(p_1,\ldots, q,\ldots,p_{n_f}),~\mathrm{for}~ \varepsilon_f(i) = \partial,\\
				& g(p_1,\ldots, p,\ldots,p_{n_g})\wedge g(p_1,\ldots, q,\ldots,p_{n_g})\vdash g(p_1,\ldots, p\wedge q,\ldots,p_{n_g}),~\mathrm{for}~ \varepsilon_g(i) = 1,\\
				& g(p_1,\ldots, p,\ldots,p_{n_g})\wedge g(p_1,\ldots, q,\ldots,p_{n_g})\vdash g(p_1,\ldots, p\vee q,\ldots,p_{n_g}),~\mathrm{for}~ \varepsilon_g(i) = \partial,
			\end{align*}
		\end{itemize}
		and is closed under the following inference rules:
		\begin{displaymath}
			\frac{\phi\vdash \chi\quad \chi\vdash \psi}{\phi\vdash \psi}
			\quad
			\frac{\phi\vdash \psi}{\phi(\chi/p)\vdash\psi(\chi/p)}
			\quad
			\frac{\chi\vdash\phi\quad \chi\vdash\psi}{\chi\vdash \phi\wedge\psi}
			\quad
			\frac{\phi\vdash\chi\quad \psi\vdash\chi}{\phi\vee\psi\vdash\chi}
		\end{displaymath}
		\begin{displaymath}
			\frac{\phi\vdash\psi}{f(p_1,\ldots,\phi,\ldots,p_n)\vdash f(p_1,\ldots,\psi,\ldots,p_n)}{~(\varepsilon_f(i) = 1)}
		\end{displaymath}
		\begin{displaymath}
			\frac{\phi\vdash\psi}{f(p_1,\ldots,\psi,\ldots,p_n)\vdash f(p_1,\ldots,\phi,\ldots,p_n)}{~(\varepsilon_f(i) = \partial)}
		\end{displaymath}
		\begin{displaymath}
			\frac{\phi\vdash\psi}{g(p_1,\ldots,\phi,\ldots,p_n)\vdash g(p_1,\ldots,\psi,\ldots,p_n)}{~(\varepsilon_g(i) = 1)}
		\end{displaymath}
		\begin{displaymath}
			\frac{\phi\vdash\psi}{g(p_1,\ldots,\psi,\ldots,p_n)\vdash g(p_1,\ldots,\phi,\ldots,p_n)}{~(\varepsilon_g(i) = \partial)}.
		\end{displaymath}
		The minimal DLE-logic is denoted by $\mathbf{L}_\mathrm{DLE}$. For any DLE-language $\mathcal{L}_{\mathrm{DLE}}$, by a {\em $\mathrm{DLE}$-logic} we understand any axiomatic extension of the basic $\mathcal{L}_{\mathrm{DLE}}$-logic in $\mathcal{L}_{\mathrm{DLE}}$.
	\end{definition}
	
	For every DLE $\bba$, the symbol $\vdash$ is interpreted as the lattice order $\leq$. A sequent $\phi\vdash\psi$ is valid in $\bba$ if $h(\phi)\leq h(\psi)$ for every homomorphism $h$ from the $\mathcal{L}_\mathrm{DLE}$-algebra of formulas over $\mathsf{AtProp}$ to $\bba$. The notation $\mathbb{DLE}\models\phi\vdash\psi$ indicates that $\phi\vdash\psi$ is valid in every DLE. Then, by means of a routine Lindenbaum-Tarski construction, it can be shown that the minimal DLE-logic $\mathbf{L}_\mathrm{DLE}$ is sound and complete with respect to its correspondent class of algebras $\mathbb{DLE}$, i.e.\ that any sequent $\phi\vdash\psi$ is provable in $\mathbf{L}_\mathrm{DLE}$ iff $\mathbb{DLE}\models\phi\vdash\psi$. 
	
	\subsection{The expanded language $\mathcal{L}_\mathrm{DLE}^*$}
	\label{ssec:expanded language}
	Any given language $\mathcal{L}_\mathrm{DLE} = \mathcal{L}_\mathrm{DLE}(\mathcal{F}, \mathcal{G})$ can be associated with the language $\mathcal{L}_\mathrm{DLE}^* = \mathcal{L}_\mathrm{DLE}(\mathcal{F}^*, \mathcal{G}^*)$, where $\mathcal{F}^*\supseteq \mathcal{F}$ and $\mathcal{G}^*\supseteq \mathcal{G}$ are obtained by expanding $\mathcal{L}_\mathrm{DLE}$ with the following connectives:
	\begin{enumerate}
		\item the binary connectives $\leftarrow$ and $\rightarrow$, the intended interpretations of which are the right residuals of $\wedge$ in the first and second coordinate respectively, and
		$\pdla$ and $ \pdra$, the intended interpretations of which are the left residuals of $\vee$ in the first and second coordinate, respectively;
		\item the $n_f$-ary connective $f^\sharp_i$ for $0\leq i\leq n_f$, the intended interpretation of which is the right residual of $f\in\mathcal{F}$ in its $i$th coordinate if $\varepsilon_f(i) = 1$ (resp.\ its Galois-adjoint if $\varepsilon_f(i) = \partial$);
		\item the $n_g$-ary connective $g^\flat_i$ for $0\leq i\leq n_g$, the intended interpretation of which is the left residual of $g\in\mathcal{G}$ in its $i$th coordinate if $\varepsilon_g(i) = 1$ (resp.\ its Galois-adjoint if $\varepsilon_g(i) = \partial$).
		\footnote{The adjoints of the unary connectives $\Box$, $\Diamond$, $\lhd$ and $\rhd$ are denoted $\Diamondblack$, $\blacksquare$, $\blhd$ and $\brhd$, respectively.}
	\end{enumerate}
	We stipulate that $\pdra, \pdla\in \mathcal{F}^*$, that $\rightarrow, \leftarrow\in \mathcal{G}^*$, and moreover, that
	$f^\sharp_i\in\mathcal{G}^*$ if $\varepsilon_f(i) = 1$, and $f^\sharp_i\in\mathcal{F}^*$ if $\varepsilon_f(i) = \partial$. Dually, $g^\flat_i\in\mathcal{F}^*$ if $\varepsilon_g(i) = 1$, and $g^\flat_i\in\mathcal{G}^*$ if $\varepsilon_g(i) = \partial$. The order-type assigned to the additional connectives is predicated on the order-type of their intended interpretations. That is, for any $f\in \mathcal{F}$ and $g\in\mathcal{G}$,
	\begin{enumerate}
		\item if $\epsilon_f(i) = 1$, then $\epsilon_{f_i^\sharp}(i) = 1$ and $\epsilon_{f_i^\sharp}(j) = (\epsilon_f(j))^\partial$ for any $j\neq i$.
		\item if $\epsilon_f(i) = \partial$, then $\epsilon_{f_i^\sharp}(i) = \partial$ and $\epsilon_{f_i^\sharp}(j) = \epsilon_f(j)$ for any $j\neq i$.
		\item if $\epsilon_g(i) = 1$, then $\epsilon_{g_i^\flat}(i) = 1$ and $\epsilon_{g_i^\flat}(j) = (\epsilon_g(j))^\partial$ for any $j\neq i$.
		\item if $\epsilon_g(i) = \partial$, then $\epsilon_{g_i^\flat}(i) = \partial$ and $\epsilon_{g_i^\flat}(j) = \epsilon_g(j)$ for any $j\neq i$.
	\end{enumerate}
	
	For instance, if $f$ and $g$ are binary connectives such that $\varepsilon_f = (1, \partial)$ and $\varepsilon_g = (\partial, 1)$, then $\varepsilon_{f^\sharp_1} = (1, 1)$, $\varepsilon_{f^\sharp_2} = (1, \partial)$, $\varepsilon_{g^\flat_1} = (\partial, 1)$ and $\varepsilon_{g^\flat_2} = (1, 1)$.\footnote{Warning: notice that this notation heavily depends from the connective which is taken as primitive, and needs to be carefully adapted to well known cases. For instance, consider the  `fusion' connective $\circ$ (which, when denoted  as $f$, is such that $\varepsilon_f = (1, 1)$). Its residuals
$f_1^\sharp$ and $f_2^\sharp$ are commonly denoted $/$ and
$\backslash$ respectively. However, if $\backslash$ is taken as the primitive connective $g$, then $g_2^\flat$ is $\circ = f$, and
$g_1^\flat(x_1, x_2): = x_2/x_1 = f_1^\sharp (x_2, x_1)$. This example shows
that, when identifying $g_1^\flat$ and $f_1^\sharp$, the conventional order of the coordinates is not preserved, and depends of which connective
is taken as primitive.}
	
	
	\begin{definition}
		For any language $\mathcal{L}_\mathrm{DLE}(\mathcal{F}, \mathcal{G})$, the {\em basic bi-intuitionistic `tense'} $\mathcal{L}_\mathrm{DLE}$-{\em logic} is defined by specializing Definition \ref{def:DLE:logic:general} to the language $\mathcal{L}_\mathrm{DLE}^* = \mathcal{L}_\mathrm{DLE}(\mathcal{F}^*, \mathcal{G}^*)$ 
		and closing under the following additional rules:
		\begin{enumerate}
			\item residuation rules for lattice connectives:
			$$
			\begin{array}{cccc}
			\AxiomC{$\phi\wedge\psi\vdash \chi$}
			\doubleLine
			\UnaryInfC{$\psi\vdash \phi\rightarrow \chi$}
			\DisplayProof
			&
			\AxiomC{$\phi\wedge\psi\vdash \chi$}
			\doubleLine
			\UnaryInfC{$\phi\vdash \chi\leftarrow \psi$}
			\DisplayProof
			&
			\AxiomC{$\phi\vdash \psi\vee\chi$}
			\doubleLine
			\UnaryInfC{$\psi \pdra \phi \vdash \chi$}
			\DisplayProof
			&
			\AxiomC{$\phi\vdash \psi\vee\chi$}
			\doubleLine
			\UnaryInfC{$\phi \pdla \chi \vdash \psi$}
			\DisplayProof
			\end{array}
			$$
			Notice that the rules for $\rightarrow$ and $\leftarrow$ are interderivable, since $\wedge$ is commutative; similarly, the rules for $\pdra$ and $\pdla$ are interderivable, since $\vee$ is commutative.
			\item Residuation rules for $f\in \mathcal{F}$ and $g\in \mathcal{G}$:
			$$
			\begin{array}{cc}
			\AxiomC{$f(\varphi_1,\ldots,\phi,\ldots, \varphi_{n_f}) \vdash \psi$}
			\doubleLine
			\LeftLabel{$(\epsilon_f(i) = 1)$}
			\UnaryInfC{$\phi\vdash f^\sharp_i(\varphi_1,\ldots,\psi,\ldots,\varphi_{n_f})$}
			\DisplayProof
			&
			\AxiomC{$\phi \vdash g(\varphi_1,\ldots,\psi,\ldots,\varphi_{n_g})$}
			\doubleLine
			\RightLabel{$(\epsilon_g(i) = 1)$}
			\UnaryInfC{$g^\flat_i(\varphi_1,\ldots, \phi,\ldots, \varphi_{n_g})\vdash \psi$}
			\DisplayProof
			\end{array}
			$$
			$$
			\begin{array}{cc}
			\AxiomC{$f(\varphi_1,\ldots,\phi,\ldots, \varphi_{n_f}) \vdash \psi$}
			\doubleLine
			\LeftLabel{$(\epsilon_f(i) = \partial)$}
			\UnaryInfC{$f^\sharp_i(\varphi_1,\ldots,\psi,\ldots,\varphi_{n_f})\vdash \phi$}
			\DisplayProof
			&
			\AxiomC{$\phi \vdash g(\varphi_1,\ldots,\psi,\ldots,\varphi_{n_g})$}
			\doubleLine
			\RightLabel{($\epsilon_g(i) = \partial)$}
			\UnaryInfC{$\psi\vdash g^\flat_i(\varphi_1,\ldots, \phi,\ldots, \varphi_{n_g})$}
			\DisplayProof
			\end{array}
			$$
		\end{enumerate}
		The double line in each rule above indicates that the rule is invertible.
		Let $\mathbf{L}_\mathrm{DLE}^*$ be the minimal bi-intuitionistic `tense' $\mathcal{L}_\mathrm{DLE}$-logic.\footnote{\label{ftn: definition basic logic for expanded language} Hence, for any language $\mathcal{L}_\mathrm{DLE}$, there are in principle two logics associated with the expanded language $\mathcal{L}_\mathrm{DLE}^*$, namely the {\em minimal} $\mathcal{L}_\mathrm{DLE}^*$-logic, which we denote by $\mathbf{L}_\mathrm{DLE}^{\underline{*}}$, and which is obtained by instantiating Definition \ref{def:DLE:logic:general} to the language $\mathcal{L}_\mathrm{DLE}^*$, and the bi-intuitionistic `tense' logic $\mathbf{L}_\mathrm{DLE}^*$, defined above. The logic $\mathbf{L}_\mathrm{DLE}^*$ is the natural logic on the language $\mathcal{L}_\mathrm{DLE}^*$, however it is useful to introduce a specific notation for $\mathbf{L}_\mathrm{DLE}^{\underline{*}}$, given that all the results holding for the minimal logic associated with an arbitrary DLE-language can be instantiated to the expanded language $\mathcal{L}_\mathrm{DLE}^*$ and will then apply to $\mathbf{L}_\mathrm{DLE}^{\underline{*}}$.} For any DLE-language $\mathcal{L}_{\mathrm{DLE}}$, by a {\em tense $\mathrm{DLE}$-logic} we understand any axiomatic extension of the basic tense bi-intuitionistic $\mathcal{L}_{\mathrm{DLE}}$-logic in $\mathcal{L}^*_{\mathrm{DLE}}$.
	\end{definition}
	
	The algebraic semantics of $\mathbf{L}_\mathrm{DLE}^*$ is given by the class of bi-intuitionistic `tense' $\mathcal{L}_\mathrm{DLE}$-algebras, defined as tuples $\bba = (H, \mathcal{F}^*, \mathcal{G}^*)$ such that $H$ is a bi-Heyting algebra\footnote{That is, $H = (D, \rightarrow, \pdla)$ such that both $(D, \rightarrow)$ and $(D^\partial, \pdla)$ are Heyting algebras. In particular, setting $c\leftarrow b: = b\rightarrow c$ and $b\pdra a: = a\pdra b$ for all $a,b,c\in D$, the following equivalences hold \begin{center}
			$a\wedge b\leq c$ iff $b\leq a\rightarrow c$ iff $a\leq c\leftarrow b$,
			$\quad\quad\quad\quad\quad\quad$
			$a\leq b\vee c$ iff $b\pdra a\leq c$ iff $a\pdla c\leq b$.
		\end{center}} and moreover,
		\begin{enumerate}
			
			\item for every $f\in \mathcal{F}$ s.t.\ $n_f\geq 1$, all $a_1,\ldots,a_{n_f}\in D$ and $b\in D$, and each $1\leq i\leq n_f$,
			\begin{itemize}
				\item
				if $\epsilon_f(i) = 1$, then $f(a_1,\ldots,a_i,\ldots a_{n_f})\leq b$ iff $a_i\leq f^\sharp_i(a_1,\ldots,b,\ldots,a_{n_f})$;
				\item
				if $\epsilon_f(i) = \partial$, then $f(a_1,\ldots,a_i,\ldots a_{n_f})\leq b$ iff $a_i\leq^\partial f^\sharp_i(a_1,\ldots,b,\ldots,a_{n_f})$.
			\end{itemize}
			\item for every $g\in \mathcal{G}$ s.t.\ $n_g\geq 1$, any $a_1,\ldots,a_{n_g}\in D$ and $b\in D$, and each $1\leq i\leq n_g$,
			\begin{itemize}
				\item if $\epsilon_g(i) = 1$, then $b\leq g(a_1,\ldots,a_i,\ldots a_{n_g})$ iff $g^\flat_i(a_1,\ldots,b,\ldots,a_{n_g})\leq a_i$.
				\item
				if $\epsilon_g(i) = \partial$, then $b\leq g(a_1,\ldots,a_i,\ldots a_{n_g})$ iff $g^\flat_i(a_1,\ldots,b,\ldots,a_{n_g})\leq^\partial a_i$.
			\end{itemize}
		\end{enumerate}
		
		It is also routine to prove using the Lindenbaum-Tarski construction that $\mathbf{L}_\mathrm{DLE}^*$ (as well as any of its sound axiomatic extensions) is sound and complete w.r.t.\ the class of bi-intuitionistic `tense' $\mathcal{L}_\mathrm{DLE}$-algebras (w.r.t.\ the suitably defined equational subclass, respectively). 
		
		\begin{thm}
			\label{th:conservative extension}
			The logic $\mathbf{L}_\mathrm{DLE}^*$ is a conservative extension of $\mathbf{L}_\mathrm{DLE}$, i.e., for every $\mathcal{L}_\mathrm{DLE}$-sequent $\phi\vdash\psi$, $\phi\vdash\psi$ is derivable in $\mathbf{L}_\mathrm{DLE}$ iff $\phi\vdash\psi$ is derivable in $\mathbf{L}_\mathrm{DLE}^*$. Moreover, every DLE-logic can be extended conservatively to a DLE$^*$-logic.
		\end{thm}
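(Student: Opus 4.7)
The plan is to reduce both statements to algebraic completeness combined with the existence of suitable $\mathcal{L}_\mathrm{DLE}$-embeddings of DLEs into bi-intuitionistic `tense' $\mathcal{L}_\mathrm{DLE}$-algebras. One direction of the main statement is immediate: every axiom and inference rule of $\mathbf{L}_\mathrm{DLE}$ is among those of $\mathbf{L}_\mathrm{DLE}^*$, so any $\mathbf{L}_\mathrm{DLE}$-derivation is verbatim an $\mathbf{L}_\mathrm{DLE}^*$-derivation.

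For the non-trivial direction I would argue contrapositively. Suppose $\phi\vdash\psi$, with $\phi,\psi\in\mathcal{L}_\mathrm{DLE}$, is not derivable in $\mathbf{L}_\mathrm{DLE}$. By the algebraic completeness of $\mathbf{L}_\mathrm{DLE}$ with respect to $\mathbb{DLE}$, recorded via the Lindenbaum-Tarski construction in the preceding discussion, there exist a DLE $\bbA$ and a homomorphism $h$ such that $h(\phi)\not\leq h(\psi)$. The key step is to embed $\bbA$, via an $\mathcal{L}_\mathrm{DLE}$-homomorphism, into a bi-intuitionistic tense $\mathcal{L}_\mathrm{DLE}$-algebra $\bbA'$: composing $h$ with this embedding produces a refutation of $\phi\vdash\psi$ on $\bbA'$, whence soundness of $\mathbf{L}_\mathrm{DLE}^*$ yields that $\phi\vdash\psi$ is not derivable in $\mathbf{L}_\mathrm{DLE}^*$.

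The natural candidate for $\bbA'$ is the canonical extension $\bbA^\delta$. This is a perfect, and in particular complete and completely distributive, DLE, and the canonical map $\bbA\hookrightarrow\bbA^\delta$ is an $\mathcal{L}_\mathrm{DLE}$-embedding. By standard results on canonical extensions of normal DLEs, the $\sigma$- or $\pi$-extensions of the operations in $\mathcal{F}$ and $\mathcal{G}$ preserve arbitrary joins (resp.\ meets) in those coordinates where they originally preserved finite ones; hence, by the adjoint functor theorem for complete lattices, all the right residuals and Galois adjoints required to interpret the additional connectives in $\mathcal{F}^*\setminus\mathcal{F}$ and $\mathcal{G}^*\setminus\mathcal{G}$ exist on $\bbA^\delta$. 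Thus $\bbA^\delta$ naturally carries the structure of a bi-intuitionistic tense $\mathcal{L}_\mathrm{DLE}$-algebra, completing the first part.

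For the `moreover' clause, given a DLE-logic $\mathbf{L}$, I would define $\mathbf{L}^*$ as the $\mathcal{L}_\mathrm{DLE}^*$-logic sound and complete with respect to the class of those bi-intuitionistic tense $\mathcal{L}_\mathrm{DLE}$-algebras whose $\mathcal{L}_\mathrm{DLE}$-reduct is an $\mathbf{L}$-algebra, and rerun the previous embedding argument within this class. I expect the hard part to be the embedding step here: in general, the canonical extension $\bbA^\delta$ of an $\mathbf{L}$-algebra $\bbA$ need not itself be an $\mathbf{L}$-algebra (unless $\mathbf{L}$ is canonical), so a more delicate construction---such as a MacNeille-style completion with suitable extension of the operations, or a free bi-intuitionistic tense hull of $\bbA$ inside the variety it generates---is needed in order to simultaneously preserve the $\mathcal{L}_\mathrm{DLE}$-equational theory of $\bbA$ and supply the residuals and Galois adjoints required by $\mathcal{L}_\mathrm{DLE}^*$.
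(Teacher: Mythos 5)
Your proof of the main equivalence is essentially the paper's own argument: the easy direction by inclusion of the axiomatizations, and the converse contrapositively via algebraic completeness of $\mathbf{L}_\mathrm{DLE}$, the embedding $\bbA\hookrightarrow\bbA^\delta$ into the canonical extension, the fact that the perfect algebra $\bbA^\delta$ is naturally endowed with bi-intuitionistic `tense' structure (you spell out, via complete (dual) operators and the adjoint functor theorem, what the paper merely asserts), and soundness of $\mathbf{L}_\mathrm{DLE}^*$ with respect to that class (the paper phrases this as ``completeness'', but the use is the same). Concerning the ``moreover'' clause: the paper's proof is explicitly only an outline and addresses only the minimal logics, so there is nothing there to compare your sketch against; your observation that the canonical-extension route does not automatically transfer to arbitrary axiomatic extensions unless the additional axioms are canonical is a fair and genuine point, but your proposed remedies (MacNeille-style completions, free bi-intuitionistic tense hulls) are not carried out, so on that clause your proposal, like the paper's own text, remains a sketch rather than a complete proof.
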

		\begin{proof}
			We only outline the proof.
			Clearly, every $\mathcal{L}_\mathrm{DLE}$-sequent which is $\mathbf{L}_\mathrm{DLE}$-derivable is also $\mathbf{L}_\mathrm{DLE}^*$-derivable. Conversely, if an $\mathcal{L}_\mathrm{DLE}$-sequent $\phi\vdash\psi$ is not $\mathbf{L}_\mathrm{DLE}$-derivable, then by the completeness of $\mathbf{L}_\mathrm{DLE}$ w.r.t.\ the class of $\mathcal{L}_\mathrm{DLE}$-algebras, there exists an $\mathcal{L}_\mathrm{DLE}$-algebra $\bba$ and a variable assignment $v$ under which $\phi^\bba\not\leq \psi^\bba$. Consider the canonical extension $\bba^\delta$ of $\bba$.\footnote{ \label{def:can:ext}
				The \emph{canonical extension} of a BDL (bounded distributive lattice) $D$ is a complete distributive lattice $D^\delta$ containing $D$ as a sublattice, such that:
				\begin{enumerate}
					\item \emph{(denseness)} every element of $D^\delta$ can be expressed both as a join of meets and as a meet of joins of elements from $D$;
					\item \emph{(compactness)} for all $S,T \subseteq D$, if $\bigwedge S \leq \bigvee T$ in $D^\delta$, then $\bigwedge F \leq \bigvee G$ for some finite sets $F \subseteq S$ and $G\subseteq T$.
				\end{enumerate}
				
				It is well known that the canonical extension of a BDL $D$ is unique up to isomorphism fixing $D$ (cf.\ e.g.\ \cite[Section 2.2]{GeNaVe05}), and that the canonical extension of a BDL is a \emph{perfect} BDL, i.e.\ a complete and completely distributive lattice which is completely join-generated by its completely join-irreducible elements and completely meet-generated by its completely meet-irreducible elements  (cf.\ e.g.\ \cite[Definition 2.14]{GeNaVe05})\label{canext bdl is perfect}. The canonical extension of an
$\mathcal{L}_\mathrm{DLE}$-algebra $\bbA = (D, \mathcal{F}^\bbA, \mathcal{G}^\bbA)$ is the perfect  $\mathcal{L}_\mathrm{DLE}$-algebra (cf.\ Footnote \ref{footnote: perfect DLE})
$\bbA^\delta: = (D^\delta, \mathcal{F}^{\bbA^\delta}, \mathcal{G}^{\bbA^\delta})$ such that $f^{\bbA^\delta}$ and $g^{\bbA^\delta}$ are defined as the
$\sigma$-extension of $f^{\bbA}$ and as the $\pi$-extension of $g^{\bbA}$ respectively, for all $f\in \mathcal{F}$ and $g\in \mathcal{G}$ (cf.\ \cite{sofronie2000duality1, sofronie2000duality2}).
			}
			Since $\bba$ is a subalgebra of $\bba^\delta$, the sequent $\phi\vdash\psi$ is not satisfied in $\bba^\delta$ under the variable assignment $\iota \circ v$ ($\iota$ denoting the canonical embedding $\bba\hookrightarrow \bba^\delta$). Moreover, since $\bba^\delta$ is a perfect $\mathcal{L}_\mathrm{DLE}$-algebra, it is naturally endowed with a structure of bi-intuitionistic `tense' $\mathcal{L}_\mathrm{DLE}$-algebra. Thus, by the completeness of $\mathbf{L}_\mathrm{DLE}^*$ w.r.t.\ the class of bi-intuitionistic `tense' $\mathcal{L}_\mathrm{DLE}$-algebras, the sequent $\phi\vdash\psi$ is not derivable in $\mathbf{L}_\mathrm{DLE}^*$, as required.
		\end{proof}
Notice that the algebraic completeness of the logics $\mathbf{L}_\mathrm{DLE}$ and $\mathbf{L}_\mathrm{DLE}^*$ and the canonical embedding of DLEs into their canonical extensions immediately give completeness of $\mathbf{L}_\mathrm{DLE}$ and $\mathbf{L}_\mathrm{DLE}^*$ w.r.t.\ the appropriate class of perfect DLEs.

	\subsection{The algorithm ALBA, informally}\label{subseq:informal}
	The  contribution of the present paper is an application  of unified correspondence theory \cite{CoPa10, CoGhPa13}, of which the algorithm ALBA is one of the main tools. In the present subsection, we will guide the reader through the main principles which make it work, by means of an example. This presentation is based on analogous illustrations in \cite{CFPS} and \cite{CGPSZ14}.
	
	\bigskip
	
	Let us start with one of the best known examples in correspondence theory, namely $\Diamond \Box p \rightarrow \Box \Diamond p$. It is well known that for every Kripke frame $\mathbb{F} = (W, R)$,
	\[
	\mathbb{F} \Vdash \Diamond \Box p \rightarrow \Box \Diamond p \,\,\,\mbox{ iff }\,\,\, \mathbb{F} \models \,\forall xyz\,(Rxy \wedge Rxz \rightarrow \exists u(Ryu \wedge Rzu)).
	\]
	As is discussed at length in \cite{CoPa10, CoGhPa13}, every piece of argument used to prove this correspondence on frames can be translated by duality to complex algebras (cf.\ \cite[Definition 5.21]{BdRV01}). We will show how this is done in the case of the example above.
	
	As is well known, complex algebras are characterized in purely algebraic terms as complete and atomic BAOs where the modal operations are completely join-preserving. These are also known as \emph{perfect} BAOs \cite[Definition 40, Chapter 6]{HBMoL}.
	
	First of all, the condition $\mathbb{F} \Vdash \Diamond \Box p \rightarrow \Box \Diamond p$ translates to the complex algebra $\bbA = \mathbb{F}^{+}$ of $\mathbb{F}$  as $\val{\Diamond\Box p} \subseteq \val{\Box \Diamond p}$ for every assignment of $p$ into $A$, so this validity clause can be rephrased as follows:
	\begin{equation}\label{Church:Rosser}
	\bbA \models \forall p [\Diamond\Box p \leq \Box\Diamond p],
	\end{equation}
	where the order $\leq$ is interpreted as set inclusion in the complex algebra.  In perfect BAOs every element is both the join of the completely join-prime elements (the set of which is denoted $\jty(\bbA)$) below it and the meet of the completely meet-prime elements (the set of which is denoted $\mty(\bbA)$) above it\footnote{In BAOs the completely join-prime elements, the completely join-irreducible elements and the atoms coincide. Moreover, the completely meet-prime elements, the completely meet-irreducible elements and the co-atoms coincide.}. Hence, taking some liberties in our use of notation, the condition above can be equivalently rewritten as follows:
	\begin{equation*}
	\bbA\models \forall p [\bigvee\{i\in \jty(\bbA)\mid i \leq\Box\Diamond p\} \leq \bigwedge\{m\in \mty(\bbA)\mid\Box\Diamond p\leq m\}].
	\end{equation*}
	By elementary properties of least upper bounds and greatest lower bounds in posets (cf.\ \cite{DaPr}), this condition is true if and only if every element in the join is less than or equal to every element in the meet; thus, condition \eqref{Church:Rosser} above  can be rewritten as:
	\begin{equation}\label{Eq:First:Approx}
	\bbA\models \forall p \forall \nomi \forall \cnomm [(\nomi \leq\Diamond \Box p \,\,\,\& \,\,\,\Box \Diamond p\leq \cnomm) \Rightarrow \nomi \leq \cnomm ],
	\end{equation}
	where the variables $\nomi$ and $\cnomm$ range over $\jty(\bbA)$ and $\mty(\bbA)$ respectively (following the literature, we will refer to the former variables as {\em nominals}, and  to the latter ones as {\em co-nominals}). Since $\bbA$ is a perfect BAO, the element of $\bbA$ interpreting $\Box p$ is the join of the completely join-prime elements below it. Hence, if $i \in \jty(\bbA)$ and $i \leq \Diamond \Box p$, because $\Diamond$ is completely join-preserving on $\bbA$, we have that
	\[
	i \leq \Diamond(\bigvee \{j \in \jty(\bbA)\mid j\leq  \Box p \}) = \bigvee\{\Diamond j \mid j\in \jty(\bbA)\mbox{ and } j\leq \Box p \},
	\]
	which implies that $i \leq \Diamond j_0$ for some $j_0 \in \jty(\bbA)$ such that $j_0 \leq \Box p$. Hence, we can equivalently rewrite the validity clause above as follows:
	\begin{equation}\label{Eq:DiaApprox}
	\bbA\models \forall p \forall \nomi \forall \cnomm [(\exists \nomj(\nomi\leq\Diamond \nomj \,\,\,\& \,\,\, \nomj\leq \Box p) \,\,\,\& \,\,\, \Box \Diamond p \leq \cnomm) \Rightarrow \nomi \leq \cnomm ],
	\end{equation}
	and then use standard manipulations from first-order logic to pull out quantifiers:
	\begin{equation}\label{Eq:PullOut}
	\bbA\models \forall p \forall \nomi \forall \cnomm \forall \nomj[(\nomi\leq\Diamond \nomj \,\,\,\& \,\,\, \nomj\leq \Box p \,\,\,\& \,\,\,\Box \Diamond p \leq \cnomm) \Rightarrow \nomi \leq \cnomm ].
	\end{equation}
	Now we observe that the operation $\Box$ preserves arbitrary meets in the perfect BAO $\bbA$. By the general theory of adjunction in complete lattices, this is equivalent to $\Box$ being a right adjoint (cf.\  \cite[Proposition 7.34]{DaPr}). It is also well known that the left or lower adjoint (cf.\ \cite[Definition 7.23]{DaPr}) of $\Box$ is the operation $\Diamondblack$, which can be recognized as the backward-looking diamond $P$, interpreted with the converse $R^{-1}$ of the accessibility relation $R$ of the frame $\mathbb{F}$ in the context of tense logic (cf.\ \cite[Example 1.25]{BdRV01} and \cite[Exercise 7.18]{DaPr} modulo translating the notation). Hence the condition above can be equivalently rewritten as:
	\begin{equation}\label{Eq:BoxAdj}
	\bbA \models \forall p \forall \nomi \forall \cnomm \forall \nomj[(\nomi\leq\Diamond \nomj \,\,\,\& \,\,\, \Diamondblack \nomj \leq p \,\,\,\& \,\,\,\Box \Diamond p \leq \cnomm) \Rightarrow \nomi \leq \cnomm ],
	\end{equation}
	and then as follows:
	\begin{equation}
	\label{Before:Ack:Eq}
	\bbA \models \forall \nomi \forall \cnomm \forall \nomj[(\nomi\leq\Diamond \nomj \,\,\,\& \,\,\, \exists p (\Diamondblack \nomj \leq p \,\,\,\& \,\,\,\Box \Diamond p \leq \cnomm)) \Rightarrow \nomi \leq \cnomm ].
	\end{equation}
	At this point we are in a position to eliminate the variable $p$ and equivalently rewrite the previous condition as follows:
	\begin{equation}
	\label{After:Ack:Eq}
	\bbA\models \forall \nomi \forall \cnomm \forall \nomj[(\nomi\leq\Diamond\nomj \,\,\,\& \,\,\, \Box \Diamond \Diamondblack \nomj \leq \cnomm) \Rightarrow \nomi \leq \cnomm ].
	\end{equation}
	Let us justify this equivalence: for the direction from top to bottom, fix an interpretation $V$ of the variables $\nomi, \nomj$, and $\cnomm$ such that $\nomi\leq\Diamond \nomj$ and $\Box \Diamond \Diamondblack \nomj \leq \cnomm$. To prove that $\nomi\leq \cnomm$ holds under  $V$,  consider the variant $V^\ast$ of $V$ such that $V^\ast(p) = \Diamondblack \nomj$. Then it can be easily verified that $V^{\ast}$ witnesses the antecedent of (\ref{Before:Ack:Eq}) under $V$; hence $\nomi\leq \cnomm$ holds under $V$. Conversely,  fix an interpretation $V$ of the variables $\nomi$, $\nomj$ and $\cnomm$ such that
	$\nomi\leq\Diamond \nomj \,\,\,\& \,\,\, \exists p (\Diamondblack \nomj \leq p \,\,\,\& \,\,\,\Box \Diamond p \leq \cnomm)$. Then, by monotonicity,  the antecedent of (\ref{After:Ack:Eq}) holds under $V$, and hence so does $\nomi\leq \cnomm$, as required. This is an instance of the following result, known as {\em Ackermann's lemma} (\cite{Ack35}, see also \cite{Conradie:et:al:SQEMAI}):
	\begin{lemma}\label{Right:Ackermann}
		Fix an arbitrary propositional language $L$. Let $\alpha, \beta(p), \gamma(p)$ be $L$-formulas such that $\alpha$ is $p$-free, $\beta$ is positive and $\gamma$ is negative in $p$. For any assignment $V$ on an $L$-algebra $\mathbb A$, the following are equivalent:
		\begin{enumerate}
			\item  $\bbA, V \models \beta(\alpha/p) \leq \gamma(\alpha/p)$ ;
			\item  there exists a $p$-variant $V^\ast$ of $V$ such that $\bbA, V^\ast \models \alpha \leq p$ and $\ \bbA, V^\ast \models \beta(p)\leq\gamma(p)$,
		\end{enumerate}
		where $\beta(\alpha/p)$ and $\gamma(\alpha/p)$ denote the result of uniformly substituting $\alpha$ for $p$ in $\beta$ and $\gamma$, respectively.
	\end{lemma}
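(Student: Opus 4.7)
The plan is to establish the two directions of the equivalence separately, relying only on two elementary ingredients: (i) substitution into formulas commutes with interpretation under a suitable choice of assignment, and (ii) the semantic consequence of being positive/negative in a variable is monotonicity/antitonicity in the corresponding coordinate.

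For the direction (1) $\Rightarrow$ (2), I would simply exhibit the witness $V^\ast$. Given $V$ such that $\bbA, V \models \beta(\alpha/p) \leq \gamma(\alpha/p)$, define $V^\ast$ to be the $p$-variant of $V$ with $V^\ast(p) := \alpha^V$. Because $\alpha$ is $p$-free, the interpretation of $\alpha$ is unaffected by a reassignment of $p$, so $\alpha^{V^\ast} = \alpha^V = V^\ast(p)$, giving $\bbA, V^\ast \models \alpha \leq p$ with equality. A straightforward induction on the complexity of $\beta$ and $\gamma$ (the substitution lemma) yields $\beta(p)^{V^\ast} = \beta(\alpha/p)^V$ and $\gamma(p)^{V^\ast} = \gamma(\alpha/p)^V$, since the only variable changed is $p$ and its new value equals $\alpha^V$. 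Hence $\beta(p)^{V^\ast} \leq \gamma(p)^{V^\ast}$ follows directly from the hypothesis.

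For the direction (2) $\Rightarrow$ (1), let $V^\ast$ be the given $p$-variant of $V$ with $\alpha^{V^\ast} \leq V^\ast(p)$ and $\beta(p)^{V^\ast} \leq \gamma(p)^{V^\ast}$. I would then invoke the monotonicity property encoded by positivity: since $\beta$ is positive in $p$, the term function $x \mapsto \beta(x/p)^{V^\ast}$ (interpreted as a map from $\bbA$ to $\bbA$, with the other variables fixed by $V^\ast$) is order-preserving, so from $\alpha^{V^\ast} \leq V^\ast(p)$ one obtains $\beta(\alpha/p)^{V^\ast} \leq \beta(p)^{V^\ast}$. Dually, negativity of $\gamma$ in $p$ gives the opposite inequality $\gamma(p)^{V^\ast} \leq \gamma(\alpha/p)^{V^\ast}$. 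Chaining these with the hypothesis yields $\beta(\alpha/p)^{V^\ast} \leq \gamma(\alpha/p)^{V^\ast}$. To conclude, I would remark that neither $\beta(\alpha/p)$ nor $\gamma(\alpha/p)$ contains $p$ (because $\alpha$ is $p$-free and the substitution removes all other occurrences), so their interpretations are insensitive to whether we use $V$ or its $p$-variant $V^\ast$; thus $\bbA, V \models \beta(\alpha/p) \leq \gamma(\alpha/p)$.

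The only subtle point, and the one deserving the most care, is the formal statement and induction proving the monotonicity/antitonicity claim used in the second direction. In the classical modal-logic setting this is transparent, but in the DLE-setting ``positive'' and ``negative'' must be understood in the sense compatible with the order-types $\varepsilon_f, \varepsilon_g$ of the connectives in $\mathcal{F} \cup \mathcal{G}$: one performs induction on the generation tree of $\beta$ (resp.\ $\gamma$), tracking the polarity of each occurrence of $p$ via the order-types, and exploiting the monotonicity of each connective in each coordinate according to its order-type. This is a routine verification that fits within the general framework of Section \ref{subset:language:algsemantics}, so no additional machinery is required.
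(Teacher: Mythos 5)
Your proof is correct and follows essentially the same route as the paper: the paper only cites \cite[Lemma 4.2]{CoPa10} for this lemma, but the argument it sketches for the instance in Section \ref{subseq:informal} (taking $V^\ast(p)$ to be the interpretation of $\alpha$ for one direction, and invoking monotonicity/antitonicity of $\beta$ and $\gamma$ in $p$ for the other) is exactly the two-step argument you give. No gaps; the only point you flag — the polarity-to-monotonicity induction in the DLE signature — is indeed the routine verification the paper also takes for granted.
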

	The proof is essentially the same as \cite[Lemma 4.2]{CoPa10}. Whenever, in a reduction, we reach a shape in which the lemma above (or its order-dual) can be applied, we say that the condition is in {\em Ackermann shape}\label{Ackermann:Shape}.
	
	Taking stock, we note that we have equivalently transformed (\ref{Church:Rosser}) into (\ref{After:Ack:Eq}), which is a condition in which all propositional variables (corresponding to monadic second-order variables) have been eliminated, and all remaining variables range over completely join- and meet-irreducible elements of the complex algebra $\bbA$. Via discrete Stone duality, these elements respectively correspond to singletons and complements of singletons of the Kripke frame from which $\bbA$ arises. Moreover, $\Diamondblack$ is interpreted on Kripke frames using the converse of the same accessibility relation used to interpret $\Box$. Hence, clause (\ref{After:Ack:Eq}) translates equivalently into a condition in the first-order correspondence language of $\mathbb{F}$.

	 To facilitate this translation, we first rewrite (\ref{After:Ack:Eq}) as follows, by reversing the reasoning that brought us from \eqref{Church:Rosser} to \eqref{Eq:First:Approx}:
	\begin{equation}
	\bbA\models \forall \nomj[\Diamond \nomj \leq \Box \Diamond \Diamondblack \nomj].
	\end{equation}
	
	By again applying the fact that $\Box$ is a right adjoint we obtain
	
	\begin{equation}\label{eq:pure: church rosser}
	\bbA\models \forall \nomj[\Diamondblack \Diamond \nomj \leq \Diamond \Diamondblack \nomj].
	\end{equation}
	
	Recalling that $\bbA$ is the complex algebra of $\mathbb{F} = (W,R)$, we can interpret the variable $\nomj$ as an individual variable ranging in the universe $W$ of $\mathbb{F}$, and the operations $\Diamond$ and $\Diamondblack$ as the set-theoretic operations defined on $\mathcal{P}(W)$ by the assignments $X\mapsto R^{-1}[X]$ and $X\mapsto R[X]$ respectively. Hence, clause \eqref{eq:pure: church rosser} above can be equivalently rewritten on the side of the frames  as
	\begin{equation}
	\mathbb{F}\models \forall w( R[R^{-1}[ w ]] \subseteq  R^{-1}[R[ w ]]).
	\end{equation}
	Notice that  $R[R^{-1}[ w ]]$ is the set of all states $x \in W$ which have a predecessor $z$ in common with $w$, while  $R^{-1}[R[ w ]]$ is the set of all states $x \in W$ which have a successor in common with $w$. This can be spelled out as
	\[
	\forall x \forall w ( \exists z (Rzx \wedge Rzw) \rightarrow \exists y (Rxy \wedge Rwy))
	\]
	or, equivalently,
	\[
	\forall z \forall x \forall w ( (Rzx \wedge Rzw) \rightarrow \exists y (Rxy \wedge Rwy))
	\]
	which is the familiar Church-Rosser condition.
	
	
	Finally, the example above   illustrates another important feature of the ALBA-based approach to the computation of first-order correspondents. Namely, ALBA-computations are neatly divided into two stages: the {\em reduction} stage, carried out  from \eqref{Church:Rosser} into \eqref{After:Ack:Eq} in the example above; and the {\em translation} stage, in which the  expressions (equalities and quasi-inequalities) obtained by eliminating all proposition variables from an input inequality are suitably translated into  frame-correspondent language. Only the reduction stage will be relevant to the remainder of the present paper.
	
		\subsection{The algorithm ALBA for $\mathcal{L}_\mathrm{DLE}$-inequalities}
		\label{ssec: ALBA}
		The present subsection reports on the rules and execution of the algorithm ALBA in the setting of $\mathcal{L}_\mathrm{DLE}$.
		ALBA manipulates inequalities and quasi-inequalities\footnote{A {\em quasi-inequality} of $\mathcal{L}_{\mathrm{DLE}}$ is an expression of the form $\bigamp_{i = 1}^n s_i\leq t_i \Rightarrow s\leq t$, where $s_i\leq t_i$ and $s\leq t$ are $\mathcal{L}_{\mathrm{DLE}}$-inequalities for each $i$.} in the expanded language $\mathcal{L}_\mathrm{DLE}^{*+}$, which is built up on the base of the lattice constants $\top, \bot$ and an enlarged set of propositional variables $\mathsf{NOM}\cup \mathsf{CONOM}\cup \mathsf{AtProp}$ (the variables $\mathbf{i}, \mathbf{j}$ in $\mathsf{NOM}$ are referred to as {\em nominals}, and the variables $\mathbf{m}, \mathbf{n}$ in $\mathsf{CONOM}$ as {\em conominals}), closing under the logical connectives of $\mathcal{L}_\mathrm{DLE}^*$. The natural semantic environment of $\mathcal{L}_\mathrm{DLE}^{*+}$ is given by {\em perfect} $\mathcal{L}_\mathrm{DLE}$-{\em algebras}. As already mentioned in the proof of Theorem \ref{th:conservative extension}, these algebras are endowed with a natural structure of bi-intuitionistic `tense' $\mathcal{L}_\mathrm{DLE}$-algebra. Moreover, crucially, perfect $\mathcal{L}_\mathrm{DLE}$-algebras are both completely join-generated by their completely join-irreducible elements and completely meet-generated by their completely meet-irreducible elements.\footnote{\label{footnote: perfect DLE} A distributive lattice is {\em perfect} if it is complete, completely distributive and completely join-generated by the collection of its completely join-prime elements. Equivalently, a distributive lattice is perfect iff it is isomorphic to the lattice of upsets of some poset. A normal DLE is {\em perfect} if $D$ is a perfect distributive lattice, and each $f$-operation (resp.\ $g$-operation) is completely join-preserving (resp.\ meet-preserving) or completely meet-reversing (resp.\ join-reversing) in each coordinate.} This property plays an important part in the algebraic account of the correspondence mechanism (cf.\ discussion in \cite[Section 1.4]{CoGhPa13}). 
		Nominals and conominals respectively range over the sets of the completely join-irreducible elements and the completely meet-irreducible elements of perfect DLEs.

		The version of ALBA relative to $\mathcal{L}_\mathrm{DLE}$ runs as detailed in \cite{CoPa10}. In a nutshell, $\mathcal{L}_\mathrm{DLE}$-inequalities are equivalently transformed into the conjunction of one or more $\mathcal{L}_\mathrm{DLE}^{*+}$ quasi-inequalities, with the aim of eliminating propositional variable occurrences via the application of Ackermann rules. We refer the reader to \cite{CoPa10} for details. In what follows, we illustrate how ALBA works, while at the same time we introduce its rules. The proof of the soundness and invertibility of the general rules for the DLE-setting is similar to the one provided in \cite{CoPa10, CoGhPa13}. ALBA manipulates input inequalities $\phi\leq\psi$ and proceeds in three stages:
		
		\textbf{First stage: preprocessing and first approximation.} 
		ALBA preprocesses the input inequality $\phi\leq \psi$ by performing the following steps
		exhaustively in the signed generation trees $+\phi$ and $-\psi$:
		\begin{enumerate}
			\item
			\begin{enumerate}
				\item Push down, towards variables, occurrences of $+\land$, by distributing each of them over their children nodes labelled with $+\lor$ which are not in the scope of PIA nodes;
				\item Push down, towards variables, occurrences of $-\lor$, by distributing each of them over their children nodes labelled with $-\land$ which are not in the scope of PIA nodes;
				\item Push down, towards variables, occurrences of $+f$ for any $f\in \mathcal{F}$, by distributing each such occurrence over its $i$th child node whenever the child node is labelled with $+\lor$ (resp.\ $-\land$) and is not in the scope of PIA nodes, and whenever $\epsilon_f(i)=1$ (resp.\ $\epsilon_f(i)=\partial$);				
				\item Push down, towards variables, occurrences of $-g$ for any $g\in \mathcal{G}$, by distributing each such occurrence over its $i$th child node whenever the child node is labelled with $-\land$ (resp.\ $+\lor$) and is not in the scope of PIA nodes, and whenever $\epsilon_g(i)=1$ (resp.\ $\epsilon_g(i)=\partial$).
				
			\end{enumerate}
			\item Apply the splitting rules:
			$$\infer{\alpha\leq\beta\ \ \ \alpha\leq\gamma}{\alpha\leq\beta\land\gamma}
			\qquad
			\infer{\alpha\leq\gamma\ \ \ \beta\leq\gamma}{\alpha\lor\beta\leq\gamma}
			$$
			\item Apply the monotone and antitone variable-elimination rules:
			$$\infer{\alpha(\perp)\leq\beta(\perp)}{\alpha(p)\leq\beta(p)}
			\qquad
			\infer{\beta(\top)\leq\alpha(\top)}{\beta(p)\leq\alpha(p)}
			$$
			for $\beta(p)$ positive in $p$ and $\alpha(p)$ negative in $p$.
		\end{enumerate}

\begin{remark}
\label{rmk: management of top and bottom on the wrong side}
 The standard ALBA preprocessing can be supplemented with the application of additional rules which replace SLR-nodes (resp.\ SRR-nodes) of the form $\circledast(\gamma_1,\ldots,\bot^{\epsilon_\circledast(i)}, \ldots, \gamma_m)$ (resp.\ $\circledast(\gamma_1,\ldots,\top^{\epsilon_\circledast(i)}, \ldots, \gamma_m)$) with $\bot$ (resp.\ $\top$). 
Although clearly sound, these rules have not been included in other ALBA settings such as \cite{CoPa10, CFPS}, since they are not strictly needed for the computation of first-order correspondents. However, in the present setting, ALBA is used for a different purpose than the one it was originally designed for. Allowing these rules to be applied during the preprocessing will address the problem of the occurrences of constants in the `wrong' position,\footnote{As we will see, in the context of analytic inductive inequalities, occurrences of $+\bot$ or $-\top$ as skeleton nodes and occurrences of $-\bot$ or $+\top$ as PIA-nodes are problematic. Indeed, in the context of the procedure which transforms inequalities into equivalent structural rules (cf.\ Sections \ref{sec:extended classes} and \ref{ssec:type5}), these logical constants would occur within certain sequents in positions (antecedent or succedent) in which they are not the interpretation of the corresponding structural constant. This fact would block the smooth transformation of logical axioms containing them into structural rules.} since it allows to transform e.g.\ a problematic premise into a tautology and make it hence disappear. These ideas will be expanded on in Sections \ref{ssec:type3} and \ref{ssec:type5}.

Another step of the preprocessing which, although sound, is not included in standard executions of ALBA concerns the exhaustive application of the following distribution rules:
 \begin{enumerate}
				\item[(a')] Push down, towards variables, occurrences of $-\land$ in the scope of PIA-nodes which are not Skeleton-nodes, by distributing each of them over their children nodes labelled with $-\lor$; 
				\item[(b')] Push down, towards variables, occurrences of $+\lor$ in the scope of PIA-nodes which are not Skeleton-nodes, by distributing each of them over their children nodes labelled with $+\land$; 
				\item[(c')] Push down, towards variables, occurrences of $-f$ for any $f\in \mathcal{F}$, by distributing each such occurrence over its $i$th child node whenever the child node is labelled with $-\lor$ (resp.\ $+\land$), and whenever $\epsilon_f(i)=1$ (resp.\ $\epsilon_f(i)=\partial$);
				\item[(d')] Push down, towards variables, occurrences of $+g$ for any $g\in \mathcal{G}$, by distributing each such occurrence over its $i$th child node whenever the child node is labelled with $+\land$ (resp.\ $-\lor$), and whenever $\epsilon_g(i)=1$ (resp.\ $\epsilon_g(i)=\partial$).				 
			\end{enumerate} Applied to PIA-terms, this additional step has the effect of surfacing all occurrences of $+\wedge$ and $-\vee$ up to the root of each PIA-term (so as to form a connected block of nodes including the root which are all labelled $+\wedge$ or all labelled $-\vee$). In this position, these occurrences can be all regarded as Skeleton nodes. Hence, after this step, no occurrences of $+\wedge$ and $-\vee$ will remain in the PIA subterms.\footnote{\label{footnote: def definite PIA} PIA subterms $\ast s$ in which no nodes $+\wedge$ and $-\vee$ occur are referred to as {\em definite}.} Notice that applying this step to $(\Omega, \epsilon)$-inductive terms produces $(\Omega, \epsilon)$-inductive terms each PIA-subterm of which contains at most one $\varepsilon$-critical variable occurrence.
\end{remark}
		
		Let $\mathsf{Preprocess}(\phi\leq\psi)$ be the finite set $\{\phi_i\leq\psi_i\mid 1\leq i\leq n\}$ of inequalities obtained after the exhaustive application of the previous rules. We proceed separately on each of them, and hence, in what follows, we focus only on one element $\phi_i\leq\psi_i$ in $\mathsf{Preprocess}(\phi\leq\psi)$, and we drop the subscript. Next, the following {\em first approximation rule} is applied {\em only once} to every inequality in $\mathsf{Preprocess}(\phi\leq\psi)$:
		$$\infer{\nomi_0\leq\phi\ \ \ \psi\leq \cnomm_0}{\phi\leq\psi}
		$$
		Here, $\nomi_0$ and $\cnomm_0$ are a nominal and a conominal respectively. The first-approximation
		step gives rise to systems of inequalities $\{\nomi_0\leq\phi_i, \psi_i\leq \cnomm_0\}$ for each inequality in $\mathsf{Preprocess}(\phi\leq\psi)$. Each such system is called an {\em initial
			system}, and is now passed on to the reduction-elimination cycle.
		
		\textbf{Second stage: reduction-elimination cycle.} The goal of the reduction-elimination cycle is to eliminate all propositional variables from the systems
		received from the preprocessing phase. The elimination of each variable is effected by an
		application of one of the Ackermann rules given below. In order to apply an Ackermann rule, the
		system must have a specific shape. The adjunction, residuation, approximation, and splitting rules are used to transform systems into this shape. The rules of the reduction-elimination cycle, viz.\ the adjunction, residuation, approximation, splitting, and Ackermann rules, will be collectively called the {\em reduction} rules.
		
		\textbf{Residuation rules.} Here below we provide the residuation rules relative to each $f\in \mathcal{F}$ and $g\in \mathcal{G}$ of arity at least $1$: for each $1\leq h\leq n_f$ and each $1\leq k\leq n_g$:
		\begin{center}
			\begin{tabular}{cc}
				\AxiomC{$f(\psi_1,\ldots,\psi_h, \ldots, \psi_{n_f})\leq \chi$}
				\LeftLabel{($\varepsilon_f(h) = 1$)}
				\UnaryInfC{$\psi_h\leq f_h^{\sharp}(\psi_1,\ldots,\chi, \ldots, \psi_{n_f})$}
				\DisplayProof
				&
				\AxiomC{$f(\psi_1,\ldots,\psi_h, \ldots, \psi_{n_f})\leq \chi$}
				\RightLabel{($\varepsilon_f(h) = \partial$)}
				\UnaryInfC{$f_h^{\sharp}(\psi_1,\ldots,\chi, \ldots, \psi_{n_f})\leq \psi_h$}
				\DisplayProof
				\\
			\end{tabular}
		\end{center}
		
		\begin{center}
			\begin{tabular}{cc}
				\AxiomC{$\chi \leq g(\psi_1,\ldots,\psi_k, \ldots, \psi_{n_g})$}
				\LeftLabel{($\varepsilon_g(k) = \partial$)}
				\UnaryInfC{$\psi_k \leq g_k^{\flat}(\psi_1,\ldots,\chi, \ldots, \psi_{n_g})$}
				\DisplayProof
				&
				\AxiomC{$\chi \leq g(\psi_1,\ldots,\psi_k, \ldots, \psi_{n_g})$}
				\RightLabel{($\varepsilon_g(k) = 1$)}
				\UnaryInfC{$g_k^{\flat}(\psi_1,\ldots,\chi, \ldots, \psi_{n_g})\leq \psi_k$}
				\DisplayProof
				\\
			\end{tabular}
		\end{center}
		
		
		\textbf{Approximation rules.} Here below we provide the approximation rules\footnote{The version of the approximation rules given in \cite{CoPa10,PaSoZh15r,CGPSZ14} is slightly different from but equivalent to that of the approximation rules reported on here. That formulation is motivated by the need of enforcing the invariance of certain topological properties for the purpose of proving the canonicity of the inequalities on which ALBA succeeds. In this context, we do not need to take these constraints into account, and hence we can take this more flexible version of the approximation rules as primitive, bearing in mind that when proving canonicity one has to take a formulation analogous to that in in \cite{CoPa10,PaSoZh15r,CGPSZ14} as primitive.} relative to each $f\in \mathcal{F}$ and $g\in \mathcal{G}$ of arity at least $1$: for each $1\leq h\leq n_f$ and each $1\leq k\leq n_g$,
		\begin{center}
			\begin{tabular}{cc}
				\AxiomC{$\nomi\leq f(\psi_1,\ldots,\psi_h, \ldots, \psi_{n_f})$}
				\LeftLabel{$(\varepsilon_f(h) = 1)$}
				\UnaryInfC{$\nomi\leq f(\psi_1,\ldots,\nomj, \ldots, \psi_{n_f})\quad\nomj\leq \psi_h$}
				\DisplayProof
				&
				\AxiomC{$g(\psi_1,\ldots,\psi_k, \ldots, \psi_{n_g})\leq \cnomm$}
				\RightLabel{$(\varepsilon_g(k) = 1)$}
				\UnaryInfC{$g(\psi_1,\ldots,\cnomn, \ldots, \psi_{n_g})\leq \cnomm\quad\psi_k\leq\cnomn$}
				\DisplayProof
				\\
				& \\
				\AxiomC{$\nomi\leq f(\psi_1,\ldots,\psi_h, \ldots, \psi_{n_f})$}
				\LeftLabel{$(\varepsilon_f(h) = \partial)$}
				\UnaryInfC{$\nomi\leq f(\psi_1,\ldots,\cnomn, \ldots, \psi_{n_f})\quad \psi_k\leq\cnomn$}
				\DisplayProof
				&
				\AxiomC{$g(\psi_1,\ldots,\psi_k, \ldots, \psi_{n_g})\leq \cnomm$}
				\RightLabel{$(\varepsilon_g(k) = \partial)$}
				\UnaryInfC{$g(\psi_1,\ldots,\nomj, \ldots, \psi_{n_g})\leq \cnomm\quad \nomj\leq \psi_h$}
				\DisplayProof
			\end{tabular}
		\end{center}
		where the variable $\nomj$ (resp.\ $\cnomn$) is a nominal (resp.\ a conominal). 
		The nominals and conominals introduced by the approximation rules must be {\em fresh}, i.e.\ must not already occur in the system before applying the rule.
		
		
		\textbf{Ackermann rules.} These rules are the core of ALBA, since their application eliminates proposition variables. As mentioned earlier, all the preceding steps are aimed at equivalently rewriting the input system into one of a shape in which the Ackermann rules can be applied. An important feature of Ackermann rules is that they are executed on the whole set of inequalities in which a given variable occurs, and not on a single inequality.
		\begin{center}
			\AxiomC{$\bigamp \{ \alpha_i \leq p \mid 1 \leq i \leq n \} \amp \bigamp \{ \beta_j(p)\leq \gamma_j(p) \mid 1 \leq j \leq m \} \; \Rightarrow \; \nomi \leq \cnomm$}
			\RightLabel{$(RAR)$}
			\UnaryInfC{$\bigamp \{ \beta_j(\bigvee_{i=1}^n \alpha_i)\leq \gamma_j(\bigvee_{i=1}^n \alpha_i) \mid 1 \leq j \leq m \} \; \Rightarrow \; \nomi \leq \cnomm$}
			\DisplayProof
		\end{center}
		where $p$ does not occur in $\alpha_1, \ldots, \alpha_n$, $\beta_{1}(p), \ldots, \beta_{m}(p)$ are positive in $p$, and $\gamma_{1}(p), \ldots, \gamma_{m}(p)$ are negative in $p$.
		
		\begin{center}
			\AxiomC{$\bigamp \{ p \leq \alpha_i \mid 1 \leq i \leq n \} \amp \bigamp \{ \beta_j(p)\leq \gamma_j(p) \mid 1 \leq j \leq m \} \; \Rightarrow \; \nomi \leq \cnomm$}
			\RightLabel{$(LAR)$}
			\UnaryInfC{$\bigamp \{ \beta_j(\bigwedge_{i=1}^n \alpha_i)\leq \gamma_j(\bigwedge_{i=1}^n \alpha_i) \mid 1 \leq j \leq m \} \; \Rightarrow \; \nomi \leq \cnomm$}
			\DisplayProof
		\end{center}
		where $p$ does not occur in $\alpha_1, \ldots, \alpha_n$, $\beta_{1}(p), \ldots, \beta_{m}(p)$ are negative in $p$, and $\gamma_{1}(p), \ldots, \gamma_{m}(p)$ are positive in $p$.
		
		\textbf{Third stage: output.}
		If there was some system in the second stage from which not all occurring propositional variables could be eliminated through the application of the reduction rules, then ALBA reports failure and terminates. Else, each system $\{\nomi_0\leq\phi_i, \psi_i\leq \cnomm_0\}$ obtained from $\mathsf{Preprocess}(\varphi\leq \psi)$ has been reduced to a system, denoted $\mathsf{Reduce}(\varphi_i\leq \psi_i)$, containing no propositional variables. Let ALBA$(\varphi\leq \psi)$ be the set of quasi-inequalities \begin{center}{\Large{\&}}$[\mathsf{Reduce}(\varphi_i\leq \psi_i) ]\Rightarrow \nomi_0 \leq \cnomm_0$\end{center} for each $\varphi_i \leq \psi_i \in \mathsf{Preprocess}(\varphi\leq \psi)$.
		
		Notice that all members of ALBA$(\varphi\leq \psi)$ are free of propositional variables. ALBA returns ALBA$(\varphi\leq \psi)$ and terminates. An inequality $\varphi\leq \psi$ on which ALBA succeeds will be called an ALBA-{\em inequality}.

		The proof of the following theorem is a straightforward generalization of \cite[Theorem 8.1]{CoPa10}, and hence its proof is omitted.
		\begin{thm}[Correctness]\label{albacorrect}
			If ALBA succeeds on a $\mathcal{L}_{\mathrm{DLE}}$-inequality $\varphi\leq\psi$, then for every perfect    $\mathcal{L}_{\mathrm{DLE}}$-algebra $\bbA$, $$\bbA\models\varphi\leq\psi\quad\mbox{iff}\quad\bbA\models\mathrm{ALBA}(\varphi\leq\psi).$$
		\end{thm}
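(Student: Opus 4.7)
The plan is to prove the theorem by induction on the length of a successful ALBA run on $\varphi\leq\psi$, showing that each rule is both sound and invertible when interpreted over perfect $\mathcal{L}_{\mathrm{DLE}}$-algebras. Since $\mathrm{ALBA}(\varphi\leq\psi)$ is obtained as the conjunction, over $\varphi_i\leq\psi_i\in\mathsf{Preprocess}(\varphi\leq\psi)$, of the quasi-inequalities $\mathop{\text{\Large \&}}[\mathsf{Reduce}(\varphi_i\leq\psi_i)]\Rightarrow\nomi_0\leq\cnomm_0$ obtained after first approximation and exhaustive reduction, it suffices to verify that every single ALBA-rule preserves validity on an arbitrary perfect $\bbA$ in both directions.

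First I would dispose of the preprocessing step. The distributivity rules that push $+\wedge$, $-\vee$, $+f$ and $-g$ towards the variables are sound and invertible because every perfect $\mathcal{L}_{\mathrm{DLE}}$-algebra is completely distributive and, by normality (cf.\ Definition \ref{def:DLE}), each $f^\bbA$ (resp.\ $g^\bbA$) preserves or reverses arbitrary joins (resp.\ meets) in each coordinate according to $\varepsilon_f$ (resp.\ $\varepsilon_g$); the additional rewrites of Remark \ref{rmk: management of top and bottom on the wrong side} replacing, say, $f(\ldots,\bot^{\varepsilon_f(i)},\ldots)$ with $\bot$ are justified by the same fact. Splitting rules and the monotone/antitone variable-elimination rules are immediate from $\bot,\top$ being the least/greatest elements of $\bbA$ together with the monotonicity of the positive/negative contexts. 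The first-approximation rule is validated by the fact that in a perfect DLE the order is determined by completely join-irreducible and meet-irreducible elements (over which nominals and conominals range), so $\varphi^\bbA\leq\psi^\bbA$ under every assignment holds iff for every nominal $\nomi_0$ and conominal $\cnomm_0$, $\nomi_0\leq\varphi^\bbA$ and $\psi^\bbA\leq\cnomm_0$ jointly imply $\nomi_0\leq\cnomm_0$. The residuation rules are the algebraic content of the adjunction clauses used to interpret $f_i^\sharp$, $g_i^\flat$, $\rightarrow$, $\leftarrow$, $\pdra$ and $\pdla$ in Section \ref{ssec:expanded language}, which hold verbatim in the bi-intuitionistic `tense' structure that every perfect DLE carries. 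The approximation rules are sound and invertible because in each case the relevant $f$ or $g$ coordinate preserves or reverses the appropriate (arbitrary) joins or meets, so, for instance, $\nomi\leq f(\ldots,\psi_h,\ldots)$ with $\varepsilon_f(h)=1$ can be rewritten as the existence of a fresh nominal $\nomj\leq\psi_h$ with $\nomi\leq f(\ldots,\nomj,\ldots)$ by expanding $\psi_h$ as the join of the join-irreducibles below it; the other three cases are dual. The two Ackermann rules are direct algebraic reformulations of Lemma \ref{Right:Ackermann} and its order dual.

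The main obstacle, and the only place where bookkeeping is non-trivial, is the verification that the side conditions of the Ackermann rules are actually in place at the moment of application: one must check that, whenever the algorithm pivots on a variable $p$, the inequalities of the current system containing $p$ split into those of shape $\alpha_i\leq p$ (or $p\leq\alpha_i$) with $\alpha_i$ being $p$-free, and those of shape $\beta_j(p)\leq\gamma_j(p)$ with $\beta_j,\gamma_j$ of the required polarity in $p$. This is ensured through a careful propagation of polarity information across preprocessing, residuation and approximation, exactly as in \cite[Theorem 8.1]{CoPa10}; the arguments generalise verbatim from the DML-signature treated there to the abstract DLE-signature considered here, since at every step we only use the order-theoretic properties of $\mathcal{F}$-connectives and $\mathcal{G}$-connectives in perfect DLEs that have been established above. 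Combining the local equivalences through the three stages of Section \ref{ssec: ALBA} yields $\bbA\models\varphi\leq\psi$ iff $\bbA\models\mathrm{ALBA}(\varphi\leq\psi)$, as required.
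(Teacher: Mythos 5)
Your proposal is correct and follows essentially the route the paper itself indicates: Theorem \ref{albacorrect} is stated with its proof omitted as a straightforward generalization of \cite[Theorem 8.1]{CoPa10}, whose argument is exactly your rule-by-rule verification that preprocessing, first approximation, residuation, approximation (with fresh nominals/conominals) and the Ackermann rules are sound and invertible over perfect $\mathcal{L}_{\mathrm{DLE}}$-algebras, using complete distributivity, normality, the adjunction structure of $\mathcal{L}^*_{\mathrm{DLE}}$, and Lemma \ref{Right:Ackermann}. The only remark is that what you call the main obstacle---checking that Ackermann shape is actually reached---is not part of correctness at all (the theorem is conditional on ALBA succeeding, and the Ackermann rules' side conditions are part of their applicability), but belongs to the success theorem, Theorem \ref{Thm:ALBA:Success:Inductive}; including it is harmless but unnecessary here.
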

		
				\subsection{Inductive inequalities}
				
				In the present subsection, we will report on the definition of {\em inductive} $\mathcal{L}_\mathrm{DLE}$-inequalities
				on which the algorithm ALBA is guaranteed to succeed 
				(cf.\ \cite{CoGhPa13,CoPa10}).
				
				\begin{definition}[\textbf{Signed Generation Tree}]
					\label{def: signed gen tree}
					The \emph{positive} (resp.\ \emph{negative}) {\em generation tree} of any $\mathcal{L}_\mathrm{DLE}$-term $s$ is defined by labelling the root node of the generation tree of $s$ with the sign $+$ (resp.\ $-$), and then propagating the labelling on each remaining node as follows:
					\begin{itemize}
						\item For any node labelled with $ \lor$ or $\land$, assign the same sign to its children nodes.
						\item For any node labelled with $h\in \mathcal{F}\cup \mathcal{G}$ of arity $n_h\geq 1$, and for any $1\leq i\leq n_h$, assign the same (resp.\ the opposite) sign to its $i$th child node if $\varepsilon_h(i) = 1$ (resp.\ if $\varepsilon_h(i) = \partial$).
					\end{itemize}
					Nodes in signed generation trees are \emph{positive} (resp.\ \emph{negative}) if are signed $+$ (resp.\ $-$).\footnote{\label{footnote: why not adopt terminology of CiGaTe} The terminology used in \cite{ciabattoni2008axioms} regarding `positive' and `negative connectives' has not been adopted in the present paper to avoid confusion with positive and negative nodes in signed generation trees.}
				\end{definition}
				
				Signed generation trees will be mostly used in the context of term inequalities $s\leq t$. In this context we will typically consider the positive generation tree $+s$ for the left-hand side and the negative one $-t$ for the right-hand side. We will also say that a term-inequality $s\leq t$ is \emph{uniform} in a given variable $p$ if all occurrences of $p$ in both $+s$ and $-t$ have the same sign, and that $s\leq t$ is $\epsilon$-\emph{uniform} in a (sub)array $\vec{p}$ of its variables if $s\leq t$ is uniform in $p$, occurring with the sign indicated by $\epsilon$, for every $p$ in $\vec{p}$\footnote{\label{footnote:uniformterms}The following observation will be used at various points in the remainder of the present paper: if a term inequality $s(\vec{p},\vec{q})\leq t(\vec{p},\vec{q})$ is $\epsilon$-uniform in $\vec{p}$ (cf.\ discussion after Definition \ref{def: signed gen tree}), then the validity of $s\leq t$ is equivalent to the validity of $s(\overrightarrow{\top^{\epsilon(i)}},\vec{q})\leq t(\overrightarrow{\top^{\epsilon(i)}},\vec{q})$, where $\top^{\epsilon(i)}=\top$ if $\epsilon(i)=1$ and $\top^{\epsilon(i)}=\bot$ if $\epsilon(i)=\partial$. }.
				
				For any term $s(p_1,\ldots p_n)$, any order type $\epsilon$ over $n$, and any $1 \leq i \leq n$, an \emph{$\epsilon$-critical node} in a signed generation tree of $s$ is a leaf node $+p_i$ with $\epsilon_i = 1$ or $-p_i$ with $\epsilon_i = \partial$. An $\epsilon$-{\em critical branch} in the tree is a branch from an $\epsilon$-critical node. The intuition, which will be built upon later, is that variable occurrences corresponding to $\epsilon$-critical nodes are \emph{to be solved for}, according to $\epsilon$.
				
				For every term $s(p_1,\ldots p_n)$ and every order type $\epsilon$, we say that $+s$ (resp.\ $-s$) {\em agrees with} $\epsilon$, and write $\epsilon(+s)$ (resp.\ $\epsilon(-s)$), if every leaf in the signed generation tree of $+s$ (resp.\ $-s$) is $\epsilon$-critical.
				In other words, $\epsilon(+s)$ (resp.\ $\epsilon(-s)$) means that all variable occurrences corresponding to leaves of $+s$ (resp.\ $-s$) are to be solved for according to $\epsilon$. We will also write $+s'\prec \ast s$ (resp.\ $-s'\prec \ast s$) to indicate that the subterm $s'$ inherits the positive (resp.\ negative) sign from the signed generation tree $\ast s$. Finally, we will write $\epsilon(\gamma) \prec \ast s$ (resp.\ $\epsilon^\partial(\gamma) \prec \ast s$) to indicate that the signed subtree $\gamma$, with the sign inherited from $\ast s$, agrees with $\epsilon$ (resp.\ with $\epsilon^\partial$).
				\begin{definition}
					\label{def:good:branch}
					Nodes in signed generation trees will be called \emph{$\Delta$-adjoints}, \emph{syntactically left residual (SLR)}, \emph{syntactically right residual (SRR)}, and \emph{syntactically right adjoint (SRA)}, according to the specification given in Table \ref{Join:and:Meet:Friendly:Table}.
					A branch in a signed generation tree $\ast s$, with $\ast \in \{+, - \}$, is called a \emph{good branch} if it is the concatenation of two paths $P_1$ and $P_2$, one of which may possibly be of length $0$, such that $P_1$ is a path from the leaf consisting (apart from variable nodes) only of PIA-nodes, and $P_2$ consists (apart from variable nodes) only of Skeleton-nodes. 
					\begin{table}[\here]
						\begin{center}
							\begin{tabular}{| c | c |}
								\hline
								Skeleton &PIA\\
								\hline
								$\Delta$-adjoints & SRA \\
								\begin{tabular}{ c c c c c c}
									$+$ &$\vee$ &$\wedge$ &$\phantom{\lhd}$ & &\\
									$-$ &$\wedge$ &$\vee$\\
									\hline
								\end{tabular}
								&
								\begin{tabular}{c c c c }
									$+$ &$\wedge$ &$g$ & with $n_g = 1$ \\
									$-$ &$\vee$ &$f$ & with $n_f = 1$ \\
									\hline
								\end{tabular}
								\\
								SLR &SRR\\
								\begin{tabular}{c c c c }
									$+$ & $\wedge$ &$f$ & with $n_f \geq 1$\\
									$-$ & $\vee$ &$g$ & with $n_g \geq 1$ \\
								\end{tabular}
								&\begin{tabular}{c c c c}
									$+$ &$\vee$ &$g$ & with $n_g \geq 2$\\
									$-$ & $\wedge$ &$f$ & with $n_f \geq 2$\\
								\end{tabular}
								\\
								\hline
							\end{tabular}
						\end{center}
						\caption{Skeleton and PIA nodes for $\mathrm{DLE}$.}\label{Join:and:Meet:Friendly:Table}
						\vspace{-1em}
					\end{table}
				\end{definition}

				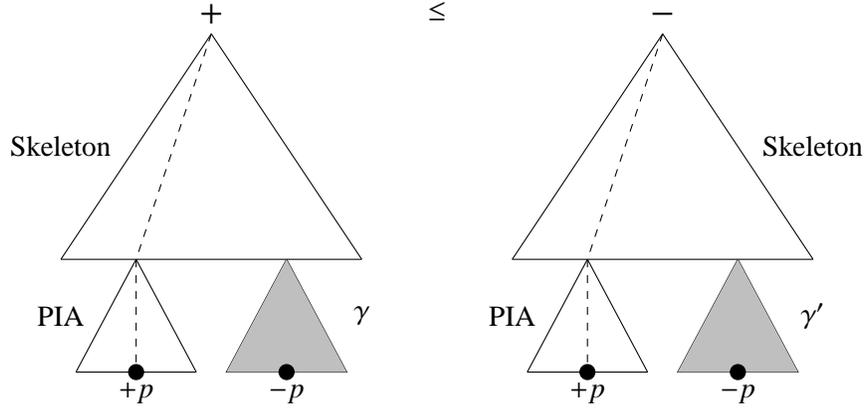
\begin{figure*}
					\begin{center}
						\begin{tikzpicture}
						\draw (-5,-1.5)   -- (-3,1.5) node[above]{\Large$+$} ;
						\draw (-5,-1.5) -- (-1,-1.5) ;
						\draw (-3,1.5) -- (-1,-1.5);
						\draw (-5,0) node{Skeleton}  ;
						\draw[dashed] (-3,1.5) -- (-4,-1.5);
						\draw (-4,-1.5) --(-4.8,-3);
						\draw (-4.8,-3) --(-3.2,-3);
						\draw (-3.2,-3) --(-4,-1.5);
						\draw[dashed] (-4,-1.5) -- (-4,-3);
						\draw[fill] (-4,-3) circle[radius=.1] node[below]{$+p$};
						\draw
						(-2,-1.5) -- (-2.8,-3) -- (-1.2,-3) -- (-2,-1.5);
						\fill[lightgray]
						(-2,-1.5) -- (-2.8,-3) -- (-1.2,-3);
						\draw (-1,-2.25)node{$\gamma$};
						\draw (-5,-2.25) node{PIA}  ;
						\draw (0,1.8) node{$\leq$};
						\draw (5,-1.5)   -- (3,1.5) node[above]{\Large$-$} ;
						\draw (5,-1.5) -- (1,-1.5) ;
						\draw (3,1.5) -- (1,-1.5);
						\draw (5,0) node{Skeleton}  ;
						\draw[dashed] (3,1.5) -- (2,-1.5);
						\draw (2,-1.5) --(2.8,-3);
						\draw (2.8,-3) --(1.2,-3);
						\draw (1.2,-3) --(2,-1.5);
						\draw[dashed] (2,-1.5) -- (2,-3);
						\draw[fill] (2,-3) circle[radius=.1] node[below]{$+p$};
						\draw
						(4,-1.5) -- (4.8,-3) -- (3.2,-3) -- (4, -1.5);
						\fill[lightgray]
						(4,-1.5) -- (4.8,-3) -- (3.2,-3) -- (4, -1.5);
						\draw (5,-2.25)node{$\gamma'$};
						\draw (1,-2.25) node{PIA} ;
						\draw[fill] (-2,-3) circle[radius=.1] node[below]{$-p$};
						\draw[fill] (4,-3) circle[radius=.1] node[below]{$-p$};
						\end{tikzpicture}
						\caption{A schematic representation of inductive inequalities.}
					\end{center}
				\end{figure*}

				\begin{definition}[Inductive inequalities]
					\label{Inducive:Ineq:Def}
					For any order type $\epsilon$ and any irreflexive and transitive relation $<_\Omega$ on $p_1,\ldots p_n$, the signed generation tree $*s$ $(* \in \{-, + \})$ of a term $s(p_1,\ldots p_n)$ is \emph{$(\Omega, \epsilon)$-inductive} if
					\begin{enumerate}
						\item for all $1 \leq i \leq n$, every $\epsilon$-critical branch with leaf $p_i$ is good (cf.\ Definition \ref{def:good:branch});
						\item every $m$-ary SRR-node occurring in the critical branch is of the form $ \circledast(\gamma_1,\dots,\gamma_{j-1},\beta,\gamma_{j+1}\ldots,\gamma_m)$, where for any $h\in\{1,\ldots,m\}\setminus j$: 
						\begin{enumerate}
							\item $\epsilon^\partial(\gamma_h) \prec \ast s$ (cf.\ discussion before Definition \ref{def:good:branch}), and
							%
							\item $p_k <_{\Omega} p_i$ for every $p_k$ occurring in $\gamma_h$ and for every $1\leq k\leq n$.
						\end{enumerate}

					\end{enumerate}
					
					We will refer to $<_{\Omega}$ as the \emph{dependency order} on the variables. An inequality $s \leq t$ is \emph{$(\Omega, \epsilon)$-inductive} if the signed generation trees $+s$ and $-t$ are $(\Omega, \epsilon)$-inductive. An inequality $s \leq t$ is \emph{inductive} if it is $(\Omega, \epsilon)$-inductive for some $\Omega$ and $\epsilon$.
				\end{definition}

				In what follows, we will find it useful to refer to formulas $\phi$ such that only PIA nodes occur in $+\phi$ (resp.\ $-\phi$) as {\em positive} (resp.\ {\em negative}) {\em PIA-formulas}, and to formulas $\xi$ such that only Skeleton nodes occur in $+\xi$ (resp.\ $-\xi$) as {\em positive} (resp.\ {\em negative}) {\em Skeleton-formulas}\label{page: positive negative PIA}.
				
				\medskip
		The proof of the following theorem is a straightforward generalization of \cite[Theorem 10.11]{CoPa10}, and hence its proof is omitted.
		\begin{thm}\label{Thm:ALBA:Success:Inductive}
			For any  language $\mathcal{L}_\mathrm{DLE}$, its corresponding version of ALBA succeeds on all inductive $\mathcal{L}_\mathrm{DLE}$-inequalities, which are hence canonical\footnote{An $\mathcal{L}_\mathrm{DLE}$-inequality $s\leq t$ is {\em canonical} if the class of $\mathcal{L}_\mathrm{DLE}$-algebras defined by $s\leq t$ is closed under the construction of canonical extension (cf.\ Footnote \ref{def:can:ext}).} and their corresponding logics are complete w.r.t.\ the elementary classes of relational structures\footnote{Such are those introduced in \cite{sofronie2000duality1, sofronie2000duality2}.} defined by their first-order correspondents.
		\end{thm}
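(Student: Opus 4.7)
The plan is to adapt the proof of \cite[Theorem 10.11]{CoPa10} to the present more general DLE-setting, exploiting the fact that the reduction rules of ALBA for $\mathcal{L}_\mathrm{DLE}$ given in Subsection \ref{ssec: ALBA} are designed so that each $f\in\mathcal{F}$ (resp.\ $g\in\mathcal{G}$) has a residual/adjoint $f^\sharp_i$ (resp.\ $g^\flat_i$) in each coordinate, and so that the natural semantic environment $\mathcal{L}_\mathrm{DLE}^{*+}$ is interpreted over perfect DLEs, which are completely join-generated by their completely join-irreducible elements and completely meet-generated by their completely meet-irreducible elements. Thus, the correctness Theorem \ref{albacorrect} covers the purely algebraic content, and what remains to be shown for success is the termination of the reduction-elimination cycle on any inductive input.

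Concretely, fix an $(\Omega,\epsilon)$-inductive inequality $\varphi\leq\psi$ and its initial system $\{\nomi_0\leq\varphi,\psi\leq\cnomm_0\}$ obtained after preprocessing and first approximation. The key step is to argue that the variables can be eliminated one by one in an order compatible with $<_\Omega$: pick a variable $p$ which is $<_\Omega$-minimal among the remaining ones. In every inequality of the current system, the signed generation tree above any $\epsilon$-critical occurrence of $p$ is still the concatenation of a Skeleton path followed (towards the leaf) by a PIA path, and every SRR node encountered in the critical branch has the form $\circledast(\gamma_1,\dots,\beta,\dots,\gamma_m)$ with each side-subterm $\gamma_h$ $\epsilon^\partial$-uniform in the remaining variables (by the minimality of $p$ with respect to $<_\Omega$ and Definition \ref{Inducive:Ineq:Def}). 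This is precisely what is needed to (a) apply splitting and approximation rules to decompose the Skeleton part, pushing nominals/conominals all the way down to the roots of the PIA parts; (b) apply residuation rules along the PIA paths, using the connectives $f^\sharp_i$ and $g^\flat_i$ of $\mathcal{L}_\mathrm{DLE}^*$, to solve for $p$ and bring every inequality into Ackermann shape with respect to $p$. One then applies $(RAR)$ or $(LAR)$ to eliminate $p$. The delicate but standard verification is that the resulting system remains $(\Omega,\epsilon)$-inductive in the remaining variables, so that the procedure can be iterated; this is where the two clauses of Definition \ref{Inducive:Ineq:Def} are used in full, and is the main obstacle in lifting the argument from \cite{CoPa10} to the present signature-agnostic setting.

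Canonicity then follows along exactly the lines of \cite[Sections 9-10]{CoPa10}: one checks that each ALBA rule is sound not only on perfect DLEs but also on the canonical extension $\bbA^\delta$ of any DLE $\bbA$ under admissible assignments (those sending nominals and conominals to completely join-irreducible and completely meet-irreducible elements respectively), and that the first-approximation rule can be justified using the compactness and denseness properties of $\bbA^\delta$ recalled in Footnote \ref{def:can:ext}. This forces the validity of $\varphi\leq\psi$ on $\bbA$ to transfer to $\bbA^\delta$, proving canonicity. Finally, elementarity and completeness of the corresponding DLE-logics with respect to the associated classes of relational structures is obtained by observing that, since the output of ALBA on an inductive inequality is a set of pure quasi-inequalities in $\mathcal{L}_\mathrm{DLE}^{*+}$ (containing only nominals, conominals, constants and the residuated/adjoint connectives), the standard translation for the relational semantics of \cite{sofronie2000duality1, sofronie2000duality2} converts it into a first-order sentence in the language of the corresponding frames, and this sentence is equivalent to $\varphi\leq\psi$ frame-by-frame by Theorem \ref{albacorrect}.
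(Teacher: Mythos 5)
Your proposal is correct and follows exactly the route the paper intends: the paper omits the proof, stating that it is a straightforward generalization of \cite[Theorem 10.11]{CoPa10}, and your sketch (elimination of variables in increasing $<_\Omega$-order via approximation rules on the Skeleton and residuation rules on the PIA parts, Ackermann elimination, canonicity via admissible validity on canonical extensions, and elementarity via the standard translation of the pure output) is precisely that generalization. The only point to phrase carefully is the invariant at each elimination stage: by clause 2(b) of Definition \ref{Inducive:Ineq:Def}, once all variables strictly $<_\Omega$-below the current minimal variable have been eliminated, the side subterms of the SRR nodes on its critical branches contain no remaining propositional variables, so its minimal valuation is pure — which is what makes the iteration go through.
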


		\section{Display calculi for $\mathbf{L}_\mathrm{DLE}$ and $\mathbf{L}_\mathrm{DLE}^*$}
		\label{sec:display calculi DL DL ast}
		In the present section, we introduce the basic proof-theoretic environment of our treatment, given by the display calculi $\mathbf{DL}^*$ and $\mathbf{DL}$ for the logics $\mathbf{L}_\mathrm{DLE}$ and $\mathbf{L}_\mathrm{DLE}^*$ associated with any given language $\mathcal{L}_\mathrm{DLE}(\mathcal{F}, \mathcal{G})$. 
		We also show some of their basic properties. 
		
		\subsection{Language and rules}
		The present subsection is aimed at simultaneously introducing the display calculi $\mathbf{DL}^*$ and $\mathbf{DL}$ for $\mathbf{L}_\mathrm{DLE}^*$ and $\mathbf{L}_\mathrm{DLE}$, respectively. As is usual of existing logical systems which the present framework intends to capture (e.g.\ intuitionistic and bi-intuitionistic logics, or modal and tense logics \cite{Go00}), the languages manipulated by these calculi are built up using {\em one and the same} set of structural terms, and differ only in the set of operational term constructors. In the tables below, each structural symbol in the upper rows corresponds to one or two logical (or operational) symbols. The idea, which will be made precise later on, is that each structural connective can be interpreted as the corresponding left-hand (resp.\ right-hand) side logical connective (if it exists) when occurring in antecedent (resp.\ consequent) position.
		\begin{itemize}
			\item Structural symbols for lattice operators:
			\begin{center}
				\begin{tabular}{|r|c|c|c|c|c|c|c|c|}
					\hline
					\scriptsize{Structural symbols} & \mc{2}{c|}{I} & \mc{2}{c|}{$;$} & \mc{2}{c|}{$>$} & \mc{2}{c|}{$<$}\\
					\hline
					\scriptsize{Operational symbols} & $\top$ & $\bot$ & $\pand$ & $\por$ & $(\pdra)$ & $(\pra)$ & $(\pdla)$ & $(\leftarrow)$\\
					\hline
				\end{tabular}
			\end{center}
			
			\item Structural symbols for any $f\in \mathcal{F}$ and $g\in \mathcal{G}$:
			\begin{center}
				\begin{tabular}{|r|c|c|c|c|}
					\hline
					\scriptsize{Structural symbols} & \mc{2}{c|}{$H$} & \mc{2}{c|}{$K$} \\
					\hline
					\scriptsize{Operational symbols} & $f$ & $\phantom{f}$ & $\phantom{g}$ & $g$ \\
					\hline
				\end{tabular}
			\end{center}
			
			\item Structural symbols for any $f_i^\sharp, g_h^\flat \in (\mathcal{F}^*\cup \mathcal{G}^*)\setminus (\mathcal{F}\cup \mathcal{G})$, and any $0\leq i\leq n_f$ and $0\leq h\leq n_g$:
			\begin{center}
				\begin{tabular}{|r|c|c|c|c|c|c|c|c|}
					\hline
					\scriptsize{Structural symbols} & \mc{2}{c|}{$H_i$ ($\varepsilon_f(i) = 1$)} & \mc{2}{c|}{$H_i$ ($\varepsilon_f(i) = \partial$)} & \mc{2}{c|}{$K_h$ ($\varepsilon_g(h) = 1$)} & \mc{2}{c|}{$K_h$ ($\varepsilon_g(h) = \partial$)} \\
					\hline
					\scriptsize{Operational symbols} & \ \ \ \ \ \ \ \, & $ (f_i^\sharp)$ & \,\,$(f_i^\sharp)\,\, $ &  & \ \,$(g_h^\flat)$\ \, & $\phantom{(g_h^\flat)}$ & $\phantom{(g_h^\flat)}$ \ \,& \,$(g_h^\flat)\,$\\
					\hline
				\end{tabular}
			\end{center}
		\end{itemize}
		Some operational symbols above appear in brackets as a reminder that, unlike their associated structural symbols, they might occur only in the language and in the calculus for $\mathbf{L}_\mathrm{DLE}^*$.
		\begin{remark} If $f\in \mathcal{F}$ and $g\in \mathcal{G}$ form a dual pair,\footnote{Examples of dual pairs are $(\top, \bot)$, $(\wedge, \vee)$, $(\pdra, \pra)$, $(\pdla, \leftarrow)$, and $(\Diamond, \Box)$ where $\Diamond$ is defined as $\neg\Box\neg$.} 
 then $n_f = n_g$ and $\varepsilon_f = \varepsilon_g$. Then $f$ and $g$ can be assigned one and the same structural operator, as follows:
			
			\begin{center}
				\begin{tabular}{|r|c|c|}
					\hline
					\scriptsize{Structural symbols} & \mc{2}{c|}{$H$} \\
					\hline
					\scriptsize{Operational symbols} & $f$ & $g$ \\
					\hline
				\end{tabular}
			\end{center}
			Moreover, for any $1\leq i\leq n_f = n_g$, the residuals $f_i^\sharp$ and $g_i^\flat$ are dual to one another. Hence they can also be assigned one and the same structural connective as follows:
			
			\begin{center}
				\begin{tabular}{|r|c|c|c|c|}
					\hline
					\scriptsize{Structural symbols} & \mc{2}{c|}{$H_i$ ($\varepsilon_f(i) = \varepsilon_g(i) = 1$)} & \mc{2}{c|}{$H_i$ ($\varepsilon_f(i) = \varepsilon_g(i) = \partial$)} \\
					\hline
					\scriptsize{Operational symbols} & \ \ \,\,\,$(g_i^\flat)$\ \ \,\,\,& $(f_i^\sharp)$ & \ \ \,\,\,$(f_i^\sharp)$\ \ \,\,\, & $(g_i^\flat)$\\
					\hline
				\end{tabular}
			\end{center}
		\end{remark}
		
		\begin{definition}
			\label{def: DL and DL star}
			The display calculi $\mathbf{DL}^*$ and $\mathbf{DL}$ consist of the following display postulates, structural rules, and operational
			rules:\footnote{\label{ftn: display calculus DL star minus}The display calculus associated with the basic DLE-logic $\mathbf{L}^{\underline{*}}$ (cf.\ footnote \ref{ftn: definition basic logic for expanded language}) in the expanded language $\mathcal{L}_\mathrm{DLE}^*$ is denoted by $\mathbf{DL}^{\underline{*}}$, and is defined by instantiating the definition of $\mathbf{DL}$ to the expanded language $\mathcal{L}_\mathrm{DLE}^*$.}
			\begin{enumerate}
				\item Identity and cut:
				
				\begin{center}
					\begin{tabular}{cc c}
						
						$p \vdash p$
						
						& &
						\AX$X \fCenter A$
						\AX$A \fCenter Y$
						\BI$X \fCenter Y$
						\DisplayProof
						
					\end{tabular}
				\end{center}
				\item Display postulates for lattice connectives:
				\begin{center}
					\begin{tabular}{cccc}
						\AX$X \,; Y \fCenter Z$
						\doubleLine
						\UI$Y \fCenter X > Z$
						\DisplayProof
						&
						\AX$Z \fCenter X \,; Y$
						\doubleLine
						\UI$X > Z \fCenter Y$
						\DisplayProof
						&
						\AX$X \,; Y \fCenter Z$
						\doubleLine
						\UI$X \fCenter Z < Y$
						\DisplayProof
						&
						\AX$Z \fCenter X \,; Y$
						\doubleLine
						\UI$Z < Y \fCenter X$
						\DisplayProof
					\end{tabular}
				\end{center}
				\item Display postulates for $f\in \mathcal{F}$ and $g\in \mathcal{G}$: for any $1\leq i\leq n_f$ and $1\leq h\leq n_g$,
				\begin{center}
					\begin{tabular}{cc}
						\AXC{$H\, (X_1, \ldots, X_i, \ldots, X_{n_f}) \fCenter Y$}
						\doubleLine
						\LeftLabel{$(\varepsilon_f(i) = 1)$}
						\UIC{$X_i \fCenter H_i\, (X_1, \ldots, Y, \ldots, X_{n_f})$}
						\DisplayProof
						&
						\AXC{$Y \fCenter K\, (X_1 \ldots, X_h, \ldots X_{n_g})$}
						\doubleLine
						\RightLabel{$(\varepsilon_g(h) = 1)$}
						\UIC{$K_h\, (X_1, \ldots, Y, \ldots, X_{n_g}) \fCenter X_h$}
						\DisplayProof \\
						
						& \\
						
						\AXC{$H\, (X_1, \ldots, X_i, \ldots, X_{n_f}) \fCenter Y$}
						\doubleLine
						\LeftLabel{$(\varepsilon_f(i) = \partial)$}
						\UIC{$H_i\, (X_1, \ldots, Y, \ldots, X_{n_f}) \fCenter X_i $}
						\DisplayProof
						&
						\AXC{$Y \fCenter K\, (X_1, \ldots, X_h, \ldots, X_{n_g})$}
						\doubleLine
						\RightLabel{$(\varepsilon_g(h) = \partial)$}
						\UIC{$ X_h\fCenter K_h\, (X_1, \ldots, Y, \ldots, X_{n_g})$}
						\DisplayProof \\
					\end{tabular}
				\end{center}
				Notice that the display postulates for all the connectives in $\mathcal{F}^*\cup \mathcal{G}^*$ are derivable from the display postulates above. The rules for the case of connectives in the dual pairs are obtained by replacing $K$ for $H$ in the corresponding rules above.


				\item Necessitation for $f\in \mathcal{F}$ and $g\in \mathcal{G}$: for any $1\leq k \leq n_f$ and $1\leq h\leq n_g$,
				\begin{center}
					\begin{tabular}{c}
						\bottomAlignProof
						\AxiomC{$\Big(X_i \vdash Y_i \quad Y_j \vdash X_j \mid i \neq k, 1\leq i, j\leq n_f, \varepsilon_f(i) = 1\mbox{ and } \varepsilon_f(j) = \partial \Big) \quad\quad\quad\quad X_k \vdash \textrm{I}_k$}
						\LeftLabel{$(\varepsilon_f(k) = 1)$}
						\UnaryInfC{$X_k \vdash H_k (X_1,\ldots, X_{k-1}, \textrm{I}, X_{k+1}, \ldots, X_{n_f})$}
						\DisplayProof \\
					\end{tabular}
				\end{center}

				\begin{center}
					\begin{tabular}{c}
						\bottomAlignProof
						\AxiomC{$\Big(X_i \vdash Y_i \quad Y_j \vdash X_j \mid j \neq k, 1\leq i, j\leq n_f, \varepsilon_f(i) = 1\mbox{ and } \varepsilon_f(j) = \partial \Big) \quad\quad\quad\quad \textrm{I}_k \vdash X_k$}
						\LeftLabel{$(\varepsilon_f(k) = \partial)$}
						\UnaryInfC{$H_k (X_1,\ldots, X_{k-1}, \textrm{I}, X_{k+1}, \ldots, X_{n_f}) \vdash X_k$}
						\DisplayProof \\
					\end{tabular}
				\end{center}


				\begin{center}
					\begin{tabular}{c}
						\bottomAlignProof
						\AxiomC{$\Big(X_j \vdash Y_j \quad Y_i \vdash X_i \mid i \neq h, 1\leq i, j\leq n_f, \varepsilon_f(i) = 1\mbox{ and } \varepsilon_f(j) = \partial \Big) \quad\quad\quad\quad \textrm{I}_h \vdash X_h$}
						\LeftLabel{$(\varepsilon_g(h) = 1)$}
						\UnaryInfC{$K_h (X_1,\ldots, X_{h-1}, \textrm{I}, X_{h+1}, \ldots, X_{n_g}) \vdash X_h$}
						\DisplayProof \\
					\end{tabular}
				\end{center}

				\begin{center}
					\begin{tabular}{c}
						\bottomAlignProof
						\AxiomC{$\Big(X_j \vdash Y_j \quad Y_i \vdash X_i \mid j \neq h, 1\leq i, j\leq n_f, \varepsilon_f(i) = 1\mbox{ and } \varepsilon_f(j) = \partial \Big) \quad\quad\quad\quad X_h \vdash \textrm{I}_h$}
						\LeftLabel{$(\varepsilon_g(h) = \partial)$}
						\UnaryInfC{$X_h \vdash K_h (X_1,\ldots, X_{h-1}, \textrm{I}, X_{h+1}, \ldots, X_{n_g})$}
						\DisplayProof \\
					\end{tabular}
				\end{center}


				\item Structural rules encoding the distributive lattice axiomatization:
				\begin{center}
					\begin{tabular}{rlcrl}
						\AX$X \fCenter Y$
						\doubleLine
						\LeftLabel{\fns$\textrm{I}_{L}$}
						\UI$\textrm{I}\,; X \fCenter Y$
						\DisplayProof
						&
						\AX$Y \fCenter X$
						\doubleLine
						\RightLabel{\fns$\textrm{I}_{R}$}
						\UI$Y \fCenter X\,; \textrm{I}$
						\DisplayProof
						& &
						\AX$Y \,; X \fCenter Z$
						\LeftLabel{\fns$E_L$}
						\UI$X \,; Y \fCenter Z $
						\DisplayProof
						&
						\AX$Z \fCenter X \,; Y$
						\RightLabel{\fns$E_R$}
						\UI$Z \fCenter Y \,; X$
						\DisplayProof
						\\
						&\\
						\AX$Y \fCenter Z$
						\LeftLabel{\fns$W_L$}
						\UI$X\,; Y \fCenter Z$
						\DisplayProof
						&
						\AX$Z \fCenter Y$
						\RightLabel{\fns$W_R$}
						\UI$Z \fCenter Y\,; X$
						\DisplayProof
						& &
						\AX$X \,; X \fCenter Y$
						\LeftLabel{\fns$C_L$}
						\UI$X \fCenter Y $
						\DisplayProof
						&
						\AX$Y \fCenter X \,; X$
						\RightLabel{\fns$C_R$}
						\UI$Y \fCenter X$
						\DisplayProof
						\\
						&\\
						\mc{2}{c}{
							\AX$X \,; (Y \,; Z) \fCenter W$
							\doubleLine
							\LeftLabel{\fns$A_{L}$}
							\UI$(X \,; Y) \,; Z \fCenter W $
							\DisplayProof}
						& &
						\mc{2}{c}{
							\AX$W \fCenter (Z \,; Y) \,; X$
							\doubleLine
							\RightLabel{\fns$A_{R}$}
							\UI$W \fCenter Z \,; (Y \,; X)$
							\DisplayProof}
					\end{tabular}
				\end{center}
				\item Introduction rules for the propositional (BDL and bi-intuitionistic) connectives:
				\begin{center}
					\begin{tabular}{rlrl}
						\AXC{\phantom{$\gbot \fCenter \textrm{I}$}}
						\LeftLabel{\fns$\bot_L$}
						\UI$\bot \fCenter \textrm{I}$
						\DisplayProof
						&
						\AX$X \fCenter \textrm{I}$
						\RightLabel{\fns$\bot_R$}
						\UI$X \fCenter \bot$
						\DisplayProof
						&
						\AX$\textrm{I} \fCenter X$
						\LeftLabel{\fns$\top_L$}
						\UI$\top \fCenter X$
						\DisplayProof
						&
						\AXC{\phantom{$\textrm{I} \fCenter \top$}}
						\RightLabel{\fns$\top_R$}
						\UI$\textrm{I} \fCenter \top$
						\DisplayProof
						\\
						& & & \\
						\AX$A \,; B \fCenter X$
						\LeftLabel{\fns$\pand_L$}
						\UI$A \pand B \fCenter X$
						\DisplayProof
						&
						\AX$X \fCenter A$
						\AX$Y \fCenter B$
						\RightLabel{\fns$\pand_R$}
						\BI$X \,; Y \fCenter A \pand B$
						\DisplayProof
						&
						\AX$A \fCenter X$
						\AX$B \fCenter Y$
						\LeftLabel{\fns$\por_L$}
						\BI$A \por B \fCenter X \,; Y$
						\DisplayProof
						&
						\AX$X \fCenter A \,; B$
						\RightLabel{\fns$\por_R$}
						\UI$X \fCenter A \por B$
						\DisplayProof
						\\
						& \\
						\AX$X \fCenter A$
						\AX$B \fCenter Y$
						\LeftLabel{\fns$\pra_L$}
						\BI$A \pra B \fCenter X > Y$
						\DisplayProof
						&
						\AX$X \fCenter A > B$
						\RightLabel{\fns$\pra_R$}
						\UI$X \fCenter A \pra B$
						\DisplayProof
						&
						\AX$A > B \fCenter Z$
						\LeftLabel{\fns$\pdra_L$}
						\UI$A \pdra B \fCenter Z$
						\DisplayProof
						&
						\AX$A \fCenter X$
						\AX$Y \fCenter B$
						\RightLabel{\fns$\pdra_R$}
						\BI$X > Y \fCenter A \pdra B$
						\DisplayProof
						\\
						& \\
						\AX$X \fCenter A$
						\AX$B \fCenter Y$
						\LeftLabel{\fns$\leftarrow_L$}
						\BI$A \leftarrow B \fCenter X < Y$
						\DisplayProof
						&
						\AX$X \fCenter A < B$
						\RightLabel{\fns$\leftarrow_R$}
						\UI$X \fCenter A \leftarrow B$
						\DisplayProof
						&
						\AX$A < B \fCenter Z$
						\LeftLabel{\fns$\pdla_L$}
						\UI$A \pdla B \fCenter Z$
						\DisplayProof
						&
						\AX$A \fCenter X$
						\AX$Y \fCenter B$
						\RightLabel{\fns$\pdla_R$}
						\BI$X < Y \fCenter A \pdla B$
						\DisplayProof
					\end{tabular}
				\end{center}
				In the presence of the exchange rules $E_L$ and $E_R$, the structural connective $<$ and the corresponding operational connectives $\pdla$ and $\pla$ are redundant.
				\item Introduction rules for $f\in\mathcal{F}$ and $g\in\mathcal{G}$:
				\begin{center}
					\begin{tabular}{c c}
						\bottomAlignProof
						\AxiomC{$H (A_1,\ldots, A_{n_f})\vdash X$}
						\LeftLabel{\fns$f_L$}
						\UnaryInfC{$f(A_1,\ldots, A_{n_f})\vdash X$}
						\DisplayProof
						&
						\bottomAlignProof
						\AxiomC{$X \vdash K(A_1,\ldots, A_{n_g})$}
						\RightLabel{\fns$g_R$}
						\UnaryInfC{$X \vdash g(A_1,\ldots, A_{n_g})$}
						\DisplayProof
					\end{tabular}
					\begin{tabular}{c}
						\bottomAlignProof
						\AxiomC{$\Big(X_i \vdash A_i \quad A_j \vdash X_j \mid 1\leq i, j\leq n_f, \varepsilon_f(i) = 1\mbox{ and } \varepsilon_f(j) = \partial \Big)$}
						\LeftLabel{\fns$f_R$}
						\UnaryInfC{$H(X_1,\ldots, X_{n_f})\vdash f(A_1,\ldots, A_{n})$}
						\DisplayProof
					\end{tabular}
					\begin{tabular}{c c c }
						\bottomAlignProof
						\AxiomC{$\Big( A_i \vdash X_i \quad X_j \vdash A_j \,\mid\, 1\leq i, j\leq n_g, \varepsilon_g(i) = 1\mbox{ and } \varepsilon_g(j) = \partial \Big)$}
						\LeftLabel{\fns$g_L$}
						\UnaryInfC{$g(A_1,\ldots, A_{n_g}) \vdash K (X_1,\ldots, X_{n})$}
						\DisplayProof
					\end{tabular}
				\end{center}
				In particular, if $f$ and $g$ are $0$-ary (i.e.\ they are constants), the rules $f_R$ and $g_L$ above reduce to the axioms ($0$-ary rule) $H\vdash f$ and $g\vdash K$.
				
				\item Only for $\mathrm{DL}^*$, introduction rules for each $f_i^\sharp, g_h^\flat\in (\mathcal{F}^*\cup\mathcal{G}^*)\setminus (\mathcal{F}\cup \mathcal{G})$:
				
				\begin{enumerate}

					\item If $\varepsilon_f(i) = 1$ and $\varepsilon_g(h) = 1$,
					\begin{center}
						\begin{tabular}{c c}
							\bottomAlignProof
							\AxiomC{$K_h (A_1,\ldots, A_{n_g})\vdash X$}
							\LeftLabel{\fns${g_h^\flat}_L$}
							\UnaryInfC{$g_h^\flat(A_1,\ldots, A_{n_f})\vdash X$}
							\DisplayProof
							&
							\bottomAlignProof
							\AxiomC{$X \vdash H_i(A_1,\ldots, A_{n_g})$}
							\RightLabel{\fns${f_i^\sharp}_R$}
							\UnaryInfC{$X \vdash f_i^\sharp(A_1,\ldots, A_{n_g})$}
							\DisplayProof
						\end{tabular}
						\begin{tabular}{c}
							\bottomAlignProof
							\bottomAlignProof
							\AxiomC{$\Big(X_{\ell} \vdash A_{\ell} \quad A_m \vdash X_m \mid 1\leq \ell, m\leq n_g, \varepsilon_{g_h^\flat}(\ell) = 1\mbox{ and } \varepsilon_{g_h^\flat}(m) = \partial \Big)$}
							\LeftLabel{\fns${g_h^\flat}_R$}
							\UnaryInfC{$K_h(X_1,\ldots, X_{n_g})\vdash g_h^\flat(A_1,\ldots, A_{n_g})$}
							\DisplayProof
						\end{tabular}
						\begin{tabular}{c c c }
							\bottomAlignProof
							\AxiomC{$\Big( A_\ell \vdash X_\ell \quad X_m \vdash A_m \,\mid\, 1\leq \ell, m\leq n_g, \varepsilon_{f_i^\sharp}(\ell) = 1\mbox{ and } \varepsilon_{f_i^\sharp}(m) = \partial \Big)$}
							\LeftLabel{\fns${f_i^\sharp}_L$}
							\UnaryInfC{$f_i^\sharp(A_1,\ldots, A_{n_g}) \vdash H_i (X_1,\ldots, X_{n_g})$}
							\DisplayProof
						\end{tabular}
					\end{center}
					
					\item If $\varepsilon_f(i) = \partial$ and $\varepsilon_g(h) = \partial$,
					\begin{center}
						\begin{tabular}{c c}
							\bottomAlignProof
							\AxiomC{$H_i (A_1,\ldots, A_{n_f})\vdash X$}
							\LeftLabel{\fns${f_i^\sharp}_L$}
							\UnaryInfC{$f_i^\sharp(A_1,\ldots, A_{n_f})\vdash X$}
							\DisplayProof
							&
							\bottomAlignProof
							\AxiomC{$X \vdash K_h(A_1,\ldots, A_{n_g})$}
							\RightLabel{\fns${g_h^\flat}_R$}
							\UnaryInfC{$X \vdash g_h^\flat(A_1,\ldots, A_{n_g})$}
							\DisplayProof
						\end{tabular}
						\begin{tabular}{c}
							\bottomAlignProof
							\bottomAlignProof
							\AxiomC{$\Big(X_{\ell} \vdash A_{\ell} \quad A_m \vdash X_m \mid 1\leq \ell, m\leq n_f, \varepsilon_{f_i^\sharp}(\ell) = 1\mbox{ and } \varepsilon_{f_i^\sharp}(m) = \partial \Big)$}
							\LeftLabel{\fns${f_i^\sharp}_R$}
							\UnaryInfC{$H_i(X_1,\ldots, X_{n_f})\vdash f_i^\sharp(A_1,\ldots, A_{n_f})$}
							\DisplayProof
						\end{tabular}
						\begin{tabular}{c c c }
							\bottomAlignProof
							\AxiomC{$\Big( A_\ell \vdash X_\ell \quad X_m \vdash A_m \,\mid\, 1\leq \ell, m\leq n_g, \varepsilon_{g_h^\flat}(\ell) = 1\mbox{ and } \varepsilon_{g_h^\flat}(m) = \partial \Big)$}
							\LeftLabel{\fns${g_h^\flat}_L$}
							\UnaryInfC{$g_h^\flat(A_1,\ldots, A_{n_g}) \vdash K_h (X_1,\ldots, X_{n_g})$}
							\DisplayProof
						\end{tabular}
					\end{center}
					
				\end{enumerate}
			\end{enumerate}

		\end{definition}
		
		A display calculus enjoys the {\em full display property} (resp.\ the {\em relativized display property}) if for every (derivable) sequent $X \vdash Y$ and every substructure $Z$ of either $X$ or $Y$, the sequent $X \vdash Y$ can be equivalently transformed, using the rules of the system, into a sequent which is either of the form $Z \vdash W$ or of the form $W \vdash Z$, for some structure $W$. 
		A routine check will show that the display calculi $\mathbf{DL}$ and $\mathbf{DL}^*$ both enjoy the relativized display property, and moreover, if $\mathcal{F}$ and $\mathcal{G}$ are such that for every $f\in \mathcal{F}$ the dual of $f$ is in $\mathcal{G}$ and for every $g\in \mathcal{G}$ the dual of $g$ is in $\mathcal{F}$, then $\mathbf{DL}$ and $\mathbf{DL}^*$ both enjoy the full display property. The proof of these facts is omitted.
		
		\begin{prop}
\label{prop: DL has relativized display property}
			The display calculi $\mathbf{DL}$ and $\mathbf{DL}^*$ enjoy the relativized display property, and under the assumption above on $\mathcal{F}$ and $\mathcal{G}$ they enjoy the full display property.
		\end{prop}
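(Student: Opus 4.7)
My plan is to prove both assertions by induction on the depth at which $Z$ occurs as a substructure of $X \vdash Y$, using the display postulates listed in items (2) and (3) of Definition \ref{def: DL and DL star} as the engine of the induction. The base case, where $Z$ coincides with $X$ or with $Y$, is immediate: $Z$ is already displayed (in precedent position in the first case and in succedent position in the second). For the inductive step, writing the given sequent as $S[Z] \vdash T$ or $S \vdash T[Z]$ where $Z$ occurs at positive depth inside the bracketed structure, I would first apply the inductive hypothesis to the immediate structural parent $W$ of $Z$ to reduce to a sequent of the form $W \vdash U$ or $U \vdash W$. Then a single application of the display postulate associated with the top-level structural connective of $W$ at the coordinate of $Z$ equivalently rewrites the sequent so that $Z$ itself becomes displayed.

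The core verification is therefore the following per-connective check: for every structural connective $*$ and every coordinate $i$ of $*$, there is a display postulate that, given $*(\ldots, X_i, \ldots)$ in the appropriate (precedent or succedent) position, displays $X_i$. For the lattice structural connectives $\mathrm{I}$, $;$, $>$, $<$, this is the classical adjunction between fusion and its residuals, recorded in item (2). For each $f \in \mathcal{F}$, item (3) supplies for every $1 \le i \le n_f$ a display postulate which, starting from $H(X_1,\ldots,X_{n_f}) \vdash Y$, displays $X_i$ (on the left if $\varepsilon_f(i)=1$ and on the right if $\varepsilon_f(i)=\partial$); dually for each $g \in \mathcal{G}$ with $K$ starting in succedent position. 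This is exactly the information needed to make the inductive step go through, with the caveat that the side (precedent vs.\ succedent) on which $Z$ ends up displayed is determined, following the path from the root to $Z$, by the polarities $\varepsilon_f(i)$ and $\varepsilon_g(i)$ of the coordinates traversed together with the precedent/succedent status of the starting structure. This gives the relativized display property.

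For the full display property, the only way the argument above can fail to display $Z$ on the opposite side is that somewhere along the path we find an $H$-structure in succedent position, or a $K$-structure in precedent position, for which no matching display postulate has been declared. Under the additional assumption that for every $f\in\mathcal{F}$ the dual of $f$ lies in $\mathcal{G}$, and symmetrically, by the convention fixed in the remark just before Definition \ref{def: DL and DL star}, each such pair shares one and the same structural connective, so the display postulates of $f$ (for $H$) and of its dual $g$ (for $K=H$) are jointly available. Consequently each structural connective admits display postulates for every coordinate in both precedent and succedent positions, and the inductive step can be carried out regardless of which side of the turnstile the enclosing structure sits on; hence $Z$ may be displayed on either side at will.

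The step I expect to require the most care is bookkeeping rather than technical difficulty: one has to argue, by a secondary induction on the path from the root to $Z$, that the composition of the individual polarity flips contributed by each display postulate match the intended precedent/succedent assignment of $Z$ as a substructure. Provided the conventions on $\varepsilon_f$, $\varepsilon_g$, $\varepsilon_{f^\sharp_i}$, $\varepsilon_{g^\flat_i}$ fixed in Section \ref{ssec:expanded language} are respected, this amounts to a routine but somewhat tedious verification, which is why we omit the formal write-up here in line with the paper's convention.
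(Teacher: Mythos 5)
Your induction via the display postulates is the right engine, but as written it never uses the hypothesis that distinguishes the two properties, namely that in the relativized case the sequent $X \vdash Y$ is \emph{derivable}. Your inductive step silently assumes that whenever the parent structure $W$ is an $H$-structure it sits in precedent position (and a $K$-structure in succedent position), so that a postulate of item 3 of Definition \ref{def: DL and DL star} applies; for an arbitrary sequent this is simply false and the step gets stuck: if $f\in\mathcal{F}$ has no dual in $\mathcal{G}$, then no substructure occurring properly inside an $H$-structure in succedent position (say inside $X \vdash H(Y,Z)$) can be displayed, since no display postulate has $H$ principal on the right. So your first two paragraphs, taken literally, would establish the full display property for $\mathbf{DL}$ with no assumption on $\mathcal{F}$ and $\mathcal{G}$, which is false. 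The missing ingredient is the sidedness of derivable sequents: if $S\vdash T$ is $\mathbf{DL}$- (or $\mathbf{DL}^*$-) derivable, then $S$ is left-sided and $T$ is right-sided (Definition \ref{def:sidedstructures} and the soundness proposition of Section \ref{ssec:soundness}, or a direct induction on derivations), and this is exactly what guarantees that along the path from the root to $Z$ every node carries a structural connective in a position for which item 2 or item 3 of Definition \ref{def: DL and DL star} (or a postulate derived from them, for the $H_i$ and $K_h$) supplies a display postulate. With that added, your induction does yield the relativized display property.

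Separately, your reading of the full display property is off: the full/relativized distinction is quantification over all sequents versus derivable sequents only (see the definition immediately preceding the proposition), not the ability to display $Z$ ``on either side at will''---indeed, as remarked after Definition \ref{PS:def: display prop}, any given occurrence is displayable on exactly one side, determined by its polarity. Your third paragraph nonetheless identifies the correct obstruction ($H$ in succedent position, $K$ in precedent position) and correctly notes that the duality assumption removes it, since a dual pair shares one structural connective which then has display postulates from both positions; but the conclusion to draw from this is that your induction then goes through for \emph{arbitrary}, not necessarily derivable, sequents, which is what the full display property asserts.
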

		
		\subsection{Soundness, completeness, conservativity}
		\label{ssec:soundness}
		
		\paragraph{Soundness.} Let us expand on how to interpret structures and sequents in the language manipulated by the calculi $\mathbf{DL}$ and $\mathbf{DL}^*$ in any perfect $\mathcal{L}_\mathrm{DLE}$-algebra $\bba$ (cf.\ Footnote \ref{footnote: perfect DLE}). Structures will be translated into formulas, and formulas will be interpreted as elements of $\bba$. In order to translate structures as formulas, structural terms need to be translated as formulas, 
as is specified in Definition \ref{def:rsls} below. To this effect, any given occurrence of a structural connective in a sequent is translated as (one or the other of) its associated logical connective(s), as reported in
		Table \ref{pic:1}, provided its operational counterpart relative to its position (antecedent or succedent) exists. 
		\begin{table}
			\begin{center}
				\begin{tabular}{c||c|c c}
					Structural & $\quad$ if in precedent$\quad$ & $\quad$ if in succedent$\quad$\\
					$\quad$ connective $\quad$ \ &\ position \ &\ position & \\[1pt]
					\hline
					I & $\top$ & $\bot$ & \\[1pt]
					$A\,; B$ & $A\wedge B$ & $A \vee B$ & \\[2pt]
					$A > B$ \ & $A \pdra B$ & $A \rightarrow B$ & \\[2pt]
					$H(\overline{A})$ & $f(\overline{A})$ & & \\[4pt]
					$K(\overline{A})$ & & $g(\overline{A})$ & \\[4pt]
					$H_i(\overline{A})$ & & $f_i^\sharp(\overline{A})$ & if $\varepsilon_f(i) = 1$ \\
					$H_i(\overline{A})$ & $f_i^\sharp(\overline{A})$ & & if $\varepsilon_f(i) = \partial$ \\
					$K_h(\overline{A})$ & $g_h^\flat(\overline{A})$ & & if $\varepsilon_g(h) = 1$ \\[4pt]
					$K_h(\overline{A})$ & & $g_h^\flat(\overline{A})$ & if $\varepsilon_g(h) = \partial$ \\[4pt]
				\end{tabular}
			\end{center}
			\caption{Translation of structural connectives into logical connectives}\label{pic:1}
		\end{table}
		Clearly, not all structural terms will in general have a translation as formulas. This motivates the following definition:
		
		\begin{definition}\label{def:sidedstructures}
			A structural term $S$ is \emph{left-sided} (resp.\ \emph{right-sided}) if in its positive (resp.\ negative) signed generation tree,\footnote{Signed generation trees of structural terms are defined analogously to signed generation trees of logical terms. Logical formulas label the leaves of the signed generation trees of structural terms.} every positive node is labelled with a structural connective which is associated with a logical connective when occurring in antecedent position, and every negative node is labelled with a structural connective which is associated with a logical connective when occurring in succedent position.
		\end{definition}
		Clearly, if every structural connective is associated with some logical connectives both when occurring in antecedent position and when occurring in succedent position, as is the case e.g.\ when $\mathcal{F}$ and $\mathcal{G}$ bijectively correspond via conjugation, every structural term is both left-sided and right-sided.
		\begin{definition}\label{def:rsls}
			For every left-sided (resp.\ right-sided) structural term $S$, let $l(S)$ (resp.\ $r(S)$) denote the formula associated with $S$ and defined inductively according to Table \ref{pic:1}.
		\end{definition}
		
		Structural sequents $S\vdash T$ such that $S$ is left-sided and $T$ is right-sided are those translatable as formula-sequents $l(S)\vdash r(T)$. These sequents in turn are interpreted in any $\mathcal{L}_{\mathrm{DLE}}$-algebra $\bba$ in the standard way. Hence, for any assignment $v: \mathsf{AtProp}\rightarrow \bba$, we denote by $\val{\cdot}_v$ the unique homomorphic extension of $v$ to the formula algebra,  interpret sequents $l(S)\vdash r(T)$  as inequalities $\val{l(S)}_v\leq \val{r(T)}_v$ and rules $(S_i\vdash T_i\mid i\in I)/S\vdash T$ as implications of the form ``if $\val{l(S_i)}_v\leq \val{r(T_i)}_v$ for every $i\in I$, then $\val{l(S)}_v\leq \val{r(T)}_v$''.

		Under these stipulations, it is routine to check that all axioms and rules of the calculi $\mathbf{DL}$ and $\mathbf{DL}^*$ are satisfied under any assignment. Hence, it is immediate to prove, by induction on the depth of the derivation tree, that
		
		\begin{prop}
			If $S\vdash T$ is $\mathbf{DL}$-derivable (resp.\ $\mathbf{DL}^*$-derivable), then $S$ is left-sided, $T$ is right-sided and $\val{l(S)}_v\leq \val{r(T)}_v$ is satisfied on every perfect $\mathcal{L}_\mathrm{DLE}$-algebra $\bba$ and under any assignment $v: \mathsf{AtProp}\rightarrow \bba$.
		\end{prop}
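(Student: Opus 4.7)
The plan is to prove the proposition by simultaneous induction on the depth of a $\mathbf{DL}$-derivation (resp.\ $\mathbf{DL}^*$-derivation) of $S\vdash T$, maintaining throughout the three invariants: (i) $S$ is left-sided, (ii) $T$ is right-sided, and (iii) $\val{l(S)}_v\leq\val{r(T)}_v$ holds on every perfect $\mathcal{L}_\mathrm{DLE}$-algebra $\bba$ under every assignment $v\colon\mathsf{AtProp}\to\bba$. For the base case, the identity axiom $p\vdash p$ trivially verifies (i) and (ii) since its structural terms are atomic formulas (so their signed generation trees contain no structural connective nodes), and (iii) is the reflexive inequality $\val{p}_v\leq\val{p}_v$. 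The constant axioms arising as the $0$-ary instances of $f_R$ and $g_L$ (namely $H\vdash f$ and $g\vdash K$) are handled analogously.

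For the inductive step, one must verify, rule by rule, that the three invariants are preserved when passing from premises to conclusion. This splits into two tasks. The first task is a purely syntactic verification that sidedness is preserved; this amounts to inspecting how each rule rearranges structural connectives between antecedent and succedent positions, and checking against Table \ref{pic:1} that the appropriate logical connective is associated in each position. The second task is a semantic verification of the preservation of validity: for the display postulates (items 2 and 3 of Definition \ref{def: DL and DL star}), this follows from the residuation/adjunction properties between $\wedge$ and $\rightarrow,\leftarrow$, between $\vee$ and $\pdra,\pdla$, and between each $f\in\mathcal{F}$ (resp.\ $g\in\mathcal{G}$) and its residuals $f^\sharp_i$ (resp.\ $g^\flat_h$) on perfect $\mathcal{L}_\mathrm{DLE}$-algebras; for the necessitation rules (item 4), it follows from the normality conditions of $f^\bba$ and $g^\bba$ stated in Definition \ref{def:DLE}; for the lattice structural rules (item 5), it reduces to the distributive lattice identities; and for the operational introduction rules (items 6--8), soundness is immediate from the translation prescribed by Table \ref{pic:1}.

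The main subtlety, and the place where care is required, is in verifying that the display postulates preserve sidedness. Moving a substructure from precedent to succedent position (or vice versa) effectively inverts the polarity of the corresponding node in the signed generation tree of the enclosing structural term, so the structural connectives adjacent to the moved substructure must be associated with logical connectives appropriate for the newly inverted position. This is precisely what Table \ref{pic:1} is engineered to guarantee: for instance, the display postulate for $H$ with $\varepsilon_f(i)=1$ moves $H$ off the antecedent and introduces $H_i$ on the succedent side of the turnstile; the table then stipulates that $H_i$ in succedent position corresponds to $f^\sharp_i$, which is exactly the residual of $f$ in its $i$-th coordinate, and this residuation is what legitimises the postulate semantically on perfect algebras. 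The analogous pattern holds for every other display postulate (including the variants arising when $\varepsilon_f(i)=\partial$ or when $f,g$ form a dual pair), and once this compatibility has been checked for each, the remaining inductive cases reduce to routine algebraic calculations from the basic theory of residuated and normal operators on perfect distributive lattice expansions.
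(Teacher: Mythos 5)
Your proposal is correct and follows exactly the route the paper takes: the paper dispatches this proposition as a routine induction on the depth of the derivation tree, checking rule by rule that sidedness and the algebraic inequality are preserved, with the display postulates justified by the residuation/adjunction properties available in perfect $\mathcal{L}_\mathrm{DLE}$-algebras. Your write-up simply makes explicit the case analysis (axioms, display postulates, structural rules, operational rules) that the paper leaves implicit, so there is nothing to add.
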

		
		\paragraph{Completeness.} At the end of Section \ref{ssec:expanded language}, we outlined the proof of the completeness of $\mathbf{L}_\mathrm{DLE}$ and $\mathbf{L}_\mathrm{DLE}^*$ w.r.t.\ perfect $\mathcal{L}_\mathrm{DLE}$-algebras. Hence, to show that $\mathbf{DL}$ and $\mathbf{DL}^*$ are complete w.r.t.\ perfect $\mathcal{L}_\mathrm{DLE}$-algebras, it is enough to show that the axioms and rules of $\mathbf{L}_\mathrm{DLE}$ (resp.\ $\mathbf{L}_\mathrm{DLE}^*$) are derivable in $\mathbf{DL}$ (resp.\ $\mathbf{DL}^*$). These verifications are routine. For instance, let $f\in \mathcal{F}$ be binary and s.t.\ $\varepsilon_f = (1, \partial)$. Then the following sequents are derivable in $\mathbf{DL}$:
		\[ f_1^\sharp(A, C)\wedge f_1^\sharp(B, C) \vdash f_1^\sharp(A\wedge B, C) \quad \quad f_1^\sharp(A, B)\wedge f_1^\sharp(A, C)\vdash f_1^\sharp(A, B\wedge C) \]
		\[f_2^\sharp(A\vee B, C)\vdash f_2^\sharp(A, C)\vee f_2^\sharp(B, C) \quad \quad f_2^\sharp(A, B\wedge C) \vdash f_2^\sharp(A, B)\vee f_2^\sharp(A, C). \]
By way of example, a derivation for $f_1^\sharp (A, C) \wedge f_1^\sharp (B, C) \fCenter f_1^\sharp (A \wedge B, C)$ is reported below. 

\begin{center}
\AX$A \fCenter A$
\AX$C \fCenter C$
\BI$f_1^\sharp (A, C) \fCenter H_1 [ A, C ]$
\UI$f_1^\sharp (A, C) \,; f_1^\sharp (B, C) \fCenter H_1 [ A, C ]$
\UI$f_1^\sharp (A, C) \wedge f_1^\sharp (B, C) \fCenter H_1 [ A, C ]$

\UI$H [ f_1^\sharp (A, C) \wedge f_1^\sharp (B, C), C ] \fCenter A$

\AX$B \fCenter B$
\AX$C \fCenter C$
\BI$f_1^\sharp (B, C) \fCenter H_1 [ B, C ]$
\UI$f_1^\sharp (A, C) \,; f_1^\sharp (B, C) \fCenter H_1 [ B, C ]$
\UI$f_1^\sharp (A, C) \wedge f_1^\sharp (B, C) \fCenter H_1 [ B, C ]$
\UI$H [ f_1^\sharp (A, C) \wedge f_1^\sharp (B, C), C ] \fCenter B$

\BI$H [ f_1^\sharp (A, C) \wedge f_1^\sharp (B, C), C ] \,; H [ f_1^\sharp (A, C) \wedge f_1^\sharp (B, C), C ] \fCenter A \wedge B$
\UI$H [ f_1^\sharp (A, C) \wedge f_1^\sharp (B, C), C ] \fCenter A \wedge B$
\UI$f_1^\sharp (A, C) \wedge f_1^\sharp (B, C) \fCenter H_1 [ A \wedge B, C ]$
\UI$f_1^\sharp (A, C) \wedge f_1^\sharp (B, C) \fCenter f_1^\sharp (A \wedge B, C)$
\DisplayProof
\end{center}

		\paragraph{Conservativity.} Let $A\vdash B$ be a $\mathbf{DL}^*$-derivable sequent in the language of $\mathbf{DL}$ (i.e., no operational connective in
		$(\mathcal{F}^*\cup \mathcal{G}^*)\setminus (\mathcal{F}\cup \mathcal{G})$ occurs in the sequent). Hence, by the soundness of $\mathbf{DL}^*$ w.r.t.\
		perfect $\mathcal{L}_\mathrm{DLE}$-algebras, the inequality $A\leq B$ is valid on these algebras. By the completeness of $\mathbf{L}_\mathrm{DLE}$ w.r.t.\
		perfect $\mathcal{L}_\mathrm{DLE}$-algebras, the inequality $A\leq B$ is derivable in $\mathbf{L}_\mathrm{DLE}$, which implies, by the syntactic completeness of $\mathbf{DL}$ w.r.t.\ $\mathbf{L}_\mathrm{DLE}$, that $A\vdash B$ is $\mathbf{DL}$-derivable, as required.

\subsection{Cut elimination and subformula property}
		The calculi $\mathbf{DL}$ and $\mathbf{DL^*}$ are proper display calculi, and hence, by Theorem \ref{thm:meta}, they enjoy Belnap-style cut-elimination and subformula property. 
		\begin{thm}
			The calculi $\mathbf{DL}$ and $\mathbf{DL}^*$ are proper display calculi. 
		\end{thm}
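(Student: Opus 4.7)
The plan is to verify that both $\mathbf{DL}$ and $\mathbf{DL}^\ast$ satisfy conditions C$_1$--C$_8$ from Section \ref{PS:ssec:DisplayLogic} by inspection of the rules in Definition \ref{def: DL and DL star}, and then invoke Theorem \ref{thm:meta}. Conditions C$_1$--C$_5$ and C$_6$--C$_7$ are straightforward to check once the congruence relation among parametric occurrences is specified in the standard way (congruent parameters being denoted by the same variable): each display postulate, structural rule, necessitation rule, and introduction rule is visibly shape-alike, non-proliferating, position-preserving, and closed under substitution of structures for congruent parameters. In particular, C$_5$ holds because in every introduction rule for $f, g, f_i^\sharp, g_h^\flat$ (and for the lattice and bi-intuitionistic connectives) the principal formula occurring in the conclusion is the whole antecedent or succedent, as inspection of the rule schemes shows.

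The only substantive work lies in verifying C$_8$, the eliminability of matching principal constituents. I would proceed by cases on the outermost connective of the cut-formula $A$, restricting to the case where $A$ is principal on both sides of the cut. The cases for $\top,\bot,\wedge,\vee,\rightarrow,\leftarrow,\pdra,\pdla$ are standard and dealt with exactly as in the proper display calculi for (bi-)intuitionistic logic; they reduce a cut on a compound formula to one or two cuts on its immediate subformulas, using the display postulates to bring the relevant substructures into displayed position when needed.

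The genuinely new cases are those in which $A = f(\overline{B})$ for some $f \in \mathcal{F}$, $A = g(\overline{B})$ for some $g \in \mathcal{G}$, or $A$ is headed by some $f_i^\sharp$ or $g_h^\flat$ in $\mathbf{DL}^\ast$. These all follow a uniform pattern dictated by the residuation-shaped design of the introduction rules: on the left the principal formula is introduced by the rule converting the structural connective $H$ (or $K$, $H_i$, $K_h$) into its operational counterpart, while on the right it is introduced by the rule whose premises are the inequalities $X_i\vdash B_i$ for $\varepsilon(i)=1$ and $B_j\vdash X_j$ for $\varepsilon(j)=\partial$. After the reduction, the single cut on $f(\overline{B})$ is replaced by $n_f$ cuts on the $B_i$'s (coordinate by coordinate), each of which has strictly smaller cut-rank, with the structural connective $H(\overline{X})$ taking the place of $f(\overline{B})$ in the lower sequent. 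The cases for $g$, $f_i^\sharp$, $g_h^\flat$ are entirely analogous, with the order-type $\varepsilon$ governing on which side of the turnstile each $B_i$ should appear.

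The main obstacle I anticipate is bookkeeping: keeping the analysis uniform across arbitrary arities and mixed order-types $\varepsilon \in \{1,\partial\}^n$, and ensuring that the display postulates are applied correctly to expose each cut on $B_i$ in precedent/succedent position as dictated by $\varepsilon(i)$. Once the $f$-case is written out schematically, however, the $g$-, $f_i^\sharp$- and $g_h^\flat$-cases are obtained by duality and by swapping the role of $\mathcal{F}$ and $\mathcal{G}$, so the argument is essentially one case repeated four times. Given the length of this verification, I would relegate the detailed case analysis to an appendix (as the paper in fact already signals via Appendix \ref{appendix:cut elim}), and in the main text limit myself to the observation that C$_1$--C$_7$ are immediate by inspection and C$_8$ is verified case-by-case following the schema above.
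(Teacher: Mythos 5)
Your proposal is correct and follows essentially the same route as the paper: C$_1$--C$_7$ are checked by direct inspection of the rule schemas, and C$_8$ is handled by the case-by-case principal-cut reduction (with the $f$- and $g$-cases reducing a cut on $f(\overline{B})$ or $g(\overline{B})$ to coordinate-wise cuts on the $B_i$'s via the display postulates), which the paper relegates to Appendix \ref{appendix:cut elim} exactly as you suggest.
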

		\begin{proof}
			The conditions C$_1$--C$_7$ can be straightforwardly verified by inspection on the rules. As to C$_8$, cf.\ Fact \ref{app:c8} in the Appendix. 
		\end{proof}

\subsection{Properly displayable $\mathcal{L}_{\mathrm{DLE}}$-logics}
\begin{definition}
\label{def: properly displayable logic}
For any DLE-language $\mathcal{L}_{\mathrm{DLE}}$, an $\mathcal{L}_{\mathrm{DLE}}$-logic (cf.\ Definition \ref{def:DLE:logic:general}) is {\em properly displayable} (resp.\ {\em specially displayable}) if it is exactly captured by a display calculus obtained by adding analytic rules (resp.\ special rules)---cf.\ Definition \ref{def:analytic} (resp.\ Definition \ref{def:special})---to the calculus $\mathbf{DL}$ for $\mathcal{L}_{\mathrm{DLE}}$.
\end{definition}
		
		\section{Primitive inequalities and special rules}
		\label{sec:primitive special main strategy}
		In \cite[Theorem 16]{Kracht}, Kracht showed that primitive formulas 
of basic normal/tense modal logic on a classical propositional base can be equivalently transformed into (a set of) special structural rules satisfying the defining conditions of proper display calculi (cf.\ Subsection \ref{PS:para:CanonicalCutElimination}). In the present section, we extend this result to any language $\mathcal{L}_\mathrm{DLE}$. We base this extension on the notion of primitive inequalities. Namely, 
in Subsection \ref{ssec:left right primitive}, we introduce the class of (left- and right-)primitive inequalities in any language $\mathcal{L}_\mathrm{DLE}$ (cf.\ Definition \ref{def:primitive}), and show (cf.\ Lemma \ref{lemma:pri}) that these inequalities can be equivalently (and effectively) transformed into special structural rules (cf.\ in the restricted sense of Definition \ref{def:special}). We also show that special structural rules can be equivalently (and effectively) transformed into primitive inequalities.
		In Subsection \ref{ssec:order theoreic prop of primitive}, we identify the crucial order-theoretic feature induced by the syntactic shape of definite primitive inequalities (cf.\ Lemma \ref{lemma:primitive scattered is operator}), on the basis of which a special ALBA-type reduction for definite primitive inequalities is given (cf.\ Proposition \ref{prop:type1}).
		In Subsection \ref{ssec:main strategy}, we take stock of the previous results and outline the way they will be further extended in Section \ref{sec:extended classes}.
		

		\subsection{Left-primitive and right-primitive inequalities and special rules}
		\label{ssec:left right primitive}

		In what follows, for each connective $f\in \mathcal{F}$ and $g\in \mathcal{G}$, we will write $f(\vec p, \vec q)$ and $g(\vec p, \vec q)$, stipulating that
		$\varepsilon_f(p) = \varepsilon_g(p) = 1$ for all $p$ in $\vec p$, and $\varepsilon_f(q) = \varepsilon_g(q) = \partial$ for all $q$ in $\vec q$. Moreover, we write e.g.\ $f(\vec u/ \vec p, \vec v/\vec q)$ to indicate that the arrays $\vec u$ and $\vec p$ (resp.\ $\vec v$ and $\vec q$) have the same length $n$ (resp.\ $m$) and that, for each $1\leq i \leq n$ (resp.\ for each $1\leq j \leq m$), the formula $u_i$ (resp.\ $v_j$) has been uniformly substituted in $f$ for the variable $p_i$ (resp.\ $q_j$).

		\begin{dfn}[Primitive inequalities]
\label{def:primitive}
For any language $\mathcal{L}_\mathrm{DLE} = \mathcal{L}_\mathrm{DLE}(\mathcal{F}, \mathcal{G})$, the {\em left-primitive} $\mathcal{L}_\mathrm{DLE}$-formulas $\psi$ and {\em right-primitive} $\mathcal{L}_\mathrm{DLE}$-formulas $\phi$ are defined by simultaneous recursion as follows:
			$$\psi:=p \mid \top \mid \lor\mid \land \mid f(\vec \psi/\vec p, \vec \phi/\vec q),$$
			$$\phi:=p \mid \bot \mid \land \mid \lor \mid g(\vec \phi/\vec p, \vec \psi/\vec q).$$
			A left-primitive (resp.\ right-primitive) $\mathcal{L}_\mathrm{DLE}$-formula is {\em definite} if there are no occurrences of $+\vee$ or $-\wedge$ (resp.\ $+\wedge$ or $-\vee$) in its positive generation tree.
			An $\mathcal{L}_\mathrm{DLE}$-inequality $s_1\leq s_2$ is {\em left-primitive} (resp.\ {\em right-primitive}) if both $s_1$ and $s_2$ are left-primitive (resp.\ right-primitive) formulas and moreover:
			\begin{enumerate}
				\item each proposition variable in $s_1$ (resp.\ $s_2$) occurs at most once, in which case we say that $s_1$ (resp.\ $s_2$) is {\em scattered}.
				\item $s_1$ and $s_2$ have the same order-type relative to the variables they have in common.
				\item $s_2$ (resp.\ $s_1$) is $\epsilon$-uniform w.r.t.\ some order-type $\epsilon$ on its occurring variables.
			\end{enumerate}
			Sometimes, the scattered side of a primitive inequality will be referred to as its {\em head} and the other one as its \emph{tail}.
		\end{dfn}
	It immediately follows from the axiomatization of the basic logic $\mathbf{L}_\mathrm{DLE}$ that left-primitive (resp.\ right-primitive) $\mathcal{L}_\mathrm{DLE}$-formulas can be equivalently written in disjunction (resp.\ conjunction) normal form of definite left-primitive (resp.\ right-primitive) formulas.
		The condition that the head of a primitive inequality is scattered implies that the head is $\epsilon$-uniform for the order type $\epsilon$ of its occurring variables in e.g.\ its positive generation tree. Notice that the definition above does not exclude the possibility that some variables which do not occur in the head of a primitive inequality might occur in its tail. However, item 3 of the definition above requires the tail to be uniform in these variables. This observation will be helpful later on in the treatment of these cases (cf.\ proof of Lemma \ref{lemma:pri}).  %
	
\begin{remark}\label{rem:primitive-skeleton-PIA}
The notion of primitive terms provides the first and most basic connection of unified correspondence theory to the characterization problem of the properly displayable DLE-logics (cf.\ Definition \ref{def: properly displayable logic}). Indeed, it can be easily verified by direct inspection that left-primitive terms are both positive Skeleton-terms and negative PIA-terms (cf.\ discussion after Definition \ref{Inducive:Ineq:Def}), and right-primitive terms are both positive PIA-terms and negative Skeleton-terms. In principle, not all positive PIA-terms (or negative Skeleton terms) are right-primitive, since $-\bot$ and $+\top$ are allowed to occur in their positive generation tree, while they are not allowed to occur in $+s$ for any right-primitive term $s$. Likewise, not all negative PIA-terms (or positive Skeleton-terms) are left-primitive, since $+\bot$ and $-\top$ are allowed to occur in their positive generation tree, while they are not allowed to occur in $+s$ for any left-primitive term $s$. 
\end{remark}		
		\begin{example}
			\label{ex:FS1right primitive}
			Let $\mathcal{L}_\mathrm{DLE}(\mathcal{F}, \mathcal{G})$ be s.t.\ $\mathcal{F} = \{\Diamond\}$ and $\mathcal{G} = \{\rightarrow, \Box\}$. Of the following Fischer Servi inequalities (cf.\ \cite{Se77, Se81}),
			\[\Diamond(q\rightarrow p)\leq \Box q\rightarrow \Diamond p \quad \quad \Diamond q\rightarrow \Box p\leq \Box (q\rightarrow p),\]
			the second one is right-primitive, whereas the first one is neither right- nor left-primitive.
		\end{example}
		Early on, in Definition \ref{def:rsls}, left-sided and right-sided structural terms were associated with formulas. In fact, it is not difficult to show, by induction on the shape of left-sided and right-sided structural terms, that the set of definite left-primitive (resp.\ right-primitive) formulas (cf.\ Definition \ref{def:primitive}) is exactly the image of the map $l$ (resp.\ $r$). The inverse maps of $l$ and $r$ are defined as follows:
		\begin{definition}[Structures associated with definite primitive formulas]
			\label{def:structure from primitive}
			Any definite left-primitive formula $s$ and any definite right-primitive formula $t$ is associated with structures $S=l^{-1}(s)$ and $T=r^{-1}(t)$ respectively, by the following simultaneous induction on $s$ and $t$.
			\begin{center}
				\begin{tabular}{l l}
					\begin{tabular}{l}
						if $s = p$ then $S: = \zeta(p)$ \\
						if $s = \top$ then $S: = \mathrm{I}$\\
						if $s = s_1\wedge s_2$ then $S = S_1\ ;\ S_2$\\
						if $s = f(\vec {s'}/\vec p, \vec {t'}/\vec q)$ then $S: = H(\vec {S'}, \vec {T'})$\\
					\end{tabular}
					&
					\begin{tabular}{l}
						if $t = p$ then $T: = \zeta(p)$\\
						if $t = \bot$ then $T: = \mathrm{I}$\\
						if $t = t_1\vee t_2$ then $T = T_1\ ;\ T_2$\\
						if $t = g(\vec {t'}/\vec p, \vec {s'}/\vec q)$ then $T: = K(\vec {T'}, \vec {S'})$\\
					\end{tabular}
				\end{tabular}
			\end{center}
			where $\zeta$ is an injective map from $\mathsf{AtProp}$ to the set of structural variables.
		\end{definition}
		
		\begin{lemma}\label{lemma:pri}
			Every left-primitive (resp.\ right-primitive) inequality $s\leq t$ is semantically equivalent to a set of special structural rules in the display calculus $\mathbf{DL}$.
		\end{lemma}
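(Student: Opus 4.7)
My plan is to argue for left-primitive inequalities; the right-primitive case follows by a symmetric (dual) argument. Given a left-primitive inequality $s\leq t$ with scattered head $s$ and $\epsilon$-uniform tail $t$, the strategy is to convert it into a finite conjunction of special structural rules, one for each disjunct in the disjunctive normal form of $s$. Using the fact, mentioned after Definition~\ref{def:primitive}, that every left-primitive formula is equivalent to a disjunction of definite left-primitive formulas, I rewrite $s$ as $\bigvee_i s_i^{\mathrm{def}}$, so that $s\leq t$ becomes the conjunction of the inequalities $s_i^{\mathrm{def}}\leq t$ over $i$. Each $s_i^{\mathrm{def}}$ inherits the scattered property from $s$.

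For each index $i$, I next eliminate every variable $q$ which occurs in $t$ but not in $s_i^{\mathrm{def}}$: since the inequality $s_i^{\mathrm{def}}\leq t$ is uniform in every such $q$ (as $s_i^{\mathrm{def}}$ does not contain $q$ and $t$ is uniform in $q$), Footnote~\ref{footnote:uniformterms} allows the substitution of $\top^{\epsilon(q)}$ for $q$, yielding an equivalent inequality $s_i^{\mathrm{def}}\leq t^{(i)}$ whose tail contains only variables of $s_i^{\mathrm{def}}$. I then propagate the constants introduced by this substitution through the signed generation tree of $t^{(i)}$ by means of the SLR/SRR simplification rules recorded in Remark~\ref{rmk: management of top and bottom on the wrong side}, obtaining an equivalent $t^{(i)}_\ast$ which is either $\bot$ or a left-primitive formula with no residual introduced constants in problematic positions. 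A further application of DNF yields $t^{(i)}_\ast \equiv \bigvee_j t_{i,j}^{\mathrm{def}}$ with each $t_{i,j}^{\mathrm{def}}$ definite left-primitive and with variables contained in those of $s_i^{\mathrm{def}}$.

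At this point I associate to each $i$ the rule
\[
\frac{\bigl(\,l^{-1}(t_{i,j}^{\mathrm{def}})\vdash Z \ \mid\ 1\leq j\leq m_i\,\bigr)}{l^{-1}(s_i^{\mathrm{def}})\vdash Z},
\]
where $l^{-1}$ is the structural translation of Definition~\ref{def:structure from primitive} (sending propositional variables of the inequality to fresh structural variables) and $Z$ is a fresh structural variable not occurring in any antecedent. Its equivalence to $s_i^{\mathrm{def}}\leq t^{(i)}_\ast$ follows by a standard cut-based argument: the premises combine via repeated applications of $\por_L$ into a sequent whose antecedent encodes $\bigvee_j t_{i,j}^{\mathrm{def}}$, which cut with $l^{-1}(s_i^{\mathrm{def}})\vdash \bigvee_j t_{i,j}^{\mathrm{def}}$ gives the conclusion; conversely, instantiating $Z$ by the formula $\bigvee_j t_{i,j}^{\mathrm{def}}$ makes every premise derivable by reflexivity and some display postulates. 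The rule clearly matches the second form of Definition~\ref{def:special} with $Z$ as the designated succedent variable; the remaining conditions C$_2$--C$_7$ follow by inspection, and C$_1$ is secured by the variable-containment established in the previous step.

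The main obstacle I foresee is the treatment of the constants introduced by the uniformity-based substitution. Depending on the position of the eliminated $q$ in the signed generation tree of $t$, these occurrences of $\top$ or $\bot$ may wind up as leaves of SLR- or SRR-subterms where they are not permitted by the definition of primitive formulas. The systematic propagation afforded by Remark~\ref{rmk: management of top and bottom on the wrong side} resolves this, either by letting the constants cascade upward until they collapse the whole tail to $\bot$ (in which case the associated rule degenerates to a zero-premise rule still of special shape) or by absorbing them in join/meet contexts; verifying that this procedure always terminates with a genuinely left-primitive (possibly trivial) formula is the technical core of the argument.
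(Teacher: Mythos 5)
Your proof is correct and takes essentially the same route as the paper's: decompose the scattered head into definite disjuncts, eliminate redundant tail variables via the uniformity substitution of Footnote~\ref{footnote:uniformterms} (collapsing or absorbing the introduced constants), and associate with each head disjunct one special rule whose premises are the definite tail disjuncts against a fresh structure variable, the equivalence resting on the same fresh-variable/instantiation (Ackermann-style) argument as the paper's quantified chain, followed by inspection of C$_1$--C$_7$. The only difference worth noting is that you eliminate tail variables per head-disjunct rather than only those missing from the whole head, which is precisely the bookkeeping that guarantees C$_1$ for each individual rule.
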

		\begin{proof}
			Assume that $s\leq t$ is right-primitive, and that both $s$ and $t$ are in conjunction normal form, that is, $s = \bigwedge_{i \leq n} s_i$ and
			$t = \bigwedge_{j \leq k} t_j$ where $s_i$ and $t_j$ are definite right-primitive formulas for any $i\leq n$ and any $j\leq k$. If some variables occur in $s$ which do not occur in $t$, then item 3 of Definition \ref{def:primitive} guarantees that $s$, and hence the whole inequality, is uniform in these variables. Hence,  as discussed in Footnote \ref{footnote:uniformterms}, the inequality $s\leq t$ can be transformed into some inequality $s'\leq t$ in which each positive  (resp.\ negative) occurrence of these variables has been suitably replaced by  $\top$ (resp.\ $\bot$). The assumption that each term $s_i$ is definite right-primitive implies that each term in which the substitution has been effected is equivalent to $\top$, and hence can be removed from the conjunction normal form. If the substitution has been effected on each $s_i$, then the inequality $s\leq t$ is equivalent to $\top\leq t$, which can be equivalently transformed into the $0$-ary rule $\mathrm{I}\vdash T$, where $T: = r^{-1}(t)$ as in Definition \ref{def:structure from primitive}, which is immediately verified to be analytic.
Assume now that all the variables which occur in $s$ occur as well in $t$.
 The following chain of equivalences is sound on any $\mathcal{L}_\mathrm{DLE}$-algebra $\bbA$:
			\begin{center}
				
				\begin{tabular}{c l l}
					& $\forall \vec p[s\leq t]$ \\
					iff & $\forall \vec p\forall p[p\leq s\Rightarrow p\leq t]$ & ($p$ fresh proposition variable)\\
					iff & $\forall \vec p\forall p[p\leq \bigwedge_{i \leq n} s_i\Rightarrow p\leq \bigwedge_{j \leq k} t_j]$\\
					iff & $\forall \vec p\forall p[\bigamp_{i\leq n} p\leq s_i\Rightarrow \bigamp_{j \leq k} p\leq t_j]$\\
					iff & $ \bigamp_{j \leq k}\Big (\forall \vec p\forall p[\bigamp_{i\leq n} p\leq s_i\Rightarrow p\leq t_j]\Big ).$ &\\
					
				\end{tabular}
			\end{center}
			Recalling  the definition of satisfaction of rules of $\mathbf{DL}$ on algebras (cf.\ Subsection \ref{ssec:soundness}), the chain of equivalences above proves that for every perfect $\mathcal{L}_\mathrm{DLE}$-algebra $\bba$, the validity of $s\leq t$ on $\bba$ is equivalent to the simultaneous validity on $\bba$ of the following rules:
			\[
			\Big ( \mbox{\AxiomC{$(X\vdash S_i\mid i\leq n)$}
				\UnaryInfC{$X\vdash T_j$}
				\DisplayProof}
			\mid j\leq k\Big)
			\]
			where for every $i\leq n$ and $j\leq k$, the structures $S_i$ and $T_j$ are the ones associated with $s_i$ and $t_j$ respectively, as indicated in Definition \ref{def:structure from primitive}.
			With a similar argument, it can be shown that if $s\leq t$ is left-primitive and both $s$ and $t$ are in disjunction normal form (that is, $s = \bigvee_{i \leq n} s_i$ and
			$t = \bigvee_{j \leq k} t_j$ where $s_i$ and $t_j$ are definite left-primitive formulas for any $i\leq n$ and any $j\leq k$), the validity of $s\leq t$ on $\bba$ is equivalent to the simultaneous validity on $\bba$ of the following rules:
			\[ \Big ( \mbox{\AxiomC{$(T_j\vdash Y\mid j\leq k)$}
				\UnaryInfC{$S_i\vdash Y$}
				\DisplayProof}
			\mid i\leq n\Big),\]
			where for every $i\leq n$ and $j\leq k$, the structures $S_i$ and $T_j$ are the ones associated with $s_i$ and $t_j$ respectively, as indicated in Definition \ref{def:structure from primitive}.
			It remains to be shown that these rules are analytic, i.e.\ that they satisfy conditions C$_1$-C$_7$. Condition C$_1$ follows from the assumption that all the variables which occur in the tail occur as well in the head. C$_5$ imposes restrictions on the introduction of formulas, and hence is vacuously true on structural rules. Conditions C$_2$, C$_6$, and C$_7$ are immediate. Condition C$_3$ follows from the requirement that every proposition variable occurs only once in the head of a primitive inequality. Finally, condition C$_4$ follows from the requirement that the formulas have the same order-type on the variables they have in common.
			\end{proof}
		
		Notice that the rules obtained from primitive inequalities in the way described above have the following special cases:
		\begin{itemize}
			\item if $s\leq t$ is a left-primitive (resp.\ right-primitive) inequality such that $t$ (resp.\ $s$) is definite, then the corresponding set of rules consists of {\em unary} rules;
			
			\item if $s\leq t$ is a left-primitive (resp.\ right-primitive) inequality $s\leq t$ such that $s$ (resp.\ $t$) is definite, then
			the corresponding set of rules consists of {\em one single} rule;
			
			\item if $s\leq t$ is a left-primitive (resp.\ right-primitive) inequality $s\leq t$ such that both $s$ and $t$ are definite, then the
			the corresponding set of rules consists of {\em one single} {\em unary} rule.
		\end{itemize}

		The other direction is also true:
		
		\begin{lemma}\label{lemma:spetopri} 
			Every special structural rule in the language of $\mathbf{DL}$ is semantically equivalent to some left-primitive or right-primitive inequality.\footnote{Notice that translating rules as axioms of the original DLE-language instead of as inequalities (as done e.g.\ in \cite[Theorem 4.5]{CiRa14}) is possible only if the basic logic has an implication-type connective with modus ponens. In the present logical setting this is not possible in general.}
		\end{lemma}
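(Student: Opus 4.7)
The result is the converse of Lemma~\ref{lemma:pri}, and the plan is to reverse its construction. I focus on special rules of the first form (the second is dual). Let the rule be
\[
\frac{(X\vdash T_i \mid 1\leq i\leq n)}{X\vdash T},
\]
with $X$ not occurring in any $T_i$ or in $T$.

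First, I would reduce the rule to a single inequality. Since $X$ does not appear in any $T_i$ or in $T$, a standard Ackermann-type argument (taking the value of $X$ to be $\bigwedge_i r(T_i)$ in one direction and using monotonicity in the other) shows that the rule is semantically equivalent on perfect $\mathcal{L}_\mathrm{DLE}$-algebras to the inequality
\[
\bigwedge_{i=1}^n r(T_i)\leq r(T),
\]
where $r$ is the right-sided translation of structural terms to formulas (Definition~\ref{def:rsls}). Any variable occurring in $T$ but in no $T_i$ can be eliminated by uniform substitution of an appropriate lattice constant, as in Footnote~\ref{footnote:uniformterms}, leaving an inequality in which every right-hand-side variable also occurs on the left.

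Second, I would verify that this is a right-primitive inequality (in the extended language $\mathcal{L}^*_\mathrm{DLE}$, with the definition of primitive being applied to $\mathcal{L}^*_\mathrm{DLE}$ in the natural way). By direct inspection of Table~\ref{pic:1}, the map $r$ sends each right-sided structural connective to a logical connective in $\mathcal{F}^*\cup\mathcal{G}^*$ whose order-type matches the structural polarity; hence $r$ recursively produces formulas conforming to the grammar of right-primitive formulas in Definition~\ref{def:primitive}. The scatteredness of the head $r(T)$ follows from condition C$_3$ combined with the convention that congruent parameters are denoted by the same letter: these together forbid any variable appearing in some $T_i$ from occurring more than once in $T$. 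The matching order-types on shared variables and the $\varepsilon$-uniformity of the tail $\bigwedge_i r(T_i)$ both follow from condition C$_4$ (position-alikeness).

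The main obstacle is the verification in the second step of the scatteredness condition, which rests on a careful reading of the congruence relation on parameters together with Belnap's analyticity conditions C$_1$, C$_3$, and C$_4$. The dual argument, starting from a special rule of the second form, produces a left-primitive inequality by replacing $r$ with the left-sided translation $l$.
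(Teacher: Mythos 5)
Your proposal is correct and takes essentially the same route as the paper's own proof: translate via $r$, use the Ackermann-style argument (instantiating the antecedent variable as the meet of the premises' translations) to reduce the rule to $\bigwedge_i r(T_i)\leq r(T)$, and then obtain scatteredness of the head from C$_3$ and the matching order-types/uniformity from C$_4$. The only divergence is your preliminary elimination of variables occurring in $T$ but in no $T_i$, which is harmless but unnecessary, since Definition \ref{def:primitive} imposes no condition on head-only variables (only the tail must be $\epsilon$-uniform in its occurring variables, which C$_4$ already provides).
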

		\begin{proof}
			Let us treat the case in which the special rule is of the form \[
			\mbox{\AxiomC{$(X\vdash S_i\mid i\leq n)$}\RightLabel{$\rho$}
				\UnaryInfC{$X\vdash T$}
				\DisplayProof},
			\]
			where $X$ does not occur in any $S_i$ nor in $T$. Let $l(X)=p$ and let $\vec{q}$ be the variables that appear in $r(S_i)$ and $r(T)$.
			As discussed in Section \ref{ssec:soundness}, the semantic validity of the rule above can be expressed as follows: $$\forall p\forall\vec{q}[\bigamp_{1\leq i\leq n}(p\leq r(S_i))\implies p\leq r(T)].$$ The fact that $X$ does not occur in any $S_i$ nor in $T$ implies that $p$ does not occur in $r(S_i)$ and $r(T)$. Then the above quasi-inequality can be equivalently rewritten as follows: $$\forall\vec{q}[\bigwedge_{i\leq n} r(S_i)\leq r(T)].$$ The inequality between brackets is right-primitive: indeed, similarly to what has been discussed above Definition \ref{def:structure from primitive} it is not difficult to show that $\bigwedge_{i\leq n} r(S_i)$ and $r(T)$ are right-primitive terms. Moreover, the assumption that $\rho$ is special implies that it is analytic, and hence $\rho$ satisfies conditions C$_1$-C$_7$. Condition C$_3$ guarantees that $r(T)$ is scattered and hence item 1.\ of Definition \ref{def:primitive} is satisfied. Condition C$_1$ guarantees Condition C$_4$ guarantees that $\bigwedge_{i\leq n} r(S_i)$ and $r(T)$ are uniform w.r.t.\ the same order-type and hence items 2.\ and 3.\ are satisfied.
		\end{proof}
		
		\begin{example}
			\label{FS2 from axiom to rule}
			Let $\mathcal{F} = \{\Diamond\}$ and $\mathcal{G} = \{\rightarrow, \Box\}$. The logical connectives of the display calculi $\mathbf{DL}$ and $\mathbf{DL}^*$ associated with the basic $\mathcal{L}_\mathrm{DLE}(\mathcal{F}, \mathcal{G})$-logic can be represented synoptically as follows:
			
			\begin{center}
				\begin{tabular}{|r|c|c|c|c|c|c|c|c|c|c|}
					\hline
					\scriptsize{Structural symbols} & \mc{2}{c|}{I} & \mc{2}{c|}{$;$} & \mc{2}{c|}{$>$} & \mc{2}{c|}{$\circ$} & \mc{2}{c|}{$\bullet$}\\
					\hline
					\scriptsize{Operational symbols} & $\top$ & $\bot$ & $\pand$ & $\por$ & $(\pdra)$ & \ $\pra$ \ & $\Diamond$ & $\Box$ & $(\Diamondblack)$ & $(\blacksquare)$\\
					\hline
				\end{tabular}
			\end{center}
			Below we illustrate schematically how to apply the procedure above to the Fischer Servi inequality
			$\Diamond q\rightarrow \Box p\leq \Box (q\rightarrow p)$, which is right-primitive (cf.\ Example \ref{ex:FS1right primitive}):
			
			\[\Diamond q\rightarrow \Box p\leq \Box (q\rightarrow p) \quad \rightsquigarrow \quad \frac{x\vdash \Diamond q\rightarrow \Box p}{x\vdash \Box (q\rightarrow p)} \quad \rightsquigarrow \quad \frac{X\vdash \circ Z > \circ Y}{X\vdash \circ (Z > Y)}.\]
		\end{example}

		\subsection{Order-theoretic properties of primitive inequalities}
		\label{ssec:order theoreic prop of primitive}

		The following lemma identifies the most important order-theoretic feature induced by the syntactic shape of primitive inequalities.
		Notice that, by definition, any scattered term $s$ is monotone, hence $s$ can be associated with an order-type on its variables, which is denoted $\varepsilon_s$. In these cases, we will sometimes write $s(\vec p, \vec q)$ with the convention that $\varepsilon_s(p) = 1$ for any $p$ in $\vec p$, and $\varepsilon_s(q) = \partial$ for any $q$ in $\vec q$. Also, in what follows we will find it convenient to represent an array $\vec{s}=(s_1,\ldots,s_n)$ as $(\overrightarrow{s_{-i}}, s_i)$ for $1\leq i\leq n$, where $\overrightarrow{s_{-i}}:=(s_1,\ldots, s_{i-1}, s_{i+1},\ldots,s_n)$. Finally, we write e.g.\ $s(\vec u/ \vec p)$ to indicate that the arrays $\vec u$ and $\vec p$ have the same length $n$ and that, for each $1\leq i \leq n$, the formula $u_i$ has been uniformly substituted in $s$ for the variable $p_i$.
		\begin{lemma}
			\label{lemma:primitive scattered is operator}
			For every language $\mathcal{L}_\mathrm{DLE}$, any definite and scattered left-primitive (resp.\ right-primitive) $\mathcal{L}_\mathrm{DLE}$-term $s$ and any $\mathcal{L}_\mathrm{DLE}$-algebra $\bba$, the term function $s^\bba: \bba^{\varepsilon_s}\to \bba$ is a (dual) operator, and if $\bba$ is perfect, then $s^\bba: \bba^{\varepsilon_s}\to \bba$ is a complete (dual) operator.\footnote{An operation on a lattice $\bba$ is an {\em operator} (resp.\ a {\em dual operator}) if it preserves finite joins (resp.\ meets) in each coordinate. Notice that this condition includes the preservation of the empty join $\bot$ (resp.\ the empty meet $\top$). An operation on a complete lattice is a {\em complete operator} (resp.\ a {\em complete dual operator}) if it preserves all joins (resp.\ meets) in each coordinate.}
		\end{lemma}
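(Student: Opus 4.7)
The plan is to prove the two statements simultaneously by structural induction, showing that $s^{\bbA}$ is an operator for every definite scattered left-primitive term $s$ and that $t^{\bbA}$ is a dual operator for every definite scattered right-primitive term $t$. By the complete symmetry between the two clauses, it suffices to discuss the left-primitive case; the right-primitive case is obtained from it by dualizing $\wedge/\vee$, $\bot/\top$, $\mathcal{F}/\mathcal{G}$ and $+/-$.

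The base cases are immediate: if $s=p$ then $s^{\bbA}$ is the identity, which is an operator; if $s=\top$ then $s^{\bbA}$ is a constant of empty arity, which is vacuously an operator. The case $s=s_1\vee s_2$ is excluded by definiteness, since it would introduce $+\vee$ in $+s$. For $s=s_1\wedge s_2$, scatteredness forces $s_1$ and $s_2$ to depend on disjoint sets of variables, so for each variable $x$ of $s$ only one of the two factors depends on $x$; the preservation of joins in $x$ then follows from the inductive hypothesis applied to that factor, together with distributivity ($a\wedge(b\vee c)=(a\wedge b)\vee(a\wedge c)$ and $a\wedge\bot=\bot$).

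The substantial step is $s=f(\vec{\psi}/\vec{p},\vec{\phi}/\vec{q})$. The first task is to verify that each $\psi_i$ is definite left-primitive and each $\phi_j$ is definite right-primitive: in $+s$ the subtree rooted at $\phi_j$ inherits the negative sign because $\varepsilon_f(q_j)=\partial$, and the condition ``no $+\vee, -\wedge$ in $-\phi_j$'' translates under sign-flipping into ``no $-\vee, +\wedge$ in $+\phi_j$'', which is definiteness of the right-primitive $\phi_j$; scatteredness of $s$ is likewise inherited by every subterm, and distinct subterms have disjoint variables. By the inductive hypothesis each $\psi_i^{\bbA}$ is an operator and each $\phi_j^{\bbA}$ is a dual operator. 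Combining this with the normality of $f$ (join-preservation in the positive coordinates, meet-reversal in the negative coordinates) and the order-type arithmetic $\varepsilon_s(x)=\varepsilon_f(i)\cdot \varepsilon_{\psi_i}(x)$ when $x$ occurs in $\psi_i$ (respectively $\varepsilon_s(x)=\partial\cdot \varepsilon_{\phi_j}(x)$ when $x$ occurs in $\phi_j$), a short four-way case split (on whether $x$ sits in some $\psi_i$ or some $\phi_j$, and on the sign of the corresponding order-type) shows that $s^{\bbA}$ preserves the join of $\bbA^{\varepsilon_s}$ in each coordinate.

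The main obstacle is precisely the sign-algebra bookkeeping in the connective case: one must check in each subcase that composing an (operator or dual operator) subterm with the appropriate coordinate of $f$, which may preserve or reverse meets, yields the correct preservation property for $s$ in the coordinate carried by that subterm. Once the order-type arithmetic is set up, each of the four subcases is a single line. The `perfect' clause requires no new ideas: perfect distributive lattices are completely distributive, the identity and constants preserve arbitrary joins/meets, and by definition the primitive connectives of a perfect DLE are complete operators or complete meet-reversers (respectively complete dual operators or complete join-reversers) in each coordinate, so the inductive argument lifts verbatim from finite to arbitrary joins.
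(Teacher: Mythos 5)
Your proof is correct and follows essentially the same route as the paper: a simultaneous structural induction in which definiteness, scatteredness and (left/right) primitiveness are shown to pass to the immediate subterms, the inductive hypothesis gives that these are (dual) operators, and this is combined with the normality of $f$ (resp.\ $g$) and the order-type bookkeeping, with the perfect case lifting to complete (dual) operators. Your explicit treatment of the $s_1\wedge s_2$ case via distributivity and of the sign-flip argument for the $\phi_j$ subterms merely fills in details the paper dispatches as ``immediate'' or ``similar arguments''.
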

		\begin{proof}
			By induction on the complexity of $s$. If $s$ is a constant or a proposition variable, the verification of the statement is immediate. Let $s= f(\vec{u},\vec{v}) = f(\vec{u}/\vec {p},\vec{v}/\vec {q})$. The assumptions that $s$ is definite, left-primitive and scattered and those on the order-type of $s$ imply that each $u$ in $\vec u$ is definite, left-primitive, and scattered, and each $v$ in $\vec v$ is definite, right-primitive, and scattered. Hence, by induction hypothesis, the term function $u^\bba:\bba^{\varepsilon_s}\to\bba$ is an operator for each $u$, and $v^\bba:\bba^{\varepsilon^\partial_s}\to\bba$ is a dual operator for each $v$. Let $r$ be a variable occurring in $s$, and assume that the only occurrence of $r$ belongs to a subterm $u_i$. If $\varepsilon_s(r) = 1$, then $\varepsilon_{u_i}(r)=1$, and hence
			\begin{center}
				\begin{tabular}{r c l l}
					$f(\vec{u}_{-i},u_i[(\bigvee_{j\in I}\phi_j)/r],\vec{v})$ & $=$ & $f(\vec{u}_{-i}, (\bigvee_{j\in I}u_i[\phi_j/r]),\vec{v})$ & (induction hypothesis)\\
					& $=$ & $\bigvee_{j\in I}f(\vec{u}_{-i},u_i[\phi_j/r],\vec{v})$.\\
				\end{tabular}
			\end{center}
			
			If $\varepsilon_s(r) = \partial$, then $\varepsilon_{u_i}(r)=\partial$, hence
			\begin{center}
				\begin{tabular}{r c l l}
					$f(\vec{u}_{-i},u_i[(\bigwedge_{j\in I}\phi_j)/r],\vec{v})$ & $=$ & $f(\vec{u}_{-i}, (\bigvee_{j\in I}u_i[\phi_j/r]),\vec{v})$ & (induction hypothesis)\\
					& $=$ & $\bigvee_{j\in I}f(\vec{u}_{-i},u_i[\phi_j/r],\vec{v})$.\\
				\end{tabular}
			\end{center}
			The remaining cases can be proven with similar arguments.
			%
		\end{proof}
		
		\begin{cor}\label{cor:rule:type:1}
			The following rules are sound and invertible in perfect DLEs, and derivable in ALBA for any definite scattered left-primitive term $s(\vec{p},\vec{q})$ and definite scattered right-primitive term $t(\vec{p},\vec{q})$:
			\begin{center}
				\begin{tabular}{ccc}
					\AxiomC{$\nomj\leq s(\vec{p},\vec{q})$}\LeftLabel{(Approx($s$))}
					\UnaryInfC{$\nomj\leq s(\vec{\nomi},\vec{\cnomm})\quad\vec{\nomi}\leq\vec{p}\quad\vec{q}\leq\vec{\cnomm}$}
					\DisplayProof
					&&
					\AxiomC{$t(\vec{p},\vec{q})\leq\cnomm$}\RightLabel{(Approx($t$))}
					\UnaryInfC{$t(\vec{\cnomn},\vec{\nomi})\leq\cnomm\quad\vec{p}\leq\vec{\cnomm}\quad \vec{\nomi}\leq\vec{q}$}
					\DisplayProof \\
				\end{tabular}
			\end{center}
		\end{cor}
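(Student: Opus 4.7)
\medskip

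The plan is to establish the corollary by induction on the complexity of $s$ (resp.\ $t$), treating both statements (soundness, invertibility, and ALBA-derivability) simultaneously. The key enabling fact is Lemma \ref{lemma:primitive scattered is operator}: for $\bbA$ perfect, the term function $s^\bbA: \bbA^{\varepsilon_s}\to \bbA$ is a complete operator (and $t^\bbA$ is a complete dual operator). Combined with the facts that any element of a perfect $\mathcal{L}_\mathrm{DLE}$-algebra is the join of the completely join-irreducibles below it and the meet of the completely meet-irreducibles above it, and that nominals (resp.\ conominals) interpret as completely join-prime (resp.\ meet-prime) elements, this immediately handles semantic soundness and invertibility.

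For soundness of (Approx($s$)) on any perfect $\mathcal{L}_\mathrm{DLE}$-algebra, assume $\nomj\leq s(\vec p,\vec q)$. By complete operator-ness,
\[ s(\vec p,\vec q) = \bigvee\{ s(\vec\nomi,\vec\cnomm)\mid \vec\nomi\leq \vec p\ \text{and}\ \vec q\leq \vec\cnomm\}, \]
where $\vec\nomi$ (resp.\ $\vec\cnomm$) range over completely join-irreducibles (resp.\ meet-irreducibles). Since $\nomj$ is completely join-prime, some witnessing $\vec\nomi,\vec\cnomm$ exist satisfying all three conclusion-inequalities. Invertibility is immediate from monotonicity of $s$ in $\varepsilon_s$: from $\vec\nomi\leq \vec p$ and $\vec q\leq \vec\cnomm$ (which are $\varepsilon_s$-uniform bounds on the inputs) we get $s(\vec\nomi,\vec\cnomm)\leq s(\vec p,\vec q)$, whence $\nomj\leq s(\vec p,\vec q)$ by transitivity. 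The argument for (Approx($t$)) is order-dual.

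For ALBA-derivability, the induction proceeds as follows. In the base case $s=p$ (or $s$ constant), the rule collapses to a tautology: taking $\nomi:=\nomj$ makes $\nomj\leq\nomi$ and $\nomi\leq p$ equivalent to the premise. For the inductive step $s=f(\vec u/\vec p_{\!f},\vec v/\vec q_{\!f})$ with $\varepsilon_f$ correctly placing the coordinates, apply one step of ALBA's basic approximation rule for $f$ to $\nomj\leq f(\vec u,\vec v)$: this introduces fresh nominals $\vec{\nomi'}$ and fresh conominals $\vec{\cnomn'}$ (one per coordinate) yielding $\nomj\leq f(\vec{\nomi'},\vec{\cnomn'})$ together with $\nomi'_k\leq u_k$ for each coordinate where $\varepsilon_f = 1$ and $v_\ell\leq \cnomn'_\ell$ for each coordinate where $\varepsilon_f = \partial$. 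Since $s$ is definite scattered left-primitive, each $u_k$ is itself definite scattered left-primitive and each $v_\ell$ is definite scattered right-primitive, on \emph{disjoint} subarrays of variables. The induction hypothesis applied to each $u_k$ (via (Approx($u_k$))) and to each $v_\ell$ (via (Approx($v_\ell$))) yields $\nomi'_k\leq u_k(\vec\nomi,\vec\cnomm)$ plus $\vec\nomi\leq \vec p$, $\vec q\leq \vec\cnomm$ on the variables of $u_k$, and dually for $v_\ell$. Finally, the intermediate $\nomi'_k$ and $\cnomn'_\ell$ are eliminated by (right and left) Ackermann rule applications, substituting $u_k(\vec\nomi,\vec\cnomm)$ for $\nomi'_k$ and $v_\ell(\vec\cnomm,\vec\nomi)$ for $\cnomn'_\ell$ in $\nomj\leq f(\vec{\nomi'},\vec{\cnomn'})$, which is in Ackermann shape since $f$ is monotone in the $\nomi'_k$-coordinates and antitone in the $\cnomn'_\ell$-coordinates (by choice of $\varepsilon_f$). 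The resulting inequality is precisely $\nomj\leq s(\vec\nomi,\vec\cnomm)$, as required. The case for $t$ is order-dual, using the approximation rule for $g\in\mathcal{G}$.

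The main obstacle will be the bookkeeping in the inductive step: we must verify that the variable placements in $u_k, v_\ell$ give exactly the order-type discipline required to apply Ackermann's lemma in the elimination phase, and that the scatteredness assumption is what guarantees each fresh nominal/conominal is introduced once and eliminated cleanly, without the cross-coordinate entanglement that would otherwise obstruct a single clean application. This is precisely the order-theoretic role played by Lemma \ref{lemma:primitive scattered is operator}, which ensures semantically that the flattened substitution captured by the rule is sound, while the inductive ALBA derivation shows it is syntactically realisable via standard ALBA manipulations.
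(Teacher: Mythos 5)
Your proposal is correct and takes essentially the same approach as the paper, whose proof merely observes that soundness and invertibility are immediate from Lemma \ref{lemma:primitive scattered is operator} and that ALBA-derivability follows by a straightforward induction on $s$ and $t$ (details omitted); your argument supplies exactly those details, using the complete (dual) operator property plus join-primeness of $\nomj$ for the hard direction and monotonicity for invertibility. The only adjustment needed is terminological: the elimination of the intermediate $\nomi'_k$ and $\cnomn'_\ell$ is not an application of the Ackermann rules (which act on propositional variables, not on nominals or conominals, so the instantiation direction of the Ackermann lemma is not available for them), but is simply the inverse, invertible direction of the basic approximation rules, justified by exactly the monotonicity/antitonicity observation you already make.
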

		\begin{proof}
			The first part of the statement is an immediate consequence of Lemma \ref{lemma:primitive scattered is operator}. The second part can be straightforwardly shown by induction on $s$ and $t$. The details of the proof are omitted.
		\end{proof}
		\begin{prop}\label{prop:type1}
			For every language $\mathcal{L}_\mathrm{DLE}$ any left-primitive $\mathcal{L}_\mathrm{DLE}$-inequality $s(\vec p, \vec q)\leq s'(\vec p, \vec q)$ and any right-primitive $\mathcal{L}_\mathrm{DLE}$-inequality $t'(\vec p, \vec q)\leq t(\vec p, \vec q)$,
			
			\begin{enumerate}
				\item if $s(\vec p, \vec q)$ is definite, then the following are equivalent for every perfect DLE $\bba$:
				\begin{enumerate}
					\item $\bba\models s(\vec p, \vec q)\leq s'(\vec p, \vec q)$;
					\item $\bba\models s(\vec \nomi, \vec \cnomm)\leq s'(\vec \nomi, \vec \cnomm)$.
				\end{enumerate}
				\item If $t(\vec p, \vec q)$ is definite, then the following are equivalent for every perfect DLE $\bba$:
				\begin{enumerate}
					\item $\bba\models t'(\vec p, \vec q)\leq t(\vec p, \vec q)$;
					\item $\bba\models t'(\vec \cnomm, \vec \nomi)\leq t(\vec \cnomm, \vec \nomi)$.
				\end{enumerate}
			\end{enumerate}
		\end{prop}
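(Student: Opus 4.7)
\textbf{Proof plan for Proposition \ref{prop:type1}.} I will prove item 1 (the left-primitive case); item 2 follows by an order-dual argument, exchanging the roles of join-irreducibles and meet-irreducibles and the roles of (complete) operators and (complete) dual operators. The direction (a) $\Rightarrow$ (b) is essentially immediate: statement (a) is implicitly universally quantified over all assignments of $\vec p, \vec q$ in $\bba$, and (b) is obtained by specializing those assignments to nominals and conominals, whose interpretations (completely join-irreducible and completely meet-irreducible elements) are elements of $\bba$.

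The substantive direction is (b) $\Rightarrow$ (a). Fix an arbitrary valuation of $\vec p, \vec q$ in $\bba$. Since $\bba$ is perfect (cf.\ Footnote \ref{footnote: perfect DLE}), I write
\[
p_k \;=\; \bigvee\{\nomi \in J^{\infty}(\bba) \mid \nomi \leq p_k\}
\qquad\text{and}\qquad
q_\ell \;=\; \bigwedge\{\cnomm \in M^{\infty}(\bba) \mid q_\ell \leq \cnomm\}.
\]
Because $s$ is a definite scattered left-primitive term, Lemma \ref{lemma:primitive scattered is operator} tells us that $s^\bba : \bba^{\varepsilon_s} \to \bba$ is a complete operator, i.e.\ preserves arbitrary joins in each coordinate of $\bba^{\varepsilon_s}$. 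Unfolding this according to $\varepsilon_s$ (each $p_k$-coordinate has $\varepsilon_s(p_k)=1$ and each $q_\ell$-coordinate has $\varepsilon_s(q_\ell)=\partial$), this yields
\[
s(\vec p, \vec q) \;=\; \bigvee_{\substack{\nomi_k \leq p_k \\ q_\ell \leq \cnomm_\ell}} s(\vec \nomi, \vec \cnomm).
\]
By assumption (b), each term of this join is below the corresponding $s'(\vec \nomi, \vec \cnomm)$. To close the argument I use the $\epsilon$-uniformity of the tail $s'$ (cf.\ item~3 of Definition~\ref{def:primitive}): since the order-type $\epsilon$ governing $s'$ agrees with $\varepsilon_s$ on the variables shared with $s$, the term function $s'^\bba$ is monotone in each $\varepsilon_s(p_k)=1$ coordinate and antitone in each $\varepsilon_s(q_\ell)=\partial$ coordinate; hence $s'(\vec \nomi, \vec \cnomm)\leq s'(\vec p, \vec q)$ for every $\nomi_k\leq p_k$ and $q_\ell\leq\cnomm_\ell$. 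Chaining these inequalities,
\[
s(\vec p, \vec q) \;=\; \bigvee_{\vec\nomi,\vec\cnomm} s(\vec \nomi, \vec \cnomm) \;\leq\; \bigvee_{\vec\nomi,\vec\cnomm} s'(\vec \nomi, \vec \cnomm) \;\leq\; s'(\vec p, \vec q),
\]
which is (a).

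Two small technical points need attention. First, variables that occur only in the tail $s'$ are not reached by the decomposition driven by $s$, but they too are $\epsilon$-uniform by Definition~\ref{def:primitive}, so the same trick applies: replace each such variable by the corresponding join of nominals or meet of conominals and use monotonicity of $s'$ in that variable to re-absorb. Second, the quasi-equality $s(\vec p, \vec q) = \bigvee_{\vec\nomi,\vec\cnomm} s(\vec \nomi, \vec \cnomm)$ requires the complete-operator property simultaneously in all coordinates; this follows by iterating coordinatewise preservation, which is exactly what Lemma~\ref{lemma:primitive scattered is operator} delivers for definite scattered left-primitive terms. The main obstacle is thus not any deep calculation but ensuring that the order-type bookkeeping (scatteredness of the head, $\epsilon$-uniformity of the tail, agreement of the order-types on shared variables) is used exactly where needed; once this is in place, the argument is a textbook first-approximation step driven by the join-density of $J^{\infty}(\bba)$ and the meet-density of $M^{\infty}(\bba)$ in the perfect DLE $\bba$.
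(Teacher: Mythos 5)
Your main argument is correct and is in substance the paper's own proof: the paper's ALBA chain consists of a first-approximation step (join-density of $J^{\infty}(\bba)$), the approximation rule Approx($s$) of Corollary \ref{cor:rule:type:1} (whose soundness is exactly the complete-operator property of the definite scattered head given by Lemma \ref{lemma:primitive scattered is operator}), and an Ackermann step (whose soundness is exactly the monotonicity/antitonicity of the tail $s'$ in the shared order-type). Your decomposition $s(\vec p,\vec q)=\bigvee s(\vec \nomi,\vec \cnomm)$ followed by re-absorption via the uniformity of $s'$ is the direct semantic rendering of those same three steps, and the converse direction by specialisation of assignments is as in the paper; so for the situation the proposition actually addresses (where, per the convention stated before Lemma \ref{lemma:primitive scattered is operator}, $\vec p,\vec q$ are the variables of the scattered head and every variable of $s'$ is among them) your proof is fine.

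Your ``first technical point'' about variables occurring only in the tail is, however, wrong as stated, and the proposed trick is a step that would fail. If $s'$ contained a variable $r$ not occurring in $s$, with $\epsilon(r)=1$ say, your re-absorption needs some completely join-irreducible $\nomi_r\leq r$, and there is none when the fixed valuation sends $r$ to $\bot$ (dually $\top$ for $\epsilon(r)=\partial$). Indeed the biconditional itself is false in that generality: $p\leq p\wedge r$ satisfies all three clauses of Definition \ref{def:primitive} with definite head $p$, yet on the two-element perfect DLE statement (b) holds (the only completely join-irreducible element is $\top$, and $\top\leq\top\wedge\top$), while statement (a) fails at $p=\top$, $r=\bot$. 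This is precisely why the paper never lets tail-only variables reach Proposition \ref{prop:type1}: they are disposed of beforehand by exploiting the $\epsilon$-uniformity of the tail to substitute the extreme constants $\top$ or $\bot$ (cf.\ Footnote \ref{footnote:uniformterms} and the opening of the proof of Lemma \ref{lemma:pri}), not by a join of nominals below the current valuation. Since the proposition's notation already presupposes that all variables occur in the head, the remedy is simply to delete that remark (or replace it by the constant-substitution preprocessing); the rest of your proof then stands.
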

		\begin{proof}
			We only prove 1, the proof of item 2 being order dual. By the assumptions and Corollary \ref{cor:rule:type:1}, the following chain of equivalences can be obtained via an ALBA reduction and hence is sound on perfect DLEs:
			\begin{center}
				\begin{tabular}{c l l}
					& $\forall \vec p\forall \vec q[s(\vec p, \vec q)\leq s'(\vec p, \vec q)]$ & \\
					iff & $\forall \vec p\forall \vec q\forall \nomj[\nomj\leq s(\vec p, \vec q)\Rightarrow \nomj\leq s'(\vec p, \vec q)]$ & \\
					iff & $\forall \vec p\forall \vec q\forall \nomj\forall\nomi\forall\cnomm[(\vec \nomi\leq \vec p \ \& \ \vec q\leq \vec \cnomm \ \&\ \nomj\leq s(\vec \nomi, \vec \cnomm))\Rightarrow \nomj\leq s'(\vec p, \vec q)]$ & (Approx($s$)) \\
					iff & $\forall \nomj\forall \vec \nomi\forall \vec \cnomm[\nomj\leq s(\vec \nomi, \vec \cnomm)\Rightarrow \nomj\leq s'(\vec \nomi, \vec \cnomm)]$ & (Ackermann, $s, s'$ same order type)\\
					iff & $\forall \vec \nomi\forall \vec \cnomm[s(\vec \nomi, \vec \cnomm)\leq s'(\vec \nomi, \vec \cnomm)]$ & \\
				\end{tabular}
			\end{center}
		\end{proof}
		
		\begin{remark}
			\label{rmk:prop type1 generalized to non definite}
			Proposition \ref{prop:type1} can be straightforwardly generalized to primitive inequalities the heads of which are not definite. For any such inequality, the preprocessing stage of ALBA 
produces a set of definite primitive inequalities with definite heads, to each of which Proposition \ref{prop:type1} can then be applied separately. Notice that the preprocessing does not affect the order-type of the occurring variables. Then, one can reverse the preprocessing steps and transform the set of pure definite primitive inequalities into a substitution instance of the input primitive inequality in which proposition variables have been suitably substituted for nominals and conominals.
		\end{remark}
		
		\begin{example}\label{ex: FS1 }
			Let us illustrate the reduction strategy of the proposition above by applying it to the right-primitive Fischer Servi inequality discussed in Examples \ref{ex:FS1right primitive} and \ref{FS2 from axiom to rule} (cf.\ \cite[Lemma 27]{MaPaSa14}).

			\begin{center}
				\begin{tabular}{r c l l}
					& & $\forall q\forall p[\Diamond q\rightarrow \Box p\leq \Box (q\rightarrow p) ]$ &\\
					& iff & $\forall q\forall p\forall \nomi\forall \cnomm[(\nomi\leq \Diamond q\rightarrow \Box p \ \& \ \Box (q\rightarrow p)\leq \cnomm)\Rightarrow \nomi\leq \cnomm] $ &\\
					& iff & $\forall q\forall p\forall \nomi\forall \cnomm\forall \cnomn[(\nomi\leq \Diamond q\rightarrow \Box p \ \& \ \Box (q\rightarrow \cnomn)\leq \cnomm \ \&\ p\leq \cnomn)\Rightarrow \nomi\leq \cnomm] $ &\\
					& iff & $\forall q\forall \nomi\forall \cnomm\forall \cnomn[(\nomi\leq \Diamond q\rightarrow \Box \cnomn \ \& \ \Box (q\rightarrow \cnomn)\leq \cnomm)\Rightarrow \nomi\leq \cnomm] $ &\\
					& iff & $\forall q\forall \nomi\forall \cnomm\forall \cnomn\forall \nomj[(\nomi\leq \Diamond q\rightarrow \Box \cnomn \ \& \ \Box (\nomj\rightarrow \cnomn)\leq \cnomm\ \&\ \nomj\leq q)\Rightarrow \nomi\leq \cnomm] $ &\\
					& iff & $\forall \nomi\forall \cnomm\forall \cnomn\forall \nomj[(\nomi\leq \Diamond \nomj\rightarrow \Box \cnomn \ \& \ \Box (\nomj\rightarrow \cnomn)\leq \cnomm)\Rightarrow \nomi\leq \cnomm] $ &\\
					& iff & $\forall \nomi\forall \cnomn\forall \nomj[\nomi\leq \Diamond \nomj\rightarrow \Box \cnomn \Rightarrow \forall \cnomm[ \Box (\nomj\rightarrow \cnomn)\leq \cnomm\Rightarrow \nomi\leq \cnomm]] $ &\\
					& iff & $\forall \nomi\forall \cnomn\forall \nomj[\nomi\leq \Diamond \nomj\rightarrow \Box \cnomn \Rightarrow \nomi\leq \Box (\nomj\rightarrow \cnomn)] $ &\\
					& iff & $\forall \cnomn\forall \nomj[\Diamond \nomj\rightarrow \Box \cnomn \leq \Box (\nomj\rightarrow \cnomn)]. $ &\\
				\end{tabular}
			\end{center}
		\end{example}

		\subsection{Special rules via ALBA: main strategy} 
		\label{ssec:main strategy}
		Before moving on to the next section, in the present subsection we take stock of the facts we have collected so far, and spell out their role in the context of the method we will apply in the following section. 
		This method is to extend the class of primitive inequalities in any given language $\mathcal{L}_\mathrm{DLE}$ to classes of inequalities each element of which can be equivalently (and effectively) transformed into (a set of) special structural rules, hence giving rise to specially displayable DLE-logics (cf.\ Definition \ref{def: properly displayable logic}).
		This method is based on the simple but crucial observation that the languages of the display calculi $\mathbf{DL}$, $\mathbf{DL}^*$, and $\mathbf{DL}^{\underline{*}}$ (cf.\ Definition \ref{def: DL and DL star} and Footnote \ref{ftn: display calculus DL star minus}) are built using {\em the same} set of structural connectives. For each language $\mathcal{L}_\mathrm{DLE}$, we are going to identify classes of {\em non-primitive} $\mathcal{L}_\mathrm{DLE}$-inequalities which can be equivalently and effectively transformed into (conjunctions of) {\em primitive} inequalities in the expanded language $\mathcal{L}_\mathrm{DLE}^*$ (cf.\ Section \ref{ssec:expanded language}). By Lemma \ref{lemma:pri} 
 applied to $\mathcal{L}_\mathrm{DLE}^*$, each primitive $\mathcal{L}_\mathrm{DLE}^*$-inequality can then be equivalently transformed into a set of special structural rules in the language of $\mathbf{DL}^{\underline{*}}$, which, as observed above, coincides with the structural language of $\mathbf{DL}$. 
		
		Proposition \ref{prop:type1} provides a key step in the procedure to equivalently transform input $\mathcal{L}_\mathrm{DLE}$-inequalities into primitive $\mathcal{L}_\mathrm{DLE}^*$-inequalities. Indeed, it guarantees that each definite primitive $\mathcal{L}_\mathrm{DLE}^*$-inequality 
		is equivalent to a ``substitution instance of itself'' in which all the nominals and conominals have been uniformly substituted for proposition variables, as illustrated by the right-hand vertical equivalence in the diagram below:

		\begin{center}
			\begin{tabular}{l c l}\label{table:U:shape}
				$\mathbb{A}\models s(\vec p, \vec q)\leq s'(\vec p, \vec q)$ & &$\mathbb{A}\models \bigamp \Big \{ s_i^*(\vec p, \vec q)\leq {s'}_i^*(\vec p, \vec q)\mid i\in I\Big \}$\\
				&& \\
				$\ \ \ \ \Updownarrow \ \mathrm{Theorems \ref{albacorrect} and \ref{Thm:ALBA:Success:Inductive}}$ & & $\ \ \ \ \Updownarrow \ \mathrm{Proposition}$ \ref{prop:type1}\\
				&& \\
				$\mathbb{A}\models \bigamp \Big \{ s_i^*(\vec\nomi, \vec \cnomm)\leq {s'_i}^*(\vec\nomi, \vec \cnomm)\mid i\in I\Big \}$
				&\ \ \ $\Large{=}$ \ \ \ &$\mathbb{A}\models \bigamp \Big \{s_i^*(\vec\nomi, \vec \cnomm)\leq {s'_i}^*(\vec\nomi, \vec \cnomm)\mid i\in I\Big \}$\\
				
			\end{tabular}
		\end{center}
		Our task in the following section will be 
		to perform ALBA-reductions aimed at equivalently transforming $\mathcal{L}_\mathrm{DLE}$-inequalities into sets of definite {\em pure} primitive $\mathcal{L}_\mathrm{DLE}^*$-inequalities, so as to provide the left-hand side leg of the diagram above.
		

		
		\section{Extending the class of primitive inequalities}\label{sec:extended classes}
		
		
In the present section, we introduce a hierarchy of classes of $\mathcal{L}_\mathrm{DLE}$-inequalities which properly extend primitive inequalities, and which can
be equivalently (and effectively) transformed into sets of special structural rules (cf.\ Definition \ref{def:special}), via 
progressively more complex ALBA-reduction strategies. 
The classes of inequalities treated in the present section are all proper subclasses of the class of analytic inductive inequalities (cf.\ Definition
\ref{def:type5}), which is the most general, and which, in Section \ref{sec:analytic}, will be also shown to capture analytic rules modulo equivalence. However,
the procedure described in Section \ref{sec:analytic} does not deliver special rules in the restricted sense of Definition \ref{def:special} in general, whereas
the finer analysis provided in the present section is guaranteed to yield special rules in this restricted sense (cf.\ Remark
\ref{rmk:two understandings of special}) in each instance in which it is applicable. Thus, unlike the general procedure, the procedure described in the present
section provides a direct and fully mechanized way\footnote{In Section \ref{sec:special rules as expressive as analytic}, we will show that in fact, all DLE-logics
axiomatized by analytic inductive inequalities are specially displayable. However, the general procedure, derived from the results in Sections \ref{sec:analytic} and \ref{sec:special rules as expressive as analytic}, to extract special rules from analytic inductive inequalities is indirect, as it consists of more than one back-and-forth toggle between inequalities and rules. }
to obtain specially displayable DLE-logics (cf.\ Definition \ref{def: properly displayable logic}). Section \ref{sec:analytic} is independent from the present section, hence the reader is not constrained to read the present section before the next. Finally, the present paper is intended for two very different readerships; in this respect, the present section, which is the richest in examples of the whole paper, can be useful to the reader who wishes to become familiar with ALBA reductions.

Throughout the present section, we adopt the convention that $f(\vec p, \vec q)$ and $ g(\vec p, \vec q)$ are s.t.\ $\varepsilon_f(p) = \varepsilon_g(p) = 1$ for every $p\in \vec p$ and $\varepsilon_f(q) = \varepsilon_g(q) = \partial$ for every $q\in \vec q$. For any sequence of formulas $\vec{\psi}=(\psi_1,\ldots,\psi_n)$ and any $1\leq i\leq n$, we let $\overrightarrow{\psi_{-i}}:=(\psi_1,\ldots,\psi_{i-1},\psi_{i+1},\ldots,\psi_n)$.

		\subsection{Type 2: allowing multiple occurrences of critical variables}
		\label{ssec:type 2}
		By definition, each proposition letter in the head of a primitive inequality is required to occur at most once (that is, the head of primitive inequalities is required to be scattered). The present subsection is aimed at showing that this condition can be relaxed. 

		\begin{definition}[Quasi-primitive inequalities]
			\label{def:quasi primitive}
			An inequality $s_1\leq s_2$ is {\em quasi left-primitive} (resp.\ {\em quasi right-primitive}) if both $s_1$ and $s_2$ are monotone (w.r.t.\ some order-type $\varepsilon_{s_i}$) and left-primitive (resp.\ right-primitive) formulas, and moreover $s_1$ and $s_2$ have the same order-type relative to the variables they have in common.
		\end{definition}
		
		The definition above differs from Definition \ref{def:primitive} in that the requirement that the head be scattered is dropped. 
		
		\begin{remark}
			In what follows, we are going to provide an effective procedure to equivalently transform quasi-primitive inequalities into pure primitive inequalities. We will restrict our focus to quasi-primitive inequalities with {\em definite} head (cf.\ Proposition \ref{lem:type2}). Indeed, during the pre-processing stage of the execution of ALBA, each quasi-primitive inequality with non-definite head can be equivalently transformed into (the conjunction of) a set of quasi-primitive inequalities with definite head, on each of which the procedure described below can be effected in parallel. Thus, this restriction is without loss of generality. 
		\end{remark}
		\begin{definition}\label{def:scattered:transform}
			For every left-primitive (resp.\ right-primitive) formula $s(\vec{p},\vec{q})$, a \emph{scattered transform} of $s$ is a scattered left-primitive (resp.\ right-primitive) term $s^\ast(\vec{p'},\vec{q'})$ for which there exists a substitution $\sigma: \mathsf{AtProp}(s^*)\to \mathsf{AtProp}(s)$ such that $s(\vec{p},\vec{q})=\sigma(s^\ast(\vec{p'},\vec{q'}))$.
		\end{definition}
		Clearly, we can always assume without loss of generality that $s(\vec{p},\vec{q})$ and $s^\ast(\vec{p'},\vec{q'})$ share no variables. In particular,
		in the following lemma, we will find it useful to consider scattered transforms which are pure, i.e.\ of the form $s^\ast(\vec{\nomi},\vec{\cnomm})$ or $s^\ast(\vec{\cnomm},\vec{\nomi})$, and such that their associated substitution $\sigma$ maps
		nominals and conominals to proposition variables in a suitable way according to their polarity. This can always be done without loss of generality.
		\begin{lemma}\label{cor:rule:type:2}
			The following rules are sound and invertible in perfect DLEs and are derivable in ALBA:
			\begin{enumerate}
				\item for any definite quasi left-primitive term $s(\vec{p},\vec{q})$,
				\begin{center}
					\begin{tabular}{ccc}
						\AxiomC{$\nomj\leq s(\vec{p},\vec{q})$}\RightLabel{(Approx$_{\sigma}$($s$))}
						\UnaryInfC{$\nomj\leq s^{\ast}(\overrightarrow{\nomi},\overrightarrow{\cnomm})\quad\overrightarrow{\bigvee \sigma^{-1}[p]}\leq \vec p\quad \vec q\leq\overrightarrow{\bigwedge \sigma^{-1}[q]}$}
						\DisplayProof
					\end{tabular}
				\end{center}
				
				where, for every $p$ in $\vec p$ and every $q$ in $\vec q$, every variable in $\sigma^{-1}[p]$ is a (fresh) nominal, and
				every variable in $\sigma^{-1}[q]$ is a (fresh) conominal, and $s^\ast$ is the scattered transform of $s$ induced by $\sigma$.
				
				\item For any definite quasi right-primitive term $t(\vec{p},\vec{q})$:
				\begin{center}
					\begin{tabular}{ccc}
						\AxiomC{$t(\vec{p},\vec{q})\leq\cnomm$}\RightLabel{(Approx$_{\sigma}$($t$))}
						 \UnaryInfC{$t^{\ast}(\overrightarrow{\cnomn},\overrightarrow{\nomi})\leq\cnomm\quad\vec{p}\leq\overrightarrow{\bigwedge\sigma^{-1}[p]}\quad\overrightarrow{\bigvee\sigma^{-1}[q]}\leq\vec{q}$}
						\DisplayProof \\
					\end{tabular}
				\end{center}
				where, for every $p$ in $\vec p$ and every $q$ in $\vec q$, every variable in $\sigma^{-1}[p]$ is a (fresh) conominal, and
				every variable in $\sigma^{-1}[q]$ is a (fresh) nominal,  and $t^\ast$ is the scattered transform of $t$ induced by $\sigma$.
				%
			\end{enumerate}
		\end{lemma}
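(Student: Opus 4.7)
The plan is to prove case 1 (the quasi left-primitive case); case 2 will follow by order-duality. The argument will rest on two pillars already established earlier in the paper: Lemma~\ref{lemma:primitive scattered is operator}, which ensures that the scattered transform $s^\ast$, being a definite scattered left-primitive term, is interpreted as a complete operator on any perfect DLE $\bbA$ (preserving arbitrary joins at each positive coordinate and reversing arbitrary meets at each negative coordinate); and Corollary~\ref{cor:rule:type:1}, which provides the approximation rule Approx($s^\ast$) for scattered heads together with its soundness, invertibility, and ALBA-derivability.

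For soundness, I will focus on the top-to-bottom direction, which is the substantive one. Writing $\vec{p_\sigma}$ and $\vec{q_\sigma}$ for the tuples of variables that appear in $s^\ast$ once the substitution $\sigma$ has been unfolded, we have $s(\vec p, \vec q) = s^\ast(\vec{p_\sigma}, \vec{q_\sigma})$. Since $\bbA$ is perfect, I will express each coordinate of $\vec{p_\sigma}$ as the join of the completely join-irreducibles below it and each coordinate of $\vec{q_\sigma}$ as the meet of the completely meet-irreducibles above it. The complete operator property of $s^\ast$ will then allow these joins and meets to be distributed outside, yielding
\[
s^\ast(\vec{p_\sigma}, \vec{q_\sigma}) \;=\; \bigvee \{\, s^\ast(\vec \nomi, \vec \cnomm) \,:\, \vec \nomi \leq \vec{p_\sigma} \text{ and } \vec{q_\sigma} \leq \vec \cnomm \text{ coordinatewise}\,\}.
\]
The assumption $\nomj \leq s(\vec p, \vec q)$ together with complete join-primeness of $\nomj$ will then deliver a witnessing tuple $(\vec\nomi, \vec\cnomm)$ such that $\nomj \leq s^\ast(\vec\nomi, \vec\cnomm)$. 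The final step is to regroup the pointwise bounds by target variable: for each $p$ in $\vec p$, every nominal in $\sigma^{-1}[p]$ lies below $p$, whence $\bigvee \sigma^{-1}[p]\leq p$, and dually $q\leq \bigwedge\sigma^{-1}[q]$ for each $q$ in $\vec q$.

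The converse direction is a straightforward monotonicity check: any left-primitive term is monotone in each coordinate according to its order-type, and the hypotheses $\bigvee \sigma^{-1}[p]\leq p$ and $q\leq \bigwedge\sigma^{-1}[q]$ yield $\vec\nomi \leq \vec{p_\sigma}$ and $\vec{q_\sigma}\leq \vec\cnomm$ coordinatewise, so $\nomj \leq s^\ast(\vec\nomi, \vec\cnomm) \leq s^\ast(\vec{p_\sigma}, \vec{q_\sigma}) = s(\vec p, \vec q)$. Invertibility is immediate from this equivalence. For ALBA-derivability, I will apply Approx($s^\ast$) from Corollary~\ref{cor:rule:type:1} to $\nomj \leq s^\ast(\vec{p_\sigma}, \vec{q_\sigma})$ and then aggregate the resulting pointwise bounds on the fresh $\vec\nomi$ and $\vec\cnomm$ into the joins $\overrightarrow{\bigvee\sigma^{-1}[p]}$ and meets $\overrightarrow{\bigwedge\sigma^{-1}[q]}$. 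The principal obstacle I anticipate is not a mathematical one but a bookkeeping one: one has to verify that the polarity assignment of the fresh variables is consistent with $\sigma$ and with the order-type of $s^\ast$, so that nominals are introduced precisely at positively-occurring positions and conominals at negatively-occurring ones. This consistency is guaranteed by Definition~\ref{def:scattered:transform}.
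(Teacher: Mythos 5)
Your proposal is correct and follows essentially the same route as the paper: the paper's proof is exactly your ALBA-derivability step, namely rewriting $s(\vec p,\vec q)$ as $\sigma(s^{\ast}(\vec\nomi,\vec\cnomm))$, applying Approx($s^\ast$) from Corollary \ref{cor:rule:type:1}, and aggregating the pointwise bounds into $\overrightarrow{\bigvee\sigma^{-1}[p]}\leq\vec p$ and $\vec q\leq\overrightarrow{\bigwedge\sigma^{-1}[q]}$ via (reverse) splitting, with soundness and invertibility inherited from those rules. Your additional semantic argument (complete-operator distribution plus join-primeness of $\nomj$) just unfolds the soundness of Approx($s^\ast$), which the paper simply cites.
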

		
		\begin{proof}
			
			We only prove item 1, item 2 being order-dual.
			
			\begin{center}
				\begin{tabular}{ccc}
					\AxiomC{$\nomj\leq s(\vec{p},\vec{q})$}\RightLabel{(Definition \ref{def:scattered:transform})}
					\UnaryInfC{$\nomj\leq \sigma(s^{\ast}(\overrightarrow{\nomi},\overrightarrow{\cnomm}))$}\RightLabel{(definition of substitution)}
					\UnaryInfC{$\nomj\leq s^{\ast}(\overrightarrow{\sigma(\nomi)},\overrightarrow{\sigma(\cnomm)})$}\RightLabel{(Approx($s^\ast$))}
					\UnaryInfC{$\nomj\leq s^\ast(\overrightarrow{\nomi},\overrightarrow{\cnomm})\quad\quad\overrightarrow{\nomi}\leq\overrightarrow{\sigma(\nomi)}\quad\quad\overrightarrow{\sigma(\cnomm)}\leq\overrightarrow{\cnomm}$}\RightLabel{(reverse splitting rule)}
					\UnaryInfC{$\nomj\leq s^{\ast}(\overrightarrow{\nomi},\overrightarrow{\cnomm})\quad\quad\overrightarrow{\bigvee \sigma^{-1}[p]}\leq\vec{p}\quad\quad\vec{q}\leq\overrightarrow{\bigwedge\sigma^{-1}[q]}$}
					\DisplayProof
				\end{tabular}
			\end{center}
		\end{proof}
		The following proposition and its proof provide an effective procedure to equivalently transform quasi-primitive inequalities with definite head into pure primitive inequalities.
		\begin{prop}\label{lem:type2}
			
			For every quasi left-primitive inequality $s(\vec{p},\vec{q})\leq s'(\vec{p},\vec{q})$ such that $s$ is definite and every quasi right-primitive inequality $t'(\vec{p},\vec{q})\leq t(\vec{p},\vec{q})$ such that $t$ is definite,
			\begin{enumerate}
				\item the following are equivalent for every perfect $\mathcal{L}_\mathrm{DLE}$ algebra $\mathbb{A}$:
				\begin{enumerate}
					\item $\bba\models s(\vec p, \vec q)\leq s'(\vec p, \vec q)$;
					\item $\bba\models s^\ast(\vec \nomi, \vec \cnomm)\leq s'(\overrightarrow{\bigvee\sigma^{-1}[p]}, \overrightarrow{\bigwedge\sigma^{-1}[q]})$,
				\end{enumerate}
				
				where $s^\ast$ is a pure scattered transform of $s$ witnessed by a map $\sigma: \mathsf{Prop}(s^*)\to \mathsf{Prop}(s)$ such that, for every $p$ in $\vec p$ and every $q$ in $\vec q$,
				every variable in $\sigma^{-1}[p]$ is a nominal and every variable in $\sigma^{-1}[q]$ is a conominal.
				\item The following are equivalent for every perfect $\mathcal{L}_\mathrm{DLE}$ algebra $\mathbb{A}$:
				\begin{enumerate}
					\item $\bba\models t'(\vec p, \vec q)\leq t(\vec p, \vec q)$;
					\item $\bba\models t'(\overrightarrow{\bigwedge\sigma^{-1}[p]}, \overrightarrow{\bigvee\sigma^{-1}[q]})\leq t^\ast(\vec \cnomm, \vec \nomi)$,
				\end{enumerate}
				
				where $t^\ast$ is a pure scattered transform of $t$ witnessed by a map $\sigma: \mathsf{Prop}(t^*)\to \mathsf{Prop}(t)$ such that, for every $p$ in $\vec p$ and every $q$ in $\vec q$,
				every variable in $\sigma^{-1}[p]$ is a conominal and every variable in $\sigma^{-1}[q]$ is a nominal.\end{enumerate}
		\end{prop}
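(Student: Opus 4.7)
The plan is to prove item 1 by closely mimicking the ALBA reduction chain of Proposition \ref{prop:type1}, replacing the use of Corollary \ref{cor:rule:type:1} with that of Lemma \ref{cor:rule:type:2}. Item 2 then follows by an order-dual argument. Without loss of generality, we assume that $s$ and $s'$ contain exactly the variables in $\vec p\cup\vec q$: any variable occurring only in $s$ can be eliminated using the definiteness of $s$, since each such variable occurs in $s$ with one polarity and the uniformity built into quasi-primitivity (the head is monotone), so it can be replaced by an appropriate constant as in the proof of Lemma \ref{lemma:pri}; any variable occurring only in $s'$ is $\epsilon_{s'}$-uniform in $s'$, hence can also be dealt with by substitution of $\top$ or $\bot$ via Footnote \ref{footnote:uniformterms}.

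Concretely, I would produce the following chain of equivalences, sound on every perfect DLE $\bba$:
\begin{center}
\begin{tabular}{c l l}
 & $\forall\vec{p}\,\forall\vec{q}\,[s(\vec{p},\vec{q})\leq s'(\vec{p},\vec{q})]$ & \\
iff & $\forall\vec{p}\,\forall\vec{q}\,\forall\nomj\,[\nomj\leq s(\vec{p},\vec{q})\;\Rightarrow\;\nomj\leq s'(\vec{p},\vec{q})]$ & (first approximation) \\
iff & $\forall\vec{p}\,\forall\vec{q}\,\forall\nomj\,\forall\vec{\nomi}\,\forall\vec{\cnomm}\,\bigl[\bigl(\nomj\leq s^{\ast}(\vec{\nomi},\vec{\cnomm})$ & (Approx$_\sigma(s)$,\\
& \qquad $\amp\;\overrightarrow{\bigvee\sigma^{-1}[p]}\leq \vec p\;\amp\;\vec{q}\leq\overrightarrow{\bigwedge\sigma^{-1}[q]}\bigr)\;\Rightarrow\;\nomj\leq s'(\vec{p},\vec{q})\bigr]$ & Lemma \ref{cor:rule:type:2})\\
iff & $\forall\nomj\,\forall\vec{\nomi}\,\forall\vec{\cnomm}\,\bigl[\nomj\leq s^{\ast}(\vec{\nomi},\vec{\cnomm})\;\Rightarrow\;\nomj\leq s'\bigl(\overrightarrow{\bigvee\sigma^{-1}[p]},\overrightarrow{\bigwedge\sigma^{-1}[q]}\bigr)\bigr]$ & (Ackermann)\\
iff & $\forall\vec{\nomi}\,\forall\vec{\cnomm}\,\bigl[s^{\ast}(\vec{\nomi},\vec{\cnomm})\leq s'\bigl(\overrightarrow{\bigvee\sigma^{-1}[p]},\overrightarrow{\bigwedge\sigma^{-1}[q]}\bigr)\bigr].$ & \\
\end{tabular}
\end{center}
Here the last step reverses the first-approximation rule, which is sound on perfect DLEs because of the complete join-generation by nominals.

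The genuine work sits in the third (Ackermann) step. For each $p\in\vec p$, the definition of quasi left-primitive inequality forces $\varepsilon_s(p)=\varepsilon_{s'}(p)=1$ on common variables; hence $s'$ is monotone in $p$, so the condition $\nomj\leq s'(\vec p,\vec q)$ is monotone in $p$, and combining this with the lower bound $\bigvee\sigma^{-1}[p]\leq p$ supplied by Approx$_\sigma(s)$ allows us to replace the universally quantified $p$ by $\bigvee\sigma^{-1}[p]$ (equivalently, by a single application of the Right Ackermann Lemma \ref{Right:Ackermann} after a straightforward polarity bookkeeping). Dually, for each $q\in\vec q$ with $\varepsilon_s(q)=\varepsilon_{s'}(q)=\partial$, the function $s'$ is antitone in $q$, and the upper bound $q\leq\bigwedge\sigma^{-1}[q]$ lets us eliminate $q$ via the Left Ackermann substitution of $\bigwedge\sigma^{-1}[q]$ for $q$. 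Iterating these eliminations across all variables in $\vec p\cup\vec q$ produces the third line of the chain above. The main subtlety I expect to manage carefully is precisely the order-type bookkeeping at the Ackermann step: in standard ALBA formulations, Ackermann is applied to proposition variables occurring in quasi-inequalities whose consequent is of the form $\nomi\leq\cnomm$, whereas here the consequent is $\nomj\leq s'(\vec p,\vec q)$, so the step must be either recast in the standard form (by first approximating $s'$ as well) or justified directly via the monotonicity/antitonicity of $s'$ in each of its variables, which is exactly what the quasi-primitive condition guarantees.
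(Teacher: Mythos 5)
Your proposal is correct and follows essentially the same route as the paper's own proof: the identical chain of equivalences (first approximation, the rule Approx$_{\sigma}(s)$ of Lemma \ref{cor:rule:type:2}, an Ackermann step justified by the fact that $s$ and $s'$ share the same order-type, and then reversing the first approximation), with item 2 by order-duality. Your explicit treatment of variables occurring on only one side and your direct monotonicity justification of the Ackermann step (whose consequent $\nomj\leq s'(\vec p,\vec q)$ is not in the standard pure form) merely spell out details the paper leaves implicit under the label ``(Ackermann, $s,s'$ same order type)''.
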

		
		\begin{proof}
			We only prove item 1, item 2 being order-dual. The assumptions and Lemma \ref{cor:rule:type:2} guarantee that the following ALBA reduction is sound:
			\begin{center}
				\begin{tabular}{c l l}
					& $\forall \vec p\forall \vec q[s(\vec p, \vec q)\leq s'(\vec p, \vec q)]$ & \\
					iff & $\forall \vec p\forall \vec q\forall \nomj[\nomj\leq s(\vec p, \vec q)\Rightarrow \nomj\leq s'(\vec p, \vec q)]$ & \\
					iff & $\forall \vec p\forall \vec q\forall \nomj\forall\overrightarrow{\nomi}\forall\overrightarrow{\cnomm}[(\nomj\leq s^{\ast}(\overrightarrow{\nomi},\overrightarrow{\cnomm})\ \&\ \overrightarrow{\bigvee\sigma^{-1}[p]}\leq\vec{p}\ \&\ \vec{q}\leq\overrightarrow{\bigwedge\sigma^{-1}[q]})\Rightarrow \nomj\leq t(\vec p, \vec q)]$~(Approx$_\sigma$(s))\\
					iff & $\forall \nomj\forall \vec \nomi\forall \vec \cnomm[\nomj\leq s^{\ast}(\overrightarrow{\nomi},\overrightarrow{\cnomm})\Rightarrow \nomj\leq s'(\overrightarrow{\bigvee\sigma^{-1}[p]}, \overrightarrow{\bigwedge \sigma^{-1}[q]})]$~(Ackermann, $s, s'$ same order type)\\
					iff & $\forall \vec \nomi\forall \vec \cnomm[s^{\ast}(\overrightarrow{\nomi},\overrightarrow{\cnomm})\leq s'(\overrightarrow{\bigvee\sigma^{-1}[p]}, \overrightarrow{\bigwedge \sigma^{-1}[q]})].$ & \\
				\end{tabular}
			\end{center}
		\end{proof}

		\paragraph{A concrete instantiation of the method.}
		
		Let $\mathcal{F} = \{\cdot, \Diamond\}$ and $\mathcal{G} = \varnothing$, where $\cdot$ is binary and of order type $(1, 1)$. The inequality $\Diamond\Diamond p\cdot\Diamond p\leq \Diamond p$ is quasi left-primitive and definite, and fails to be left-primitive because its head (the term on the left-hand side) is not scattered. Firstly, we run ALBA on this inequality, so as to equivalently transform it into a {\em pure} non-definite left-primitive inequality as follows:
		
		\begin{center}
			\begin{tabular}{c l l l}
				& $\forall p[\Diamond\Diamond p\cdot\Diamond p\leq \Diamond p]$ &\\
				
				iff & $\forall p\forall\nomj\forall \cnomm[(\nomj\leq \Diamond\Diamond p\cdot\Diamond p \ \&\ \Diamond p\leq \cnomm)\Rightarrow\nomj\leq \cnomm]$ &\\
				
				
				iff & $\forall p\forall\nomj\forall\cnomm\forall\nomi[(\nomj\leq \Diamond\Diamond \nomi\cdot \Diamond p \ \&\ \nomi\leq p \ \&\ \Diamond p\leq \cnomm)\Rightarrow\nomj\leq \cnomm]$ &\\
				
				iff & $\forall p\forall\nomj\forall\cnomm\forall\nomi\forall\nomh [(\nomj\leq \Diamond\Diamond \nomi\cdot \Diamond \nomh \ \&\ \nomi\leq p\ \&\ \nomh\leq p \ \&\ \Diamond p\leq \cnomm)\Rightarrow\nomj\leq \cnomm]$\\
				
				iff & $\forall p\forall\nomj\forall \cnomm\forall\nomi \forall\nomh[(\nomj\leq \Diamond\Diamond \nomi\cdot \Diamond \nomh
				\ \&\ \nomi\lor \nomh\leq p \ \&\ \Diamond p\leq \cnomm)\Rightarrow\nomj\leq \cnomm]$ & (reverse splitting rule)\\
				
				
				iff & $\forall\nomj\forall \cnomm\forall\nomi\forall\nomh[(\nomj\leq \Diamond\Diamond \nomi\cdot \Diamond \nomh \ \&\ \Diamond (\nomi\lor \nomh)\leq \cnomm)\Rightarrow\nomj\leq \cnomm]$\\
				
				iff & $\forall\nomj\forall\nomi\forall\nomh[\nomj\leq \Diamond\Diamond \nomi\cdot \Diamond \nomh \Rightarrow \forall \cnomm[ \Diamond (\nomi\lor \nomh)\leq \cnomm\Rightarrow\nomj\leq \cnomm]]$\\
				
				iff & $\forall\nomj\forall\nomi\forall\nomh[\nomj\leq \Diamond\Diamond \nomi\cdot \Diamond \nomh \Rightarrow \nomj\leq \Diamond (\nomi\lor \nomh)]$\\
				
				iff & $\forall\nomi\forall\nomh[\Diamond\Diamond \nomi\cdot \Diamond \nomh \leq \Diamond (\nomi\lor \nomh)]$\\

				
				
			\end{tabular}
		\end{center}
		
		By Proposition \ref{prop:type1}, 
		the pure left-primitive inequality $\Diamond\Diamond \nomi\cdot \Diamond \nomh \leq \Diamond (\nomi\lor \nomh)$ is equivalent on perfect $\mathcal{L}_\mathrm{DLE}(\mathcal{F}, \mathcal{G})$-algebras to the left-primitive inequality
		$\Diamond\Diamond p_1\cdot \Diamond p_2 \leq \Diamond (p_1\lor p_2)$, which, via ALBA-distribution rule, is equivalent to the following inequality in disjunction normal form: \[\Diamond\Diamond p_1\cdot \Diamond p_2 \leq \Diamond p_1\lor \Diamond p_2.\]
		
		If we specify the non-lattice fragment of the language of the associated calculus $\mathbf{DL}$ as follows:
		\begin{center}
			\begin{tabular}{|r|c|c|c|c|c|c|c|c|c|c|}
				\hline
				\scriptsize{Structural symbols} & \mc{2}{c|}{$\circ$} & \mc{2}{c|}{$\bullet$} & \mc{2}{c|}{$\odot$} & \mc{2}{c|}{$\backslash\!\backslash_{\odot}$} & \mc{2}{c|}{$/\!/_{\odot}$}\\
				\hline
				\scriptsize{Operational symbols} & $\Diamond$ & $\phantom{\Diamond}$ & $\phantom{(\blacksquare)}$ & $(\blacksquare)$ & $\cdot$ & $\phantom{\cdot}$ & $\phantom{(\backslash_{\odot})}$ & $(\backslash_{\odot})$ & $\phantom{(/_{\odot})}$ & $(/_{\odot})$\\
				\hline
			\end{tabular}
		\end{center}
		
		then, applying the procedure indicated in the proof of Lemma \ref{lemma:pri}, the inequality above can be transformed into a structural rule in the language above as follows:
		\[ \Diamond\Diamond p_1\cdot \Diamond p_2 \leq \Diamond p_1\vee \Diamond p_2\quad \rightsquigarrow\quad \frac{\Diamond p_1\vdash z\quad \Diamond p_2\vdash z}{\Diamond\Diamond p_1\cdot \Diamond p_2 \vdash z}\quad \rightsquigarrow\quad \frac{\circ X \vdash Z\quad \circ Y \vdash Z}{\circ\circ X\odot \circ Y \vdash Z}.\]
		
		
		\paragraph{Monotone terms in quasi-primitive inequalities.}
		The head of primitive inequalities is scattered, hence monotone (w.r.t.\ some order-type). In defining quasi-primitive inequalities, we have dropped the former requirement but kept the latter. Before moving on, let us illustrate why by means of an example.
		Let $\mathcal{F} = \{\cdot, \Diamond, {\lhd}\}$ and $\mathcal{G} = \varnothing$, where $\cdot$ is binary and of order type $(1, 1)$, and $\lhd$ is unary and of order-type $(\partial)$. The inequality $p\cdot{\lhd} p\leq \Diamond p$ is not quasi-primitive, since its head $p\cdot{\lhd} p$ is not monotone. Actually, this inequality behaves like a primitive inequality, in that Proposition \ref{prop:type1} can be generalized to cover such an inequality; indeed
		
		\begin{center}
			\begin{tabular}{c l}
				& $\forall p[p\cdot{\lhd} p\leq \Diamond p]$\\
				iff & $\forall p\forall \nomi\forall \cnomm[(\nomi\leq p\cdot{\lhd} p \ \&\ \Diamond p\leq \cnomm)\Rightarrow \nomi\leq \cnomm]$\\
				iff & $\forall p\forall \nomi\forall \cnomm\forall \nomj[(\nomi\leq \nomj\cdot{\lhd} p \ \&\ \nomj\leq p\ \&\ \Diamond p\leq \cnomm)\Rightarrow \nomi\leq \cnomm]$\\
				
				iff & $\forall \nomi\forall \cnomm\forall \nomj[(\nomi\leq \nomj\cdot{\lhd} \nomj \ \&\ \Diamond \nomj\leq \cnomm)\Rightarrow \nomi\leq \cnomm]$\\
				iff & $\forall \nomi\forall \nomj[\nomi\leq \nomj\cdot{\lhd} \nomj \Rightarrow \forall \cnomm[ \Diamond \nomj\leq \cnomm\Rightarrow \nomi\leq \cnomm]]$\\
				iff & $\forall \nomi\forall \nomj[\nomi\leq \nomj\cdot{\lhd} \nomj \Rightarrow \nomi\leq \Diamond \nomj]$\\
				iff & $\forall \nomj[\nomj\cdot{\lhd} \nomj \leq \Diamond \nomj]$.\\
			\end{tabular}
		\end{center}
		However, this is not good news. Indeed, this reduction does not help to solve the main problem of this inequality, namely the fact that if we apply the procedure described in the proof of Lemma \ref{lemma:pri} to this inequality, we obtain a rule which violates condition C$_4$ (position-alikeness of parameters). 

		\subsection{Type 3: allowing PIA-subterms}
		\label{ssec:type3}
		In Sections \ref{ssec:order theoreic prop of primitive} and \ref{ssec:type 2}, we have generalized Kracht's notion of primitive inequalities, first by making this notion apply uniformly to any $\mathcal{L}_\mathrm{DLE}$-signature, and then by dropping the requirement that the heads of inequalities be scattered. Moreover, we have identified the main order-theoretic features induced by the syntactic shape of definite scattered primitive formulas, and, thanks to this identification, we have started to see ALBA at work on primitive and quasi-primitive inequalities. However, so far we have not discussed why ALBA was guaranteed to succeed on any primitive or quasi-primitive inequality in the first place. More in general, we have not yet made use of the second tool of unified correspondence theory: the possibility of identifying Sahlqvist and inductive type of inequalities in any $\mathcal{L}_\mathrm{DLE}$-signature.
		
		So let us start the present subsection by analyzing (quasi-)primitive inequalities as inductive inequalities (cf.\ Definition \ref{Inducive:Ineq:Def}). Indeed, it can be easily verified by direct inspection that all primitive inequalities are a very special subclass of inductive $\mathcal{L}_\mathrm{DLE}$-inequalities. Specifically, as observed earlier (cf.\ Remark \ref{rem:primitive-skeleton-PIA}), all non-leaf nodes in the generation tree $+s$ (resp.\ $-s$) of a (quasi) left-primitive (resp.\ right-primitive) formula $s$ are Skeleton nodes. 
This guarantees that, if $+s$ is also monotone w.r.t.\ some order-type $\varepsilon_s$, then all the variables at the leaves of such a generation tree (which are $\varepsilon_s$-critical) can be solved for, and moreover (together with the condition on the order-type in Definition \ref{def:quasi primitive}), that an ALBA reduction on a (quasi-)primitive inequality is guaranteed to reach Ackermann shape using {\em only} approximation and splitting rules after the preprocessing stage.
		
		A natural question arising at this point is whether or not all inductive inequalities can be transformed via ALBA into (conjunctions of) pure primitive inequalities, as outlined in Subsection \ref{ssec:main strategy}. We can already answer this question in the negative, as the following example shows. Let $\mathcal{F} = \{\Diamond\}$ and $\mathcal{G} = \{\Box\}$, and consider the inequality $\Diamond p\leq \Diamond\Box p$, which is Sahlqvist for the order-type $(1)$ and `McKinsey-type' for the order-type $(\partial)$, and is neither left-primitive nor right-primitive. There is only one successful reduction strategy for ALBA, which consists in solving for the positive occurrence of $p$ as follows:
		
		\begin{center}
			\begin{tabular}{c l}
				& $\forall p[\Diamond p\leq \Diamond\Box p]$\\
				iff & $\forall p\forall \nomi\forall\cnomm[(\nomi\leq \Diamond p \ \&\ \Diamond\Box p\leq \cnomm)\Rightarrow \nomi\leq \cnomm]$\\
				iff & $\forall p\forall \nomi\forall\cnomm\forall \nomj[(\nomi\leq \Diamond \nomj \ \&\ \nomj\leq p\ \&\ \Diamond\Box p\leq \cnomm)\Rightarrow \nomi\leq \cnomm]$\\
				iff & $\forall \nomi\forall\cnomm\forall \nomj[(\nomi\leq \Diamond \nomj \ \&\ \Diamond\Box \nomj\leq \cnomm)\Rightarrow \nomi\leq \cnomm]$\\
				iff & $\forall \nomi\forall \nomj[\nomi\leq \Diamond \nomj \Rightarrow \forall\cnomm[ \Diamond\Box \nomj\leq \cnomm\Rightarrow \nomi\leq \cnomm]]$\\
				iff & $\forall \nomi\forall \nomj[\nomi\leq \Diamond \nomj \Rightarrow \nomi\leq \Diamond\Box \nomj]$\\
				iff & $\forall \nomj[\Diamond \nomj \leq \Diamond\Box \nomj].$\\
				
			\end{tabular}
		\end{center}
		Clearly, this reduction fails to improve the situation, since it leaves the troublemaking side $\Diamond\Box p$ untouched. In contrast to this example, consider
		the inequality $\Diamond\Box p\leq \Diamond p$, which is again neither left-primitive nor right-primitive, but is Sahlqvist for both order-types $(1)$ and $(\partial)$. Solving for the troublemaking side we obtain:
		\begin{center}
			\begin{tabular}{c l}
				& $\forall p[\Diamond\Box p\leq \Diamond p]$\\
				iff & $\forall p\forall \nomi\forall\cnomm[(\nomi\leq \Diamond\Box p \ \&\ \Diamond p\leq \cnomm)\Rightarrow \nomi\leq \cnomm]$\\
				iff & $\forall p\forall \nomi\forall\cnomm\forall \nomj[(\nomi\leq \Diamond \nomj \ \&\ \nomj\leq \Box p\ \&\ \Diamond p\leq \cnomm)\Rightarrow \nomi\leq \cnomm]$\\
				iff & $\forall p\forall \nomi\forall\cnomm\forall \nomj[(\nomi\leq \Diamond \nomj \ \&\ \Diamondblack\nomj\leq p\ \&\ \Diamond p\leq \cnomm)\Rightarrow \nomi\leq \cnomm]$\\
				iff & $\forall \nomi\forall\cnomm\forall \nomj[(\nomi\leq \Diamond \nomj \ \&\ \Diamond \Diamondblack\nomj\leq \cnomm)\Rightarrow \nomi\leq \cnomm]$\\
				iff & $\forall \nomi\forall \nomj[\nomi\leq \Diamond \nomj \Rightarrow \forall\cnomm[ \Diamond\Diamondblack \nomj\leq \cnomm\Rightarrow \nomi\leq \cnomm]]$\\
				iff & $\forall \nomi\forall \nomj[\nomi\leq \Diamond \nomj \Rightarrow \nomi\leq \Diamond\Diamondblack \nomj]$\\
				iff & $\forall \nomj[\Diamond \nomj \leq \Diamond\Diamondblack \nomj],$\\
			\end{tabular}
		\end{center}
		from which the usual steps (Proposition \ref{prop:type1} and Lemma \ref{lemma:pri}) yield the rule \[\frac{\circ\bullet X\vdash Y}{\circ X\vdash Y}\]
		
		These ideas motivate the following
		\begin{definition}[Very restricted analytic inductive inequalities]
			\label{def:type3}
			For any order type $\epsilon$ and any irreflexive and transitive relation $\Omega$ on the variables $p_1,\ldots p_n$, the signed generation tree $\ast s$ ($\ast\in \{+, -\}$) of a term $s(p_1,\ldots p_n)$ is \emph{restricted analytic $(\Omega, \epsilon)$-inductive} if
			
			\begin{enumerate}
				\item $\ast s$ is $(\Omega, \epsilon)$-inductive (cf.\ Definition \ref{Inducive:Ineq:Def});
				\item every branch of $\ast s$ is good (cf.\ Definition \ref{def:good:branch});
				
				\item every maximal $\epsilon^{\partial}$-uniform subtree of $\ast s$ 
				occurs as an immediate subtree of an SRR node of some $\epsilon$-critical branch of $\ast s$;
			\end{enumerate}
			
			\noindent An inequality $s \leq t$ is \emph{very restricted left-analytic $(\Omega, \epsilon)$-inductive} (resp.\ \emph{very restricted right-analytic $(\Omega, \epsilon)$-inductive}) if
			\begin{enumerate}
				\item $+s$ (resp.\ $-t$) (which we refer to as the head of the inequality) is restricted analytic $(\Omega, \epsilon)$-inductive;
				\item $-t$ (resp.\ $+s$) is $\epsilon^\partial$-uniform, and
				\item $t$ is left-primitive (resp.\ $s$ is right-primitive) (cf.\ Definition \ref{def:primitive}).
			\end{enumerate}
			An inequality $s \leq t$ is \emph{very restricted analytic inductive} if it is very restricted (right-analytic or left-analytic) $(\Omega, \epsilon)$-inductive for some $\Omega$ and $\epsilon$.
		\end{definition}
		
		\begin{remark}
			\label{remark:discussion type 3}
			The syntactic shape specified in the definition above can be intuitively understood with the help of the following picture, which illustrates the `left-analytic' case:
			
			\begin{center}
				\begin{tikzpicture}
				\draw (-5,-1.5) -- (-3,1.5) node[above]{\Large$+$} ;
				\draw (-5,-1.5) -- (-1,-1.5) ;
				\draw (-3,1.5) -- (-1,-1.5);
				\draw (-5,0) node{Ske} ;
				\draw (5,0) node{PIA} ;
				\draw (3,-1.75) node{$\gamma$} ;
				\draw[dashed] (-3,1.5) -- (-4,-1.5);
				\draw[dashed] (-3,1.5) -- (-2,-1.5);
				\draw (-4,-1.5) --(-4.8,-3);
				\draw (-4.8,-3) --(-3.2,-3);
				\draw (-3.2,-3) --(-4,-1.5);
				\draw (-2,-1.5) --(-2.8,-3);
				\draw (-2.8,-3) --(-1.2,-3);
				\draw (-1.2,-3) --(-2,-1.5);
				\draw[dashed] (-4,-1.5) -- (-4,-3);
				\draw[dashed] (-2,-1.5) -- (-2,-3);
				\draw[fill] (-4,-3) circle[radius=.1] node[below]{$+p$};
				\draw[fill] (-2,-3) circle[radius=.1] node[below]{$+p$};
				\draw (-5,-2.25) node{PIA} ;
				\draw (0,1.8) node{$\leq$};
				\draw
				(5,-1.5) -- (3,1.5) node[above]{\Large$-$} -- (1,-1.5) -- (5,-1.5);
				\fill[pattern=north east lines]
				(5,-1.5) -- (3,1.5) -- (1,-1.5) ;
				\end{tikzpicture}
			\end{center}
As the picture shows, this syntactic shape requires that each $\varepsilon$-critical occurrence is a leaf of the head of the inequality. Moreover, the definition of restricted analytic $(\Omega, \epsilon)$-inductive signed generation tree implies that every maximal PIA-subtree contains at least one (but possibly more) $\varepsilon$-critical variable occurrence. Further, the requirement that every branch be good implies that every maximal $\varepsilon^\partial$-subtree $\gamma$ of every PIA-structure consists also exclusively of PIA-nodes. Moreover, the requirement that these subtrees be attached to their main PIA-subtree by means of an SRR-node lying on a critical branch guarantees that these subtrees will be incorporated in the minimal valuation subtree of the critical occurrence at the leaf of that critical branch.

Finally, exhaustively applying the distribution rules (a')-(c') described in Remark \ref{rmk: management of top and bottom on the wrong side} 
to any restricted analytic inductive term produces a restricted analytic inductive term, every maximal PIA-subterm of which is definite (cf.\ Footnote \ref{footnote: def definite PIA}) and contains {\em exactly one} $\varepsilon$-critical variable occurrence.
		\end{remark}
		
		\begin{example}\label{ex: frege as type 3}
			Let $\mathcal{F} = \varnothing$ and $\mathcal{G} = \{\rhu\}$, with $\rhu$ binary and of order-type $(\partial, 1)$.
			As observed in \cite{CoPa10}, the Frege inequality
			
			\[p\rhu(q\rhu r)\leq (p\rhu q)\rhu (p\rhu r)\] 
			
			\noindent is not Sahlqvist for any order type, but is $(\Omega, \epsilon)$-inductive, e.g.\ for $r <_\Omega p <_\Omega q$ and $\epsilon(p, q, r) =(1, 1, \partial)$, and is also
			 very restricted right-analytic $(\Omega, \epsilon)$-inductive for the same $\Omega$ and $\epsilon$, as can be seen from the signed generation trees below: 

		\begin{center}
			\begin{multicols}{3}
				\begin{tikzpicture}
				\tikzstyle{level 1}=[level distance=1cm, sibling distance=2.5cm]
				\tikzstyle{level 2}=[level distance=1cm, sibling distance=1.5cm]
				\tikzstyle{level 3}=[level distance=1 cm, sibling distance=1.5cm]
				\node[PIA] {$+\rightharpoonup$}
				child{node{$-p$}}
				child{node[PIA]{$+\rightharpoonup$}
					child{node{$-q$}}
					child{node{$+r$}}}
				;
				
				\end{tikzpicture}
				
				\columnbreak
				$\leq$
				\columnbreak
				\begin{tikzpicture}
				\tikzstyle{level 1}=[level distance=1cm, sibling distance=2.5cm]
				\tikzstyle{level 2}=[level distance=1cm, sibling distance=1.5cm]
				\tikzstyle{level 3}=[level distance=1 cm, sibling distance=1.5cm]
				\node[Ske]{$-\rightharpoonup$}
				child{node[PIA]{$+\rightharpoonup$}
					child{node{$-p$}}
					child{node{\circled{$+q$}}}}
				child{node[Ske]{$-\rightharpoonup$}
					child{node{\circled{$+p$}}}
					child{node{\circled{$-r$}}}}
				;
				\end{tikzpicture}
			\end{multicols}
		\end{center}
In the picture above, the circled variable occurrences are the $\epsilon$-critical ones, the doubly circled nodes are the Skeleton ones and the single-circle ones are PIA. 		
\end{example}
		

Below, we introduce an auxiliary definition which is a simplified version of \cite[Definition 5.1]{CFPS} and is aimed at effectively calculating the residuals of definite positive and negative PIA formulas (cf.\ discussion after Definition \ref{Inducive:Ineq:Def} and Footnote \ref{footnote: def definite PIA}) w.r.t.\ a given variable occurrence $x$. 
		The intended meaning of notation such as $\phi(!x, \oz)$ is that the variable $x$ occurs {\em exactly once} in the formula $\phi$.
		\begin{definition} 
			%
			For every definite positive PIA $\mathcal{L}_{\mathrm{DLE}}$-formula $\phi = \phi(!x, \oz)$, and any definite negative PIA $\mathcal{L}_{DLE}$-formula $\psi = \psi(!x, \oz)$ such that $x$ occurs in them exactly once, the $\mathcal{L}^+_\mathrm{DLE}$-formulas $\mathsf{LA}(\phi)(u, \oz)$ and $\mathsf{RA}(\psi)(u, \oz)$ (for $u \in Var - (x \cup \oz)$) are defined by simultaneous recursion as follows:
			
			\begin{center}
				\begin{tabular}{r c l}
					$\mathsf{LA}(x)$ &= &$u$;\\
					$\mathsf{LA}(\Box \phi(x, \oz))$ &= &$\mathsf{LA}(\phi)(\Diamondblack u, \overline{z})$;\\
					$\mathsf{LA}(\psi(\oz) \rightarrow \phi(x, \oz))$ &= &$\mathsf{LA}(\phi)(u \wedge \psi(\oz), \oz)$;\\
					$\mathsf{LA}(\phi_1(\oz) \vee \phi_2(x, \oz))$ &= &$\mathsf{LA}(\phi_2)(u - \phi_1(\oz), \oz)$;\\
					$\mathsf{LA}(\psi(x, \oz)\rightarrow\phi(\oz))$ &= &$\mathsf{RA}(\psi)(u \rightarrow \phi(\oz), \oz)$;\\
					$\mathsf{LA}(g(\overrightarrow{\phi_{-j}(\oz)},\phi_j(x,\oz), \overrightarrow{\psi(\oz)}))$ &= &$\mathsf{LA}(\phi_j)(g^{\flat}_{j}(\overrightarrow{\phi_{-j}(\oz)},u, \overrightarrow{\psi(\oz)} ), \oz)$;\\
					$\mathsf{LA}(g(\overrightarrow{\phi(\oz)}, \overrightarrow{\psi_{-j}(\oz)},\psi_j(x,\oz)))$ &= &$\mathsf{RA}(\psi_j)(g^{\flat}_{j}(\overrightarrow{\phi(\oz)}, \overrightarrow{\psi_{-j}(\oz)},u), \oz)$;\\
					&&\\
					$\mathsf{RA}(x)$ &= &$u$;\\
					$\mathsf{RA}(\Diamond \psi(x, \oz))$ &= &$\mathsf{RA}(\psi)(\blacksquare u, \overline{z})$;\\
					$\mathsf{RA}(\psi(x, \oz) - \phi(\oz))$ &= &$\mathsf{RA}(\psi)(\phi(\oz) \vee u, \oz)$;\\
					$\mathsf{RA}(\psi_1(\oz) \wedge \psi_2(x, \oz))$ &= &$\mathsf{RA}(\psi_2)(\psi_1(\oz) \rightarrow u, \oz)$;\\
					$\mathsf{RA}(\psi(\oz) - \phi(x, \oz))$ &= &$\mathsf{LA}(\phi)(\psi(\oz) - u, \oz)$;\\
					$\mathsf{RA}(f(\overrightarrow{\psi_{-j}(\oz)},\psi_j(x,\oz), \overrightarrow{\phi(\oz)}))$ &= &$\mathsf{RA}(\psi_j)(f^{\sharp}_{j}(\overrightarrow{\psi_{-j}(\oz)},u, \overrightarrow{\phi(\oz)} ), \oz)$;\\
					$\mathsf{RA}(f(\overrightarrow{\psi(\oz)}, \overrightarrow{\phi_{-j}(\oz)},\phi_j(x,\oz)))$ &= &$\mathsf{LA}(\phi_j)(f^{\sharp}_{j}(\overrightarrow{\psi(\oz)}, \overrightarrow{\phi_{-j}(\oz)},u), \oz)$.\\
				\end{tabular}
			\end{center}
		\end{definition}

		\begin{lem}\label{type3lem}
			For all definite positive PIA $\mathcal{L}_{\mathrm{DLE}}$-formulas $\phi_1(!x, \oz)$, $\phi_2(!x, \oz)$, and all definite negative PIA $\mathcal{L}_{\mathrm{DLE}}$-formulas $\psi_1(!x, \oz)$, $\psi_2(!x, \oz)$ such that the variable $x$ occurs in them exactly once,
			\begin{enumerate}
				\item if $+x\prec +\phi_1$, then the following rule is derivable in ALBA:
				\begin{center}
					\AxiomC{$\chi\leq \phi_1(x, \oz)$}\LeftLabel{(LA($\phi_1$))}
					\UnaryInfC{$\mathsf{LA}(\phi_1)(\chi/u, \oz)\leq x$}
					\DisplayProof
				\end{center}
				and moreover, $\mathsf{LA}(\phi_1)(u, \oz)$ is a definite negative PIA $\mathcal{L}_{\mathrm{DLE}}^\ast$-formula.
				\item if $-x\prec +\phi_2$, then the following rule is derivable in ALBA:
				\begin{center}
					\AxiomC{$\chi\leq \phi_2(x, \oz)$}\LeftLabel{(LA($\phi_2$))}
					\UnaryInfC{$x\leq \mathsf{LA}(\phi_2)(\chi/u, \oz)$}
					\DisplayProof
				\end{center}
				and moreover, $\mathsf{LA}(\phi_2)(u, \oz)$ is a definite positive PIA $\mathcal{L}_{\mathrm{DLE}}^\ast$-formula.
				\item if $+x\prec +\psi_1$, then the following rule is derivable in ALBA:
				\begin{center}
					\AxiomC{$\psi_1(x, \oz)\leq \chi$}\LeftLabel{(RA($\psi_1$))}
					\UnaryInfC{$x\leq \mathsf{RA}(\psi_1)(\chi/u, \oz)$}
					\DisplayProof
				\end{center}
				and moreover, $\mathsf{RA}(\psi_1)(u, \oz)$ is a definite positive PIA $\mathcal{L}_{\mathrm{DLE}}^\ast$-formula.
				\item if $-x\prec +\psi_2$, then the following rule is derivable in ALBA:
				\begin{center}
					\AxiomC{$\psi_2(x, \oz)\leq \chi$}\LeftLabel{(RA($\psi_2$))}
					\UnaryInfC{$\mathsf{RA}(\psi_2)(\chi/u, \oz)\leq x$}
					\DisplayProof
				\end{center}
				and moreover, $\mathsf{RA}(\psi_2)(u, \oz)$ is a definite negative PIA $\mathcal{L}_{\mathrm{DLE}}^\ast$-formula.
				
			\end{enumerate}
		\end{lem}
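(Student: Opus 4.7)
The plan is to prove all four items simultaneously, by induction on the complexity of the PIA-subject formula. In the base cases $\phi_1 = \phi_2 = \psi_1 = \psi_2 = x$, the definitions give $\mathsf{LA}(x) = \mathsf{RA}(x) = u$, each of the displayed rules degenerates to the trivial rule $\chi \leq x / \chi \leq x$ (or its order-dual), and $u$ is trivially a definite PIA formula of every polarity.

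For the inductive step, I would process the clauses of the definitions of $\mathsf{LA}$ and $\mathsf{RA}$ one at a time. Each clause peels off one PIA-node from the top of $\phi_i$ or $\psi_i$, and the corresponding residuation or adjunction rule of ALBA (cf.\ Section \ref{ssec: ALBA}) is precisely what justifies the recursion. For instance, for $\mathsf{LA}(\Box\phi(x,\oz)) = \mathsf{LA}(\phi)(\Diamondblack u,\oz)$, the ALBA-derivable adjunction $\Diamondblack \dashv \Box$ rewrites $\chi \leq \Box\phi(x,\oz)$ as $\Diamondblack\chi \leq \phi(x,\oz)$; since $\Box$ has order-type $1$, one still has $+x\prec +\phi$, so the IH for $\phi$ yields $\mathsf{LA}(\phi)(\Diamondblack\chi/u,\oz) \leq x$, which is precisely $\mathsf{LA}(\Box\phi)(\chi/u,\oz) \leq x$. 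The remaining clauses are handled analogously, each invoking the adjunction or residuation rule appropriate to its top connective. The clauses that switch between $\mathsf{LA}$ and $\mathsf{RA}$, such as $\mathsf{LA}(\psi(x,\oz)\rightarrow\phi(\oz)) = \mathsf{RA}(\psi)(u\rightarrow\phi(\oz),\oz)$, correspond to residuation steps on SRR-nodes that flip the side of the inequality on which the critical branch lies, and hence invoke the $\mathsf{RA}$-part of the IH (or vice versa) on the sub-PIA-formula through which the critical branch runs.

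For the second claim in each item --- that the resulting formula is itself a definite PIA formula of the stated polarity --- I would verify case by case that the connective newly introduced in passing to the recursive call (one of $\Diamondblack$, $\blacksquare$, $f_j^\sharp$, $g_j^\flat$, or one of $\wedge$, $\vee$, $\rightarrow$, $-$ applied between $u$ and an $\epsilon^\partial$-uniform side-subterm) lands as a non-$\Delta$-adjoint PIA node in the signed generation tree of the resulting formula; combined with the IH for the recursive subterm, this gives the preservation of the definite-PIA shape.

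The main obstacle, and the real content of the proof, is the careful bookkeeping of polarities across cases. For each clause one must verify that the polarity of $x$ in the sub-PIA-formula to which the IH is applied is the one required, so that the correct one of the four items of the IH may be invoked, and that the newly assembled formula has the polarity and definite-PIA shape claimed. This analysis is routine once each clause of the $\mathsf{LA}/\mathsf{RA}$ definition is paired with the matching residuation rule of ALBA and inspected against Table \ref{Join:and:Meet:Friendly:Table}, but writing it out in full is tedious because of the number of cases --- four inductive statements times each of the unary and binary $f$- and $g$-clauses --- and the dualities between the left- and right-adjoint computations.
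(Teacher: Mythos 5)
Your proposal is correct and follows essentially the same route as the paper's proof: a simultaneous induction on the shape of the four PIA-formulas, with the trivial variable base case, each inductive step discharged by the matching ALBA adjunction/residuation rule for the top connective (switching between $\mathsf{LA}$ and $\mathsf{RA}$ exactly when the critical branch passes through a coordinate of opposite polarity), and the definite-PIA claim checked by the same polarity bookkeeping; the paper likewise writes out only one representative case (the $g$-clause invoking the $\mathsf{RA}$-part of the induction hypothesis) and leaves the rest as analogous.
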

		\begin{proof}
			By simultaneous induction on the shapes of $\phi_1, \phi_2, \psi_1$ and $\psi_2$. The case in which they coincide with $x$ immediately follows from the definitions involved. As to the inductive step, we only illustrate the case in which $\phi_1$ is of the form $g(\overrightarrow{\phi(\oz)}, \overrightarrow{\psi_{-j}(\oz)},\psi_{2,j}(x,\oz))$ for some array $\overrightarrow{\phi(\oz)}$ of formulas which are positive PIA, some array $\overrightarrow{\psi_{-j}(\oz)}$ of formulas which are negative PIA, and some negative PIA formula $\psi_{2,j}(x,\oz)$ such that $-x\prec +\psi_{2,j}$. Then, by induction hypothesis, the following rule is derivable in ALBA for every formula $\chi'$:
			\begin{center}
				\AxiomC{$\psi_{2, j}(x, \oz)\leq \chi'$}\LeftLabel{(RA($\psi_{2, j}$))}
				\UnaryInfC{$\mathsf{RA}(\psi_{2,j})(\chi'/u', \oz)\leq x$}
				\DisplayProof
			\end{center}
			Moreover, by definition,
			\begin{equation}
				\label{eq:unraveling LA phi1}
				\mathsf{LA}(g(\overrightarrow{\phi(\oz)}, \overrightarrow{\psi_{-j}(\oz)},\psi_{2,j}(x,\oz)))= \mathsf{RA}(\psi_{2,j})(g^{\flat}_{j}(\overrightarrow{\phi(\oz)}, \overrightarrow{\psi_{-j}(\oz)},u)/u', \oz).\end{equation}
			Hence, we can show that RA($\phi_1$) is a derivable ALBA-rule as follows: for every formula $\chi$,
			\begin{center}
				\begin{tabular}{ccc}
					\AxiomC{$\chi\leq g(\overrightarrow{\phi(\oz)}, \overrightarrow{\psi_{-j}(\oz)},\psi_{2,j}(x,\oz))$}\RightLabel{(Residuation)}
					\UnaryInfC{$\psi_{2,j}(x,\oz)\leq g^{\flat}_{j}(\overrightarrow{\phi(\oz)}, \overrightarrow{\psi_{-j}(\oz)},\chi/u)$}\RightLabel{RA($\psi_{2, j}$)}
					\UnaryInfC{$\mathsf{RA}(\psi_{2,j})(g^{\flat}_{j}(\overrightarrow{\phi(\oz)}, \overrightarrow{\psi_{-j}(\oz)},\chi/u)/u', \oz)\leq x$}\RightLabel{(Identity \eqref{eq:unraveling LA phi1})}
					\UnaryInfC{$\mathsf{LA}(g(\overrightarrow{\phi(\oz)}, \overrightarrow{\psi_{-j}(\oz)},\psi_{2,j}(x,\oz)))(\chi/u, \oz)\leq x$}
					\DisplayProof \\
				\end{tabular}
			\end{center}
			To see that $\mathsf{LA}(\phi_1)(u, \oz)$ is a definite negative PIA-formula, one needs to show that every positive (resp.\ negative) node in $ + \mathsf{LA}(\phi_1)(u, \oz)$ is labeled with a connective from $\mathcal{F}^\ast$ (resp.\ $\mathcal{G}^\ast$). This follows from the identity \eqref{eq:unraveling LA phi1}, the second part of the induction hypothesis (stating that $\mathsf{RA}(\psi_{2,j})(u', \oz)$ is definite negative PIA), the fact that $\mathsf{RA}(\psi_{2,j})(u', \oz)$ is negative in $u'$, the fact that $g^{\flat}_{j}\in \mathcal{G}^\ast$ (and its corresponding node in $+ \mathsf{LA}(\phi_1)(u, \oz)$ is signed $-$, as we have just remarked), the fact that the order-type of $g^{\flat}_{j}$ is the same as the order-type of $g$, the fact that every formula in $\overrightarrow{\phi(\oz)}$ is positive PIA, and for each $\phi$ in $\overrightarrow{\phi(\oz)}$, \[ -\phi\prec - g^{\flat}_{j}(\overrightarrow{\phi(\oz)}, \overrightarrow{\psi_{-j}(\oz)},\chi/u)\prec +\mathsf{LA}(\phi_1)(u, \oz),\]
			and finally, the fact that every formula in $\overrightarrow{\psi_{-j}(\oz)}$ is negative PIA, and for each $\psi$ in $\overrightarrow{\psi_{-j}(\oz)}$, \[ +\psi\prec - g^{\flat}_{j}(\overrightarrow{\phi(\oz)}, \overrightarrow{\psi_{-j}(\oz)},\chi/u)\prec +\mathsf{LA}(\phi_1)(u, \oz).\]
		\end{proof}

		\begin{thm}\label{type3:theorem}
			Every very restricted left-analytic (resp.\ right-analytic) inductive $\mathcal{L}_{\mathrm{DLE}}$-inequality can be equivalently transformed, via an ALBA-reduction, into a set of pure left-primitive (resp.\ right-primitive) $\mathcal{L}_{\mathrm{DLE}}^\ast$-inequalities.
		\end{thm}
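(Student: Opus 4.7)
The plan is to execute ALBA on a very restricted left-analytic inductive $\mathcal{L}_\mathrm{DLE}$-inequality $s \leq t$ in a way tailored to the specific syntactic shape identified by Definition \ref{def:type3}, and to verify that after all propositional variables are eliminated the resulting pure quasi-inequalities in $\mathcal{L}^{*+}_\mathrm{DLE}$ can be read off as (conjunctions of) pure left-primitive $\mathcal{L}^*_\mathrm{DLE}$-inequalities. The right-analytic case will be dual. I will rely heavily on Lemma \ref{type3lem}, which supplies the ALBA-derivable left/right adjoint rules $\mathsf{LA}(\cdot), \mathsf{RA}(\cdot)$ for definite PIA formulas in which the solving variable occurs exactly once, together with the crucial fact that the output $\mathsf{LA}(\phi)$ (resp.\ $\mathsf{RA}(\psi)$) is again a definite PIA formula in $\mathcal{L}^*_\mathrm{DLE}$.

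First I would preprocess $s \leq t$ using the standard rules of Section \ref{ssec: ALBA} together with the additional distribution steps (a')--(d') of Remark \ref{rmk: management of top and bottom on the wrong side}. As observed in Remark \ref{remark:discussion type 3}, this produces a finite set of very restricted left-analytic $(\Omega, \epsilon)$-inductive inequalities in which every maximal PIA-subtree of the head is definite and contains \emph{exactly one} $\epsilon$-critical leaf. It suffices to treat each of them separately, so I fix one such inequality, again written $s \leq t$. After first approximation I am left with the system $\{\nomi_0 \leq s,\ t \leq \cnomm_0\}$. Since $-t$ is $\epsilon^\partial$-uniform, the tail inequality $t \leq \cnomm_0$ contains no $\epsilon$-critical occurrences of any variable and will be carried along passively until the final Ackermann substitution.

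Next I process $\nomi_0 \leq s$. The head $+s$ is a Skeleton-on-top, PIA-on-bottom tree, so by repeatedly applying the approximation rules for SLR $f$-nodes and the splitting rules for $+\wedge$/$-\vee$ (combined with display postulates for $\vee$ treated as SRR at the Skeleton level via the standard ALBA normalization), I can peel off the Skeleton portion of $+s$ and reduce $\nomi_0 \leq s$ to a finite conjunction of inequalities of the form $\nomj \leq \pi$ or $\pi \leq \cnomn$, with fresh nominals/conominals, where each $\pi$ is one of the maximal definite PIA-subterms of $+s$. To each such inequality I then apply the appropriate clause of Lemma \ref{type3lem}, which lets me solve for the unique $\epsilon$-critical occurrence of some variable $p_i$ inside $\pi$; this yields an inequality of the form $\alpha_{i,k} \leq p_i$ (if $\epsilon(p_i)=1$) or $p_i \leq \alpha_{i,k}$ (if $\epsilon(p_i)=\partial$), where $\alpha_{i,k}$ is a definite PIA $\mathcal{L}^*_\mathrm{DLE}$-formula in the nominals, conominals, and the variables occurring in the $\epsilon^\partial$-uniform subtrees attached at the SRR-nodes of the critical branch of $\pi$. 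Following the inductive order $<_\Omega$ from minimal to maximal variables, I then use the Ackermann rules (with $\alpha_i := \bigvee_k \alpha_{i,k}$ or $\alpha_i := \bigwedge_k \alpha_{i,k}$) to eliminate the $p_i$ from the tail $t \leq \cnomm_0$ and from the $\alpha_{j,k}$ of the still-pending variables; this is guaranteed to work because the dependency order $<_\Omega$ and the goodness of every branch in $+s$ ensure that when $p_i$ is to be eliminated, all non-critical occurrences of $p_i$ are in Ackermann shape with respect to the already-computed $\alpha_i$.

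After all variables have been eliminated, I obtain a finite conjunction of quasi-inequalities of the form $\nomi_0 \leq \cnomm_0$ with side conditions that are pure in nominals and conominals. By the standard reverse-Skolemization step that closes the treatment of primitive inequalities in Example \ref{ex: FS1 } and Proposition \ref{prop:type1} (converting the outermost universal over $\nomi_0, \cnomm_0$ back into an inequality between the two sides), each of these quasi-inequalities rewrites equivalently on perfect DLEs as a pure inequality $\sigma \leq \tau$ in $\mathcal{L}^*_\mathrm{DLE}$. The main thing to check is that $\sigma \leq \tau$ so obtained is actually left-primitive: the left-hand side $\sigma$ is built from the original $\alpha_{i,k}$'s applied to nominals and conominals and so, being in Ackermann shape and a join of definite PIA formulas, is a (positive-Skeleton) left-primitive term in the expanded signature; the right-hand side $\tau$ is the result of substituting the $\alpha_i$'s into the left-primitive $-t$-side of the original inequality, and because substituting left-primitive terms for $\epsilon^\partial$-uniform variables of a left-primitive term preserves left-primitivity (as the definition of primitive formula is closed under the $f$-constructors in the appropriate polarity), $\tau$ is again left-primitive. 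The main obstacle is precisely this bookkeeping at the last step: one must track carefully how the polarities of the $\mathsf{LA}/\mathsf{RA}$ outputs interact with the Ackermann substitution into the tail, and verify that the uniform variables from the $\epsilon^\partial$-uniform subtrees inherited into the $\alpha_{i,k}$ are also eliminated (by an analogous argument applied reversibly at the preprocessing level using uniformity, cf.\ Footnote \ref{footnote:uniformterms}) so that the final inequality is genuinely pure and primitive, as required.
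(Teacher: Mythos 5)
Your overall strategy is the same as the paper's: refined preprocessing (including the extra distribution steps of Remark \ref{rmk: management of top and bottom on the wrong side}, so that each maximal PIA-subterm of the head is definite and contains exactly one critical occurrence), first approximation, peeling off the Skeleton of the head by approximation/splitting (this is exactly what the rule Approx($\xi$) of Corollary \ref{cor:rule:type:1} packages), solving each displayed PIA-piece via Lemma \ref{type3lem}, and then Ackermann elimination along $\Omega$ with the $\epsilon^\partial$-uniform tail carried along passively. Up to that point the plan is sound.

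The gap is in the final bookkeeping, which is precisely what the theorem asserts. After the Ackermann steps the surviving pure system is $\{\nomj\leq \xi(\vec{\nomi},\vec{\cnomm}),\ t(\vec{P}/\vec{p})\leq\cnomn\}$, so the output inequality is $\xi(\vec{\nomi},\vec{\cnomm})\leq t(\vec{P}/\vec{p})$: its left-hand side is the \emph{Skeleton of the head} evaluated at the fresh nominals and conominals (left-primitive because $\xi$ is definite, scattered and Skeleton), while the $\mathsf{LA}/\mathsf{RA}$-outputs $\alpha_{i,k}$ occur only inside the minimal valuations substituted into the tail on the right-hand side. Your description of $\sigma$ as ``built from the $\alpha_{i,k}$'s applied to nominals and conominals'' conflates the two sides: the inequalities $\alpha_{i,k}\leq p_i$ (resp.\ $p_i\leq\alpha_{i,k}$) are consumed by the Ackermann rule and do not form the head of the output, so the left-primitivity argument you give for $\sigma$ is aimed at the wrong object (the correct argument is simply that $\xi$ is a scattered definite positive Skeleton term). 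Relatedly, to conclude that both sides are genuinely \emph{primitive}---and not merely Skeleton- or PIA-shaped---one also needs the constant-management reductions of Remark \ref{rmk: management of top and bottom on the wrong side}, which remove occurrences of $+\bot,-\top$ from Skeleton parts and of $-\bot,+\top$ from PIA parts, since Definition \ref{def:primitive} forbids such occurrences (cf.\ Remark \ref{rem:primitive-skeleton-PIA}); the paper's proof spends several ``without loss of generality'' steps on exactly this point, and your plan does not address it, so as written the claim that the final inequality is left-primitive does not quite follow.
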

		\begin{proof}
			We only consider the case of the inequality $s\leq t$ being very restricted left-analytic $(\Omega,\epsilon)$-inductive, since the proof of the right-analytic case is dual. By assumption, $t$ is a negative PIA formula (cf.\ page \pageref{page: positive negative PIA}). Observe preliminarily that we can assume w.l.o.g.\ that there are no occurrences of $+\bot$ and $-\top$ in $+t$. Indeed, modulo exhaustive application of distribution rules, $t$ can be equivalently written as the disjunction of definite negative PIA terms $t_i$. If $+\bot$ or $-\top$ occurred in $+t_i$ for some $i$, the exhaustive application of the rules which identify each $+f'(\phi_1,\ldots,\bot^{\varepsilon_{f'}}(i),\ldots,\phi_{n_{f'}})$ with $+\bot$ for every $f'\in \mathcal{F}\cup\{\wedge\}$ and $-g'(\phi_1,\ldots,\top^{\varepsilon_{g'}}(i),\ldots,\phi_{n_{g'}})$ with $-\top$ for every $g'\in \mathcal{G}\cup\{\vee\}$ would identify $t_i$ with $\bot$. Hence the offending subterm can be removed from the disjunction. Hence (cf.\ Remark \ref{rem:primitive-skeleton-PIA}), we can assume w.l.o.g.\ that $t$ is left-primitive.

By assumption, $s: = \xi(\vec{\phi}/\vec{x},\vec{\psi}/\vec{y})$, where $\xi(\vec{!x}, \vec{!y})$ is a positive Skeleton-formula---cf.\ page \pageref{page: positive negative PIA}---which is scattered, monotone in $\vec{x}$ and antitone in $\vec{y}$. Moreover, the formulas in $\vec{\phi}$ are positive PIA, and the formulas in $\vec{\psi}$ are negative PIA. 
			Modulo exhaustive application of distribution and splitting rules of the standard ALBA preprocessing,\footnote{The applications of splitting rules at this stage give rise to a set of inequalities, each of which can be treated separately. In the remainder of the proof, we focus on one of them.} we can assume w.l.o.g.\ that the scattered positive Skeleton formula $\xi$ is also definite. Modulo exhaustive application of the additional rules which identify $+f'(\phi_1,\ldots,\bot^{\varepsilon_{f'}}(i),\ldots,\phi_{n_{f'}})$ with $+\bot$ for every $f'\in \mathcal{F}\cup\{\wedge\}$ and $-g'(\phi_1,\ldots,\top^{\varepsilon_{g'}}(i),\ldots,\phi_{n_{g'}})$ with $-\top$ for every $g'\in \mathcal{G}\cup\{\vee\}$, which would reduce $s\leq t$ to a tautology, we can assume w.l.o.g.\ that there are no occurrences of $+\bot$ and $-\top$ in $+\xi$. Hence (cf.\ Remark \ref{rem:primitive-skeleton-PIA}) we can assume w.l.o.g.\ that $\xi$ is scattered, definite and left-primitive. Therefore, the derived rule Approx($\xi$) (cf.\ Corollary \ref{cor:rule:type:1}) is applicable, which justifies the last equivalence in the following chain:
			
			\begin{center}
				\begin{tabular}{c l l}
					& $\forall \vec{p}[\xi(\vec{\phi}/\vec{x},\vec{\psi}/\vec{y})\leq t(\vec{p})]$\\
					iff & $\forall \vec{p}\forall \nomj\forall \cnomn[(\nomj\leq \xi(\vec{\phi}/\vec{x},\vec{\psi}/\vec{y})\ \&\ t(\vec{p})\leq\cnomn)\Rightarrow \nomj\leq \cnomn]$\\
					iff & $\forall \vec{p}\forall \nomj\forall \cnomn\forall \vec{\nomi}\forall\vec{\cnomm}[(\nomj\leq \xi(\vec{\nomi}/\vec{x},\vec{\cnomm}/\vec{y})\ \&\ \vec{\nomi}\leq \vec{\phi}\ \&\ \vec{\psi}\leq \vec{\cnomm}\ \&\ t(\vec{p})\leq\cnomn)\Rightarrow \nomj\leq \cnomn]$. & (Approx($\xi$))\\
				\end{tabular}
			\end{center}
			By assumption, in each inequality $\vec{\nomi}\leq \vec{\phi}$ and $\vec{\psi}\leq \vec{\cnomm}$ there is at least one $\varepsilon$-critical variable occurrence. Modulo exhaustive application of distribution rules (a')-(c') of Remark \ref{rmk: management of top and bottom on the wrong side} and splitting rules, we can assume w.l.o.g.\ that each $\phi$ in $\vec{\phi}$ (resp.\ $\psi$ in $\vec{\psi}$) is a definite positive (resp.\ negative) PIA-formula, which has exactly one $\varepsilon$-critical variable occurrence. That is, if $\vec{p_1}$ and $\vec{p_2}$ respectively denote the subarrays of $\vec{p}$ such that $\varepsilon(p_1) = 1$ for each $p_1$ in $\vec{p_1}$ and $\varepsilon(p_2) = \partial$ for each $p_2$ in $\vec{p_2}$, then each $\phi$ in $\vec{\phi}$ is either of the form $\phi_1(p_1/!x, \vec{p'}/\oz)$ with $+x\prec +\phi_1$, or of the form $\phi_2(p_2/!x, \vec{p'}/\oz)$ with $-x\prec +\phi_2$. Similarly, each $\psi$ in $\vec{\psi}$ is either of the form $\psi_1(p_2/!x, \vec{p'}/\oz)$ with $+x\prec -\psi_1$, or of the form $\psi_2(p_1/!x, \vec{p'}/\oz)$ with $-x\prec -\psi_2$. Recall that each $\phi$ in $\vec{\phi}$ is definite positive PIA and each $\psi$ in $\vec{\psi}$ is definite negative PIA. Hence, we can assume w.l.o.g.\ that there are no occurrences of $-\bot$ and $+\top$ in $+\phi$. Indeed, otherwise, the exhaustive application of the additional rules which identify $-f'(\phi_1,\ldots,\bot^{\varepsilon_{f'}}(i),\ldots,\phi_{n_{f'}})$ with $-\bot$ for every $f'\in \mathcal{F}\cup\{\wedge\}$ and $+g'(\phi_1,\ldots,\top^{\varepsilon_{g'}}(i),\ldots,\phi_{n_{g'}})$ with $+\top$ for every $g'\in \mathcal{G}\cup\{\vee\}$, would reduce all offending inequalities to tautological inequalities of the form $\nomi\leq \top$ which can then be removed. Likewise, we can assume w.l.o.g.\ that there are no occurrences of $+\bot$ and $-\top$ in $+\psi$. This shows (cf.\ Remark \ref{rem:primitive-skeleton-PIA}) that we can assume w.l.o.g.\ that each $\phi$ in $\vec{\phi}$ (resp.\ $\psi$ in $\vec{\psi}$) is a right-primitive (resp.\ left-primitive) term. Moreover, by Lemma \ref{type3lem}, the suitable derived adjunction rule among LA($\phi_1$), LA($\phi_2$), RA($\psi_1$), RA($\psi_2$) is applicable to each formula, yielding:
			
			\begin{center}
				\begin{tabular}{c l l}
					& $\forall \vec{p_1}\forall \vec{p_2}\forall \nomj\forall \cnomn\forall \vec{\nomi}\forall\vec{\cnomm}[(\nomj\leq \xi(\vec{\nomi}/\vec{x},\vec{\cnomm}/\vec{y})\ \&\ \vec{\nomi}\leq \vec{\phi}\ \&\ \vec{\psi}\leq \vec{\cnomm}\ \&\ t(\vec{p_1}, \vec{p_2})\leq\cnomn)\Rightarrow \nomj\leq \cnomn]$ & \\
					iff & $\forall \vec{p_1}\forall \vec{p_2}\forall \nomj\forall \cnomn\forall \vec{\nomi}\forall\vec{\cnomm}[(\nomj\leq \xi(\vec{\nomi}/\vec{x},\vec{\cnomm}/\vec{y})\ \&\ \overrightarrow{\mathsf{LA}(\phi_1)(\nomi/u)}\leq \vec{p_1}\ \&\ \vec{p_2}\leq \overrightarrow{\mathsf{LA}(\phi_2)(\nomi/u)}\ \&\ $\\
					&$\overrightarrow{\mathsf{RA}(\psi_2)(\cnomm/u)}\leq \vec{p_1}\ \&\ \vec{p_2}\leq \overrightarrow{\mathsf{RA}(\psi_1)(\cnomm/u)}\ \&\ t(\vec{p_1}, \vec{p_2})\leq\cnomn)\Rightarrow \nomj\leq \cnomn]$. & \\ 
				\end{tabular}
			\end{center}
			The assumptions made above imply that $t(\vec{p_1}, \vec{p_2})$ is monotone in each variable in $\vec{p_1}$ and antitone in each variable $\vec{p_2}$. Hence, the quasi-inequality above is simultaneously in Ackermann shape w.r.t.\ all variables.\footnote{The formulas $\mathsf{LA}(\phi_1)(\nomi/u)$, $\mathsf{LA}(\phi_2)(\nomi/u)$, $\mathsf{RA}(\psi_1)(\cnomm/u)$, and $\mathsf{RA}(\psi_2)(\cnomm/u)$ do not need to be pure, and in general they are not. However, the assumptions and the general theory of ALBA guarantee that they are $\varepsilon^\partial$-uniform and free of the variable the `minimal valuation' of which they are part of. The reader is referred to \cite{CoPa10} for an expanded treatment of this point.} Applying the Ackermann rule repeatedly in the order indicated by $\Omega$ yields the following pure quasi-inequality:
			
			\begin{center}
				\begin{tabular}{c l l}
					& $\forall \nomj\forall \cnomn\forall \vec{\nomi}\forall\vec{\cnomm}[(\nomj\leq \xi(\vec{\nomi}/\vec{x},\vec{\cnomm}/\vec{y})\ \&\ t(\vec{P_1}/\vec{p_1}, \vec{P_2}/\vec{p_2})\leq\cnomn)\Rightarrow \nomj\leq \cnomn],$ \\
				\end{tabular}
			\end{center}
			where $P_1$ and $P_2$ denote the pure $\mathcal{L}_{\mathrm{DLE}}^\ast$-terms obtained by applying the Ackermann-substitution. For instance, for every $\Omega$-minimal $p_1$ in $\vec{p_1}$, \[P_1: = \bigvee_i \mathsf{LA}(\phi_1^{(i)})(\nomi/u)\vee \bigvee_j \mathsf{RA}(\psi_2^{(j)})(\cnomm/u),\]
			and for every $\Omega$-minimal $p_2$ in $\vec{p_2}$, \[P_2: = \bigwedge_i \mathsf{LA}(\phi_2^{(i)})(\nomi/u)\wedge \bigwedge_j \mathsf{RA}(\psi_1^{(j)})(\cnomm/u).\]
			In the clauses above, the indexes $i$ and $j$ count the number of critical occurrences of the given variable $p_1$ (resp.\ $p_2$) in PIA-subterms of type $\phi_1$ and $\psi_2$ (resp.\ $\phi_2$ and $\psi_1$). The pure quasi-inequality above can be equivalently transformed into one pure inequality as follows:
			\begin{center}
				\begin{tabular}{c l l}
					& $\forall \nomj\forall \cnomn\forall \vec{\nomi}\forall\vec{\cnomm}[(\nomj\leq \xi(\vec{\nomi}/\vec{x},\vec{\cnomm}/\vec{y})\ \&\ t(\vec{P_1}/\vec{p_1}, \vec{P_2}/\vec{p_2})\leq\cnomn)\Rightarrow \nomj\leq \cnomn]$ \\
					iff & $\forall \nomj\forall \vec{\nomi}\forall\vec{\cnomm}[(\nomj\leq \xi(\vec{\nomi}/\vec{x},\vec{\cnomm}/\vec{y})\Rightarrow \forall\cnomn[ t(\vec{P_1}/\vec{p_1}, \vec{P_2}/\vec{p_2})\leq\cnomn\Rightarrow \nomj\leq \cnomn]]$ \\
					iff & $\forall \nomj\forall \vec{\nomi}\forall\vec{\cnomm}[\nomj\leq \xi(\vec{\nomi}/\vec{x},\vec{\cnomm}/\vec{y})\Rightarrow \nomj\leq t(\vec{P_1}/\vec{p_1}, \vec{P_2}/\vec{p_2})]$ \\
					iff & $\forall \vec{\nomi}\forall\vec{\cnomm}[\xi(\vec{\nomi}/\vec{x},\vec{\cnomm}/\vec{y})\leq t(\vec{P_1}/\vec{p_1}, \vec{P_2}/\vec{p_2})].$ \\
				\end{tabular}
			\end{center}

			To finish the proof, it remains to be shown that the inequality in the last clause above is left-primitive. This is a rather simple proof by induction on the maximum length of chains in $\Omega$. The base case, when $\Omega$ is the discrete order (hence $P_1$ and $P_2$ are of the form displayed above), immediately follows from the polarity of $\xi$ and $t$ in $\vec{p_1}$ and $\vec{p_2}$, and by Lemma \ref{type3lem}. The inductive step is routine.
		\end{proof}
		\paragraph{The Frege axiom in a pre-Heyting algebra setting.}\label{par:Frege type 3} Let $\mathcal{F} = \varnothing$ and $\mathcal{G} = \{\rhu\}$, with $\rhu$ binary and of order-type $(\partial, 1)$. The logical connectives of the display calculi $\mathbf{DL}$ and $\mathbf{DL}^*$ arising from the basic $\mathcal{L}_\mathrm{DLE}(\mathcal{F}, \mathcal{G})$-logic can be represented synoptically as follows:
		
		\begin{center}
			\begin{tabular}{|r|c|c|c|c|c|c|c|c|c|c|}
				\hline
				\scriptsize{Structural symbols} & \mc{2}{c|}{I} & \mc{2}{c|}{$;$} & \mc{2}{c|}{$>$} & \mc{2}{c|}{$\succ$} & \mc{2}{c|}{\circled{$\bullet$}}\\
				\hline
				\scriptsize{Operational symbols} & $\top$ & $\bot$ & $\pand$ & $\por$ & $(\pdra)$ & \ $(\pra)$\  & $\phantom{\rhu}$ & $\rhu$ & $(\bullet)$ & $\phantom{\bullet}$\\
				\hline
			\end{tabular}
		\end{center}
		As mentioned in Example \ref{ex: frege as type 3},
		
		\[p\rhu(q\rhu r)\leq (p\rhu q)\rhu (p\rhu r)\]

		\noindent is strictly right-primitive $(\Omega, \epsilon)$-inductive for $r <_\Omega p <_\Omega q$ and $\epsilon(p, q, r) =(1, 1, \partial)$. Executing ALBA according to this choice of $\Omega$ and $\epsilon$, we obtain:
		
		
		
		\begin{center}
			\begin{tabular}{r c l l}
				& & $\forall p\forall q\forall r[p\rhu(q\rhu r)\leq (p\rhu q)\rhu (p\rhu r)]$ &\\
				& iff & $\forall p \forall q\forall r\forall \nomj \forall \cnomm[(\nomj\leq p\rhu(q\rhu r)\ \& \ (p\rhu q)\rhu (p\rhu r)\leq \cnomm)\Rightarrow \nomj\leq \cnomm]$&\\
				& iff & $\forall p \forall q\forall r\forall \nomj \forall \cnomm\forall \cnomn[(\nomj\leq p\rhu(q\rhu r) \ \&\ (p\rhu q)\rhu (p\rhu \cnomn)\leq \cnomm \ \&\ r\leq \cnomn)\Rightarrow \nomj\leq \cnomm]$&\\
				& iff & $\forall p \forall q\forall \nomj \forall \cnomm\forall \cnomn[(\nomj\leq p\rhu(q\rhu \cnomn) \ \&\ (p\rhu q)\rhu (p\rhu \cnomn)\leq \cnomm)\Rightarrow \nomj\leq \cnomm]$&\\
				& iff & $\forall p \forall q\forall \nomj \forall \cnomm\forall \cnomn\forall \nomi[(\nomj\leq p\rhu(q\rhu \cnomn) \ \&\ (p\rhu q)\rhu (\nomi\rhu \cnomn)\leq \cnomm \ \&\ \nomi\leq p)\Rightarrow \nomj\leq \cnomm]$&\\
				& iff & $ \forall q\forall \nomj \forall \cnomm\forall \cnomn\forall \nomi[(\nomj\leq \nomi\rhu(q\rhu \cnomn) \ \&\ (\nomi\rhu q)\rhu (\nomi\rhu \cnomn)\leq \cnomm)\Rightarrow \nomj\leq \cnomm]$&\\
				& iff & $ \forall q\forall \nomj \forall \cnomm\forall \cnomn\forall \nomi\forall \nomh[(\nomj\leq \nomi\rhu(q\rhu \cnomn) \ \&\ \nomh\rhu (\nomi\rhu \cnomn)\leq \cnomm \ \&\ \nomh\leq \nomi\rhu q)\Rightarrow \nomj\leq \cnomm]$&\\
				& iff & $ \forall q\forall \nomj \forall \cnomm\forall \cnomn\forall \nomi\forall \nomh[(\nomj\leq \nomi\rhu(q\rhu \cnomn) \ \&\ \nomh\rhu (\nomi\rhu \cnomn)\leq \cnomm \ \&\ \nomi \bullet \nomh\leq q)\Rightarrow \nomj\leq \cnomm]$&\\
				& iff & $\forall \nomj \forall \cnomm\forall \cnomn\forall \nomi\forall \nomh[(\nomj\leq \nomi\rhu((\nomi \bullet \nomh)\rhu \cnomn) \ \&\ \nomh\rhu (\nomi\rhu \cnomn)\leq \cnomm)\Rightarrow \nomj\leq \cnomm]$&\\
				& iff & $\forall \nomj \forall \cnomn\forall \nomi\forall \nomh[\nomj\leq \nomi\rhu((\nomi \bullet \nomh)\rhu \cnomn) \Rightarrow \forall \cnomm[\nomh\rhu (\nomi\rhu \cnomn)\leq \cnomm\Rightarrow \nomj\leq \cnomm]]$&\\
				& iff & $\forall \nomj \forall \cnomn\forall \nomi\forall \nomh[\nomj\leq \nomi\rhu((\nomi \bullet \nomh)\rhu \cnomn) \Rightarrow \nomj\leq\nomh\rhu (\nomi\rhu \cnomn)]$&\\
				& iff & $\forall \cnomn\forall \nomi\forall \nomh[\nomi\rhu((\nomi \bullet \nomh)\rhu \cnomn)\leq\nomh\rhu (\nomi\rhu \cnomn)],$&\\
			\end{tabular}
		\end{center}
		The last inequality above is a pure right-primitive $\mathcal{L}_{DLE}^*$-inequality, and by Proposition \ref{prop:type1} is equivalent on perfect DLE-algebras to
		\[p\rhu((q\bullet p)\rhu r)\leq p\rhu (q\rhu r).\]
		By applying the usual procedure, we obtain the following rule:
		\[p\rhu((q\bullet p)\rhu r)\leq p\rhu (q\rhu r)\quad \rightsquigarrow\quad \frac{x\vdash p\rhu((q\bullet p)\rhu r)}{x\vdash p\rhu (q\rhu r)}\quad \rightsquigarrow\quad \frac{X\vdash W\succ((Y\circled{$\bullet$} W)\succ Z)}{X\vdash W\succ (Y\succ Z)}\]

		\subsection{Type 4: allowing both sides of inequalities to be non-primitive}
		\label{ssec:type4}
		In all syntactic shapes of inequalities treated so far, the tail has been required to be primitive. This requirement is dropped in the syntactic shape treated in the present subsection. Let us start with a motivating example:
		
		\paragraph{The Church-Rosser inequality.}
		Let $\mathcal{F} = \{\Diamond\}$ and $\mathcal{G} = \{\Box\}$. The $\mathcal{L}_{\mathrm{DLE}}(\mathcal{F}, \mathcal{G})$-inequality $\Diamond\Box p\leq\Box\Diamond p$ is neither very restricted left-analytic inductive nor very restricted right-analytic inductive, given that neither side is primitive. However, the following ALBA reduction succeeds in transforming it into a pure left-primitive $\mathcal{L}_{\mathrm{DLE}}^\ast$-inequality: 
		\begin{center}
			\begin{tabular}{c l l}
				
				& $\forall p[\Diamond\Box p\leq\Box\Diamond p]$\\
				
				iff & $\forall p[\Diamondblack\Diamond\Box p\leq\Diamond p]$& (Adjunction)\\
				
				iff & $\forall p\forall\nomi\forall\cnomm[\nomi\leq\Diamondblack\Diamond\Box p\ \&\ \Diamond p\leq\cnomm\Rightarrow\nomi\leq\cnomm]$&(First approximation)\\
				
				iff & $\forall p\forall\nomi\forall\nomj\forall\cnomm[\nomi\leq\Diamondblack\Diamond\nomj\ \&\ \nomj\leq\Box p\ \&\ \Diamond p\leq\cnomm\Rightarrow\nomi\leq\cnomm]$&(Approximation rules for $\Diamond$ and $\Diamondblack$)\\
				
				iff & $\forall p\forall\nomi\forall\nomj\forall\cnomm[\nomi\leq\Diamondblack\Diamond\nomj\ \&\ \Diamondblack\nomj\leq p\ \&\ \Diamond p\leq\cnomm\Rightarrow\nomi\leq\cnomm]$&(Adjunction)\\
				
				iff & $\forall\nomi\forall\nomj\forall\cnomm[\nomi\leq\Diamondblack\Diamond\nomj\ \&\ \Diamond \Diamondblack\nomj\leq\cnomm\Rightarrow\nomi\leq\cnomm]$&(Ackermann lemma)\\
				
				iff & $\forall\nomj[\Diamondblack\Diamond\nomj\leq\Diamond \Diamondblack\nomj].$\\
				
			\end{tabular}
		\end{center}
		Notice that this reduction departs in significant ways from the standard ALBA executions as described in Section \ref{ssec: ALBA}, in that we have applied an adjunction rule other than a splitting rule {\em before} the first approximation step, that is, as part of the preprocessing, and to a {\em Skeleton} node. This rule application is sound, but would be redundant if our goal was restricted to calculating first-order correspondents of input formulas. Notice that this rule application succeeded in transforming the input inequality into the inequality $\Diamondblack\Diamond\Box p\leq\Diamond p$, which is very restricted left-analytic inductive (cf.\ Definition \ref{def:type3}), and thus can be treated as indicated in the previous subsection. This example illustrates the ideas on which the treatment of the following class of inequalities is based:
		
		\begin{definition}[Restricted analytic inductive inequalities]
			\label{def:type4}
			For any order type $\epsilon$ and any irreflexive and transitive relation $\Omega$ on the variables $\vec{p}$,
			the signed generation tree $\ast s$ ($\ast\in \{+, -\}$) of a term $s(p_1,\ldots p_n)$ is \emph{analytic $(\Omega, \epsilon)$-inductive} if
			
			\begin{enumerate}
				\item $\ast s$ is $(\Omega, \epsilon)$-inductive (cf.\ Definition \ref{Inducive:Ineq:Def});
				\item every branch of $\ast s$ is good (cf.\ Definition \ref{def:good:branch}).
			\end{enumerate}
			An inequality $s \leq t$ in $\vec{p}$ is \emph{restricted left-analytic} (resp.\ {\em right-analytic}) $(\Omega, \epsilon)$-{\em inductive} if
			\begin{enumerate}
				\item $+s$ (resp.\ $-t$) is restricted analytic $(\Omega, \epsilon)$-inductive (cf.\ Definition \ref{def:type3}) and $-t$ (resp.\ $+s$) is analytic $(\Omega, \epsilon)$-inductive;
				\item there exists exactly one $\epsilon^\partial$-uniform PIA subtree in $-t$ (resp.\ in $+s$) the root of which is attached to the Skeleton of $-t$ (resp.\ $+s$).
				
			\end{enumerate}
			An inequality $s \leq t$ is \emph{restricted analytic inductive} if it is restricted (right-analytic or left-analytic) $(\Omega, \epsilon)$-inductive for some $\Omega$ and $\epsilon$.
		\end{definition}

		\begin{remark}
			The syntactic shape specified in the definition above can be intuitively understood with the help of the following picture, which illustrates the `left-analytic' case:
			
			\begin{center}
				\begin{tikzpicture}
				\draw (-5,-1.5) -- (-3,1.5) node[above]{\Large$+$} ;
				\draw (-5,-1.5) -- (-1,-1.5) ;
				\draw (-3,1.5) -- (-1,-1.5);
				\draw (-5,0) node{Ske} ;
				\draw[dashed] (-3,1.5) -- (-4,-1.5);
				\draw[dashed] (-3,1.5) -- (-2,-1.5);
				\draw (-4,-1.5) --(-4.8,-3);
				\draw (-4.8,-3) --(-3.2,-3);
				\draw (-3.2,-3) --(-4,-1.5);
				\draw (-2,-1.5) --(-2.8,-3);
				\draw (-2.8,-3) --(-1.2,-3);
				\draw (-1.2,-3) --(-2,-1.5);
				\draw[dashed] (-4,-1.5) -- (-4,-3);
				\draw[dashed] (-2,-1.5) -- (-2,-3);
				\draw[fill] (-4,-3) circle[radius=.1] node[below]{$+p$};
				\draw[fill] (-2,-3) circle[radius=.1] node[below]{$+p$};
				\draw (-5,-2.25) node{PIA} ;
				\draw (0,1.8) node{$\leq$};
				\draw (5,-1.5) -- (3,1.5) node[above]{\Large$-$} ;
				\draw (5,-1.5) -- (1,-1.5) ;
				\draw (3,1.5) -- (1,-1.5);
				\draw (5,0) node{Ske} ;
				\draw[dashed] (3,1.5) -- (4,-1.5);
				\draw[dashed] (3,1.5) -- (2,-1.5);
				\draw (2,-1.5) --(2.8,-3);
				\draw (2.8,-3) --(1.2,-3);
				\draw (1.2,-3) --(2,-1.5);
				\draw[dashed] (2,-1.5) -- (2,-3);
				\draw[fill] (2,-3) circle[radius=.1] node[below]{$+p$};
				\draw
				(4,-1.5) -- (4.8,-3) -- (3.2,-3) -- (4, -1.5);
				\fill[pattern=north east lines]
				(4,-1.5) -- (4.8,-3) -- (3.2,-3) -- (4, -1.5);
				\draw (4,-3.25)node{$\gamma$};
				\draw (5,-2.25) node{PIA} ;
				\end{tikzpicture}
			\end{center}
			
			As the picture shows, similarly to the very restricted analytic inductive inequalities, this syntactic shape forbids the root of any $\epsilon^\partial$-uniform subtree to be attached directly to the skeleton of the head of the inequality. However, in contrast to the very restricted analytic inductive inequalities, critical branches can appear now in the tail of the inequality. Finally, there exists a unique $\epsilon^\partial$-uniform subtree whose root is attached to the skeleton of the tail of the inequality. In the lemma below, we will denote the tail of a restricted left-analytic (resp.\ right-analytic) inductive inequality by $\xi(\gamma/!x, \vec{\psi}/\oz)$, where $\xi(!x,\oz)$ is a negative (resp.\ positive) skeleton term, 
 and $\gamma$ denotes the unique $\epsilon^\partial$-uniform PIA subtree attached to the skeleton, and for each $\psi\in\vec{\psi}$ with $\ast\psi\prec-\xi$ is a PIA subtree that contains a critical branch. 
		\end{remark}
		\paragraph{BNF presentation of analytic $(\Omega,\epsilon)$-inductive terms.} In what follows, we adopt the following conventions: when writing e.g.\ $g(\vec x, \vec y, \vec z, \vec w)$, we understand that the arrays of variables are of different lengths, which can be possibly $0$, and moreover $g$ is monotone in $\vec x$ and $\vec z$ and is antitone in $\vec y$ and $\vec w$. Let us first introduce the BNF presentation of the $\epsilon^\partial$-uniform PIA terms $\gamma$ and $\chi$, which are substituted for positive and negative placeholder variables in the skeleton of analytic inductive terms respectively. This implies that we will only be interested in the signed generation trees $+\gamma$ and $-\chi$. Moreover, we will use the letter $p$ (or $\vec p$) to indicate those variables which are assigned to $1$ by $\epsilon$, and the letter $q$ (or $\vec q$) for those which are assigned to $\partial$.
		$$\gamma:= q \mid \bot \mid \top \mid\gamma\lor\gamma\mid \gamma\land\gamma \mid g(\vec \gamma/\vec x, \vec \chi/\vec y),$$
		$$\chi:= p \mid \top \mid\bot\mid \chi\land\chi \mid \chi\lor\chi \mid f(\vec \chi/\vec x, \vec \gamma/\vec y).$$
		
		Next, let us introduce the BNF presentation of the non $\epsilon^\partial$-uniform PIA terms $\phi$ and $\psi$, which are substituted for positive and negative placeholder variables in the skeleton of analytic inductive terms respectively. This implies that we will only be interested in the signed generation trees $+\phi$ and $-\psi$. Let $\mathsf{PosPIA}$ and $\mathsf{NegPIA}$ respectively denote the sets of the $\phi$- and $\psi$-terms. In addition, we will need---and define by simultaneous induction---the function $$CVar:\mathsf{PosPIA}\cup\mathsf{NegPIA}\to\wp(Var)$$ which maps each $\phi$ and $\psi$ to the set of variables of which there are critical occurrences in $\phi$ and $\psi$.
		
		$$\phi:= p \mid \top\land \phi\mid \bot\land \phi\mid \phi\land\phi\mid \gamma\vee \phi \mid g(\vec \gamma/\vec x, \vec \chi/\vec y,\phi/z)\mid g(\vec \gamma/\vec x, \vec \chi/\vec y,\psi/w)$$
		$$\psi:= q \mid \bot\lor\psi\mid \top\land \psi\mid \psi\lor\psi \mid \chi\wedge \psi \mid f(\vec \chi/\vec x, \vec \gamma/\vec y,\psi/z)\mid f(\vec \chi/\vec x, \vec \gamma/\vec y,\phi/w).$$
		
		In the two presentations above, the construction of the terms which have $g$ or $f$ as their main connectives is subject to the condition that all the variables in $CVar(\phi)$ (resp.\ $CVar(\psi)$)---where $\phi$ and $\psi$ denote the immediate subformulas as indicated above---are common upper bounds of the variables occurring in $\vec{\gamma}$ and $\vec{\chi}$ w.r.t.\ $\Omega$.
		
		\begin{center}
			\begin{tabular}{r c l c r c l}
				$CVar(p)$ & $=$ & $\{p\}$ & $\quad$ & $CVar(q)$ & $=$ & $\{q\}$\\
				$CVar(\phi_1\land\phi_2)$&$=$&$CVar(\phi_1)\cup CVar(\phi_2)$ & &	$CVar(\psi_1\lor\psi_2)$&$=$&$CVar(\psi_1)\cup CVar(\psi_2)$\\
				$CVar(g(\vec \gamma/\vec x, \vec \chi/\vec y,\phi/z))$&$=$&$CVar(\phi)$&& $CVar(f(\vec \chi/\vec x, \vec \gamma/\vec y,\psi/z))$&$=$&$CVar(\psi)$\\
				$CVar(g(\vec \gamma/\vec x, \vec \chi/\vec y,\psi/w))$&$=$&$CVar(\psi)$&& $CVar(f(\vec \chi/\vec x, \vec \gamma/\vec y,\phi/w))$&$=$&$CVar(\phi)$
			\end{tabular}	
		\end{center}
		
		Finally, let us introduce the BNF presentation of the analytic inductive terms $s$ and $t$, which are to occur on the left-hand side and right-hand side of inequalities respectively. This implies that we will only be interested in the sign generation trees $+s$ and $-t$.

		$$s:=\gamma \mid \phi \mid s\lor s\mid s\land s \mid f(\vec s/\vec x, \vec t/\vec y),$$
		$$t:=\chi \mid \psi \mid t\land t \mid t\lor t \mid g(\vec t/\vec x, \vec s/\vec y).$$

		\begin{lemma} 
\label{type4:lemma}
For any $\mathcal{L}_{\mathrm{DLE}}$-inequality $s\leq t$,
			\begin{enumerate}
				\item if $s\leq t = \xi(\gamma/!x, \vec{\psi}/\oz)$ is restricted left-analytic $(\Omega,\epsilon)$-inductive such that $\xi$ is definite and $-x\prec -\xi$ (resp.\ $+x\prec -\xi$), the adjunction rule LA($\xi$) is applicable and yields the equivalent inequality $\mathsf{LA}(\xi)(s/u,\vec{\psi})\leq \gamma$ (resp.\ $\gamma\leq \mathsf{LA}(\xi)(s/u,\vec{\psi})$), which is very restricted left-analytic (resp.\ right-analytic) $(\Omega,\epsilon)$-inductive.
				\item if $\xi(\gamma/!x, \vec{\psi}/\oz) = s\leq t$ is restricted right-analytic $(\Omega,\epsilon)$-inductive such that $\xi$ is definite and $+x\prec +\xi$ (resp.\ $-x\prec +\xi$), the adjunction rule RA($\xi$) is applicable and yields the equivalent inequality $\gamma\leq \mathsf{RA}(\xi)(t/u,\vec{\psi})$ (resp.\ $\mathsf{RA}(\xi)(t/u,\vec{\psi})\leq \gamma$), which is very restricted right-analytic (resp.\ left-analytic) $(\Omega,\epsilon)$-inductive.
			\end{enumerate}
		\end{lemma}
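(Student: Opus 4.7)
The plan is to prove the lemma by induction on the complexity of $\xi$, by first extending the $\mathsf{LA}$ and $\mathsf{RA}$ operations of Lemma~\ref{type3lem} to definite Skeleton terms. The key observation is that every node of a definite negative (resp.\ positive) Skeleton term $\xi$ is either a $\Delta$-adjoint or an SLR node (such as $+f$ for $f\in\mathcal{F}$ or $-g$ for $g\in\mathcal{G}$); in each case the connective is residuatable in the expanded language $\mathcal{L}^\ast_{\mathrm{DLE}}$, its adjoint being a PIA connective. Hence one can define $\mathsf{LA}(\xi)$ and $\mathsf{RA}(\xi)$ by assigning the appropriate $\mathcal{L}^\ast_{\mathrm{DLE}}$-adjoint at each recursion step, mirroring the clauses of the definition preceding Lemma~\ref{type3lem}.

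For the base case ($\xi = x$) the inequality $s\leq\gamma$ is already of the required form. For the inductive step, let $c$ be the outermost connective of $\xi$ and let $\xi_i$ be the unique immediate subterm through which the placeholder $x$ occurs. Applying the residuation rule corresponding to $c$ and coordinate $i$ (from the rules of Section~\ref{ssec: ALBA}) produces an equivalent inequality in which the adjoint $c^\sharp_i$ or $c^\flat_i$ wraps the original $s$ together with the sibling subterms of $\xi_i$, while $\xi_i$ itself now appears on the opposite side. The inductive hypothesis, applied to the resulting strictly simpler inequality whose tail is $\xi_i(\gamma/!x,\ldots)$, then yields the required inequality $\mathsf{LA}(\xi)(s/u,\vec{\psi})\leq\gamma$ (or its dual, depending on the polarity of $x$).

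The remaining and more delicate task is to verify that the resulting inequality satisfies the conditions of Definition~\ref{def:type3}. For the new tail $\gamma$: the $\epsilon^\partial$-uniformity is inherited from the assumption, and the primitive property follows because, as observed in the BNF presentation preceding the lemma, any $\epsilon^\partial$-uniform PIA term can, via distribution and splitting, be rewritten into the left-primitive (resp.\ right-primitive) form of the appropriate polarity dictated by the sign of $x$ in $\xi$. For the new head $\mathsf{LA}(\xi)(s/u,\vec{\psi})$: the adjoint connectives introduced by $\mathsf{LA}(\xi)$ are all PIA connectives in $\mathcal{L}^\ast_{\mathrm{DLE}}$; they are placed at the top of the head and incorporate the PIA subterms $\vec{\psi}$. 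Each $\psi$ in $\vec{\psi}$ originally hosted an $\epsilon$-critical branch that was good in $-t$; after the move, those critical branches sit inside the new head with only PIA nodes above the original critical PIA portions, hence remain good. The original restricted analytic subtree $+s$ survives as a subterm, and the dependency ordering $\Omega$ is preserved by the move since the adjoint step does not alter the variables below the adjoint nodes, so the resulting signed tree is restricted analytic $(\Omega,\epsilon)$-inductive.

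The main obstacle will be the case analysis over the possible outermost Skeleton connectives---$\wedge$, $\vee$, and the $f$, $g$ connectives of various arities and order-types---together with the consistent bookkeeping of sign propagation through each adjunction step. In particular, for each of the four sub-cases collected in the statement (left-analytic vs.\ right-analytic input, and $+x\prec\mp\xi$ vs.\ $-x\prec\mp\xi$), one must verify that the polarity of $x$ in $\xi$ selects the correct direction of adjunction at every step, so that the resulting head and tail have precisely the very restricted analytic shape required by Definition~\ref{def:type3}; the four cases are mutually symmetric and can be dispatched in parallel.
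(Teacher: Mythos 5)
Your overall strategy (residuate along the definite skeleton of the tail, then check the shape of the resulting inequality) is the right one, but the construction you propose is redundant and, more importantly, your verification of the "very restricted analytic inductive" shape contains a sign error at the decisive point. First, the redundancy: there is no need to "extend $\mathsf{LA}$ and $\mathsf{RA}$ to definite Skeleton terms" by a new induction. By the Skeleton/PIA duality under sign flip (Remark \ref{rem:primitive-skeleton-PIA} and Table \ref{Join:and:Meet:Friendly:Table}), the definite negative (resp.\ positive) Skeleton term $\xi(!x,\oz)$ of the tail \emph{is} a definite positive (resp.\ negative) PIA formula in which $x$ occurs exactly once, so Lemma \ref{type3lem} applies verbatim and already delivers both the equivalence and the fact that $\mathsf{LA}(\xi)(u,\oz)$ is a definite negative PIA formula; your induction merely re-proves that lemma. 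This is what the paper does, and it is the cheaper route.

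The genuine gap is in your claim that "the adjoint connectives introduced by $\mathsf{LA}(\xi)$ are all PIA connectives \ldots placed at the top of the head", followed by the argument that critical branches remain good because "only PIA nodes" sit above the original PIA portions. The connectives produced by $\mathsf{LA}(\xi)$ form a definite \emph{negative} PIA formula; when the output $\mathsf{LA}(\xi)(s/u,\vec{\psi})$ is read as the new head, i.e.\ with \emph{positive} sign, these nodes are Skeleton (SLR) nodes, e.g.\ $+\Diamondblack$, $+g^\flat_i$, not PIA nodes. Your reading gets the goodness check wrong in both directions: it only (apparently) covers the branches through $\vec{\psi}$, and if the new top layer really consisted of PIA nodes, the $\epsilon$-critical branches running through the Skeleton of $s$ would be PIA-above-Skeleton-above-PIA, hence \emph{not} good, contradicting the very conclusion you want. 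The correct argument is the opposite one: $u$ occurs positively in $\mathsf{LA}(\xi)(u,\oz)$ and $\mathsf{LA}(\xi)$ contributes only Skeleton nodes to $+\mathsf{LA}(\xi)(s/u,\vec{\psi})$, so $+s$ sits below Skeleton nodes and its good branches stay good, while the polarities of the $\psi$'s are unchanged by adjunction, so each $\psi$ remains a PIA subtree with a critical branch now hanging from the Skeleton. You also never verify condition 3 of Definition \ref{def:type3} (every maximal $\epsilon^\partial$-uniform subtree must occur as an immediate subtree of an SRR node on a critical branch), and your appeal to "distribution and splitting" to make the new tail $\gamma$ primitive is misplaced: $\gamma$ is already a PIA term of the right polarity (splitting would moreover replace the single equivalent inequality claimed by the lemma with a set of inequalities). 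As it stands, the shape verification, which is the actual content of the lemma beyond Lemma \ref{type3lem}, does not go through.
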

		\begin{proof}
			We only show the first item in the case $-x\prec -\xi$, the remaining cases being similar. 
The assumptions imply (cf.\ Lemma \ref{type3lem}) that the rule $LA(\xi)$ is applicable to $s\leq t$ so as to obtain the inequality $\mathsf{LA}(\xi)(s/u,\vec{\psi})\leq \gamma$, and that $\mathsf{LA}(\xi)(s/u,\oz)$ is a definite negative PIA formula. Since the polarities of $\oz$ do not change under the application of adjunction rules and the polarity of $u$ is positive, in $\mathsf{LA}(\xi)(s/u,\vec{\psi})$ the subtree of each $\psi\in\vec{\psi}$ remains a PIA subtree with at least one critical branch, and the branches running through $s$ remain good. Hence, $+\mathsf{LA}(\xi)(s/u,\vec{\psi})$ is $(\Omega,\epsilon)$-inductive, all of its branches are good, and all of its maximal $\epsilon^\partial$-uniform PIA subtrees occur as immediate subtrees of SRR nodes of some $\epsilon$-critical branches. That is, $\mathsf{LA}(\xi)(s/u,\vec{\psi})$ is a restricted analytic inductive term. Furthermore, $\gamma$ is negative PIA, and $-\gamma$ is $\epsilon^\partial$-uniform by assumption. From the above observations it follows that $\mathsf{LA}(\xi)(s/u,\vec{\psi})\leq \gamma$ is a very restricted left-analytic $(\Omega,\epsilon)$-inductive inequality.
		\end{proof}
		
\begin{cor}
\label{type4:cor}
			Every restricted left-analytic (resp.\ right-analytic) inductive $\mathcal{L}_{\mathrm{DLE}}$-inequality can be equivalently transformed, via an ALBA-reduction, into a set of pure left-primitive (resp.\ right-primitive) $\mathcal{L}_{\mathrm{DLE}}^\ast$-inequalities.
\end{cor}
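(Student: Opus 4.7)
The plan is to derive the corollary by reducing to the very restricted case already handled by Theorem \ref{type3:theorem}, using Lemma \ref{type4:lemma} as the bridge. I will only treat the left-analytic case; the right-analytic case is order-dual.

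First I would preprocess the input inequality $s \leq t$ so as to bring the tail into the shape required by Lemma \ref{type4:lemma}, namely $t = \xi(\gamma/!x, \vec{\psi}/\oz)$ with $\xi$ a \emph{definite} negative Skeleton term. Exhaustive application of the distribution rules of the standard ALBA preprocessing, together with the additional rules discussed in Remark \ref{rmk: management of top and bottom on the wrong side} (which identify $+\bot$ and $-\top$ occurrences on SLR/SRR nodes), surfaces all $-\wedge$ and $+\vee$ nodes at the root of the Skeleton of $-t$. Then the splitting rules split $-t$ into a conjunction of definite negative Skeleton terms; dually, $+s$ is split into a disjunction of definite positive Skeleton terms. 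Each resulting inequality can be handled independently and is again restricted left-analytic $(\Omega,\epsilon)$-inductive by construction, with the crucial property that its tail's Skeleton $\xi$ is now definite. Throughout this preprocessing I would verify, as is done in the proof of Theorem \ref{type3:theorem}, that the distribution rules do not destroy the inductive shape nor the uniqueness of the $\epsilon^\partial$-uniform PIA subtree $\gamma$ attached to the Skeleton of $-t$.

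Next, I would apply Lemma \ref{type4:lemma}(1) to each of the resulting inequalities. Depending on whether $-x \prec -\xi$ or $+x \prec -\xi$, the lemma outputs, respectively, the equivalent inequality $\mathsf{LA}(\xi)(s/u, \vec{\psi}) \leq \gamma$ or $\gamma \leq \mathsf{LA}(\xi)(s/u, \vec{\psi})$, which is guaranteed to be very restricted left-analytic (resp.\ right-analytic) $(\Omega, \epsilon)$-inductive. At this point the situation falls exactly under the hypothesis of Theorem \ref{type3:theorem}, which directly produces an equivalent set of pure left-primitive (resp.\ right-primitive) $\mathcal{L}^\ast_\mathrm{DLE}$-inequalities via an ALBA reduction. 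Composing the ALBA reductions yields the desired transformation.

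The main obstacle, as in the proof of Theorem \ref{type3:theorem}, is not the structural application of the previous results but rather the bookkeeping required to ensure that the preprocessing step legitimately yields inequalities of the expected shape. In particular one must verify that (i) the uniqueness condition in Definition \ref{def:type4}(2) on the $\epsilon^\partial$-uniform PIA subtree $\gamma$ is preserved by splittings in the tail, and (ii) no critical branch is accidentally eliminated through the management of constants $+\bot, -\top$ occurring in the wrong position. Once these verifications are in place, the corollary follows by chaining Lemma \ref{type4:lemma} with Theorem \ref{type3:theorem}, with no new ALBA machinery needed.
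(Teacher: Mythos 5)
Your proposal is correct and follows essentially the same route as the paper: preprocess with distribution/splitting rules so that the Skeleton $\xi$ of the tail becomes definite, apply the adjunction rule of Lemma \ref{type4:lemma} to obtain an equivalent very restricted (left- or right-)analytic inductive inequality, and conclude by Theorem \ref{type3:theorem}. The extra bookkeeping you mention (preservation of the inductive shape and of the unique $\epsilon^\partial$-uniform PIA subtree under preprocessing) is exactly what the paper silently absorbs into its ``w.l.o.g.'' step, so nothing is missing.
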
		
\begin{proof}
We only consider the case of the inequality $s\leq t = \xi(\gamma/!x, \vec{\psi}/\oz)$ being restricted left-analytic $(\Omega,\epsilon)$-inductive, and $+x\prec -\xi$, the remaining cases being similar. Modulo exhaustive application of distribution and splitting rules of the standard ALBA preprocessing,\footnote{The applications of splitting rules at this stage give rise to a set of inequalities, each of which can be treated separately. In the remainder of the proof, we focus on one of them.} we can assume w.l.o.g.\ that the negative Skeleton formula $\xi$ is also definite. By Lemma \ref{type4:lemma}, the adjunction rule LA($\xi$) is applicable and yields the equivalent inequality $\gamma\leq \mathsf{LA}(\xi)(s/u,\vec{\psi})$, which is very restricted right-analytic $(\Omega,\epsilon)$-inductive. Hence the statement follows by Theorem \ref{type3:theorem}.
\end{proof}		
		
		\paragraph{The Frege inequality, again.} Early on (cf.\ page \pageref{par:Frege type 3}), we have discussed the Frege inequality as an example of very restricted right-analytic $(\Omega, \epsilon)$-inductive inequality for $r <_\Omega p <_\Omega q$ and $\epsilon(p, q, r) =(1, 1, \partial)$. Here below, we provide an alternative solving strategy based on the fact that the Frege inequality is also a restricted left-analytic $(\Omega, \epsilon)$-inductive inequality for $\epsilon(p, q, r) = (1, 1, 1)$.
		\begin{center}
			\begin{tabular}{c l l}
				
				& $\forall p\forall q\forall r[p\rhu(q\rhu r)\leq (p\rhu q)\rhu (p\rhu r)]$\\
				
				iff & $\forall p\forall q\forall r[(p\rhu q)\bullet (p\rhu(q\rhu r))\leq p\rhu r]$&(Residuation)\\
				
				iff & $\forall p\forall q\forall r[ p\bullet ((p\rhu q)\bullet (p\rhu(q\rhu r)))\leq r]$&(Residuation)\\
				
				iff & $\forall p\forall q\forall r\forall \nomi\forall\cnomm[\nomi\leq p\bullet ((p\rhu q)\bullet (p\rhu(q\rhu r)))\ \&\ r\leq\cnomm\Rightarrow\nomi\leq\cnomm]$&(First approximation)\\
				
				iff & $\forall p\forall q\forall r\forall \nomi\forall \nomj\forall \nomk\forall \nomh\forall\cnomm[\nomi\leq \nomj\bullet (\nomk\bullet \nomh)\ \&\ $\\
				
				& $\nomj\leq p\ \&\ \nomk\leq p\rhu q\ \&\ \nomh\leq p\rhu(q\rhu r)\ \&\ r\leq\cnomm\Rightarrow\nomi\leq\cnomm]$&(Approximation)\\
				
				iff & $\forall p\forall q\forall r\forall \nomi\forall \nomj\forall \nomk\forall \nomh\forall\cnomm[\nomi\leq \nomj\bullet (\nomk\bullet \nomh)\ \&\ $\\
				
				& $\nomj\leq p \ \&\ p\bullet \nomk\leq q\ \&\ q\bullet (p\bullet \nomh)\leq r\ \&\ r\leq\cnomm\Rightarrow\nomi\leq\cnomm]$&(Residuation)\\
				
				iff & $\forall q\forall r\forall \nomi\forall \nomj\forall \nomk\forall \nomh\forall\cnomm[\nomi\leq \nomj\bullet (\nomk\bullet \nomh)\ \&\ $\\
				
				& $\nomj\bullet \nomk\leq q\ \&\ q\bullet (\nomj\bullet \nomh)\leq r\ \&\ r\leq\cnomm\Rightarrow\nomi\leq\cnomm]$&(Ackermann lemma)\\
				
				iff & $\forall r\forall \nomi\forall \nomj\forall \nomk\forall \nomh\forall\cnomm[\nomi\leq \nomj\bullet (\nomk\bullet \nomh)\ \&\ (\nomj\bullet \nomk)\bullet (\nomj\bullet \nomh)\leq r\ \&\ r\leq\cnomm\Rightarrow\nomi\leq\cnomm]$&(Ackermann lemma)\\
				
				iff & $\forall \nomi\forall \nomj\forall \nomk\forall \nomh\forall\cnomm[\nomi\leq \nomj\bullet (\nomk\bullet \nomh)\ \&\ (\nomj\bullet \nomk)\bullet (\nomj\bullet \nomh)\leq\cnomm\Rightarrow\nomi\leq\cnomm]$&(Ackermann lemma)\\
				
				iff & $\forall \nomj\forall \nomk\forall \nomh[\nomj\bullet (\nomk\bullet \nomh)\leq(\nomj\bullet \nomk)\bullet (\nomj\bullet \nomh)]$.\\
				
			\end{tabular}
		\end{center}
The last inequality above is a pure left-primitive $\mathcal{L}_{DLE}^*$-inequality, and by Proposition \ref{prop:type1} is equivalent on perfect DLE-algebras to
		\[p\bullet (q\bullet r)\leq(p\bullet q)\bullet (p\bullet r).\]
		By applying the usual procedure, we obtain the following rule:
		\[p\bullet (q\bullet r)\leq(p\bullet q)\bullet (p\bullet r)\quad \rightsquigarrow\quad \frac{(p\bullet q)\bullet (p\bullet r)\vdash y}{p\bullet (q\bullet r)\vdash y}\quad \rightsquigarrow\quad \frac{(X\circled{$\bullet$} Y)\circled{$\bullet$} (X\circled{$\bullet$} Z)\vdash W}{X\circled{$\bullet$} (Y\circled{$\bullet$}Z)\vdash W}\]

\paragraph{The non-primitive Fischer Servi inequality.} For the $\mathcal{L}_\mathrm{DLE}$-setting specified as in Example \ref{FS2 from axiom to rule}, the Fischer Servi inequality $\Diamond(p\rightarrow q)\leq \Box p\rightarrow \Diamond q$ is restricted right-analytic $(\Omega, \epsilon)$-inductive w.r.t.\ the discrete order $\Omega$ and $\epsilon(p, q) = (1, \partial)$. Let us apply the procedure indicated in the proof of Corollary \ref{type4:cor} to it:

\begin{center}
	\begin{tabular}{r c l l}
		& & $\forall p\forall q[\Diamond(p\rightarrow q)\leq \Box p\rightarrow \Diamond q]$ &\\
 & iff & $\forall p\forall q[p\rightarrow q\leq \blacksquare(\Box p\rightarrow \Diamond q)]$ & (Adjunction)\\
		& iff & $\forall p\forall q\forall \nomi\forall \cnomm[(\nomi\leq p\rightarrow q \ \& \ \blacksquare(\Box p\rightarrow \Diamond q)\leq \cnomm)\Rightarrow \nomi\leq \cnomm] $ & (First approximation)\\
		& iff & $\forall p\forall q\forall \nomi\forall m\forall \nomj\forall \cnomn[(\nomi\leq p\rightarrow q \ \& \ \blacksquare(\nomj\rightarrow \cnomn)\leq \cnomm \ \&\ \nomj\leq \Box p \ \&\ \Diamond q\leq \cnomn )\Rightarrow \nomi\leq \cnomm] $ & (Approximation)\\
 & iff & $\forall p\forall q\forall \nomi\forall m\forall \nomj\forall \cnomn[(\nomi\leq p\rightarrow q \ \& \ \blacksquare(\nomj\rightarrow \cnomn)\leq \cnomm \ \&\ \Diamondblack\nomj\leq p \ \&\ q\leq \blacksquare \cnomn )\Rightarrow \nomi\leq \cnomm] $ & (Adjunction)\\
& iff & $\forall \nomi\forall m\forall \nomj\forall \cnomn[(\nomi\leq \Diamondblack\nomj\rightarrow \blacksquare \cnomn \ \& \ \blacksquare(\nomj\rightarrow \cnomn)\leq \cnomm)\Rightarrow \nomi\leq \cnomm] $ & (Ackermann)\\
& iff & $\forall \nomj\forall \cnomn[\Diamondblack \nomj\rightarrow \blacksquare \cnomn\leq \blacksquare( \nomj\rightarrow \cnomn)]. $ &\\
	\end{tabular}
\end{center}
The last inequality above is a pure right-primitive $\mathcal{L}_{DLE}^*$-inequality, and by Proposition \ref{prop:type1} is equivalent on perfect DLE-algebras to
		\[\Diamondblack p\rightarrow \blacksquare q\leq \blacksquare( p\rightarrow q).\]
		By applying the usual procedure, we obtain the following rule:
		\[\Diamondblack p\rightarrow \blacksquare q\leq \blacksquare( p\rightarrow q)\quad \rightsquigarrow\quad \frac{x\vdash
\Diamondblack p\rightarrow \blacksquare q}{x\vdash \blacksquare( p\rightarrow q)}\quad \rightsquigarrow\quad \frac{X\vdash
\bullet Y > \bullet Z}{X\vdash \bullet( Y > Z)}\]

\paragraph{The `transitivity' axiom.} For the $\mathcal{L}_\mathrm{DLE}$-setting in which we discussed the Frege inequality (cf.\ page \pageref{par:Frege type 3}), the inequality $(p\rhu q)\bullet (q\rhu r)\leq p\rhu r$ is restricted right-analytic $(\Omega, \epsilon)$-inductive w.r.t.\ the order $p<_{\Omega} q$ and $\epsilon(p, q, r) = (1, \partial, \partial)$. Let us apply the procedure indicated in the proof of Corollary \ref{type4:cor} to it:

\begin{center}
	\begin{tabular}{r c l l}
		& & $\forall p\forall q\forall r[(p\rhu q)\bullet (q\rhu r)\leq p\rhu r]$&\\
 & iff & $\forall p\forall q\forall r[ q\rhu r\leq (p\rhu q)\rhu(p\rhu r)]$& (Adjunction)\\
		& iff & $\forall pqr\forall \nomi\forall \cnomm[(\nomi\leq q\rhu r\ \& \ (p\rhu q)\rhu(p\rhu r)\leq \cnomm )\Rightarrow \nomi\leq \cnomm]$ & (First approximation)\\
		& iff & $\forall pqr\forall \nomi\cnomm \nomh\nomj\cnomn[(\nomi\leq q\rhu r\ \& \ \nomj\rhu(\nomh\rhu \cnomn)\leq \cnomm \ \&\ \nomh\leq p\ \&\ r\leq \cnomn\ \&\ \nomj\leq p\rhu q)\Rightarrow \nomi\leq \cnomm]$ & (Approximation)\\
& iff & $\forall q\forall \nomi\forall \cnomm\forall \nomh\forall\nomj\forall\cnomn[(\nomi\leq q\rhu \cnomn\ \& \ \nomj\rhu(\nomh\rhu \cnomn)\leq \cnomm \ \&\ \nomj\leq \nomh\rhu q)\Rightarrow \nomi\leq \cnomm]$ & (Ackermann)\\
& iff & $\forall q\forall \nomi\forall \cnomm\forall \nomh\forall\nomj\forall\cnomn[(\nomi\leq q\rhu \cnomn\ \& \ \nomj\rhu(\nomh\rhu \cnomn)\leq \cnomm \ \&\ \nomh\bullet \nomj\leq q)\Rightarrow \nomi\leq \cnomm]$ & (Adjunction)\\
& iff & $\forall \nomi\forall \cnomm\forall \nomh\forall\nomj\forall\cnomn[(\nomi\leq (\nomh\bullet \nomj)\rhu \cnomn\ \& \ \nomj\rhu(\nomh\rhu \cnomn)\leq \cnomm)\Rightarrow \nomi\leq \cnomm]$ & (Ackermann)\\
& iff & $\forall \nomh\forall\nomj\forall\cnomn[(\nomh\bullet \nomj)\rhu \cnomn\leq\nomj\rhu(\nomh\rhu \cnomn)]$. & \\
\end{tabular}
\end{center}
The last inequality above is a pure right-primitive $\mathcal{L}_{DLE}^*$-inequality, and by Proposition \ref{prop:type1} is equivalent on perfect DLE-algebras to
		\[(p\bullet q)\rhu r\leq q\rhu(p\rhu r).\]
		By applying the usual procedure, we obtain the following rule:
		\[(p\bullet q)\rhu r\leq q\rhu(p\rhu r)\quad \rightsquigarrow\quad \frac{x\vdash
(p\bullet q)\rhu r}{x\vdash q\rhu(p\rhu r)}\quad \rightsquigarrow\quad \frac{X\vdash
(Y\circled{$\bullet$} Z)\succ W}{X\vdash Z\succ(Y\succ W)}\]

\section{Analytic inductive inequalities and analytic rules}\label{sec:analytic}
In the present section, we address the most general syntactic shape considered in the paper: in the following subsection we define the class of analytic inductive inequalities, and show that each of them can be equivalently transformed into (a set of) analytic structural rules (which are in fact quasi-special). In Subsection \ref{sec:rulestoineq}, we also show that any analytic rule is semantically equivalent to some analytic inductive inequality. Thus, the DLE-logics axiomatized by means of analytic inductive inequalities are exactly the properly displayable ones.
\subsection{From analytic inductive inequalities to quasi-special rules}
\label{ssec:type5}
 Let us start with a motivating example:

\paragraph{The pre-linearity axiom.} Let $\mathcal{F} = \varnothing$, $\mathcal{G} = \{\rhu\}$ where $\rhu$ is binary and of order-type $(\partial, 1)$.
\begin{center}
	\begin{tabular}{|c|c|c|c|c|c|}
		\hline
		\mc{2}{|c|}{I} & \mc{2}{c|}{$;$} & \mc{2}{c|}{$\succ$} \\
		\hline
		$\top$ & $\bot$ & $\pand$ & $\por$ & $\phantom{\rhu}$ & $\rhu$ \\
		\hline
	\end{tabular}
\end{center}

\noindent The following inequality
\[\top\leq ( p\rhu q)\vee (q\rhu p) \]
is not restricted analytic inductive for any order-type: indeed, all the non-leaf nodes of the right-hand are Skeleton, and the PIA subterms are reduced to the variables. The inequality above is not restricted right-analytic for any order-type $\varepsilon$, since the right-hand side contains $\varepsilon^\partial$-uniform PIA-subterms attached to the skeleton. It is not restricted left-analytic for any order-type $\varepsilon$, since the right-hand side contains more than one $\varepsilon^\partial$-uniform PIA-subterm.

We have not found an ALBA-reduction suitable to extend the strategy of the previous section so as to equivalently transform the inequality above into one or more primitive inequalities. However, the following ALBA reduction, exclusively based on applications of a modified (inverted) Ackermann rule (the soundness of which is proved in Lemma \ref{lem:inverted:ackermann} below) and adjunction rules, transforms the inequality above into a quasi-inequality which gives rise to an analytic (in fact quasi-special, cf.\ Definition \ref{def:quasispecial}) structural rule.

\begin{center}
	\begin{tabular}{r l l}
		& $\forall p\forall q[\top\leq ( p\rhu q)\vee (q\rhu p) ]$ &\\
		& $\forall p\forall q\forall \vec{r}[(r_1\leq p \ \&\ q\leq r_2\ \&\ r_3\leq q\ \&\ p\leq r_4) \Rightarrow \top\leq ( r_1\rhu r_2)\vee (r_3\rhu r_4) ]$ &\\
		& $\forall q\forall \vec{r}[(r_1\leq r_4 \ \&\ q\leq r_2\ \&\ r_3\leq q) \Rightarrow \top\leq ( r_1\rhu r_2)\vee (r_3\rhu r_4) ]$ &\\
		& $\forall \vec{r}[(r_1\leq r_4 \ \&\ r_3\leq r_2) \Rightarrow \top\leq ( r_1\rhu r_2)\vee (r_3\rhu r_4) ].$ &\\
		
	\end{tabular}
\end{center}

The last quasi-inequality above expresses the validity of the following quasi-special structural rule on perfect DLEs:
\begin{center}
\AxiomC{$X\vdash W$}\AxiomC{$Z\vdash Y$}
\BinaryInfC{$\mathrm{I}\vdash (X\succ Y)\, ;\, (Z\succ W)$}
\DisplayProof
\end{center}
\noindent We will see that the solving strategy applied to the example above can be applied to the following class of inequalities:
\begin{definition}[Analytic inductive inequalities]
	\label{def:type5}
	For every order type $\epsilon$ and every irreflexive and transitive relation $\Omega$ on the variables $p_1,\ldots p_n$,
	an inequality $s \leq t$ is \emph{ analytic $(\Omega, \epsilon)$-inductive} if $+s$ and $-t$ are both $(\Omega, \epsilon)$-analytic inductive (cf.\ Definition \ref{def:type4}). An inequality $s \leq t$ is \emph{analytic inductive} if is $(\Omega, \epsilon)$-analytic inductive for some $\Omega$ and $\epsilon$.
\end{definition}

\begin{remark}
	\label{remark:discussion type 5} 
	The syntactic shape specified in the definition above can be intuitively understood with the help of the following picture:
	
	\begin{center}
		\begin{tikzpicture}
		\draw (-5,-1.5) -- (-3,1.5) node[above]{\Large$+$} ;
		\draw (-5,-1.5) -- (-1,-1.5) ;
		\draw (-3,1.5) -- (-1,-1.5);
		\draw (-5,0) node{Ske} ;
		\draw[dashed] (-3,1.5) -- (-4,-1.5);
		\draw[dashed] (-3,1.5) -- (-2,-1.5);
		\draw (-4,-1.5) --(-4.8,-3);
		\draw (-4.8,-3) --(-3.2,-3);
		\draw (-3.2,-3) --(-4,-1.5);
		\draw[dashed] (-4,-1.5) -- (-4,-3);
		\draw[fill] (-4,-3) circle[radius=.1] node[below]{$+p$};
		\draw
		(-2,-1.5) -- (-2.8,-3) -- (-1.2,-3) -- (-2,-1.5);
		\fill[pattern=north east lines]
		(-2,-1.5) -- (-2.8,-3) -- (-1.2,-3);
		\draw (-2,-3.25)node{$\gamma$};
		\draw (-5,-2.25) node{PIA} ;
		\draw (0,1.8) node{$\leq$};
		\draw (5,-1.5) -- (3,1.5) node[above]{\Large$-$} ;
		\draw (5,-1.5) -- (1,-1.5) ;
		\draw (3,1.5) -- (1,-1.5);
		\draw (5,0) node{Ske} ;
		\draw[dashed] (3,1.5) -- (4,-1.5);
		\draw[dashed] (3,1.5) -- (2,-1.5);
		\draw (2,-1.5) --(2.8,-3);
		\draw (2.8,-3) --(1.2,-3);
		\draw (1.2,-3) --(2,-1.5);
		\draw[dashed] (2,-1.5) -- (2,-3);
		\draw[fill] (2,-3) circle[radius=.1] node[below]{$+p$};
		\draw
		(4,-1.5) -- (4.8,-3) -- (3.2,-3) -- (4, -1.5);
		\fill[pattern=north east lines]
		(4,-1.5) -- (4.8,-3) -- (3.2,-3) -- (4, -1.5);
		\draw (4,-3.25)node{$\gamma'$};
		\draw (5,-2.25) node{PIA} ;
		\end{tikzpicture}
	\end{center}

	As the picture shows,
	the difference between analytic inductive inequalities and restricted analytic inductive inequalities is that, in the latter, there can be exactly one $\varepsilon^\partial$-uniform subterm attached to the skeleton of the inequality, 
	while in the former this requirement is dropped. 

\end{remark}
Below, we discuss a slightly modified version of the Ackermann rule, which will be used in the proof of Proposition \ref{prop:type5}.
\begin{lemma}
	\label{lem:inverted:ackermann}
	Let $s(\vec{!q})$ and $t(\vec{!r})$ be $\mathcal{L}_\mathrm{DLE}$-terms such that the propositional variables in the arrays $\vec{!q}$ and $\vec{!r}$ are disjoint. Let $\varepsilon$ be the unique order-type on the concatenation $\vec{!q}\oplus\vec{!r}$ with respect to which $s\leq t$ is $\varepsilon$-uniform. Let $\vec{\alpha}$ and $\vec{\beta}$ be arrays of DLE-terms of the same length as $\vec{q}$ and $\vec{r}$ respectively, and such that no variable in $\vec{q}\oplus \vec{r}$ occurs in any $\alpha$ or $\beta$.
	Then the following are equivalent for any perfect DLE $\bbA$:
	\begin{enumerate}
		\item $\bbA\models s(\vec{\alpha}/\vec{q})\leq t(\vec{\beta}/\vec{r})$;
		\item $\bbA\models \forall\vec{q}\forall \vec{r}[\bigamp_{q\in \vec{q}, r\in \vec{r}}(q\leq^{\varepsilon(q)} \alpha\ \&\ \beta\leq^{\varepsilon(r)} r )\Rightarrow s(\vec{!q})\leq t(\vec{!r})]$,
	\end{enumerate}
	where $\leq^1: = \leq$ and $\leq^\partial: = \geq$.
\end{lemma}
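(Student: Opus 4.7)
\medskip

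\noindent\textbf{Proof plan.} The equivalence is a direct consequence of the monotonicity/antitonicity properties encoded in the $\varepsilon$-uniformity hypothesis. Since the arrays $\vec{q}$ and $\vec{r}$ are disjoint and $s\leq t$ is $\varepsilon$-uniform, for each $q$ in $\vec{q}$ one has $+q\prec +s$ when $\varepsilon(q)=1$ and $-q\prec +s$ when $\varepsilon(q)=\partial$; dually, for each $r$ in $\vec{r}$ one has $-r\prec -t$ when $\varepsilon(r)=1$ and $+r\prec -t$ when $\varepsilon(r)=\partial$. Hence the term function $s^\bbA$ is order-preserving (resp.\ order-reversing) in the coordinate $q$ when $\varepsilon(q)=1$ (resp.\ $\varepsilon(q)=\partial$), and $t^\bbA$ is order-preserving (resp.\ order-reversing) in the coordinate $r$ when $\varepsilon(r)=1$ (resp.\ $\varepsilon(r)=\partial$).

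The direction (2)$\Rightarrow$(1) is obtained by instantiating the universally quantified variables $\vec{q}$ and $\vec{r}$ with $\vec{\alpha}$ and $\vec{\beta}$ respectively. The antecedent then becomes a conjunction of inequalities of the form $\alpha\leq^{\varepsilon(q)}\alpha$ and $\beta\leq^{\varepsilon(r)}\beta$, which all hold by reflexivity of $\leq$. Hence the consequent $s(\vec{\alpha}/\vec{q})\leq t(\vec{\beta}/\vec{r})$ holds under any assignment on $\bbA$, which is exactly (1).

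For the direction (1)$\Rightarrow$(2), fix an arbitrary assignment of $\vec{q}$ and $\vec{r}$ satisfying the antecedent $\bigamp_{q\in\vec{q},\,r\in\vec{r}}(q\leq^{\varepsilon(q)}\alpha\ \&\ \beta\leq^{\varepsilon(r)}r)$. By the monotonicity/antitonicity of $s^\bbA$ recalled in the first paragraph, each inequality $q\leq^{\varepsilon(q)}\alpha$ can be pushed through $s$ to yield $s(\vec{q})\leq s(\vec{\alpha}/\vec{q})$ (a standard induction on the subterm structure, using monotonicity in coordinates where $\varepsilon(q)=1$ and antitonicity in coordinates where $\varepsilon(q)=\partial$). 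Analogously, each inequality $\beta\leq^{\varepsilon(r)}r$ yields $t(\vec{\beta}/\vec{r})\leq t(\vec{r})$. Combining these with the hypothesis (1), one obtains the chain
$$s(\vec{q})\ \leq\ s(\vec{\alpha}/\vec{q})\ \leq\ t(\vec{\beta}/\vec{r})\ \leq\ t(\vec{r}),$$
which proves the consequent of (2).

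No real obstacle is anticipated: the only technical point is verifying the two ``pushing'' inequalities for $s$ and $t$, which is a routine induction on terms relying on the fact that each $f\in\mathcal{F}$ and $g\in\mathcal{G}$ preserves/reverses the order in each coordinate according to $\varepsilon_f$ and $\varepsilon_g$, together with the disjointness assumption on $\vec{q}$ and $\vec{r}$ which ensures that the substitutions $\vec{\alpha}/\vec{q}$ and $\vec{\beta}/\vec{r}$ do not interfere with each other or with the newly quantified variables.
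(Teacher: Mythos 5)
Your proof is correct and takes essentially the same route as the paper's: the $(1)\Rightarrow(2)$ direction is the very chain $s(\vec{q})\leq s(\vec{\alpha}/\vec{q})\leq t(\vec{\beta}/\vec{r})\leq t(\vec{r})$ used in the paper, and your substitution-instance argument for $(2)\Rightarrow(1)$ is just the paper's variant-valuation argument phrased syntactically (legitimate precisely because no variable of $\vec{q}\oplus\vec{r}$ occurs in any $\alpha$ or $\beta$). If anything, you are more explicit than the paper in spelling out how $\varepsilon$-uniformity yields the required monotonicity of $s$ in $\vec{q}$ and of $t$ in $\vec{r}$, which the paper leaves implicit.
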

\begin{proof}
	Let us assume 1. To show 2., fix an interpretation $v$ of the variables in $\bbA$ such that $(\bbA, v)\models q\leq^{\varepsilon(q)} \alpha$ and $ (\bbA, v)\models \beta \leq^{\varepsilon(r)} r$ for each $q$ in $\vec{q}$ and $r$ in $\vec{r}$. Hence, assumption 1.\ and the $\varepsilon$-uniformity of $s\leq t$ imply that $\val{s(\vec{q})}_v\leq \val{s(\vec{\alpha}/\vec{q})}_v\leq \val{t(\vec{\beta}/\vec{r})}_v\leq \val{t(\vec{r})}_v$, which proves that $(\bbA, v)\models s(\vec{q})\leq t(\vec{r})$, as required. Conversely, fix a valuation $v$, and notice that the truth of the required condition $(\bbA, v)\models s(\vec{\alpha}/\vec{q})\leq t(\vec{\beta}/\vec{r})$ does not depend on where $v$ maps the variables in $\vec{q}\oplus\vec{r}$, since none of these variables occurs in $s(\vec{\alpha}/\vec{q})\leq t(\vec{\beta}/\vec{r})$. Hence, it is enough to show that $(\bbA, v')\models s(\vec{\alpha}/\vec{q})\leq t(\vec{\beta}/\vec{r})$ for some $\vec{q}\oplus\vec{r}$-variant $v'$ of $v$.
	Let $v'$ be the $\vec{q}\oplus\vec{r}$-variant of $v$ such that $\val{q_i}_{v'}: = \val{\alpha_i}_v= \val{\alpha_i}_{v'}$ and $\val{r_j}_{v'}: = \val{\beta_j}_v= \val{\beta_j}_{v'}$. Then clearly, $(\bbA, v')\models q\leq^{\varepsilon(q)} \alpha$ and $ (\bbA, v')\models \beta \leq^{\varepsilon(r)} r$. By assumption 2, this implies that $(\bbA, v')\models s(\vec{q})\leq t(\vec{r})$, which is equivalent by construction to the required $(\bbA, v')\models s(\vec{\alpha}/\vec{q})\leq t(\vec{\beta}/\vec{r})$.
\end{proof}
\begin{remark}
	\label{rem:polarity}
	Notice that, in the quasi-inequality in item 2 of the statement of the lemma above, each variable $q$ in $\vec{q}$ and $r$ in $\vec{r}$ occurs twice, i.e.\ once in exactly one inequality in the antecedent and once in the conclusion of the quasi-inequality. These two occurrences have the same polarity in the two inequalities. For example, if $q$ is in $\vec{q_1}$ and $\varepsilon(q) = \partial$, then $q$ occurs negatively in the conclusion of the quasi-inequality, and also negatively in the inequality $q\leq^{\varepsilon(q)}\phi$, which can be rewritten as $\phi\leq q$. The remaining cases are analogous and left to the reader.
\end{remark}
\begin{prop}
	\label{prop:type5} Every analytic $(\Omega, \epsilon)$-inductive inequality can be equivalently transformed, via an ALBA-reduction, into a set of quasi-special structural rules.
\end{prop}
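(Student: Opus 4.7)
The strategy is to extend the ALBA reduction of Section~\ref{ssec:type4} by an additional use of the inverted Ackermann rule (Lemma~\ref{lem:inverted:ackermann}) in order to dispose of the proposition variables lying inside the $\epsilon^\partial$-uniform PIA-subtrees which may now be attached to the Skeletons of \emph{both} $+s$ and $-t$. First I preprocess the input inequality $s\leq t$ by the standard distribution and splitting rules together with the extra rules of Remark~\ref{rmk: management of top and bottom on the wrong side}, so that, without loss of generality, the two Skeletons of $+s$ and $-t$ are definite and each maximal PIA-subterm is either a $\phi$- or $\psi$-term containing exactly one $\epsilon$-critical leaf, or a $\gamma$- or $\chi$-term which is $\epsilon^\partial$-uniform (cf.\ the BNF presentation of analytic $(\Omega,\epsilon)$-inductive terms). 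After the first-approximation step $\nomj\leq s \,\&\, t\leq\cnomm \Rightarrow \nomj\leq\cnomm$, I apply the approximation rules of ALBA on both Skeletons so as to display \emph{every} maximal PIA-subterm, including the $\epsilon^\partial$-uniform ones, introducing a fresh nominal for each positive PIA-root and a fresh conominal for each negative PIA-root.

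At this point every PIA-subterm containing an $\epsilon$-critical variable is treated exactly as in the proof of Theorem~\ref{type3:theorem}: the derived adjunction rules $\mathsf{LA},\mathsf{RA}$ of Lemma~\ref{type3lem} are applied to display each critical variable, and then the standard Ackermann rules are applied in the order prescribed by $\Omega$, the $(\Omega,\epsilon)$-inductivity of $s\leq t$ guaranteeing Ackermann shape at each stage. What survives is a quasi-inequality whose non-pure inequalities are exactly those produced when the $\epsilon^\partial$-uniform subterms $\vec{\gamma},\vec{\chi}$ were approximated, namely inequalities of the form $\vec{\nomi'}\leq\vec{\gamma}$ and $\vec{\chi}\leq\vec{\cnomn'}$. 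A proposition variable $p$ surviving at this stage occurs only inside these subterms, and by construction every such occurrence has the same $\epsilon^\partial$-polarity (cf.\ Remark~\ref{rem:polarity}). Hence Lemma~\ref{lem:inverted:ackermann} applies to $p$ and eliminates it, replacing its surviving inequalities by a single pure inequality between the appropriate fresh nominals and conominals. Iterating this for every surviving proposition variable produces a pure quasi-inequality.

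Finally, I read off a set of structural rules from the resulting pure quasi-inequality via the dictionary between pure quasi-inequalities and structural rules used in Definition~\ref{def:structure from primitive} and the proof of Lemma~\ref{lemma:pri}. The shape produced matches precisely that of Definition~\ref{def:quasispecial}: each pure fresh nominal/conominal appears exactly once in the conclusion of the rule (since it was introduced to display a PIA-subterm occupying a unique position in the Skeleton) and does not appear in any premise coming from the critical-variable approximations. The conditions $C_1$--$C_7$ of analyticity are then verified just as in the proof of Lemma~\ref{lemma:pri}. The main obstacle in executing the plan is the bookkeeping needed after the critical-variable Ackermann stage, to ensure that every surviving occurrence of each $\epsilon^\partial$-uniform variable is on the side and with the polarity required by Lemma~\ref{lem:inverted:ackermann}; this is established by an induction on the structure of the $\gamma,\chi$-terms, using that approximation and the derived adjunction rules preserve the polarities of parametric subformulas.
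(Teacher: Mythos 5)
Your overall architecture---decompose $+s$ and $-t$ into definite Skeletons with maximal PIA-subterms, display the critical PIA-subterms, apply the adjunction rules of Lemma \ref{type3lem} and then Ackermann along $\Omega$---matches the paper's proof, but your endgame contains a genuine gap. First, the terminal use of Lemma \ref{lem:inverted:ackermann} does not do what you claim: after preprocessing, every remaining proposition variable has at least one $\epsilon$-critical occurrence (variables in which the inequality is $\epsilon^\partial$-uniform are removed by the monotone variable-elimination rules), and when the standard Ackermann rule eliminates such a variable it substitutes the minimal valuation into \emph{all} of its noncritical occurrences, including those inside the displayed $\epsilon^\partial$-uniform subterms $\vec{\gamma},\vec{\chi}$. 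Hence no proposition variable survives that stage; moreover Lemma \ref{lem:inverted:ackermann} introduces fresh variables and \emph{additional} premises, it does not convert premises containing proposition variables into ``pure inequalities between nominals and conominals''.

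The substantive problem is the final reading-off step. Because you display all maximal PIA-subterms via approximation rules, your reduction terminates in a pure quasi-inequality whose variables are nominals and conominals, i.e.\ quantified over completely join- and meet-irreducible elements; but the validity of a structural rule of $\mathbf{DL}$ is defined via arbitrary assignments (Section \ref{ssec:soundness}), so structural variables range over arbitrary algebra elements. Reading your quasi-inequality off as a quasi-special rule therefore requires an analogue of Proposition \ref{prop:type1} for quasi-inequalities of this shape (passing from validity over irreducibles to validity over arbitrary elements), which you neither state nor prove, and which does not follow from the ``dictionary'' of Lemma \ref{lemma:pri}, whose quasi-inequalities involve proposition variables. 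This is precisely the difficulty the paper's proof is engineered to avoid: there, Lemma \ref{lem:inverted:ackermann} is applied \emph{at the start}, replacing every maximal PIA-subterm (critical and uniform alike) by a fresh \emph{proposition} variable, so that after the LA/RA-plus-Ackermann stage one obtains a quasi-inequality in those variables which is literally the validity condition of a quasi-special rule with conclusion $\xi_1\leq\xi_2$, after which conditions C$_1$--C$_7$ and quasi-specialness (Definition \ref{def:quasispecial}) are verified directly. Your argument could presumably be repaired---for instance by comparing your pure quasi-inequality with the paper's variable-based one, both being equivalent to the original inequality---but as written the crucial equivalence is missing; note also that to obtain the quasi-special \emph{shape} you must undo the first-approximation step, so that the conclusion of the rule is $\Xi_1\vdash\Xi_2$ rather than a sequent between the two fresh structures introduced by first approximation.
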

\begin{proof}
	By assumption, $s\leq t$ is of the form \[\xi_1(\vec{\phi}_1/\vec{x}_1,\vec{\psi}_1/\vec{y}_1, \vec{\gamma}_1/\vec{z}_1, \vec{\chi}_1/\vec{w}_1)\leq \xi_2(\vec{\psi}_2/\vec{x}_2,\vec{\phi}_2/\vec{y}_2, \vec{\chi}_2/\vec{z}_2,\vec{\gamma}_2/\vec{w}_2),\] where
	$\xi_1(\vec{!x}_1, \vec{!y}_1, \vec{!z}_1, \vec{!w}_1)$ and $\xi_2(\vec{!x}_2, \vec{!y}_2, \vec{!z}_2, \vec{!w}_2)$ respectively are a positive and a negative Skeleton-formula (cf.\ page \pageref{page: positive negative PIA}) which are scattered, monotone in $\vec{x}$ and $\vec{z}$ and antitone in $\vec{y}$ and $\vec{w}$. Moreover, the formulas in $\vec{\phi}$ and $\vec{\gamma}$ are positive PIA, and the formulas in $\vec{\psi}$ and $\vec{\chi}$ are negative PIA. Finally, every $\phi$ and $\psi$ contains at least one $\varepsilon$-critical variable, whereas all $+\gamma$ and $-\chi$ are $\varepsilon^\partial$-uniform. 
Modulo exhaustive application of distribution and splitting rules of the standard ALBA preprocessing,\footnote{The applications of splitting rules at this stage give rise to a set of inequalities, each of which can be treated separately. In the remainder of the proof, we focus on one of them.} we can assume w.l.o.g.\ that the scattered Skeleton formulas $\xi_1$ and $\xi_2$ are also definite. Modulo exhaustive application of the additional rules which identify $+f'(\phi_1,\ldots,\bot^{\varepsilon_{f'}}(i),\ldots,\phi_{n_{f'}})$ with $+\bot$ for every $f'\in \mathcal{F}\cup\{\wedge\}$ and $-g'(\phi_1,\ldots,\top^{\varepsilon_{g'}}(i),\ldots,\phi_{n_{g'}})$ with $-\top$ for every $g'\in \mathcal{G}\cup\{\vee\}$, which would reduce $s\leq t$ to a tautology, we can assume w.l.o.g.\ that there are no occurrences of $+\bot$ and $-\top$ in $+\xi_1$ and $-\xi_2$. Hence (cf.\ Remark \ref{rem:primitive-skeleton-PIA}) we can assume w.l.o.g.\ that $\xi_1$ (resp.\ $\xi_2$) is scattered, definite and left-primitive (resp.\ right-primitive).
	%
	The following equivalence is justified by Lemma \ref{lem:inverted:ackermann} (in what follows, we write e.g.\ $\vec{q}_{1, 6}\leq \vec{\phi}$ to represent concisely both $\vec{q}_1\leq \vec{\phi}_1$ and $\vec{q}_{6}\leq \vec{\phi}_2$):
	
	\begin{center}
		\begin{tabular}{c l l}
			& $\forall \vec{p}[s\leq t]$\\
			iff & $\forall \vec{p}\forall \vec{q}[(\vec{q}_{1, 6}\leq \vec{\phi}\ \&\ \vec{\psi}\leq \vec{q}_{2, 5}\ \&\ \vec{q}_{3,8}\leq \vec{\gamma}\ \&\ \vec{\chi}\leq \vec{q}_{4, 7})\Rightarrow \xi_1(\vec{q}_1, \vec{q}_2, \vec{q}_3, \vec{q}_4)\leq \xi_2(\vec{q}_5, \vec{q}_6, \vec{q}_7, \vec{q}_8)].$\\
		\end{tabular}
	\end{center}
	By assumption, in each inequality $q\leq \phi$ (resp.\ $\psi\leq q$) in $\vec{q}_{1, 6}\leq \vec{\phi}$ (resp.\ $\vec{\psi}\leq \vec{q}_{2, 5}$) there is at least one $\varepsilon$-critical occurrence of some variable in $\vec{p}$.
Modulo exhaustive application of distribution rules (a')-(c') of Remark \ref{rmk: management of top and bottom on the wrong side} and splitting rules, we can assume w.l.o.g.\ that each $\phi$ in $\vec{\phi}$ (resp.\ $\psi$ in $\vec{\psi}$) is a definite positive (resp.\ negative) PIA-formula, which has exactly one $\varepsilon$-critical variable occurrence. That is, if $\vec{p_1}$ and $\vec{p_2}$ respectively denote the subarrays of $\vec{p}$ such that $\varepsilon(p_1) = 1$ for each $p_1$ in $\vec{p_1}$ and $\varepsilon(p_2) = \partial$ for each $p_2$ in $\vec{p_2}$, then each $\phi$ in $\vec{\phi}$ is either of the form $\phi_+(p_1/!x, \vec{p'}/\oz)$ with $+x\prec +\phi_+$, or of the form $\phi_-(p_2/!x, \vec{p'}/\oz)$ with $-x\prec +\phi_-$. Similarly, each $\psi$ in $\vec{\psi}$ is either of the form $\psi_+(p_1/!x, \vec{p'}/\oz)$ with $+x\prec -\psi_+$, or of the form $\psi_-(p_2/!x, \vec{p'}/\oz)$ with $-x\prec -\psi_-$. Recall that each $\phi$ in $\vec{\phi}$ is definite positive PIA and each $\psi$ in $\vec{\psi}$ is definite negative PIA. Hence, we can assume w.l.o.g.\ that there are no occurrences of $-\bot$ and $+\top$ in each $+\phi$. Indeed, otherwise, the exhaustive application of the additional rules which identify $-f'(\phi_1,\ldots,\bot^{\varepsilon_{f'}}(i),\ldots,\phi_{n_{f'}})$ with $-\bot$ for every $f'\in \mathcal{F}\cup\{\wedge\}$ and $+g'(\phi_1,\ldots,\top^{\varepsilon_{g'}}(i),\ldots,\phi_{n_{g'}})$ with $+\top$ for every $g'\in \mathcal{G}\cup\{\vee\}$, would reduce all offending inequalities to tautological inequalities of the form $q\leq \top$ or $\bot\leq q$ which can then be removed. Likewise, we can assume w.l.o.g.\ that there are no occurrences of $+\bot$ and $-\top$ in each $+\psi$. This shows (cf.\ Remark \ref{rem:primitive-skeleton-PIA}) that we can assume w.l.o.g.\ that each $\phi$ in $\vec{\phi}$ (resp.\ $\psi$ in $\vec{\psi}$) is a right-primitive (resp.\ left-primitive) term. Likewise, we can assume w.l.o.g.\ that each $\gamma$ in $\vec{\gamma}$ (resp.\ $\chi$ in $\vec{\chi}$) is right-primitive (resp.\ left-primitive).
By Lemma \ref{type3lem}, the suitable derived adjunction rule among LA($\phi_+$), LA($\phi_-$), RA($\psi_+$), RA($\psi_-$) is applicable to each $\phi$ and $\psi$, yielding:
	\begin{center}
		\begin{tabular}{c l l}
			& $\forall \vec{p}\forall \vec{q}[(\vec{q}_{1, 6}\leq \vec{\phi}\ \&\ \vec{\psi}\leq \vec{q}_{2, 5}\ \&\ \vec{q}_{3,8}\leq \vec{\gamma}\ \&\ \vec{\chi}\leq \vec{q}_{4, 7})\Rightarrow \xi_1(\vec{q}_1, \vec{q}_2, \vec{q}_3, \vec{q}_4)\leq \xi_2(\vec{q}_5, \vec{q}_6, \vec{q}_7, \vec{q}_8)]$\\
			
			iff & $\forall \vec{p_1}\forall \vec{p_2}\forall \vec{q}[(\overrightarrow{\mathsf{LA}(\phi_+)(q/u)}\leq \vec{p_1}\ \&\ \vec{p_2}\leq \overrightarrow{\mathsf{LA}(\phi_-)(q/u)}\ \&\ \overrightarrow{\mathsf{RA}(\psi_+)(q/u)}\leq \vec{p_1}\ \& $\\
			&$ \vec{p_2}\leq \overrightarrow{\mathsf{RA}(\psi_-)(q/u)}\ \&\ \vec{q}_{3,8}\leq \vec{\gamma}\ \&\ \vec{\chi}\leq \vec{q}_{4, 7})\Rightarrow \xi_1(\vec{q}_1, \vec{q}_2, \vec{q}_3, \vec{q}_4)\leq \xi_2(\vec{q}_5, \vec{q}_6, \vec{q}_7, \vec{q}_8)].$\\
		\end{tabular}
\end{center}
Notice that, when applying the adjunction/residuation rules, the polarity of subterms which are parametric in the rule application remains unchanged. Hence, the assumption that there are no occurrences of $-\bot$ and $+\top$ in each $+\phi$ and there are no occurrences of $+\bot$ and $-\top$ in each $+\psi$ implies that that there are no occurrences of $-\bot$ and $+\top$ in each $+\mathsf{LA}(\phi_-)(q/u)$ and $+\mathsf{RA}(\psi_-)(q/u)$, which are then shown to be right-primitive, and there are no occurrences of $+\bot$ and $-\top$ in each $+\mathsf{LA}(\phi_+)(q/u)$ and $+\mathsf{RA}(\psi_+)(q/u)$, which are then shown to be left-primitive.
The assumptions made above imply that each $\gamma$ is antitone in each variable in $\vec{p_1}$ and monotone in each variable in $\vec{p_2}$, while each $\chi$ is monotone in each variable in $\vec{p_1}$ and antitone in each variable in $\vec{p_2}$. Hence, the quasi-inequality above is simultaneously in Ackermann shape w.r.t.\ all variables in $\vec{p}$.\footnote{The formulas $\mathsf{LA}(\phi_+)(q/u)$, $\mathsf{LA}(\phi_-)(q/u)$, $\mathsf{RA}(\psi_+)(q/u)$, and $\mathsf{RA}(\psi_-)(q/u)$ do not need to be free of all variables in $\vec{p}$, and in general they are not. However, the assumptions and the general theory of ALBA guarantee that they are $\varepsilon^\partial$-uniform and free of the specific $p$-variable the `minimal valuation' of which they are part of. The reader is referred to \cite{CoPa10} for an expanded treatment of this point.} Applying the Ackermann rule repeatedly in the order indicated by $\Omega$ yields the following quasi-inequality, free of variables in $\vec{p}$:
	\begin{equation}
		\label{eq:analytic quasieq}
		\forall \vec{q}[(\vec{q}_{3, 8}\leq \vec{\gamma}(\vec{P_1}/\vec{p_1}, \vec{P_2}/\vec{p_2})\ \&\ \vec{\chi}(\vec{P_1}/\vec{p_1}, \vec{P_2}/\vec{p_2})\leq\vec{q}_{4, 7})\Rightarrow \xi_1(\vec{q}_1, \vec{q}_2, \vec{q}_3, \vec{q}_4)\leq \xi_2(\vec{q}_5, \vec{q}_6, \vec{q}_7, \vec{q}_8)],
	\end{equation}
	where $P_1$ and $P_2$ denote the $\mathcal{L}_{\mathrm{DLE}}^\ast$-terms, with variables in $\vec{q}_1, \vec{q}_2, \vec{q}_5, \vec{q}_6$, obtained by applying the Ackermann-substitution. For instance, for every $\Omega$-minimal $p_1$ in $\vec{p_1}$, \[P_1: = \bigvee_i \mathsf{LA}(\phi_+^{(i)})(q/u)\vee \bigvee_j \mathsf{RA}(\psi_+^{(j)})(q/u),\]
	and for every $\Omega$-minimal $p_2$ in $\vec{p_2}$, \[P_2: = \bigwedge_i \mathsf{LA}(\phi_-^{(i)})(q/u)\wedge \bigwedge_j \mathsf{RA}(\psi_-^{(j)})(q/u).\]
	In the clauses above, the indexes $i$ and $j$ count the number of critical occurrences of the given variable $p_1$ (resp.\ $p_2$) in PIA-subterms of type $\phi_+$ and $\psi_+$ (resp.\ $\phi_-$ and $\psi_-$).
	
	Let us show that the quasi-inequality \eqref{eq:analytic quasieq} represents the validity in perfect DLEs of some analytic (in fact quasi-special, cf.\ Definition \ref{def:quasispecial}) structural rule of the calculus $\mathbf{DL}$. We have already observed above that $\xi_1$ is left-primitive and $\xi_2$ is right-primitive. Hence, the conclusion of the quasi-inequality \eqref{eq:analytic quasieq} can be understood as the semantic interpretation of some structural sequent (cf.\ Definition \ref{def:structure from primitive}). To see that each inequality in the antecedent of \eqref{eq:analytic quasieq} is also the interpretation of some structural sequent, it is enough to show that every $\gamma(\vec{P_1}/\vec{p_1}, \vec{P_2}/\vec{p_2})$ is right-primitive, and every $\chi(\vec{P_1}/\vec{p_1}, \vec{P_2}/\vec{p_2})$ is left-primitive. Indeed, if this is the case, then we can apply distribution rules exhaustively so as to surface the $+\lor$ and $-\land$, and then apply splitting rules to obtain definite left-primitive and right-primitive inequalities. By Definition \ref{def:structure from primitive} each of these inequalities will be the interpretation of some structural sequent.
	
	This is a rather simple proof by induction on the maximum length of chains in $\Omega$. The base case, when $\Omega$ is the discrete order (hence $P_1$ and $P_2$ are of the form displayed above), immediately follows from the observation, made above, that each $\gamma$ is right-primitive, antitone in each variable in $\vec{p_1}$ and monotone in each variable $\vec{p_2}$, while each $\chi$ is left-primitive, monotone in each variable in $\vec{p_1}$ and antitone in each variable $\vec{p_2}$, and by Lemma \ref{type3lem}. The inductive step is routine.
	
	Let us show that the rule so obtained is analytic (cf.\ Definition \ref{def:analytic}), that is, it satisfies conditions C$_1$-C$_7$. As to C$_1$, notice that each variable $q$ in $\vec{q}_i$ for $1\leq i\leq 8$ appears in some inequality in the antecedent of the initial quasi-inequality, and has not been eliminated in any ensuing transformations. This implies that each $q$ gives rise to a parametric structural variable $X$ which occurs in some premise and in the conclusion. Condition C$_2$ is guaranteed by construction: indeed, the congruence relation is defined as the transitive closure of the relation identifying only the occurrences of the structural variable $X$ corresponding to one variable $q$. Condition C$_3$ is also guaranteed by construction, given that each variable $q$ occurs exactly once in $\xi_1(\vec{q}_1, \vec{q}_2, \vec{q}_3, \vec{q}_4)\leq \xi_2(\vec{q}_5, \vec{q}_6, \vec{q}_7, \vec{q}_8)$. Condition C$_4$ follows from Remark \ref{rem:polarity}, and the fact that adjunction rules and usual Ackermann rule preserve the polarity of the variables. Condition C$_5$ vacuously holds, since all constituents of structural rules are parametric. Conditions C$_6$ and C$_7$ are immediate.

Finally, observe that the rule we have obtained is in fact quasi-special. Indeed, the variables $\vec{q_3}, \vec{q_4}, \vec{q_7}, \vec{q_8}$ are fresh, and each of them occurs only once in the premises. 
\end{proof}

\subsection{From analytic rules to analytic inductive inequalities}
\label{sec:rulestoineq}
In the previous section, we introduced the syntactic shape of analytic inductive $\mathcal{L}_{\mathrm{DLE}}$-inequalities for any language $\mathcal{L}_{\mathrm{DLE}}$, and showed that these inequalities can be effectively transformed via ALBA into a set of analytic structural rules of the associated display calculus $\mathbf{DL}_{\mathrm{DLE}}$.
In the present section, we show that having this shape is also a necessary condition.

\begin{lemma}\label{quasitoineq}
Let $s(\vec{q},\vec{r})$, $t(\vec{q},\vec{r})$, $\overrightarrow{\alpha(\vec{q},\vec{r})}$ and $\overrightarrow{\beta(\vec{q},\vec{r})}$ be $\mathcal{L}_{\mathrm{DLE}}$-terms such that 
$t$ and each $\alpha$ are monotone in $\vec{r}$ and antitone in $\vec{q}$, and $s$ and each $\beta$ are monotone in $\vec{q}$ and antitone in $\vec{r}$. Then the following are equivalent for any DLE $\bbA$:
\begin{enumerate}
\item $\bbA\models s(\vec{q}\land\vec{\alpha}/\vec{q},\vec{r}\lor\vec{\beta}/\vec{r})\leq t(\vec{q}\land\vec{\alpha}/\vec{q},\vec{r}\lor\vec{\beta}/\vec{r})$;
\item $\bbA\models \forall\vec{q}\forall \vec{r}[(\vec{q}\leq\vec{\alpha}\ \&\ \vec{\beta}\leq\vec{r})\Rightarrow s(\vec{q},\vec{r})\leq t(\vec{q},\vec{r})]$
\end{enumerate}

\end{lemma}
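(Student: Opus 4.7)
The proof is an instance of the minimal-valuation style argument familiar from Ackermann-type reductions (cf.\ Lemma \ref{lem:inverted:ackermann}). I would prove the two directions independently.

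For the implication $(2) \Rightarrow (1)$, given any assignment $v$ on $\bbA$, I would define the $(\vec{q},\vec{r})$-variant $v'$ of $v$ by setting $v'(q_i) := \val{q_i \wedge \alpha_i}_v$ and $v'(r_j) := \val{r_j \vee \beta_j}_v$, so that by construction $\val{\tau^\sigma}_v = \val{\tau}_{v'}$ for every $\mathcal{L}_\mathrm{DLE}$-term $\tau$, where $\sigma$ denotes the substitution $\vec{q} \mapsto \vec{q} \wedge \vec{\alpha}$, $\vec{r} \mapsto \vec{r} \vee \vec{\beta}$. Then I would verify that the antecedent of the quasi-inequality in (2) holds under $v'$, i.e.\ that $v'(q_i) \leq \val{\alpha_i}_{v'}$ and $\val{\beta_j}_{v'} \leq v'(r_j)$ for all $i,j$. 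The first follows from the inequality $q_i \wedge \alpha_i \leq \alpha_i$ together with the fact that $\alpha_i$ is antitone in $\vec{q}$ and monotone in $\vec{r}$: substituting smaller values for $\vec{q}$ (namely $q_i \wedge \alpha_i \leq q_i$) and larger values for $\vec{r}$ (namely $r_j \leq r_j \vee \beta_j$) yields $\val{\alpha_i}_v \leq \val{\alpha_i^\sigma}_v = \val{\alpha_i}_{v'}$. The second inequality is dual, using the opposite monotonicity profile of $\beta_j$ and the fact that $\beta_j \leq r_j \vee \beta_j$. Applying (2) at $v'$ then gives $\val{s}_{v'} \leq \val{t}_{v'}$, which by the choice of $v'$ is exactly $\val{s^\sigma}_v \leq \val{t^\sigma}_v$, as required.

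For the converse $(1) \Rightarrow (2)$, I would fix an assignment $v$ satisfying the antecedent of the quasi-inequality, i.e.\ $\val{q_i}_v \leq \val{\alpha_i}_v$ and $\val{\beta_j}_v \leq \val{r_j}_v$ for all $i,j$. Under these assumptions, $\val{q_i \wedge \alpha_i}_v = \val{q_i}_v$ and $\val{r_j \vee \beta_j}_v = \val{r_j}_v$, so the substitution $\sigma$ acts as the identity at $v$; hence (1) instantiated at $v$ immediately yields $\val{s}_v \leq \val{t}_v$.

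The proof involves no serious obstacle: the only subtlety is verifying that the polarity assumptions on $\vec{\alpha}$ and $\vec{\beta}$ are precisely the ones needed to ensure that the minimal-valuation candidate $v'$ satisfies the antecedent of (2). The statement does not require $\bbA$ to be perfect, so no appeal to completeness or join/meet generation is needed.
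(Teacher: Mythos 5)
Your proof is correct and follows essentially the same route as the paper's: the direction $(2)\Rightarrow(1)$ is handled by the same minimal-valuation variant $v'(q_i):=\val{q_i\wedge\alpha_i}_v$, $v'(r_j):=\val{r_j\vee\beta_j}_v$, with the same polarity computation showing $v'$ satisfies the antecedent, and the converse is the same observation that the substitution acts as the identity under the antecedent. No gaps.
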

\begin{proof}
Assume item 1. To show item 2, fix a valuation $v$ such that $(\bbA, v)\models\vec{q}\leq\vec{\alpha}$ and $(\bbA, v)\models\vec{\beta}\leq\vec{r}$. Hence, $(\bbA, v)\models \vec{q}\land\vec{\alpha}=\vec{q}$ and $(\bbA, v)\models\vec{r}\lor\vec{\beta}=\vec{r}$. By item 1, $(\bbA, v)\models s(\vec{q}\land\vec{\alpha}/\vec{q},\vec{r}\lor\vec{\beta}/\vec{r})\leq t(\vec{q}\land\vec{\alpha}/\vec{q},\vec{r}\lor\vec{\beta}/\vec{r})$, which is equivalent to $(\bbA, v)\models s(\vec{q},\vec{r})\leq t(\vec{q},\vec{r})$, as required.

Conversely, assume item 2 and fix a valuation $v$. Clearly, $(\bbA, v)\models \vec{q}\wedge \vec{\alpha}\leq \vec{\alpha}$ and $(\bbA, v)\models \vec{\beta} \leq \vec{r}\vee \vec{\beta}$. Since each $\alpha$ (resp.\ $\beta$) is monotone (resp.\ antitone) in $\vec{r}$ and antitone (resp.\ monotone) in $\vec{q}$, this implies that \[(\bbA, v)\models \overrightarrow{\alpha(\vec{q},\vec{r})}\leq \overrightarrow{\alpha((\vec{q}\wedge \vec{\alpha})/\vec{q},(\vec{r}\vee \vec{\beta})/\vec{r})}\quad\quad (\bbA, v)\models \overrightarrow{\beta((\vec{q}\wedge \vec{\alpha})/\vec{q},(\vec{r}\vee \vec{\beta})/\vec{r})}\leq \overrightarrow{\beta(\vec{q},\vec{r})}, \]
which immediately entail that 
\[(\bbA, v)\models \vec{q}\wedge \overrightarrow{\alpha(\vec{q},\vec{r})}\leq \overrightarrow{\alpha((\vec{q}\wedge \vec{\alpha})/\vec{q},(\vec{r}\vee \vec{\beta})/\vec{r})}\quad\quad (\bbA, v)\models \overrightarrow{\beta((\vec{q}\wedge \vec{\alpha})/\vec{q},(\vec{r}\vee \vec{\beta})/\vec{r})}\leq \overrightarrow{\beta(\vec{q},\vec{r})}\vee \vec{r}. \]
%
Let $v'$ be the $\vec{q}\oplus\vec{r}$-variant of $v$ such that $\overrightarrow{v'(q)}: = \overrightarrow{v(q)}\land\overrightarrow{\val{\alpha}_v}$ and $\overrightarrow{v'(r)}: = \overrightarrow{v(r)}\lor\overrightarrow{\val{\beta}_v}$. By definition, the conditions above are equivalent to
\[(\bbA, v')\models \vec{q}\leq \overrightarrow{\alpha(\vec{q},\vec{r})}\quad\quad (\bbA, v')\models \overrightarrow{\beta(\vec{q},\vec{r})}\leq \vec{r}. \]
Hence, by assumption 2, we can conclude that $(\bbA, v')\models s\leq t$, which is equivalent to $(\bbA, v)\models s(\vec{q}\land\vec{\alpha}/\vec{q},\vec{r}\lor\vec{\beta}/\vec{r})\leq t(\vec{q}\land\vec{\alpha}/\vec{q},\vec{r}\lor\vec{\beta}/\vec{r})$, as required.
\end{proof}

\begin{prop}\label{prop:rulestoineq}
For any language $\mathcal{L}_\mathrm{DLE}$, every analytic rule in the language of the corresponding calculus $\mathbf{DL}$ is semantically equivalent to some analytic inductive $\mathcal{L}^\ast_\mathrm{DLE}$-inequality.
\end{prop}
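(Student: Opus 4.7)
Given an analytic rule $\rho$ of the calculus $\mathbf{DL}$, the plan is to normalize it into a standard shape via display postulates, translate it into a quasi-inequality in $\mathcal{L}^*_\mathrm{DLE}$, collapse that quasi-inequality into a single inequality using Lemma \ref{quasitoineq}, and verify that the resulting inequality is analytic inductive.

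First, I would invoke the remark following Proposition \ref{prop: DL has relativized display property} to normalize $\rho$, modulo applications of display postulates, to the standard shape
\[\frac{(S^i_j \vdash Y^i \mid 1\leq j\leq n_i)_{1\leq i\leq n} \quad (X^k \vdash T^k_\ell \mid 1\leq \ell\leq m_k)_{1\leq k\leq m}}{(S\vdash T)[Y^i]^{suc}[X^k]^{pre}},\]
in which each structural variable $X^k$ (resp.\ $Y^i$) occurs exactly once in the conclusion in precedent (resp.\ succedent) position. Conditions C$_2$--C$_4$ of the original rule guarantee that every congruent occurrence of $X^k$ (resp.\ $Y^i$) inside any $S^i_j$ or $T^k_\ell$ also lies in precedent (resp.\ succedent) position.

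Next, I would translate the structures to $\mathcal{L}^*_\mathrm{DLE}$-formulas via the maps $l$ and $r$ of Definition \ref{def:rsls}, setting $s:=l(S)$, $t:=r(T)$, $\alpha^k:=\bigwedge_\ell r(T^k_\ell)$ and $\beta^i:=\bigvee_j l(S^i_j)$. By the observation preceding Definition \ref{def:structure from primitive}, combined with the closure of primitive formulas under $\wedge$ and $\vee$, the formulas $s$ and each $\beta^i$ are left-primitive, while $t$ and each $\alpha^k$ are right-primitive. The semantic validity of $\rho$ on any perfect $\mathcal{L}_\mathrm{DLE}$-algebra unfolds to the quasi-inequality
\[\forall \vec x\,\vec y\ \bigl[\bigamp_k (x^k \leq \alpha^k) \amp \bigamp_i (\beta^i \leq y^i) \Rightarrow s \leq t\bigr].\]
A polarity check, using the correspondence between precedent/succedent position in a structural term and positive/negative polarity in its $l$-/$r$-translation, together with C$_4$, shows that $t$ and each $\alpha^k$ are monotone in $\vec y$ and antitone in $\vec x$, while $s$ and each $\beta^i$ are monotone in $\vec x$ and antitone in $\vec y$. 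Hence Lemma \ref{quasitoineq} applies (with $\vec q := \vec x$ and $\vec r := \vec y$) and delivers the equivalent single inequality
\[s(\vec x \wedge \vec\alpha /\vec x,\, \vec y \vee \vec\beta /\vec y)\ \leq\ t(\vec x \wedge \vec\alpha /\vec x,\, \vec y \vee \vec\beta /\vec y).\]

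Finally, I would verify this inequality is analytic inductive for the empty order $\Omega$ and the order-type $\varepsilon$ given by $\varepsilon(x^k)=1$ and $\varepsilon(y^i)=\partial$. In the signed generation tree of the left-hand side, the skeleton of $+s$ is extended at each $x^k$-leaf by a skeleton node $+\wedge$ whose children are $+x^k$ (the $\varepsilon$-critical leaf) and $+\alpha^k$; since $\alpha^k$ is a definite right-primitive formula, $+\alpha^k$ consists entirely of PIA-nodes, and its leaves, signed $-x^{k'}$ and $+y^i$, are both non-$\varepsilon$-critical, so $+\alpha^k$ is $\varepsilon^\partial$-uniform. A symmetric analysis applies to $-t(\vec y \vee \vec\beta /\vec y)$ using $-\vee$ and the PIA subtrees $-\beta^i$. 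Every $\varepsilon$-critical branch therefore lies entirely within a skeleton, so the $(\Omega,\varepsilon)$-inductive clause involving SRR-nodes holds vacuously, and every non-critical branch is PIA-then-skeleton and hence good. The main obstacle is this last bookkeeping: conditions C$_2$--C$_4$ and the discipline imposed by $l$ and $r$ must be carefully combined to ensure that the signs of the variables inside each $\alpha^k$ and $\beta^i$, after the substitution mandated by Lemma \ref{quasitoineq}, really do produce $\varepsilon^\partial$-uniform PIA subtrees hanging off critical branches that remain in the skeleton.
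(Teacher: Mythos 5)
Your proposal follows essentially the same route as the paper's own proof of Proposition \ref{prop:rulestoineq}: normalize the rule via display postulates into the shape described in the remark after Proposition \ref{prop: DL has relativized display property}, read off the semantic quasi-inequality whose premises are $x^k\leq \alpha^k$ and $\beta^i\leq y^i$ with $\alpha^k=\bigwedge_\ell r(T^k_\ell)$ and $\beta^i=\bigvee_j l(S^i_j)$, collapse it into a single inequality via Lemma \ref{quasitoineq}, and then check analytic inductiveness for the discrete order $\Omega$ and the position-determined order-type; this is exactly the paper's argument.

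One step is stated too narrowly. You define $\varepsilon$ only on the displayed variables $x^k, y^i$ and assert that the leaves of each $+\alpha^k$ (dually $-\beta^i$) are signed $-x^{k'}$ or $+y^{i}$. In general the premises $S^i_j$, $T^k_\ell$ may also contain structural variables that occur in the conclusion without being among the displayed $X^k$, $Y^i$ (for instance, in the prelinearity rule only two of the four structural variables can be displayed in the premises), so such variables can appear among the leaves of the PIA parts and must receive an order-type. The paper's proof handles this by extending $\varepsilon$ to \emph{all} variables according to their C$_4$-uniform precedent/succedent position (precedent $\mapsto 1$, succedent $\mapsto \partial$); the same polarity computation you sketch then shows that every occurrence of such a variable inside $\alpha^k$ or $\beta^i$ is non-critical, so the PIA subtrees remain $\varepsilon^\partial$-uniform and all critical branches stay in the skeleton. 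With that extension your argument goes through unchanged. A cosmetic point: $\alpha^k$ is right-primitive but not in general \emph{definite} (it has $+\wedge$ at its root when $m_k\geq 2$); this is harmless, since $+\wedge$ is an SRA node and the relevant branches remain good.
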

\begin{proof}
Modulo application of display postulates, any analytic rule can be represented as follows:
\begin{center}
\begin{tabular}{ccc}
\AxiomC{$(S^i_j\vdash Y^i\mid 1\leq i\leq n\textrm{ and } 1\leq j\leq n_i)\quad (X^k\vdash T_\ell^k\mid 1\leq k\leq m\textrm{ and } 1\leq\ell\leq m_k)$}
\UnaryInfC{$(S\vdash T)[Y^i]^{suc}[X^k]^{pre}$}
\DisplayProof \\
\end{tabular}
\end{center}
 where $Y^i$ and $X^k$ are structural variables and $S^i_j$, $T^k_\ell$, $S$ and $T$ are structural terms.
As discussed in Section \ref{ssec:soundness}, for every perfect $\mathcal{L}_\mathrm{DLE}$-algebra $\bbA$, the validity of the rule above on $\bbA$ is equivalent to the validity on $\bbA$ of the following quasi-inequality
$$\bigamp_{\begin{smallmatrix}
1\leq i\leq n\\
1\leq k\leq m
\end{smallmatrix}
}\left(\bigvee_{1\leq j\leq n_i} s^i_j\leq p_i'\quad\&\quad q_k'\leq \bigwedge_{1\leq \ell\leq m_k}t^k_\ell\right)\Rightarrow s\leq t$$
where $p_i': = r(Y^i)$, $q_k': = l(X^k)$, for each $i$ and $k$, and $s: = l(S)$, $t: =r(T)$, and $s_j^i: = l(S^i_j)$, and $t^k_\ell: =r(T^k_\ell)$ for each $j$ and $\ell$. Let $s_i: = \bigvee_{1\leq j\leq n_i} s^i_j$ and $t_k: = \bigwedge_{1\leq \ell\leq m_k}t^k_\ell$.

By Lemma \ref{quasitoineq}, the validity of the quasi-inequality above is equivalent to the validity of the following inequality, where $\vec{p'}: = (p_i')_i$, $\vec{s}: = (s_i)_i$, $\vec{q'}: = (q'_k)_k$ and $\vec{t}: = (t_k)_k$:

\begin{equation}
s((\vec{p'}\lor \vec{s})/\vec{p'},(\vec{q'}\land \vec{t})/\vec{q'})\leq t((\vec{p'}\lor \vec{s})/\vec{p'},(\vec{q'}\land \vec{t})/\vec{q'}).
\label{eq:ruletoineq}
\end{equation}
To finish the proof, we need to show that the inequality above is analytic $(\Omega,\epsilon)$-inductive for some $\Omega$ and $\epsilon$.
Let $\vec{p}$, $\vec{q}$ be the variables in the inequality $s\leq t$, different from the variables in $\vec{p'}$ and $\vec{q'}$, and occurring in $s\vdash t$ in antecedent and succedent position respectively (by C$_4$ they are disjoint). Clearly, $s(\vec{p},\vec{q},\vec{p'},\vec{q'})$ is left-primitive, and hence is positive skeleton, and $t(\vec{p},\vec{q},\vec{p'},\vec{q'})$ is right-primitive, and hence is negative skeleton. Condition C$_3$ implies that $X^k$ and $Y^i$ are in antecedent and succedent position respectively in $S\vdash T$, and hence $s$ (resp.\ $t$) is monotone in $\vec{q'}$ (resp.\ in $\vec{p'}$) and antitone in $\vec{p'}$ (resp.\ $\vec{q'}$). 
Moreover, $s_i$ is left-primitive, and hence is negative PIA for every $i$, and $t_k$ is right-primitive, and hence is positive PIA for every $k$. These observations immediately yield that every branch in the inequality \eqref{eq:ruletoineq} is good, and in particular, $\vec{s}$ and $\vec{t}$ are the PIA-parts.

 Next, let $\epsilon$ be the order-type which assigns all $p$ in $\vec{p}$ and $q'$ in $\vec{q'}$ to $1$ and all $q$ in $\vec{q}$ and $q$ in $\vec{p'}$ to $\partial$. Let $\Omega$ be the discrete order. To show that the inequality \eqref{eq:ruletoineq} is analytic $(\Omega, \epsilon)$-inductive, it is enough to show that all terms in $\vec{s}$ and $\vec{t}$ are $\epsilon^\partial$-uniform.

 Since any $p$ in $\vec{p}$ corresponds to a structural variable antecedent position, $+p\prec + s_i$ and $+p\prec -t_k$ for all $i$ and $k$, hence $-p\prec - s_i$ and $-p\prec +t_k$ for all $i$ and $k$. This shows that $\vec{s}$ and $\vec{t}$ are $\epsilon^\partial$-uniform in any $p$ in $\vec{p}$. Similar arguments relative to the variables in $\vec{q}$, $\vec{p'}$ and $\vec{q'}$ complete the proof. 
\end{proof}

\begin{remark}\label{remark:quasi-special}
	If the rule
	\begin{center}
		\begin{tabular}{ccc}
			\AxiomC{$(S^i_j\vdash Y^i\mid 1\leq i\leq n\textrm{ and } 1\leq j\leq n_i)\quad (X^k\vdash T_\ell^k\mid 1\leq k\leq m\textrm{ and } 1\leq\ell\leq m_k)$}
			\UnaryInfC{$(S\vdash T)[Y^i]^{suc}[X^k]^{pre}$}
			\DisplayProof \\
		\end{tabular}
	\end{center}
	is quasi-special, then, in order to transform it into an analytic inequality as in the proof of the proposition above, we can use Lemma \ref{lem:inverted:ackermann} rather than Lemma \ref{quasitoineq}, which yields the inequality
	
	\begin{equation}
	s(\vec{s}/\vec{p'},\vec{t}/\vec{q'})\leq t(\vec{s}/\vec{p'},\vec{t}/\vec{q'}),
	\label{eq:qsruletoineq}
	\end{equation}
	which is equivalent to \eqref{eq:ruletoineq}. Indeed, all variables in $\vec{p'}$ occur only in positive position and can hence be equivalently replaced by $\bot$ and all variables in $\vec{q'}$ occur only in negative position and can be equivalently replaced by $\top$, yielding
	
	\begin{equation}
	s((\vec{\bot}\lor\vec{s})/\vec{p'},(\vec{\top}\land\vec{t})/\vec{q'})\leq t((\vec{\bot}\lor\vec{s})/\vec{p'},(\vec{\top}\land\vec{t})/\vec{q'}),
	\end{equation}
 which is equivalent to \eqref{eq:qsruletoineq}. We will come back to this observation in the following section.

\end{remark}

\section{Special rules are as expressive as analytic rules}
\label{sec:special rules as expressive as analytic}
In \cite{Kracht}, Kracht states without proof that every analytic rule in the display calculus for the classical basic tense logic $Kt$ is equivalent to a special rule (see also the discussion in \cite[Section 5.1]{CiRa14}). A proof of this fact is presented in \cite{CiRa14}, where it is shown that, in classical tense logic, every axiom which is obtained from an analytic rule of the display calculus is equivalent to a primitive axiom. In the present section, we extend this result from classical tense logic to any \textrm{DLE}-logic. Namely, we show, using ALBA, that every analytic inductive inequality in any \textrm{DLE}-language is equivalent to some primitive inequality in the corresponding \textrm{DLE}*-language. We will proceed in two steps: in Section \ref{ssec:quasi-primitive}, we will present an intermediate subclass of analytic inductive inequalities, referred to as quasi-primitive inequalities, and show that any analytic inductive inequality can be equivalently transformed into some quasi-primitive inequality. Then, in Section \ref{ssce:Krachtswhatever}, we will prove that every quasi-primitive inequality is equivalent to some primitive inequality.

These results imply that special structural rules (cf.\ Definition \ref{def:special}) are as expressive
as analytic rules (cf.\ Definition \ref{def:analytic}). Hence, for any language $\mathcal{L}_\mathrm{DLE}$, any properly displayable DLE-logic is specially
displayable. Notice that this fact does not imply that any properly displayable $\mathcal{L}_\mathrm{DLE}$-logic can be axiomatized by means of primitive
$\mathcal{L}_\mathrm{DLE}$-inequalities, since the required primitive inequalities pertain to the language $\mathcal{L}^\ast_\mathrm{DLE}$. However, this fact does
imply that any properly displayable $\mathcal{L}^\ast_\mathrm{DLE}$-logic can be axiomatized by means of primitive
$\mathcal{L}^\ast_\mathrm{DLE}$-inequalities.

\subsection{Quasi-special rules and quasi-special inductive inequalities}
\label{ssec:quasi-primitive}

Let us take stock of what was presented in Sections \ref{ssec:type5} and \ref{sec:rulestoineq}. Taken together, Proposition \ref{prop:rulestoineq} and \ref{prop:type5} immediately imply that every analytic rule is equivalent to a quasi-special rule. Furthermore, any analytic inductive inequality derived from an analytic rule has a special shape: every critical branch consists only of Skeleton nodes, leaving all PIA subtrees to be $\epsilon^\partial$-uniform. This motivates the following definition:
\begin{definition}\label{def:quasianalyticinductive}
 For every analytic $(\Omega,\varepsilon)$-inductive inequality $s\leq t$, if every $\varepsilon$-critical branch of the signed generation trees $+s$ and $-t$ consists solely of skeleton nodes, then $s\leq t$ is a \emph{quasi-special inductive inequality}. Such an inequality is {\em definite} if none of its Skeleton nodes is $+\vee $ or $-\wedge$.
\end{definition}

\begin{center}
	\begin{tikzpicture}
	\draw (-5,-1.5) -- (-3,1.5) node[above]{\Large$+$} ;
	\draw (-5,-1.5) -- (-1,-1.5) ;
	\draw (-3,1.5) -- (-1,-1.5);
	\draw (-5,0) node{Ske} ;
	\draw[dashed] (-3,1.5) -- (-4,-1.5);
	\draw[dashed] (-3,1.5) -- (-2,-1.5);
	\draw[fill] (-4,-1.5) circle[radius=.1] node[below]{$+p$};
	\draw
	(-2,-1.5) -- (-2.8,-3) -- (-1.2,-3) -- (-2,-1.5);
	\fill[pattern=north east lines]
	(-2,-1.5) -- (-2.8,-3) -- (-1.2,-3);
	\draw (-2,-3.25)node{$\gamma$};
	\draw (-3,-2.25) node{PIA} ;
	\draw (0,1.8) node{$\leq$};
	\draw (5,-1.5) -- (3,1.5) node[above]{\Large$-$} ;
	\draw (5,-1.5) -- (1,-1.5) ;
	\draw (3,1.5) -- (1,-1.5);
	\draw (5,0) node{Ske} ;
	\draw[dashed] (3,1.5) -- (4,-1.5);
	\draw[dashed] (3,1.5) -- (2,-1.5);
	\draw[fill] (2,-1.5) circle[radius=.1] node[below]{$+p$};
	\draw
	(4,-1.5) -- (4.8,-3) -- (3.2,-3) -- (4, -1.5);
	\fill[pattern=north east lines]
	(4,-1.5) -- (4.8,-3) -- (3.2,-3) -- (4, -1.5);
	\draw (4,-3.25)node{$\gamma'$};
	\draw (5,-2.25) node{PIA} ;
	\end{tikzpicture}
\end{center}

Definite quasi-special inductive inequalities and quasi-special rules entertain the same privileged relation with each other as the one entertained by definite primitive inequalities and special rules. Indeed, translating into an inequality the rule obtained from a definite quasi-special inductive inequality leads to the original inequality (cf.\ Remark \ref{remark:quasi-special}). Notice that these are exactly the inequalities that have this property, since the inequality that is obtained by Proposition \ref{prop:rulestoineq} is always definite quasi-special inductive. Since every analytic inductive inequality is equivalent to a set of analytic rules (in fact quasi-special rules) and every analytic rule is equivalent to a definite quasi-special inductive inequality, is it clear that every analytic inductive inequality is equivalent to a set of definite quasi-special inductive inequalities.

\subsection{Quasi-special inductive inequalities are equivalent to primitive inequalities}
\label{ssce:Krachtswhatever}

The following propositions generalize \cite[Lemma 5.12]{CiRa14}. 
\begin{prop}\label{lemma:analytictospecial}
	Let $\xi_1(!\vec{x},!\vec{y},!\vec{z},!\vec{w})$ be a definite positive Skeleton formula and $\xi_2(\vec{x},\vec{y})$ be a positive Skeleton formula such that $+\vec{x},+\vec{z}\prec+\xi_1$, $-\vec{y},-\vec{w}\prec+\xi_1$ and $-\vec{x},+\vec{y}\prec-\xi_2$. Let $\overrightarrow{\gamma(\vec{p},\vec{q})}$ be an array of positive PIA-formulas such that $-\vec{p},+\vec{q}\prec+\gamma$ and let $\overrightarrow{\chi(\vec{p},\vec{q})}$ be an array of negative PIA formulas such that $-\vec{p},+\vec{q}\prec-\chi$. Then the following are equivalent:
	\begin{enumerate}
		\item $\forall\vec{p}\forall \vec{q}[\xi_1(\vec{p}/\vec{x},\vec{q}/\vec{y},\vec{\gamma}/\vec{z},\vec{\chi}/\vec{w})\leq\xi_2(\vec{p}/\vec{x},\vec{q}/\vec{y})]$;
	\item \[\forall\vec{p}\forall\vec{q}\forall\vec{p'}\forall\vec{q'}\left[ \begin{tabular}{r c l}
	$\xi_1(\vec{p}/\vec{x},\vec{q}/\vec{y},\vec{p'}/\vec{z},\vec{q'}/\vec{w})$ & $\!\leq\!$ & $\xi_2(\vec{p}/\vec{x},\vec{q}/\vec{y})\lor$\\
	&&$(\bigvee_{z_k\in\vec{z}}\xi_1(\vec{p}/\vec{x},\vec{q}/\vec{y},\vec{p'}_{-k}/\vec{z}_{-k},p'_k\pdla\gamma_k/z_k,\vec{q'}/\vec{w}))\lor$\\
	& & $(\bigvee_{w_\ell\in\vec{w}}\xi_1(\vec{p}/\vec{x},\vec{q}/\vec{y},\vec{p'}/\vec{z},\vec{q'}_{-\ell}/\vec{w}_{-\ell},\chi_\ell\to q'_\ell/w_\ell))$
	\end{tabular}\right].\]
\end{enumerate}

\end{prop}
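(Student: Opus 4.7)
The plan is to prove the equivalence of (1) and (2) by algebraic manipulations that exploit (i) the defining adjunctions $a\pdla b \leq c \iff a \leq c\vee b$ and $a\wedge b \leq c \iff a \leq b\rightarrow c$, valid in every DLE, and (ii) an order-theoretic property of $\xi_1$ that mirrors Lemma \ref{lemma:primitive scattered is operator}: being a definite positive Skeleton formula in which each $z_k$ (resp.\ $w_\ell$) occurs exactly once with $+z_k\prec+\xi_1$ (resp.\ $-w_\ell\prec+\xi_1$), the term function of $\xi_1$ is completely $\vee$-preserving in each $z_k$ and sends arbitrary meets to joins in each $w_\ell$, with all other variables fixed. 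I would verify this property first, by induction on $\xi_1$: definiteness rules out $+\vee$ and $-\wedge$ nodes along the relevant paths, leaving only $+\wedge$, $-\vee$, $+f$ and $-g$ nodes, along which either distributivity or the normality of $f\in\mathcal{F}$ and $g\in\mathcal{G}$ in the appropriate coordinate gives the required preservation.

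For the direction $(2)\Rightarrow(1)$, I would instantiate the universally quantified $\vec{p'},\vec{q'}$ by $\vec{\gamma},\vec{\chi}$ respectively. The LHS of (2) then becomes the LHS of (1). In the RHS, each of the extra disjuncts contains a subterm $\gamma_k\pdla\gamma_k$ (which equals $\bot$, since the residuation yields $\gamma_k\pdla\gamma_k\leq \bot$ from the tautology $\gamma_k\leq \bot\vee\gamma_k$) at a position with $+z_k\prec+\xi_1$, or a subterm $\chi_\ell\rightarrow\chi_\ell$ (which equals $\top$) at a position with $-w_\ell\prec+\xi_1$. Using definiteness of $\xi_1$ together with the normality axioms of the minimal DLE-logic (which identify $f(\ldots,\bot^{\varepsilon_f(i)},\ldots)$ with $\bot$ and $g(\ldots,\top^{\varepsilon_g(j)},\ldots)$ with $\top$, cf.\ Remark \ref{rmk: management of top and bottom on the wrong side}), the $+\bot$ (resp.\ $-\top$) propagates along the path to the root of $+\xi_1$, reducing each such disjunct to $\bot$. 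Hence the RHS of (2) simplifies to $\xi_2(\vec{p},\vec{q})$, yielding (1).

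For the direction $(1)\Rightarrow(2)$, the plan is to combine the residuation inequalities $p'_k\leq(p'_k\pdla\gamma_k)\vee\gamma_k$ and $\chi_\ell\wedge(\chi_\ell\rightarrow q'_\ell)\leq q'_\ell$ with the monotonicity and antitonicity of $\xi_1$ in the $\vec{z},\vec{w}$ coordinates, first obtaining
\[\xi_1(\vec{p},\vec{q},\vec{p'},\vec{q'})\leq \xi_1\!\left(\vec{p},\vec{q},\,((\vec{p'}\pdla\vec{\gamma})\vee\vec{\gamma})/\vec{z},\,(\vec{\chi}\wedge(\vec{\chi}\rightarrow\vec{q'}))/\vec{w}\right).\]
Then, invoking the order-theoretic property above, one distributes $\vee$ out of each $z_k$-position and $\wedge$ out of each $w_\ell$-position (using meet-to-join reversal there), turning the right-hand side into a join of summands. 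The unique ``all-$\vec{\gamma}$, all-$\vec{\chi}$'' summand is bounded by $\xi_2$ by assumption (1), while any summand retaining at least one ``error'' expression is bounded by a corresponding single-error disjunct on the RHS of (2) using monotonicity together with the pointwise bounds $p'_k\pdla\gamma_k\leq p'_k$ and $q'_\ell\leq \chi_\ell\rightarrow q'_\ell$ (which themselves follow from the adjunctions via $p'_k\leq p'_k\vee\gamma_k$ and $q'_\ell\wedge\chi_\ell\leq q'_\ell$).

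The hard part, and the one I would work out most carefully, is the bookkeeping of the iterated splitting in the $(\Rightarrow)$ direction: naive full distribution produces $2^{|\vec{z}|+|\vec{w}|}$ summands, while the RHS of (2) only has $|\vec{z}|+|\vec{w}|+1$ disjuncts, and the summands mixing both $\gamma_k$'s and $p'_k\pdla\gamma_k$'s in different coordinates do not bound cleanly by single-error disjuncts via monotonicity alone. The plan is to handle this by induction on $|\vec{z}|+|\vec{w}|$: at each inductive step one coordinate is split, the ``error-branch'' summand is absorbed directly into the matching single-error disjunct of (2), and the other summand (with $\gamma_k$ or $\chi_\ell$ substituted at that coordinate) is fed into the inductive hypothesis applied to the inequality obtained by the same substitution in (1)---which remains valid in the required form. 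The base case, where all $\vec{z},\vec{w}$ coordinates have been substituted by $\vec{\gamma},\vec{\chi}$, is precisely (1) itself.
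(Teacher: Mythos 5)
Your direction $(2)\Rightarrow(1)$ is fine and essentially the paper's: instantiating $\vec{p'}:=\vec{\gamma}$ and $\vec{q'}:=\vec{\chi}$ turns the error arguments into $\gamma_k\pdla\gamma_k=\bot$ and $\chi_\ell\to\chi_\ell=\top$, and Lemma \ref{lemma:primitive scattered is operator} (a definite scattered left-primitive $\xi_1$ is an operator, so it preserves $\bot$ in positive coordinates and turns $\top$ into $\bot$ in negative ones) kills the extra disjuncts. The gap is in $(1)\Rightarrow(2)$. Distributing $\xi_1$ over $(p'_k\pdla\gamma_k)\vee\gamma_k$ and $\chi_\ell\wedge(\chi_\ell\to q'_\ell)$ produces mixed summands such as $\xi_1(\ldots,\gamma_j/z_j,\ldots,p'_k\pdla\gamma_k/z_k,\ldots)$, and your proposed induction does not repair this: after splitting $z_1$ and feeding $\xi_1[\gamma_1/z_1]$ into the inductive hypothesis, the error disjuncts returned by the IH still carry $\gamma_1$ in the $z_1$-slot, whereas the disjuncts of (2) carry $p'_1$ there; absorbing the former into the latter by monotonicity would require $\gamma_1\leq p'_1$ (and symmetrically $q'_\ell\leq\chi_\ell$ on the negative side), which fails in general. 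Note also that these mixed terms need not lie below $\xi_1(\vec{p},\vec{q},\vec{p'},\vec{q'})$ at all---they are pieces of an upper bound that is strictly too generous---so there is no reason they should be bounded by the right-hand side of (2), and no amount of re-bracketing the induction changes that.

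The missing ingredient is precisely what the paper's proof uses: the equivalence is one of validity on \emph{perfect} DLEs, and the paper translates both items via ALBA into quasi-inequalities over nominals and conominals, residuates $\xi_1$ (via $\mathsf{RA}(\xi_1)$) so that each premise takes the form $\nomj'_k\leq\gamma_k\vee\mathsf{RA}(\xi_1)(\ldots,\cnomm/u,\ldots)$ (resp.\ the dual form for $\cnomn'_\ell$), and then invokes join-primeness of nominals and meet-primeness of conominals: each $\nomj'_k$ goes either below $\gamma_k$ (if this happens for all coordinates, one is exactly in the situation of item (1)) or below the residuated term, in which case residuating back yields the conclusion outright. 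This coordinatewise case split at prime elements is what dissolves your mixed summands, and it cannot be simulated by monotonicity and distributivity over arbitrary elements $\vec{p'},\vec{q'}$. To close your argument you would have to evaluate (2) at completely join-irreducible and meet-irreducible witnesses (which is, in effect, the paper's proof) or supply a genuinely new identity; as it stands, the key step is missing.
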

\begin{proof}
The inequality in item 1 of the statement can be equivalently transformed via ALBA into the following quasi-inequality:
	
	\begin{equation}\label{eq:alba1}
\forall\nomi\forall\vec{\nomj}\forall\vec{\nomj'}\forall\vec{\cnomn}\forall\vec{\cnomn'}\forall\cnomm[(\overrightarrow{\nomj'}\leq\overrightarrow{\gamma(\vec{\nomj},\vec{\cnomn})}\ \amp\ \overrightarrow{\chi(\vec{\nomj},\vec{\cnomn})}\leq\overrightarrow{\cnomn'}\ \amp\ \xi_2(\vec{\nomj},\vec{\cnomn})\leq\cnomm)\Rightarrow \xi_1(\vec{\nomj},\vec{\cnomn},\vec{\nomj'},\vec{\cnomn'})\leq\cnomm].
	\end{equation}
Likewise, the inequality in item 2 can be equivalently transformed via ALBA into the following quasi-inequality:
	\begin{equation}\label{eq:alba2}\forall\nomi\forall\vec{\nomj}\forall\vec{\nomj'}\forall\vec{\cnomn}\forall\vec{\cnomn'}\forall\cnomm\left[\left(
	\begin{tabular}{c}
	$\bigamp_k\xi_1(\vec{\nomj},\vec{\cnomn},\vec{\nomj'}_{-k},\nomj'_k\pdla\gamma_k(\vec{\nomj},\vec{\cnomn}),\vec{\cnomn'})\leq\cnomm$ \\ $\bigamp_\ell \xi_1(\vec{\nomj},\vec{\cnomn},\vec{\nomj'},\vec{\cnomn'}_{-\ell},\chi_\ell(\vec{\nomj},\vec{\cnomn})\to \cnomn'_\ell)\leq\cnomm$ \\ $\xi_2(\vec{\nomj},\vec{\cnomn})\leq\cnomm$
	\end{tabular}\right)
	\Rightarrow \xi_1(\vec{\nomj},\vec{\cnomn},\vec{\nomj'},\vec{\cnomn'})\leq\cnomm\right].
	\end{equation}
To finish the proof, it is enough to show that conditions \eqref{eq:alba1} and \eqref{eq:alba2} are equivalent. Assume condition \eqref{eq:alba2} and let $\vec{\nomj}$ $\vec{\cnomn}$, $\cnomm$, $\nomj'_k$ and $\cnomn'_\ell$ be such that the following inequalities hold for any $\gamma_k$ in $\overrightarrow{\gamma}$ and $\chi_\ell$ in $\overrightarrow{\chi}$:
\begin{equation}\label{eq:2to1first}
\nomj'_k\leq\gamma_k(\vec{\nomj},\vec{\cnomn})\quad \chi_\ell(\vec{\nomj},\vec{\cnomn})\leq\cnomn'_\ell\quad \xi_2(\vec{\nomj},\vec{\cnomn})\leq\cnomm.
\end{equation} By applying adjunction all inequalities above but the last one become $$\nomj'_k\pdla\gamma_k(\vec{\nomj},\vec{\cnomn})=\bot\quad\top=\chi_\ell(\vec{\nomj},\vec{\cnomn})\to\cnomn'_\ell.$$
	These equalities imply that $$\bigamp_k\xi_1(\vec{\nomj},\vec{\cnomn},\vec{\nomj'}_{-k},\nomj'_k\pdla\gamma_k(\vec{\nomj},\vec{\cnomn}),\vec{\cnomn'})=\bot\quad \bigamp_\ell\xi_1(\vec{\nomj},\vec{\cnomn},\vec{\nomj'},\vec{\cnomn'}_{-\ell},\chi_\ell(\vec{\nomj},\vec{\cnomn})\to \cnomn'_\ell)=\bot.$$ Indeed, by assumption, $\xi_1$ is a definite positive Skeleton formula such that any variable in it occurs at most once. Hence, $\xi_1$ is a definite and scattered left-primitive formula. By Lemma \ref{lemma:primitive scattered is operator}, the term function induced by $\xi_1$ is an operator, and hence $\xi_1$ preserves $\bot$ in its positive coordinates and reverses $\top$ in its negative coordinates. This finishes the proof that all $\vec{\nomj}$ $\vec{\cnomn}$, $\cnomm$, $\nomj'_k$ and $\cnomn'_\ell$ satisfying conditions \eqref{eq:2to1first} satisfy also the premises of the quasi-inequality \eqref{eq:alba2}, namely:
	
	$$\bigamp_k\xi_1(\vec{\nomj},\vec{\cnomn},\vec{\nomj'}_{-k},\nomj'_k\pdla\gamma_k(\vec{\nomj},\vec{\cnomn}),\vec{\cnomn'})\leq\cnomm\quad\bigamp_\ell \xi_1(\vec{\nomj},\vec{\cnomn},\vec{\nomj'},\vec{\cnomn'}_{-\ell},\chi_\ell(\vec{\nomj},\vec{\cnomn})\to \cnomn'_\ell)\leq\cnomm\quad \xi_2(\vec{\nomj},\vec{\cnomn})\leq\cnomm.$$
By assumption \eqref{eq:alba2}, we conclude that $\xi_1(\vec{\nomj},\vec{\cnomn},\vec{\nomj'},\vec{\cnomn'})\leq\cnomm$, as required.
	
Conversely, assume condition \eqref{eq:alba1} and let $\vec{\nomj}$ $\vec{\cnomn}$, $\cnomm$, $\nomj'_k$ and $\cnomn'_\ell$ be such that the following inequalities hold for any $k$ and $\ell$ as above:
\begin{equation}\label{eq:1to2first}
\xi_1(\vec{\nomj},\vec{\cnomn},\vec{\nomj'}_{-k},\nomj'_k\pdla\gamma_k(\vec{\nomj},\vec{\cnomn}),\vec{\cnomn'})\leq\cnomm\quad \xi_1(\vec{\nomj},\vec{\cnomn},\vec{\nomj'},\vec{\cnomn'}_{-\ell},\chi_\ell(\vec{\nomj},\vec{\cnomn})\to \cnomn'_\ell)\leq\cnomm\quad \xi_2(\vec{\nomj},\vec{\cnomn})\leq\cnomm.
\end{equation} By applying the appropriate residuation rules, all but the last inequality above can be equivalently written as follows:
$$\nomj'_k\pdla\gamma_k(\vec{\nomj},\vec{\cnomn})\leq \mathsf{RA}(\xi_1)(\vec{\nomj},\vec{\cnomn},\vec{\nomj'}_{-k},\cnomm/u,\vec{\cnomn'})\quad\mathsf{RA}(\xi_1)(\vec{\nomj},\vec{\cnomn},\vec{\nomj'},\vec{\cnomn'}_{-\ell},\cnomm/u)\leq\chi_\ell(\vec{\nomj},\vec{\cnomn})\to \cnomn'_\ell$$ and by applying residuation once again we obtain for every $k$ and $\ell$:
$$\nomj'_k\leq \gamma_k(\vec{\nomj},\vec{\cnomn})\lor\mathsf{RA}(\xi_1)(\vec{\nomj},\vec{\cnomn},\vec{\nomj'}_{-k},\cnomm/u,\vec{\cnomn'})\quad \mathsf{RA}(\xi_1)(\vec{\nomj},\vec{\cnomn},\vec{\nomj'},\vec{\cnomn'}_{-\ell},\cnomm/u)\land\chi_\ell(\vec{\nomj},\vec{\cnomn})\leq \cnomn'_\ell.$$ Since each $\nomj'_k$ and each $\cnomn'_\ell$ is a nominal and a conominal respectively, they are interpreted as join-prime and meet-prime elements respectively. If $\nomj'_k\leq\gamma_k(\vec{\nomj},\vec{\cnomn})$ and $\chi_\ell(\vec{\nomj},\vec{\cnomn})\leq\cnomn'_\ell$ for all $k$ and $\ell$, then the antecedent of \ref{eq:alba1} is satisfied and hence we conclude $\xi_1(\vec{\nomj},\vec{\cnomn},\vec{\nomj'},\vec{\cnomn'})\leq\cnomm$. If $\nomj'_k\leq\mathsf{RA}(\xi_1)(\vec{\nomj},\vec{\cnomn},\vec{\nomj'}_{-k},\cnomm/u,\vec{\cnomn'})$ or $\mathsf{RA}(\xi_1)(\vec{\nomj},\vec{\cnomn},\vec{\nomj'},\vec{\cnomn'}_{-\ell},\cnomm/u)\leq\cnomn'_\ell$ for some $\nomj'_k$ or $\cnomn'_\ell$, then by applying the appropriate residuation rule we immediately obtain that $\xi_1(\vec{\nomj},\vec{\cnomn},\vec{\nomj'},\vec{\cnomn'})\leq\cnomm$.
\end{proof}
The following proposition is order-dual to the previous one, hence its proof is omitted.
\begin{prop}\label{lemma:analytictospecialdual}
	Let $\xi_1(\vec{x},\vec{y})$ be a negative Skeleton formula and $\xi_2(!\vec{x},!\vec{y},!\vec{z},!\vec{w})$ be a definite negative Skeleton formula such that $-\vec{x},+\vec{y}\prec+\xi_1$, $+\vec{x},+\vec{z}\prec-\xi_2$ and $-\vec{y},-\vec{w}\prec-\xi_2$. Let $\overrightarrow{\gamma(\vec{p},\vec{q})}$ be an array of positive PIA-formulas such that $-\vec{p},+\vec{q}\prec+\gamma$ and let $\overrightarrow{\chi(\vec{p},\vec{q})}$ be an array of negative PIA formulas such that $-\vec{p},+\vec{q}\prec-\chi$. Then the following are equivalent:
	\begin{enumerate}
		\item $\forall\vec{p}\forall \vec{q}[\xi_1(\vec{p}/\vec{x},\vec{q}/\vec{y})\leq\xi_2(\vec{p}/\vec{x},\vec{q}/\vec{y},\vec{\gamma}/\vec{z},\vec{\chi}/\vec{w})]$
		\item \[\forall\vec{p}\forall\vec{q}\forall\vec{p'}\forall\vec{q'}\left[ \begin{tabular}{l c l}
			 $\xi_1(\vec{p}/\vec{x},\vec{q}/\vec{y})\land$ & &\\
			 $(\bigwedge_{z_k\in\vec{z}}\xi_2(\vec{p}/\vec{x},\vec{q}/\vec{y},\vec{p'}_{-k}/\vec{z}_{-k},p'_k\pdla\gamma_k/z_k,\vec{q'}/\vec{w}))\land$ & & \\
			 $(\bigwedge_{w_\ell\in\vec{w}}\xi_2(\vec{p}/\vec{x},\vec{q}/\vec{y},\vec{p'}/\vec{z},\vec{q'}_{-\ell}/\vec{w}_{-\ell},\chi_\ell\to q'_\ell/w_\ell))$ & $\leq$ & $\xi_2(\vec{p}/\vec{x},\vec{q}/\vec{y},\vec{p'}/\vec{z},\vec{q'}/\vec{w})$
		\end{tabular}\right].\]
	\end{enumerate}
	\end{prop}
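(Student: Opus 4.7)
The plan is to mimic the strategy of the proof of Proposition \ref{lemma:analytictospecial}, exploiting the order-duality between the two statements. First I would apply ALBA to item 1 and item 2 to reduce each to an equivalent pure quasi-inequality, analogous to \eqref{eq:alba1} and \eqref{eq:alba2}, whose variables are nominals and conominals corresponding respectively to the variables occurring in precedent and succedent position in the original inequality. The two resulting quasi-inequalities will have the form
\[
\forall \vec{\nomj}\forall\vec{\cnomn}\forall\vec{\nomj'}\forall\vec{\cnomn'}\forall\nomi\left[(\nomi\leq\xi_1(\vec{\nomj},\vec{\cnomn})\ \amp\ \overrightarrow{\nomj'\leq \gamma(\vec{\nomj},\vec{\cnomn})}\ \amp\ \overrightarrow{\chi(\vec{\nomj},\vec{\cnomn})\leq\cnomn'})\Rightarrow \nomi\leq \xi_2(\vec{\nomj},\vec{\cnomn},\vec{\nomj'},\vec{\cnomn'})\right]
\]
for item 1, and an analogous form for item 2 in which the premises $\overrightarrow{\nomj'\leq \gamma}$ and $\overrightarrow{\chi\leq\cnomn'}$ are replaced by the conjunction of inequalities $\nomi\leq \xi_2(\vec{\nomj},\vec{\cnomn},\vec{\nomj'}_{-k},\nomj'_k\pdla\gamma_k(\vec{\nomj},\vec{\cnomn}),\vec{\cnomn'})$ for each $k$ and $\nomi\leq \xi_2(\vec{\nomj},\vec{\cnomn},\vec{\nomj'},\vec{\cnomn'}_{-\ell},\chi_\ell(\vec{\nomj},\vec{\cnomn})\to\cnomn'_\ell)$ for each $\ell$. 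The goal is then to prove that these two quasi-inequalities are equivalent on perfect $\mathcal{L}_\mathrm{DLE}$-algebras.

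For the direction from item 2 to item 1, I would fix a tuple of nominals, conominals and an element $\nomi$ satisfying the antecedent of the first quasi-inequality, i.e.\ $\nomi\leq \xi_1(\vec{\nomj},\vec{\cnomn})$, $\overrightarrow{\nomj'_k\leq\gamma_k(\vec{\nomj},\vec{\cnomn})}$ and $\overrightarrow{\chi_\ell(\vec{\nomj},\vec{\cnomn})\leq\cnomn'_\ell}$. By adjunction, these last two batches of inequalities become $\overrightarrow{\nomj'_k\pdla\gamma_k(\vec{\nomj},\vec{\cnomn}) = \bot}$ and $\overrightarrow{\chi_\ell(\vec{\nomj},\vec{\cnomn})\to \cnomn'_\ell = \top}$. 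Here the order-dual version of Lemma \ref{lemma:primitive scattered is operator} enters: since $\xi_2$ is a definite scattered right-primitive term (obtained from the assumption that it is a definite negative Skeleton formula, observing as in Remark \ref{rem:primitive-skeleton-PIA} that we can assume there are no occurrences of $+\top$ or $-\bot$ within $-\xi_2$), the induced term function is a complete dual operator, hence it preserves $\top$ in its positive coordinates ($\vec z$) and reverses $\bot$ in its negative coordinates ($\vec w$). Substituting these into the subterms in positions $z_k$ and $w_\ell$ will force each of the conjuncts $\xi_2(\vec{\nomj},\vec{\cnomn},\vec{\nomj'}_{-k},\nomj'_k\pdla\gamma_k,\vec{\cnomn'})$ and $\xi_2(\vec{\nomj},\vec{\cnomn},\vec{\nomj'},\vec{\cnomn'}_{-\ell},\chi_\ell\to\cnomn'_\ell)$ to equal $\top$, so the antecedent of the second quasi-inequality is satisfied, yielding $\nomi\leq \xi_2(\vec{\nomj},\vec{\cnomn},\vec{\nomj'},\vec{\cnomn'})$ as required.

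For the converse direction, I would fix the parameters so that the antecedent of the second quasi-inequality holds and then apply residuation to each inequality of the form $\nomi\leq \xi_2(\vec{\nomj},\vec{\cnomn},\vec{\nomj'}_{-k},\nomj'_k\pdla\gamma_k,\vec{\cnomn'})$ using the derived adjoint $\mathsf{LA}(\xi_2)$ in the appropriate coordinate (this is available by Lemma \ref{type3lem} since $\xi_2$ is definite negative PIA when restricted to such a coordinate). This produces inequalities of the form $\mathsf{LA}(\xi_2)(\vec{\nomj},\vec{\cnomn},\vec{\nomj'}_{-k},\nomi/u,\vec{\cnomn'})\leq \nomj'_k\pdla\gamma_k(\vec{\nomj},\vec{\cnomn})$ and similar inequalities in each $\cnomn'_\ell$-coordinate, which by a further application of adjunction can be rewritten as $\mathsf{LA}(\xi_2)(\vec{\nomj},\vec{\cnomn},\vec{\nomj'}_{-k},\nomi/u,\vec{\cnomn'})\wedge \nomj'_k\leq \gamma_k(\vec{\nomj},\vec{\cnomn})$ and $\chi_\ell(\vec{\nomj},\vec{\cnomn})\leq \cnomn'_\ell\vee \mathsf{LA}(\xi_2)(\vec{\nomj},\vec{\cnomn},\vec{\nomj'},\vec{\cnomn'}_{-\ell},\nomi/u)$. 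At this point the conclusion follows by case distinction using the meet-prime (resp.\ join-prime) nature of conominals $\cnomn'_\ell$ (resp.\ nominals $\nomj'_k$): if the critical conjunct in each dichotomy above lands in $\gamma_k$ or $\chi_\ell$, then the antecedent of the first quasi-inequality is satisfied and we conclude; otherwise, the critical conjunct lands in $\mathsf{LA}(\xi_2)(\ldots, \nomi/u, \ldots)$, which by undoing the residuation gives $\nomi\leq \xi_2(\vec{\nomj},\vec{\cnomn},\vec{\nomj'},\vec{\cnomn'})$ directly.

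The main obstacle I anticipate is ensuring that the order-dual of Lemma \ref{lemma:primitive scattered is operator} applies cleanly to $\xi_2$, i.e.\ that $\xi_2$ can genuinely be taken to be scattered and definite right-primitive (in particular free from $+\top$ and $-\bot$ at inappropriate positions); this was handled in the original proof by preprocessing, and the same reduction via Remark \ref{rem:primitive-skeleton-PIA} should work here, but one must be careful that the bookkeeping on the polarity of the PIA-subterms $\vec\gamma$ and $\vec\chi$ matches exactly what is needed for Lemma \ref{type3lem} to produce the required $\mathsf{LA}(\xi_2)$ rule in the direction from item 1 to item 2.
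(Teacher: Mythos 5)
Your proposal is correct in approach and is in fact exactly what the paper does: the paper omits the proof of this proposition precisely because it is the order-dual of the proof of Proposition \ref{lemma:analytictospecial}, which is the proof you are dualizing. Two bookkeeping slips should be fixed, though neither threatens the argument. First, in the direction from item 2 to item 1, the roles of $\vec{z}$ and $\vec{w}$ are swapped in your parenthetical: since $+\vec{z}\prec-\xi_2$ and $-\vec{w}\prec-\xi_2$, the term function of $\xi_2$ is antitone in $\vec{z}$ (so it sends $\bot$ to $\top$ there, which is what you need for $\nomj'_k\pdla\gamma_k=\bot$) and monotone in $\vec{w}$ (so it preserves $\top$ there, which is what you need for $\chi_\ell\to\cnomn'_\ell=\top$); your substitutions and conclusion are right, but only under this corrected labelling. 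Second, in the direction from item 1 to item 2, the adjunction step should yield $\nomj'_k\leq\mathsf{LA}(\xi_2)(\ldots,\nomi/u,\ldots)\vee\gamma_k(\vec{\nomj},\vec{\cnomn})$ and $\chi_\ell(\vec{\nomj},\vec{\cnomn})\wedge\mathsf{LA}(\xi_2)(\ldots,\nomi/u,\ldots)\leq\cnomn'_\ell$, not the forms you wrote (which have meet and join on the wrong sides); these corrected forms are the ones your subsequent case distinction via join-primeness of nominals and meet-primeness of conominals actually requires, so the intended argument goes through once they are restored.
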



\begin{cor}
For any language $\mathcal{L}_\mathrm{DLE}$, every analytic structural rule in the language of the corresponding display calculus $\mathbf{DL}$ 
can be equivalently transformed into 
some special structural rule in the same language.
\end{cor}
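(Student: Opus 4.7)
The plan is to chain together the equivalences already established in the paper so as to move from an arbitrary analytic rule to a set of special rules, passing through an $\mathcal{L}^\ast_\mathrm{DLE}$-primitive inequality along the way. First, given an analytic rule $\rho$ in the language of $\mathbf{DL}$, Proposition~\ref{prop:rulestoineq} yields an analytic inductive $\mathcal{L}^\ast_\mathrm{DLE}$-inequality $s\leq t$ which is semantically equivalent to $\rho$ on every perfect $\mathcal{L}_\mathrm{DLE}$-algebra. Then, by Proposition~\ref{prop:type5}, $s\leq t$ can be equivalently rewritten as a set of quasi-special structural rules, each of which by Proposition~\ref{prop:rulestoineq} (together with Remark~\ref{remark:quasi-special}) is in turn equivalent to a definite quasi-special inductive inequality in the sense of Definition~\ref{def:quasianalyticinductive}. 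Hence $\rho$ is equivalent to a set $\{s_i\leq t_i\mid i\in I\}$ of definite quasi-special inductive $\mathcal{L}^\ast_\mathrm{DLE}$-inequalities; in each such inequality the head and the tail are (scattered) definite positive, respectively negative Skeleton formulas into which $\varepsilon^\partial$-uniform PIA-subterms $\vec\gamma,\vec\chi$ are substituted at non-critical leaves.

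Next, for every $s_i\leq t_i$ the plan is to iteratively invoke Propositions~\ref{lemma:analytictospecial} and~\ref{lemma:analytictospecialdual} to eliminate all the $\varepsilon^\partial$-uniform PIA-subterms. Concretely, one first applies Proposition~\ref{lemma:analytictospecialdual} to the tail of $s_i\leq t_i$: this is legitimate because $+s_i$ is a definite positive Skeleton formula with only (fresh) variables at its leaves once we replace each maximal $\varepsilon^\partial$-uniform PIA-subterm in the head by a fresh nominal or conominal while keeping track of the corresponding side conditions. The result is a finite meet of inequalities in each of which the tail is now a pure (negative) Skeleton formula. One then applies Proposition~\ref{lemma:analytictospecial} to eliminate the PIA-subterms of the head analogously, obtaining on the right-hand side a disjunction of Skeleton-formulas whose only non-variable leaves are terms of the shape $p'\pdla\gamma$ or $\chi\rightarrow q'$. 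A routine inspection of the polarity table (cf.\ Table~\ref{Join:and:Meet:Friendly:Table}) shows that, when read in the expanded signature $\mathcal{L}^\ast_\mathrm{DLE}$, these residual terms are right-primitive, so that each disjunct is a left-primitive $\mathcal{L}^\ast_\mathrm{DLE}$-formula; applying the distribution and splitting rules of ALBA preprocessing then produces a finite set of primitive $\mathcal{L}^\ast_\mathrm{DLE}$-inequalities equivalent to $s_i\leq t_i$.

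Finally, applying Lemma~\ref{lemma:pri} (instantiated to $\mathcal{L}^\ast_\mathrm{DLE}$) converts each of these primitive $\mathcal{L}^\ast_\mathrm{DLE}$-inequalities into a set of special structural rules in the language of $\mathbf{DL}^{\underline{\ast}}$. Because, by Definition~\ref{def: DL and DL star} and Footnote~\ref{ftn: display calculus DL star minus}, the structural language of $\mathbf{DL}^{\underline{\ast}}$ is identical to that of $\mathbf{DL}$, these special rules are in fact rules of $\mathbf{DL}$; composing all the equivalences from the previous paragraphs then witnesses that the original analytic rule $\rho$ is equivalent to a (finite) set of special structural rules in the language of $\mathbf{DL}$, as required.

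The main obstacle is the middle step: one must verify that the iterated applications of Propositions~\ref{lemma:analytictospecial} and~\ref{lemma:analytictospecialdual} are indeed applicable and that, after each application, the resulting (possibly disjunctive or conjunctive) right-hand, respectively left-hand side can be placed in primitive shape by standard ALBA preprocessing. This boils down to tracking the polarities of the newly introduced residual connectives $\pdla,\rightarrow,f^\sharp_i,g^\flat_i$ inside $\xi_1,\xi_2$ and checking, via Table~\ref{Join:and:Meet:Friendly:Table}, that every branch of the resulting signed generation trees remains a concatenation of a PIA-path and a Skeleton-path with PIA parts that are themselves $\varepsilon^\partial$-uniform --- a verification which is notationally heavy but, thanks to the additive/multiplicative decomposition already encoded in Definition~\ref{def:quasianalyticinductive}, reduces to a routine induction on the Skeleton-complexity of the head and of the tail.
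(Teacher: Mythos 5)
Your overall architecture (analytic rule $\to$ definite quasi-special inductive inequality $\to$ primitive $\mathcal{L}^\ast_\mathrm{DLE}$-inequality via Propositions~\ref{lemma:analytictospecial}/\ref{lemma:analytictospecialdual} $\to$ special rules via Lemma~\ref{lemma:pri}, using the coincidence of the structural languages of $\mathbf{DL}$ and $\mathbf{DL}^{\underline{\ast}}$) is the one the paper follows. However, your middle step has a genuine gap. Propositions~\ref{lemma:analytictospecial} and~\ref{lemma:analytictospecialdual} each require, already in item~(1), that \emph{one entire side} of the inequality be a Skeleton formula with only propositional variables at its leaves; a general definite quasi-special inductive inequality has $\epsilon^\partial$-uniform PIA-subterms attached to the Skeleton of \emph{both} sides, so neither proposition is applicable as stated. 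Your workaround --- replacing the head's PIA-subterms by fresh nominals/conominals ``while keeping track of the corresponding side conditions'' --- takes you outside the scope of the propositions: they are statements about inequalities universally quantified over propositional variables, not about quasi-inequalities carrying extra premises, and their proofs exploit the join-/meet-primeness of nominals and conominals in a way that is not obviously stable under adding such premises. Moreover, even granting that first application, item~(2) of Proposition~\ref{lemma:analytictospecialdual} returns an inequality whose left-hand side is a \emph{conjunction} of Skeleton formulas, hence no longer a \emph{definite} positive Skeleton formula; so the hypothesis of Proposition~\ref{lemma:analytictospecial} fails for the second, ``iterated'' application, and the claimed induction on Skeleton-complexity does not get off the ground.

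The repair is the one the paper uses: before invoking either proposition, apply residuation/adjunction on the Skeleton (exactly as in the treatment of Type~4 inequalities in Section~\ref{ssec:type4}, cf.\ Lemma~\ref{type4:lemma}) so as to consolidate all the $\epsilon^\partial$-uniform PIA material onto a single side, bringing the quasi-special inequality into the precise shape of item~(1) of \emph{one} of Propositions~\ref{lemma:analytictospecial} or~\ref{lemma:analytictospecialdual}. A single application of that proposition then yields item~(2), which is already a definite left-primitive (resp.\ right-primitive) $\mathcal{L}^\ast_\mathrm{DLE}$-inequality --- the residual connectives $\pdla$ and $\rightarrow$ land in positions where they are Skeleton for the relevant sign, and the $\gamma_k,\chi_\ell$ sit in coordinates where right-/left-primitivity is preserved --- so no further ALBA preprocessing or second elimination round is needed, and Lemma~\ref{lemma:pri} concludes. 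Your opening detour (rule $\to$ analytic inductive inequality $\to$ quasi-special rules $\to$ quasi-special inductive inequalities) is harmless but unnecessary: Proposition~\ref{prop:rulestoineq} together with Remark~\ref{remark:quasi-special} already delivers a definite quasi-special inductive inequality directly from the analytic rule.
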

\begin{proof}
As discussed at the beginning of Section \ref{ssec:quasi-primitive}, any analytic structural rule in $\mathbf{DL}$ is equivalent to a definite quasi-special inequality in $\mathcal{L}^*_\mathrm{DLE}$. It is easy to see that every definite quasi-special inequality can be transformed into one inequality of the form of item (1) in Propositions \ref{lemma:analytictospecial} or \ref{lemma:analytictospecialdual}. This transformation is effected by applying suitable residuation rules so as to reduce one side of the given inequality to an $\epsilon^\partial$-uniform PIA subterm (analogously to the treatment of the Type 4 inequalities discussed in Section \ref{ssec:type4}). Hence, either Propositions \ref{lemma:analytictospecial} or \ref{lemma:analytictospecialdual} is applicable, yielding an equivalent inequality as in item 2 of the propositions mentioned above.
Finally, the inequality in item 2 of Proposition \ref{lemma:analytictospecial} (resp.\ \ref{lemma:analytictospecialdual}) is definite left-primitive (resp.\ right-primitive). Hence, the statement follows by Lemma \ref{lemma:pri}. 
\end{proof}


\section{Two methodologies: a sketch of a comparison}
\label{sec:comparison}

The generalizations of Kracht's results presented in Sections \ref{sec:primitive special main strategy}--\ref{sec:special rules as expressive as analytic} are
alternative to those proposed in \cite{CiRa13, CiRa14}, and the aim of the present section is connecting and comparing these two generalizations.
Such a comparison is not straightforward, since the methodologies the two generalizations rely on are different: while the treatment in \cite{CiRa13, CiRa14} relies purely on proof-theoretic notions and is therefore {\em internal} to proof theory, the present is {\em external}, in that is based on a theory (unified correspondence) originating in the model theory of modal logic, developed independently of proof theory, and whose connections with proof theory have not been systematically explored before. As to the basic settings for the two generalizations, the basic setting of the treatment in \cite{CiRa13, CiRa14} is given by the so called {\em amenable calculi} (the definition of which is reported on in Definition \ref{def:amenable calculus} below), which are defined for an arbitrary logical signature by means of conditions concerning the {\em performances} of the calculus (requiring e.g.\ that sequents of certain shapes be derivable) rather than the specific shape of the rules of the calculus. For any logical language, and any amenable calculus $\mathcal{C}$, the class of axioms which is proven to give rise to analytic structural rules is defined parametrically in $\mathcal{C}$, as a certain subcollection of the set $\mathcal{I}_2(\mathcal{C})$ of those ``formulae $A$ whose logical connectives can be eliminated by applying the invertible logical rules [of $\mathcal{C}$] to the premises of those rules obtained by applying some invertible rules to $\mathbf{I}\vdash A$ followed by [the Ackermann lemma]''. The subcollection just mentioned is the one of {\em acyclic} formulas, which is defined again taking $\mathcal{C}$ as a parameter. In the present paper, the basic environment is given by the class of perfect DLEs, which provides the common semantic environment for both the language of ALBA and for display calculi. In this setting, the logical connectives pertaining to the `expansion' of the lattice signature are classified into two sets $\mathcal{F}$ and $\mathcal{G}$, according to the order-theoretic properties of their algebraic interpretations. Hence, any DLE-signature is uniquely determined by the sets of logical connectives/function symbols $\mathcal{F}$ and $\mathcal{G}$, which are taken as parameters of the language $\mathcal{L}_{\mathrm{DLE}} = \mathcal{L}_{\mathrm{DLE}}(\mathcal{F}, \mathcal{G})$. The display calculus $\mathbf{DL}$, the language and rules of the appropriate version of ALBA, and the inductive $\mathcal{L}_{\mathrm{DLE}}$-inequalities are then defined parametrically in $\mathcal{F}$ and $\mathcal{G}$ and are hence unique for each choice of $\mathcal{F}$ and $\mathcal{G}$. In Appendix \ref{appedix:amenable},
we sketch the proof that, for each $\mathcal{F}$ and $\mathcal{G}$, the associated display calculus $\mathbf{DL}$ is amenable, and in Appendix \ref{appedix:analytic and I2}, we show that acyclic inequalities in $\mathcal{I}_2(\mathbf{DL})$ can be identified with analytic inductive inequalities.

Notwithstanding their different and mutually independent starting points, once a concrete setting is defined which provides a common ground for the application of the two methodologies, it is not difficult to recognize striking similarities between the algorithm defined in \cite{CiRa13, CiRa14} for computing analytic structural rules from input analytic inductive inequalities and the ALBA-based procedure illustrated in Section \ref{ssec:type5}. In what follows, we are not giving a formal proof establishing systematic connections between the two procedures, and limit ourselves to illustrating them by means of an example.

\paragraph{Generalized Church-Rosser inequality.} Let $\mathcal{F} = \{\cdot, \Diamond, {\lhd}\}$, $\mathcal{G} = \{\star, \Box, {\rhd}\}$, where $\cdot$ and $\star$ are binary and of order-type $(1, 1)$, $\Diamond$ and $\Box$ are unary and of order-type $(1)$, and $\lhd$ and $\rhd$ are unary and of order-type $(\partial)$. The logical and structural connectives of the display calculi $\mathbf{DL}$ and
$\mathbf{DL}^\ast$ associated with the basic $\mathcal{L}_{\mathrm{DLE}}(\mathcal{F}, \mathcal{G})$-logic can be represented synoptically as follows (we omit the residuals of the binary connectives since they are not relevant to the present discussion):

\begin{center}
	\begin{tabular}{|c|c|c|c|c|c|c|c|c|c|c|c|c|c|c|c|c|c|c|c|c|c|c|c|}
		\hline
		\mc{2}{|c|}{I} & \mc{2}{c|}{$;$} & \mc{2}{c|}{\dcircled{.}} & \mc{2}{c|}{\circled{$\star$}}
		& \mc{2}{c|}{\circled{$\Diamond$}} & \mc{2}{c|}{\circled{$\Box$}} & \mc{2}{c|}{\circled{$\lhd$}} & \mc{2}{c|}{\circled{$\rhd$}} & \mc{2}{c|}{\circled{$\Diamondblack$}} & \mc{2}{c|}{\circled{$\blacksquare$}} & \mc{2}{c|}{\circled{$\blacktriangleleft$}} & \mc{2}{c|}{\circled{$\blacktriangleright$}} \\
		\hline
		$\top$ & $\bot$ & $\pand$ & $\por$ & $\cdot$ & $\phantom{\cdot}$ & $\phantom{\star}$ & $\star$ & $\Diamond$ & $\phantom{\Diamond}$ & $\phantom{\Box}$ & $\Box$ & $\lhd$ & $\phantom{\lhd}$ & $\phantom{\rhd}$ & $\rhd$ & $\Diamondblack$ & $\phantom{\Diamondblack}$ & $\phantom{\blacksquare}$ & $\blacksquare$ & $\blacktriangleleft$ & $\phantom{\blacktriangleleft}$ & $\phantom{\blacktriangleright}$ & $\blacktriangleright$\\
		\hline
	\end{tabular}
\end{center}

\noindent Consider the following analytic inductive inequality:
\[\Box p\cdot {\rhd} p\leq \Diamond p\star {\lhd} p. \]
Let us implement the procedure illustrated in Section \ref{ssec:type5} on the inequality above:
\begin{center}
	\begin{tabular}{r l l}
		& $\forall p[\Box p\cdot {\rhd} p\leq \Diamond p\star {\lhd} p]$\\
		iff & $\forall p\forall\vec{q}[(q_1\leq \Box p\ \&\ q_2\leq {\rhd} p\ \&\ \Diamond p\leq q_3\ \&\ {\lhd} p \leq q_4)\Rightarrow q_1\cdot q_2\leq q_3\star q_4]$\\
		iff & $\forall p\forall\vec{q}[(\Diamondblack q_1\leq p\ \&\ q_2\leq {\rhd} p\ \&\ \Diamond p\leq q_3\ \&\ {\blacktriangleleft}q_4 \leq p)\Rightarrow q_1\cdot q_2\leq q_3\star q_4]$\\
		iff & $\forall p\forall\vec{q}[(\Diamondblack q_1\vee {\blacktriangleleft} q_4\leq p\ \&\ q_2\leq {\rhd} p\ \&\ \Diamond p\leq q_3)\Rightarrow q_1\cdot q_2\leq q_3\star q_4]$\\
		iff & $\forall\vec{q}[(q_2\leq {\rhd} (\Diamondblack q_1\vee {\blacktriangleleft}q_4)\ \&\ \Diamond (\Diamondblack q_1\vee {\blacktriangleleft}q_4)\leq q_3)\Rightarrow q_1\cdot q_2\leq q_3\star q_4]$.\\
	\end{tabular}
\end{center}
The last quasi-inequality above expresses the validity on perfect DLEs of the following quasi-special structural rule:
\begin{equation}\label{eq:rule gen churchrosser alba}
	\frac{Y\vdash {\circled{$\rhd$}} (\circled{$\Diamondblack$} X\ ; \ {\circled{$\blacktriangleleft$}}W)\quad \quad \circled{$\Diamond$} (\circled{$\Diamondblack$} X\ ;\ {\circled{$\blacktriangleleft$}}W)\vdash Z}{X  \dcircled{.} Y\vdash Z\circled{$\star$} W}\end{equation}

\noindent Let us apply the procedure described in \cite{CiRa13, CiRa14} to the calculus $\mathbf{DL}$ and the sequent
\[\Box p\cdot {\rhd} p\vdash \Diamond p\star {\lhd} p.\] \noindent We start by exhaustively applying in reverse all invertible rules of $\mathbf{DL}$ which are applicable to the sequent. These rules are:
\begin{center}
	\begin{tabular}{c c}
		\AxiomC{$A\dcircled{.} B\vdash Z$}
		\UnaryInfC{$A\cdot B\vdash Z$}
		\DisplayProof &
		\AxiomC{$X\vdash A $\circled{$\star$} $B$}
		\RightLabel{.}
		\UnaryInfC{$X\vdash A\star B$}
		\DisplayProof\\
	\end{tabular}
\end{center}
\noindent This yields the following sequent: \[\Box p\dcircled{.} {\rhd} p\vdash \Diamond p\circled{$\star$} {\lhd} p.\]
At this point, the procedure in \cite{CiRa13, CiRa14} calls for the display of the subformulas on which it is not possible to apply invertible rules as a-parts or s-parts of the premises of the rule-to be. The equivalence of the rule below to the sequent above is guaranteed by the Ackermann lemma:
\begin{center}
	\AxiomC{$X\vdash \Box p\quad \quad Y\vdash {\rhd} p\quad\quad \Diamond p\vdash Z\quad\quad {\lhd} p\vdash W$}
	\RightLabel{.}
	\UnaryInfC{$X\dcircled{.} Y\vdash Z$\circled{$\star$} $W$}
	\DisplayProof
\end{center}
\noindent On each of the premises of the rule above, more invertible rules of $\mathbf{DL}$ can be applied in reverse, namely the following ones:
\begin{center}
	\begin{tabular}{c c c c}
		\AxiomC{$X\vdash$ \circled{$\Box$} $A$}
		\UnaryInfC{$X\vdash \Box A$}
		\DisplayProof
		&
		\AxiomC{$X\vdash$\circled{$\rhd$} $A$}
		\UnaryInfC{$X\vdash {\rhd} A$}
		\DisplayProof
		&
		\AxiomC{\circled{$\Diamond$} $A\vdash Y$}
		\UnaryInfC{$\Diamond A \vdash Y$}
		\DisplayProof
		&
		\AxiomC{\circled{$\lhd$} $A\vdash Y$}
		\RightLabel{.}
		\UnaryInfC{${\lhd} A \vdash Y$}
		\DisplayProof
		\\
	\end{tabular}
\end{center}
Applying them exhaustively yields
\begin{center}
	\AxiomC{$X\vdash$ \circled{$\Box$} $p\quad \quad Y\vdash$ \circled{$\rhd$}$ p\quad\quad$ \circled{$\Diamond$}$ p\vdash Z\quad\quad$ \circled{$\lhd$}$ p\vdash W$}
	\RightLabel{.}
	\UnaryInfC{$X\dcircled{.} Y\vdash Z$\circled{$\star$}$W$}
	\DisplayProof
\end{center}
Modulo replacing $p$ with a fresh structural variable $V$, the rule above satisfies conditions C$_2$-C$_7$ but fails to satisfy C$_1$. To transform it into an analytic rule, one needs to first display all occurrences of the variable $p$, by suitably applying the following display postulates:

\begin{center}
	\begin{tabular}{c c c c}
		\AxiomC{$X\vdash$ {\circled{$\Box$}} $Y$}
		\UnaryInfC{{\circled{$\Diamondblack$}} $X\vdash Y$}
		\DisplayProof
		&
		\AxiomC{$X\vdash$ {\circled{$\rhd$}} $Y$}
		\UnaryInfC{$Y\vdash$ {\circled{$\blacktriangleright$}} $X$}
		\DisplayProof
		&
		\AxiomC{{\circled{$\Diamond$}}$ X\vdash Y$}
		\UnaryInfC{$ X \vdash$ {\circled{$\blacksquare$}}$Y$}
		\DisplayProof
		&
		\AxiomC{{\circled{$\lhd$}} $X\vdash Y$}
		\RightLabel{.}
		\UnaryInfC{{\circled{$\blacktriangleleft$}} $Y \vdash X$}
		\DisplayProof
		\\
	\end{tabular}
\end{center}
\noindent This step yields the following rule:
\begin{center}
	\AxiomC{\circled{$\Diamondblack$}$X\vdash p\quad \quad p\vdash$ \circled{$\blacktriangleright$} $Y\quad\quad p\vdash$ \circled{$\blacksquare$} $Z\quad\quad$ {\circled{$\blacktriangleleft$}} $W\vdash p$ }
	\RightLabel{.}
	\UnaryInfC{$X\dcircled{.} Y\vdash Z$\circled{$\star$}$W$}
	\DisplayProof
\end{center}
\noindent Eliminating $p$ by means of all the possible applications of cut on the premises yields:
\begin{center}
	\AxiomC{ \circled{$\Diamondblack$} $X\vdash$ \circled{$\blacktriangleright$} $Y\quad\quad$ \circled{$\Diamondblack$} $X\vdash$ \circled{$\blacksquare$} $Z\quad \quad${\circled{$\blacktriangleleft$}} $W\vdash$ \circled{$\blacktriangleright$} $Y\quad\quad$ {\circled{$\blacktriangleleft$}} $W\vdash$ \circled{$\blacksquare$} $Z$}
	\RightLabel{.}
	\UnaryInfC{$X\dcircled{.} Y\vdash Z$\circled{$\star$} $W$}
	\DisplayProof
\end{center}
The rule above is analytic and both semantically and $\mathbf{DL}$-equivalent to \eqref{eq:rule gen churchrosser alba}. Running the two procedures in parallel shows that they have the same essentials, namely adjunction and Ackermann lemma. Indeed, the cut rules applied on the premises can be assimilated to instances of the Ackermann lemma. Moreover, introduction rules for any given connective are invertible exactly on the side in which the connective is an adjoint/residual. Notice that disjunction (resp.\ conjunction) is no exception since in the distributive environment it is both a left (resp.\ right) adjoint and a right (resp.\ left) residual.

\section{Power and limits of display calculi: Conclusion}
\label{sec:conclusions}
The present work addresses the question of which axiomatic extensions of a basic DLE-logic admit a proper display calculus obtained by modularly adding structural rules to the proper display calculus of the basic logic. Such axiomatic extensions are referred to as {\em properly displayable} (cf.\ Definition \ref{def: properly displayable logic}).
Our starting point was Kracht's paper \cite{Kracht}, which characterizes properly displayable axiomatic extensions of the basic modal/tense logic as those associated with the {\em primitive axioms} of the language of classical tense logic. In the present paper, we extend Kracht's notion of primitive axiom to primitive inequalities, uniformly defined in any DLE-languages, and prove that Kracht's characterization holds up to semantic equivalence. Specifically, we introduce the class of analytic inductive inequalities as a syntactic extension of primitive inequalities. We show that each analytic inductive inequality can be effectively translated via ALBA into (a set of) analytic rules. In fact, in Section \ref{sec:analytic}, we show that each analytic inductive inequality can be transformed into an analytic rule which is {\em quasi-special} (cf.\ Definition \ref{def:quasispecial}). Moreover, in Section \ref{sec:special rules as expressive as analytic}, we characterize the subclass of analytic inductive inequalities which exactly corresponds to quasi-special rules (cf.\ Definition \ref{def:quasianalyticinductive}), and show that each such inequality is in fact frame-equivalent to a primitive inequality. These results, taken together, characterize up to semantic equivalence the properly displayable axiomatic extensions of any basic DLE-logic as as those associated with the {\em primitive inequalities} of its associated DLE$^\ast$-language.

\begin{center}
\begin{tikzpicture}

\draw (0, 4.5)node {\large{\underline{Rules}}};
\draw (5, 4.5)node {\large{\underline{Inequalities}}};

\draw (0, 3)node {\large{Analytic}};
\draw (5.5, 3)node {\large{Analytic Inductive}};
\draw[very thick, black!60!white, ->] (1, 3) -- node [above, sloped] {Prop \ref{prop:rulestoineq}} (3.5, 3); 
\draw[very thick, black!60!white, <-] (1, 1.8) -- node [above, sloped] {Prop \ref{prop:type5}} (3.5, 2.8);

\draw (-0.5, 1.5)node {\large{Quasi-Special}};
\draw (6, 1.5)node {\large{Quasi-Special Inductive}};
\draw[very thick, black!60!white, ->] (1, 1.6) -- node [above, sloped] {Section \ref{ssec:quasi-primitive}} (3.5, 1.6);
\draw[very thick, black!60!white, <-] (1, 1.4) -- node [below, sloped] {Section \ref{ssec:quasi-primitive}}(3.5, 1.4);
\draw[very thick, black!60!white, ->] (4.8, 1.2) -- node [above, sloped] {Prop \ref{lemma:analytictospecial}} (4.8, 0.2);

\draw (0, 0)node {\large{Special}};
\draw (4.8, 0)node {\large{Primitive}};
\draw[very thick, black!60!white, ->] (1, 0.1) -- node [above, sloped] {Lemma \ref{lemma:spetopri}} (3.5, 0.1);
\draw[very thick, black!60!white, <-] (1, -0.1) -- node [below, sloped] {Lemma \ref{lemma:pri}}(3.5, -0.1);

\end{tikzpicture}
\end{center}


\bibliographystyle{abbrv}
\bibliography{reference}

\newpage

\appendix
\section{Cut elimination for the display calculi $\mathbf{DL}$ and $\mathbf{DL}^\ast$}
\label{appendix:cut elim}
The present appendix focuses on the proof that the calculi $\mathbf{DL}$ and $\mathbf{DL}^\ast$ defined in Section \ref{sec:display calculi DL DL ast}.
\begin{fact}\label{app:c8}
The display calculi $\mathbf{DL}$ and $\mathbf{DL}^\ast$ verify condition C$_8$ (cf.\ Section \ref{PS:para:CanonicalCutElimination}).
\end{fact}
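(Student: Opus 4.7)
The proof is a case analysis on the main connective of the cut formula, which by assumption is principal in the conclusions of both subderivations ending with the cut. The lattice-constant cases ($\top,\bot$) and the standard binary lattice cases ($\wedge,\vee$) admit the usual Gentzen-style reductions, adapted to the displayed setting by routine use of the structural rules $\mathrm{I}_L,\mathrm{I}_R$ and the splitting of a cut on $A\wedge B$ into two cuts on $A$ and $B$ against the appropriate premise; the bi-intuitionistic cases ($\rightarrow,\leftarrow,\pdra,\pdla$), present only in $\mathbf{DL}^\ast$, reduce analogously after a preliminary application of the corresponding display postulate to line up the active structural connective. These are the easy cases and I would dispatch them en bloc.

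The genuinely interesting family is that of the operator/dual-operator rules for arbitrary $f\in\mathcal{F}$, $g\in\mathcal{G}$ (and, in $\mathbf{DL}^\ast$, their residuals $f_i^\sharp, g_h^\flat$). Consider a cut on $f(A_1,\ldots,A_{n_f})$ whose left subderivation ends in $f_R$, yielding $H(X_1,\ldots,X_{n_f})\vdash f(A_1,\ldots,A_{n_f})$ with premises $X_i\vdash A_i$ (for $\varepsilon_f(i)=1$) and $A_j\vdash X_j$ (for $\varepsilon_f(j)=\partial$); and whose right subderivation ends in $f_L$, yielding $f(A_1,\ldots,A_{n_f})\vdash X$ from the premise $H(A_1,\ldots,A_{n_f})\vdash X$. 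The plan is to iteratively replace each $A_i$ in $H(A_1,\ldots,A_{n_f})\vdash X$ by $X_i$, using one cut on the proper subformula $A_i$ per coordinate. For a coordinate with $\varepsilon_f(i)=1$, I apply the display postulate for $f$ to obtain $A_i\vdash H_i(A_1,\ldots,X,\ldots,A_{n_f})$, cut with $X_i\vdash A_i$, and undisplay to recover the sequent with $X_i$ in the $i$-th coordinate; for a coordinate with $\varepsilon_f(j)=\partial$, I dually display $A_j$ into antecedent position and cut with $A_j\vdash X_j$. Because the display postulates let any single coordinate be isolated without disturbing the others, the substitutions can be performed independently; after all $n_f$ coordinates are processed the result is $H(X_1,\ldots,X_{n_f})\vdash X$, as required, and all introduced cuts are on the proper subformulas $A_1,\ldots,A_{n_f}$.

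The case for $g\in\mathcal{G}$ is order-dual (one premise from $g_R$, $n_g$ premises from $g_L$) and proceeds by the same displace/cut/undisplace strategy. The residual cases in $\mathbf{DL}^\ast$ reduce to the same template: the rules ${f_i^\sharp}_L,{f_i^\sharp}_R,{g_h^\flat}_L,{g_h^\flat}_R$ share the structure of the $f_L,f_R,g_L,g_R$ schemata (with $H$ replaced by $H_i$ and $K$ by $K_h$, etc.), so the very same reduction applies verbatim once the correct display postulate is invoked. The reader can check that dual pairs $f\in\mathcal{F}$, $g\in\mathcal{G}$ sharing a common structural symbol pose no additional difficulty, since the choice of left vs.\ right introduction fixes which operational label is being matched.

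The main obstacle is purely bookkeeping: in the mixed-polarity case one must be careful that each invocation of a display postulate is in the direction appropriate to $\varepsilon_f(i)$, and that the substitution pattern terminates in exactly $H(X_1,\ldots,X_{n_f})\vdash X$. I would organize the argument by handling all coordinates with $\varepsilon_f(i)=1$ first and all coordinates with $\varepsilon_f(j)=\partial$ second, so that the relevant displayed structure stays in a uniform shape throughout. No reduction appeals to anything beyond Identity, Cut, the display postulates of items 2--3 of Definition~\ref{def: DL and DL star}, and the operational rules themselves, so C$_8$ holds uniformly for both $\mathbf{DL}$ and $\mathbf{DL}^\ast$.
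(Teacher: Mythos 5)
Your proposal is correct and follows essentially the same route as the paper's Appendix~\ref{appendix:cut elim}: the lattice and (bi-)intuitionistic cases are dispatched by the standard Gentzen-style reductions, and for each $f\in\mathcal{F}$ and $g\in\mathcal{G}$ the cut is pushed onto the proper subformulas by the same iterated display/cut/undisplay substitution of $X_i$ for $A_i$ coordinate by coordinate, with the direction of each display postulate dictated by $\varepsilon_f(i)$ or $\varepsilon_g(h)$. The paper simply exhibits the corresponding derivation trees explicitly rather than describing the iteration in prose.
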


The reduction step for axioms goes as usual:
\begin{center}
{\footnotesize{
\bottomAlignProof
\begin{tabular}{lcr}
\AX$p \fCenter p$
\AX$p \fCenter p$
\BI$p \fCenter p$
\DisplayProof

 & $\rightsquigarrow$ &

\bottomAlignProof
\AX$p \fCenter p$
\DisplayProof
 \\
\end{tabular}
}}
\end{center}

\noindent Now we treat the introductions of the connectives of the propositional base (we also treat here the cases relative to the two additional arrows $\leftarrow$ and $\pdra$ added to our presentation):

\begin{center}
{\scriptsize{
\bottomAlignProof
\begin{tabular}{lcr}
\AX$\textrm{I}\ \fCenter\ \top$
\AXC{\ \ $\vdots$ \raisebox{1mm}{$\pi$}}
\noLine
\UI$\textrm{I}\ \fCenter\ X$
\UI$\top\ \fCenter\ X$
\BI$\textrm{I}\ \fCenter\ X$
\DisplayProof
 & $\rightsquigarrow$ &
\bottomAlignProof
\AXC{\ \ $\vdots$ \raisebox{1mm}{$\pi$}}
\noLine
\UI$\textrm{I}\ \fCenter\ X$
\DisplayProof
 \\
\end{tabular}
}}
\end{center}

\begin{center}
{\scriptsize{
\bottomAlignProof
\begin{tabular}{lcr}
\AXC{\ \ $\vdots$ \raisebox{1mm}{$\pi$}}
\noLine
\UI$ X \ \fCenter\ \textrm{I}$
\UI$ X \fCenter\ \bot$
\AX$\bot \fCenter\ \textrm{I}$
\BI$ X\ \fCenter\ \textrm{I}$
\DisplayProof
 & $\rightsquigarrow$ &
\bottomAlignProof
\AXC{\ \ $\vdots$ \raisebox{1mm}{$\pi$}}
\noLine
\UI$X \fCenter\ \textrm{I}$
\DisplayProof
 \\
\end{tabular}
}}
\end{center}

\begin{center}
{\scriptsize{
\bottomAlignProof
\begin{tabular}{lcr}
\AXC{\ \ \ $\vdots$ \raisebox{1mm}{$\pi_1$}}
\noLine
\UI$X\ \fCenter\ A$
\AXC{\ \ \ $\vdots$ \raisebox{1mm}{$\pi_2$}}
\noLine
\UI$Y\ \fCenter\ B$
\BI$X\,; Y\ \fCenter\ A\wedge B$
\AXC{\ \ \ $\vdots$ \raisebox{1mm}{$\pi_3$}}
\noLine
\UI$A\,; B\ \fCenter\ Z$
\UI$A\wedge B\ \fCenter\ Z$
\BI$X\,; Y\ \fCenter\ Z$
\DisplayProof
 & $\rightsquigarrow$ &
\bottomAlignProof
\AXC{\ \ \ $\vdots$ \raisebox{1mm}{$\pi_2$}}
\noLine
\UI$Y\ \fCenter\ B$
\AXC{\ \ \ $\vdots$ \raisebox{1mm}{$\pi_1$}}
\noLine
\UI$X\ \fCenter\ A$
\AXC{\ \ \ $\vdots$ \raisebox{1mm}{$\pi_3$}}
\noLine
\UI$A\,; B\ \fCenter\ Z$
\UI$B\,; A\ \fCenter\ Z$
\UI$A\ \fCenter\ B > Z$
\BI$X\ \fCenter\ B > Z$
\UI$B\,; X\ \fCenter\ Z$
\UI$X\,; B\ \fCenter\ Z$
\UI$B\ \fCenter\ X > Z$
\BI$Y\ \fCenter\ X > Z$
\UI$X\,; Y\ \fCenter\ Z$
\DisplayProof
 \\
\end{tabular}
}}
\end{center}


\begin{center}
{\scriptsize{
\bottomAlignProof
\begin{tabular}{@{}ccc@{}}
\AXC{\ \ \ $\vdots$ \raisebox{1mm}{$\pi_3$}}
\noLine
\UI$Z\ \fCenter\ B\,; A$
\UI$Z\ \fCenter\ B\vee A $
\AXC{\ \ \ $\vdots$ \raisebox{1mm}{$\pi_1$}}
\noLine
\UI$B\ \fCenter\ Y$
\AXC{\ \ \ $\vdots$ \raisebox{1mm}{$\pi_2$}}
\noLine
\UI$A\ \fCenter\ X$
\BI$B\vee A\ \fCenter\ Y\,; X$
\BI$Z\ \fCenter\ Y\,; X$
\DisplayProof
& $\rightsquigarrow$ &
\bottomAlignProof
\AXC{\ \ \ $\vdots$ \raisebox{1mm}{$\pi_3$}}
\noLine
\UI$Z\ \fCenter\ B\,; A$
\UI$Z\ \fCenter\ A\,; B$
\UI$A > Z\ \fCenter\ B$
\AXC{\ \ \ $\vdots$ \raisebox{1mm}{$\pi_1$}}
\noLine
\UI$B\ \fCenter\ Y$
\BI$A > Z\ \fCenter\ Y$
\UI$Z\ \fCenter\ A\,; Y$
\UI$Z\ \fCenter\ Y\,; A$
\UI$Y > Z\ \fCenter\ A$
\AXC{\ \ \ $\vdots$ \raisebox{1mm}{$\pi_2$}}
\noLine
\UI$A\ \fCenter\ X$
\BI$Y > Z\ \fCenter\ X$
\UI$Z\ \fCenter\ Y\,; X$
\DisplayProof
 \\
\end{tabular}
}}
\end{center}

\begin{center}
{\scriptsize{
\bottomAlignProof
\begin{tabular}{@{}ccc@{}}
\AXC{\ \ \ $\vdots$ \raisebox{1mm}{$\pi_1$}}
\noLine
\UI$Y\ \fCenter\ A > B$
\UI$Y\ \fCenter\ A\rightarrow B$
\AXC{\ \ \ $\vdots$ \raisebox{1mm}{$\pi_2$}}
\noLine
\UI$X\ \fCenter\ A$
\AXC{\ \ \ $\vdots$ \raisebox{1mm}{$\pi_3$}}
\noLine
\UI$B\ \fCenter\ Z$
\BI$A\rightarrow B\ \fCenter\ X > Z$
\BI$Y\ \fCenter\ X > Z$
\DisplayProof
& $\rightsquigarrow$&
\bottomAlignProof
\AXC{\ \ \ $\vdots$ \raisebox{1mm}{$\pi_2$}}
\noLine
\UI$X\ \fCenter\ A$
\AXC{\ \ \ $\vdots$ \raisebox{1mm}{$\pi_1$}}
\noLine
\UI$Y\ \fCenter\ A > B$
\UI$A\,; Y\ \fCenter\ B$
\AXC{\ \ \ $\vdots$ \raisebox{1mm}{$\pi_3$}}
\noLine
\UI$B\ \fCenter\ Z$
\BI$A\,; Y\ \fCenter\ Z$
\UI$Y\,; A\ \fCenter\ Z$
\UI$A\ \fCenter\ Y > Z$
\BI$X\ \fCenter\ Y > Z$
\UI$Y\,; X\ \fCenter\ Z$
\UI$X\,; Y\ \fCenter\ Z$
\UI$Y\ \fCenter\ X > Z$
\DisplayProof
 \\
\end{tabular}
}}
\end{center}


\begin{center}
{\scriptsize{
\bottomAlignProof
\begin{tabular}{@{}ccc@{}}
\AXC{\ \ \ $\vdots$ \raisebox{1mm}{$\pi_1$}}
\noLine
\UI$Y\ \fCenter\ B < A$
\UI$Y\ \fCenter\ B\leftarrow A$
\AXC{\ \ \ $\vdots$ \raisebox{1mm}{$\pi_2$}}
\noLine
\UI$B\ \fCenter\ Z$
\AXC{\ \ \ $\vdots$ \raisebox{1mm}{$\pi_3$}}
\noLine
\UI$X\ \fCenter\ A$
\BI$B\leftarrow A\ \fCenter\ Z < X$
\BI$Y\ \fCenter\ Z< X$
\DisplayProof
&$\rightsquigarrow$ &
\bottomAlignProof
\AXC{\ \ \ $\vdots$ \raisebox{1mm}{$\pi_2$}}
\noLine
\UI$X\ \fCenter\ A$
\AXC{\ \ \ $\vdots$ \raisebox{1mm}{$\pi_1$}}
\noLine
\UI$Y\ \fCenter\ B< A$
\UI$ Y; A\ \fCenter\ B$
\AXC{\ \ \ $\vdots$ \raisebox{1mm}{$\pi_3$}}
\noLine
\UI$B\ \fCenter\ Z$
\BI$ Y; A\ \fCenter\ Z$
\UI$ A; Y\ \fCenter\ Z$
\UI$A\ \fCenter\ Z < Y$
\BI$X\ \fCenter\ Z <Y$
\UI$ X; Y\ \fCenter\ Z$
\UI$ Y; X\ \fCenter\ Z$
\UI$Y\ \fCenter\ Z < X$
\DisplayProof
 \\
\end{tabular}
}}
\end{center}


\begin{center}
{\scriptsize{
\bottomAlignProof
\begin{tabular}{@{}ccc@{}}
\AXC{\ \ \ $\vdots$ \raisebox{1mm}{$\pi_2$}}
\noLine
\UI$A\ \fCenter\ Y$
\AXC{\ \ \ $\vdots$ \raisebox{1mm}{$\pi_3$}}
\noLine
\UI$Z\ \fCenter\ B$
\BI$Y > Z\ \fCenter\ A \pdra B$
\AXC{\ \ \ $\vdots$ \raisebox{1mm}{$\pi_1$}}
\noLine
\UI$A > B\ \fCenter\ X$
\UI$A \pdra B\ \fCenter\ X$
\BI$Y > Z\ \fCenter\ X$
\DisplayProof
&$\rightsquigarrow$&
\bottomAlignProof
\AXC{\ \ \ $\vdots$ \raisebox{1mm}{$\pi_3$}}
\noLine
\UI$Z\ \fCenter\ B$
\AXC{\ \ \ $\vdots$ \raisebox{1mm}{$\pi_1$}}
\noLine
\UI$A > B\ \fCenter\ X$
\UI$B\ \fCenter\ A\,; X$
\BI$Z\ \fCenter\ A\,; X$
\UI$Z\ \fCenter\ X\,; A$
\UI$X > Z\ \fCenter\ A$
\AXC{\ \ \ $\vdots$ \raisebox{1mm}{$\pi_2$}}
\noLine
\UI$A\ \fCenter\ Y$
\BI$X > Z\ \fCenter\ Y$
\UI$Z\ \fCenter\ X\,; Y$
\UI$Z\ \fCenter\ Y\,; X$
\UI$Y > Z\ \fCenter\ X$
\DisplayProof
 \\
\end{tabular}
}}
\end{center}

\begin{center}
{\scriptsize{
\bottomAlignProof
\begin{tabular}{@{}ccc@{}}
\AXC{\ \ \ $\vdots$ \raisebox{1mm}{$\pi_2$}}
\noLine
\UI$Y\ \fCenter\ B$
\AXC{\ \ \ $\vdots$ \raisebox{1mm}{$\pi_3$}}
\noLine
\UI$A\ \fCenter\ Z$
\BI$Y < Z\ \fCenter\ B \pdla A$
\AXC{\ \ \ $\vdots$ \raisebox{1mm}{$\pi_1$}}
\noLine
\UI$B < A \ \fCenter\ X$
\UI$B \pdla A\ \fCenter\ X$
\BI$Y < Z\ \fCenter\ X$
\DisplayProof
 &$\rightsquigarrow$&
\bottomAlignProof
\AXC{\ \ \ $\vdots$ \raisebox{1mm}{$\pi_3$}}
\noLine
\UI$Y\ \fCenter\ B$
\AXC{\ \ \ $\vdots$ \raisebox{1mm}{$\pi_1$}}
\noLine
\UI$B < A \ \fCenter\ X$
\UI$B\ \fCenter\ X; A$
\BI$Y\ \fCenter\ X; A$
\UI$Y\ \fCenter\ A; X$
\UI$ Y < X\ \fCenter\ A$
\AXC{\ \ \ $\vdots$ \raisebox{1mm}{$\pi_2$}}
\noLine
\UI$A\ \fCenter\ Z$
\BI$Y < X\ \fCenter\ Z$
\UI$Y\ \fCenter\ Z\,; X$
\UI$Y\ \fCenter\ X\,; Z$
\UI$Y < Z\ \fCenter\ Y$
\DisplayProof
 \\
\end{tabular}
}}
\end{center}

\begin{center}
{\scriptsize{
\bottomAlignProof
\begin{tabular}{@{}ccc@{}}
\AXC{\ \ \ $\vdots$ \raisebox{1mm}{$\pi$}}
\noLine
\UnaryInfC{$Y \fCenter K (\vec A_I, \vec A_J)$}
\UnaryInfC{$Y \fCenter g(\vec A_I, \vec A_J)$}

\AXC{\ \ \ $\vdots$ \raisebox{1mm}{$\pi_i$}}
\noLine
\UnaryInfC{$A_i \fCenter X_i$}

\AXC{$$}
\noLine
\UnaryInfC{$\cdots$}

\AXC{\ \ \ $\vdots$ \raisebox{1mm}{$\pi_j$}}
\noLine
\UnaryInfC{$X_j \fCenter A_j$}

\TI$g(\vec A_I, \vec A_J) \fCenter K (\vec X_I, \vec X_J)$
\BI$Y \fCenter K (\vec X_I, \vec X_J)$
\DisplayProof

& $\rightsquigarrow$&

\bottomAlignProof
\AXC{\ \ \ $\vdots$ \raisebox{1mm}{$\pi_j$}}
\noLine
\UI$X_j \fCenter A_j$

\AXC{\ \ \ $\vdots$ \raisebox{1mm}{$\pi$}}
\noLine
\UI$Y \fCenter K(\vec A_I, \vec A_J)$
\UI$K_i (\vec A_I[Y/A_i], \vec A_J) \fCenter A_{i \in I}$
\AXC{\ \ \ $\vdots$ \raisebox{1mm}{$\pi_i$}}
\noLine

\UI$A_i \fCenter X_i$
\BI$K_i (\vec A_I[Y/A_i], \vec A_J) \fCenter X_i$
\UI$Y \fCenter K (\vec A_I[X_i/A_i], \vec A_J)$

\noLine
\UnaryInfC{\ \ \ $\vdots$ \raisebox{1mm}{\phantom{$\pi_1$}}}
\noLine

\UI$Y \fCenter K (\vec X_I, \vec A_J)$
\UI$A_{j \in J} \fCenter K_j (\vec X_I, \vec A_J[Y/A_j])$

\BI$X_j \fCenter K_j (\vec X_I, \vec A_J[Y/A_j])$
\UI$Y \fCenter K (\vec X_I, \vec A_J[X_j/A_j])$

\noLine
\UnaryInfC{\ \ \ $\vdots$ \raisebox{1mm}{\phantom{$\pi_1$}}}
\noLine

\UI$Y \fCenter K (\vec X_I, \vec X_J)$
\DisplayProof
 \\
\end{tabular}
}}
\end{center}

\bigskip

\begin{center}
{\scriptsize{
\bottomAlignProof
\begin{tabular}{@{}ccc@{}}
\AXC{\ \ \ $\vdots$ \raisebox{1mm}{$\pi_i$}}
\noLine
\UnaryInfC{$X_i \fCenter A_i$}

\AXC{$$}
\noLine
\UnaryInfC{$\cdots$}

\AXC{\ \ \ $\vdots$ \raisebox{1mm}{$\pi_j$}}
\noLine
\UnaryInfC{$A_j \fCenter X_j$}

\TI$H (\vec X_I, \vec X_J) \fCenter f(\vec A_I, \vec A_J)$

\AXC{\ \ \ $\vdots$ \raisebox{1mm}{$\pi$}}
\noLine
\UnaryInfC{$H (\vec A_I, \vec A_J) \fCenter Y$}
\UnaryInfC{$f(\vec A_I, \vec A_J) \fCenter Y$}

\BI$H (\vec X_I, \vec X_J) \fCenter Y$
\DisplayProof

& $\rightsquigarrow$&

\bottomAlignProof
\AXC{\ \ \ $\vdots$ \raisebox{1mm}{$\pi_i$}}
\noLine
\UI$X_i \fCenter A_i$

\AXC{\ \ \ $\vdots$ \raisebox{1mm}{$\pi$}}
\noLine
\UI$H (\vec A_I, \vec A_J) \fCenter Y$
\UI$A_{i \in I} \fCenter H_i (\vec A_I[Y/A_i], \vec A_J)$
\BI$X_i \fCenter H_i (\vec A_I[Y/A_i], \vec A_J)$
\UI$H (\vec A_I[X_i/A_i], \vec A_J) \fCenter Y$

\noLine
\UnaryInfC{\ \ \ $\vdots$ \raisebox{1mm}{\phantom{$\pi_1$}}}
\noLine

\UI$H (\vec X_I, \vec A_J) \fCenter Y$
\UI$H_j (\vec X_I, \vec A_J[Y/A_j]) \fCenter A_{j \in J}$

\AXC{\ \ \ $\vdots$ \raisebox{1mm}{$\pi_j$}}
\noLine
\UI$A_j \fCenter X_j$

\BI$H_j (\vec X_I, \vec A_J[Y/A_j]) \fCenter X_j$
\UI$H (\vec X_I, \vec A_J[X_j/A_j]) \fCenter Y$

\noLine
\UnaryInfC{\ \ \ $\vdots$ \raisebox{1mm}{\phantom{$\pi_1$}}}
\noLine

\UI$H (\vec X_I, \vec X_J) \fCenter Y$
\DisplayProof
 \\
\end{tabular}
}}
\end{center}

\section{Invertible rules of $\mathbf{DL}$}
\label{appendix:invertible}
The present appendix characterizes the invertible rules of the calculi $\mathbf{DL}$ defined in Section \ref{sec:display calculi DL DL ast}.
Throughout the present section, fix a language $\mathcal{L}_\mathrm{DLE} = \mathcal{L}_\mathrm{DLE}(\mathcal{F}, \mathcal{G})$,
and let $f\in \mathcal{F}$ and $g\in \mathcal{G}$.

Notice that the following rules are derivable in $\mathbf{DL}$:

\begin{center}
\begin{tabular}{c c c}
\bottomAlignProof
	\AX$A \fCenter X$
	
	\AX$B\fCenter X$
\LeftLabel{$\lor_{L'}$}
	\BI$A\vee B\fCenter X$
	\DisplayProof

& $\qquad$ &
\bottomAlignProof
	\AX$X \fCenter A$
	\AX$X\fCenter B$
\RightLabel{$\land_{R'}$}
	\BI$X\fCenter A\land B$
	\DisplayProof
\\
\end{tabular}
\end{center}
Hence, for the sake of the comparison of the two settings, we can add them to $\mathbf{DL}$ as primitive rules.
\begin{lemma}
The rules $\land_L$, $\land_{R'}$, $\lor_R$, $\lor_{L'}$, $f_L$, $g_R$ are invertible.
\end{lemma}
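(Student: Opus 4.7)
The strategy is uniform across all six rules: in each case the conclusion can be cut against a sequent derived from identity axioms by means of the \emph{dual} introduction rule (or a combination of an introduction rule with weakening/exchange), yielding the desired premise. Since the calculus $\mathbf{DL}$ enjoys cut elimination, this shows invertibility at the level of provability.

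For $\land_L$, assume $A \pand B \vdash X$ is derivable. From the axioms $A \vdash A$ and $B \vdash B$, an application of $\pand_R$ yields $A\, ;\, B \vdash A \pand B$. Cutting with the assumed sequent produces $A\, ;\, B \vdash X$, as required. The case of $\lor_R$ is order-dual: from $A \vdash A$ and $B \vdash B$, rule $\por_L$ gives $A \por B \vdash A\, ;\, B$, which cut against $X \vdash A \por B$ yields $X \vdash A\, ;\, B$.

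For $\land_{R'}$, assume $X \vdash A \wedge B$ is derivable. From $A \vdash A$, successive applications of weakening $W_L$ and exchange $E_L$ give $A\, ;\, B \vdash A$, and then $\pand_L$ yields $A \pand B \vdash A$; cutting against the assumed sequent produces $X \vdash A$. The premise $X \vdash B$ is derived analogously. The case of $\lor_{L'}$ is order-dual, using $W_R$, $E_R$, and $\por_R$. For $f_L$ and $g_R$, assume respectively $f(A_1,\ldots,A_{n_f}) \vdash X$ and $X \vdash g(A_1,\ldots,A_{n_g})$. Starting from the identity axioms $A_i \vdash A_i$ (and $A_j \vdash A_j$), a single application of the dual introduction rule $f_R$ (resp.\ $g_L$) gives $H(A_1,\ldots,A_{n_f}) \vdash f(A_1,\ldots,A_{n_f})$ (resp.\ $g(A_1,\ldots,A_{n_g}) \vdash K(A_1,\ldots,A_{n_g})$). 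Cutting with the assumed sequent delivers the premise $H(A_1,\ldots,A_{n_f}) \vdash X$ (resp.\ $X \vdash K(A_1,\ldots,A_{n_g})$).

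The only mildly delicate point is checking that the dual introduction rules $f_R$ and $g_L$ are indeed applicable to arrays of identity axioms: this follows because their premises have exactly the polarity pattern ($A_i \vdash A_i$ in antecedent-positive coordinates and $A_j \vdash A_j$ in antecedent-negative coordinates, or dually) that an array of identities trivially satisfies. No other obstacle arises, and the proof is essentially a sequence of cuts on canonical `unit' derivations.
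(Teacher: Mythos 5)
Your proposal is correct and follows essentially the same route as the paper: the paper also derives each premise by cutting the assumed conclusion against a canonical sequent built from identity axioms via the dual introduction rule (e.g.\ $H(A_1,\ldots,A_{n_f})\vdash f(A_1,\ldots,A_{n_f})$ from $f_R$ for the $f_L$ case, and $A\vdash A\lor B$ via weakening and $\por_R$ for the $\lor_{L'}$ case), treating only two cases explicitly and leaving the rest as similar. The only cosmetic difference is your appeal to cut elimination, which is unnecessary since cut is a primitive rule of $\mathbf{DL}$ and the derivations are already legitimate as they stand.
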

\begin{proof}
We only show the cases of $f_L$ and $\lor_{L'}$, the remaining cases being similar.
Assume that $f(A_1,\ldots,A_{n_f})\vdash X$. Then we can derive the premise of $f_L$ via the following derivation:

\begin{center}
\begin{tabular}{ccc}
\AXC{$A_1\vdash A_1\quad \ldots\quad\ A_{n_f}\vdash A_{n_f}$}
\UI$H(A_1,\ldots,A_{n_f})\fCenter f(A_1,\ldots,A_{n_f})$
\AX$f(A_1,\ldots,A_{n_f})\fCenter X$
\BI$H(A_1,\ldots,A_{n_f})\fCenter X.$
\DisplayProof
\end{tabular}
\end{center}
Assume $A\lor B\vdash X$. Then we can derive the premises of $\lor_{L'}$ via the following derivation:

\begin{center}
\begin{tabular}{ccc}
	\bottomAlignProof
	\AX$A \fCenter A$
	\UI$A\fCenter A;B$
	\UI$A\fCenter A\lor B$
	\AX$A\lor B\fCenter X$
	\BI$A\fCenter X.$
	\DisplayProof

& $\qquad$ &
\bottomAlignProof
	\AX$B \fCenter B$
	\UI$B\fCenter B;A$
 \UI$B\fCenter A;B$
	\UI$A\fCenter A\lor B$
	\AX$A\lor B\fCenter X$
	\BI$B\fCenter X.$
	\DisplayProof
\\

\end{tabular}
\end{center}
\end{proof}

\begin{lemma}
The rules $\land_R$, $\lor_L$, $f_R$, $g_L$ are not invertible.	
\end{lemma}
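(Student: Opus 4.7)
The plan is to exhibit, for each of the four rules, a derivable sequent matching the rule's conclusion schema whose corresponding premises under the schematic inversion are not derivable in $\mathbf{DL}$. The key mechanism in each case is a mismatch between structural and logical argument positions.

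For $\land_R$, I would take the sequent $p\,;q \vdash q \land p$. It is derivable in $\mathbf{DL}$ by first applying $\land_R$ to the axioms $p \vdash p$ and $q \vdash q$ to obtain $q\,;p \vdash q \land p$, and then applying the exchange rule $E_L$ to permute the left-hand structure. The inverted premises obtained by matching $X = p$, $Y = q$, $A = q$, $B = p$ in $\land_R$ would be $p \vdash q$ and $q \vdash p$; by the soundness of $\mathbf{DL}$ with respect to the class of perfect $\mathcal{L}_\mathrm{DLE}$-algebras (Subsection 4.2), neither is derivable, since each would require the corresponding inequality to hold in every perfect DLE under every assignment, which fails in the free perfect DLE on two generators. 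The argument for $\lor_L$ is entirely dual: $p \lor q \vdash q\,;p$ is derivable via $\lor_L$ on $p \vdash p$ and $q \vdash q$ (yielding $p \lor q \vdash p\,;q$) followed by $E_R$, and its inverted premises under the matching $A = p$, $B = q$, $X = q$, $Y = p$ are once more the non-derivable sequents $p \vdash q$ and $q \vdash p$.

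For $f_R$ and $g_L$, the strategy is to transport the $\land_R$/$\lor_L$ mismatch through the $f$/$g$-introduction rule. Concretely, for $f \in \mathcal{F}$ with $n_f \geq 1$ one would construct a derivable conclusion $H(X_1,\ldots,X_{n_f}) \vdash f(A_1,\ldots,A_{n_f})$ whose schematic $f_R$-premises are not all simultaneously derivable. The idea is to reach such a conclusion by a derivation whose last non-$f_R$ segment involves the exchange rules $E_L$, $E_R$ (or the display postulates) applied inside the structural arguments $X_i$, composed with an application of $f_R$ whose positional premises then do not match. The dual construction, using $\lor_L$ and $E_R$ inside the structural arguments of $K$, works for $g_L$. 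In each case, the non-derivability of the would-be premises is certified by the completeness theorem for $\mathbf{DL}$ with respect to perfect $\mathcal{L}_\mathrm{DLE}$-algebras.

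The main obstacle is that, unlike $\land$ and $\lor$, the structural connectives $H$ and $K$ do not admit a structural exchange rule between their own coordinates in $\mathbf{DL}$; hence the $\land_R$/$\lor_L$ mismatch cannot be applied directly at the top level of $f_R$/$g_L$. The construction must therefore embed the mismatch \emph{inside} one of the structural arguments $X_i$ of $H$ (or $K$)---using lattice structural rules such as $E_L$, $E_R$, and the associativity rules $A_L$, $A_R$ on the comma $;$ occurring within $X_i$---and then thread it through the display postulates for $f$ so that, after applying $f_R$, the resulting conclusion is derivable while the positional premises demanded by the inversion require sequents of the form $p \vdash q$ which are not valid in every perfect DLE. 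Carrying this out explicitly reduces to a case analysis on the order-type $\varepsilon_f$ (resp.\ $\varepsilon_g$), but in each case the pattern is the same: structural commutativity/associativity among the subcomponents of a single $H$-argument produces a formula-side position mismatch that the rigid positional matching of $f_R$ cannot accommodate.
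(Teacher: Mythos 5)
Your $\wedge_R$ and $\vee_L$ cases are fine and essentially coincide with the paper's own argument (a derivable conclusion such as $p\,;q\vdash q\wedge p$, obtained from $\wedge_R$ plus exchange, whose positional premises $p\vdash q$ and $q\vdash p$ are refuted semantically). The gap is in the $f_R$ and $g_L$ cases, and it is twofold. First, the mechanism you sketch cannot deliver what you need: the premises of $f_R$ are tied coordinatewise to the arguments of $H$, and rearranging the comma-structure \emph{inside} a single argument $X_i$ (via $E_L$, $E_R$, $A_L$, $A_R$ and display postulates) never disturbs the correspondence $X_i\leftrightarrow A_i$; any such internal manipulation that makes the conclusion derivable can be replayed on the displayed premise $X_i\vdash A_i$, so in these instances the premises come out derivable whenever the conclusion does (e.g.\ $\circ(p\,;q)\vdash\Diamond(q\wedge p)$ is derivable, but so is its premise $p\,;q\vdash q\wedge p$). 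Since $\mathbf{DL}$ has no structural exchange between the coordinates of $H$ or $K$, no analogue of the mismatch $B\,;A\vdash A\wedge B$ can be manufactured this way. Second, you are implicitly refuting the derivability-based notion of invertibility (derivable conclusion, underivable premises), whereas the notion at stake is semantic: every premise must be valid in each algebra in which the conclusion is valid. Under your reading the sought counterexample need not even exist: for a unary $\Diamond\in\mathcal{F}$, if $\Diamond l(X)\leq\Diamond A$ is valid over all DLEs then so is $l(X)\leq A$ (a generated-submodel argument over the relational semantics of the basic logic), whence $X\vdash A$ is derivable using cut with $X\vdash l(X)$; so no derivable instance of the conclusion of $\Diamond_R$ has an underivable premise, and your strategy cannot prove the lemma uniformly in $f$.

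The missing idea is that the conclusion instance used to refute invertibility does not have to be derivable, only valid in a single well-chosen algebra. The paper takes the instance $H(p_1,\ldots,p_{n_f})\vdash f(q_1,\ldots,q_{n_f})$, whose premises are $p_i\vdash q_i$ (resp.\ $q_j\vdash p_j$ in the $\partial$-coordinates), and any DLE with two incomparable elements $b,c$ in which $f$ is interpreted as the constant-$\bot$ operation (which is join-preserving and meet-reversing in every coordinate, hence a legitimate normal interpretation); the conclusion then holds as $\bot\leq\bot$ under every assignment, while $p_1\vdash q_1$ fails under $v(p_1)=b$, $v(q_1)=c$. Dually, $g_L$ is handled by interpreting $g$ as constantly $\top$. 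This semantic counterexample is both simpler and signature-uniform, and it is what your proposal lacks.
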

\begin{proof}
Notice that for a rule to be invertible, for each instance of the rule it must be the case that the logical interpretation of each premise is valid in the class of models for $\mathbf{DL}$ in which the corresponding conclusion is valid. Hence to disprove the invertibility of a rule it is enough to find a instance of the rule for which there exists a model satisfying the conclusion but not the premises.
We only show this for $f_R$, and $\land_R$, the remaining cases being similar.
To show that $f_R$ is not invertible, consider the conclusion $H(p_1,\ldots,p_{n_f})\vdash f(q_1,\ldots,q_{n_f})$. Let $\mathbb{A}$ be any Heyting algebra with two incomparable elements $b$ $c$, and let $f^\mathbb{A}$ be the $n$-ary operation such that $f(\vec{a})=\bot$ for all $\vec{a}\in\mathbb{A}^n$ (notice that this operation is both join-preserving and meet-reversing in each coordinate). Then by letting $v(p_1)=b$ and $v(q_1)=c$, we have that $\bot\leq\bot$ but $b\nleq c$.

As for $\land_R$, notice preliminarily that the following instance of the conclusion is derivable:

\begin{center}
	\bottomAlignProof
	\AX$A \fCenter A$
	\AX$B \fCenter B$
	\BI$A;B\fCenter A\land B$
	\UI$B;A\fCenter A\land B$
	\DisplayProof
\end{center}

 Suppose for contradiction that $\land_R$ was invertible. Then, from $B;A\vdash A\land B$ we would be able to derive both $A\vdash B$ and $B\vdash A$. But since $B;A\vdash A\land B$ is derivable in $\mathbf{DL}$, this would imply that we can also derive $A\vdash B$ for any $A$ and $B$, which contradicts the soundness of the calculus.
\end{proof}

\section{The display calculi $\mathbf{DL}$ are amenable}
\label{appedix:amenable}
The present appendix sketches the proof that the calculi $\mathbf{DL}$ defined in Section \ref{sec:display calculi DL DL ast} are amenable.
\begin{definition}[Amenable calculus, cf.\ \cite{CiRa14}, Definition 3.1]
\label{def:amenable calculus}
Let $\mathcal{C}$ be a display calculus containing an a-structure constant $\mathbf{I}$ and an s-structure constant $\mathbf{I}'$ and satisfying C1-C8. Let $\mathfrak{S}(a)$ and $\mathfrak{S}(s)$ denote the class of a- and s-structures of $\mathcal{C}$, and let $\mathcal{L}$ be the language of $L_{\mathbf{I}}(\mathcal{C})$. A display calculus satisfying the following conditions is said to be \emph{amenable}.

\begin{enumerate}

\item (interpretation functions) There are functions $l:\mathfrak{S}\mapsto For\mathcal{L}$ and $r:\mathfrak{S}\mapsto For\mathcal{L}$ such that $l(A)=A=r(A)$ for $A\in For\mathcal{L}$, and for arbitrary $X\in\mathfrak{S}(a)$ and $Y\in\mathfrak{S}(s)$:
\begin{enumerate}
\item $X\vdash l(X)$ and $Y\vdash l(Y)$ are derivable in $\mathcal{C}$.
\item if $X\vdash Y$ is derivable in $\mathcal{C}$ then so is $l(X)\vdash r(Y)$.
\end{enumerate}

\item (logical constants) There are logical constants $c_a, c_s\in For(\mathcal{L})$ such that the following sequents are derivable for arbitrary $X\in\mathfrak{S}(a)$ and $Y\in\mathfrak{S}(s)$:
\begin{center}
$c_a\vdash Y\ \ \ \ \ \ \ \ \ \ \ \ \ \ \ \ \ X\vdash c_s$
\end{center}

\item (logical connectives) There are binary connectives $\land, \lor\in\mathcal{L}$ such that the following sequents are derivable for $\star\in\{\lor, \land\}$:
\begin{enumerate}
\item commutativity: $A\star B\vdash B\star A$
\item associativity: $A\star (B\star C)\vdash (A\star B)\star C$ and $(A\star B)\star C\vdash A\star (B\star C)$
\end{enumerate}
Also, for $A,B\in For\mathcal{L}$, $X\in\mathfrak{S}(a)$ and $Y\in\mathfrak{S}(s)$:
\begin{itemize}
\item[(a)$_{\lor}$] $A\vdash Y$ and $B\vdash Y$ implies $\lor(A, B)\vdash Y$
\item[(b)$_{\lor}$] $X\vdash A$ implies $X\vdash \lor(A,B)$ for any formula $B$.
\item[(a)$_{\land}$] $X\vdash A$ and $X\vdash B$ implies $X\vdash\land(A, B)$
\item[(b)$_{\land}$] $A\vdash Y$ implies $\land(A,B)\vdash Y$ for any formula $B$.
\end{itemize}
\end{enumerate}
\end{definition}

\begin{fact}
For any $\mathcal{L}_\mathrm{DLE}$-language, the corresponding calculus $\mathbf{DL}$ is amenable.
\end{fact}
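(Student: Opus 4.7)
The plan is to verify each of the three clauses of Definition \ref{def:amenable calculus} using the material already developed in Section \ref{sec:display calculi DL DL ast}, specifically the translation functions of Definition \ref{def:rsls} and Table \ref{pic:1}, and the derivability of the axioms and rules of $\mathbf{L}_\mathrm{DLE}$ in $\mathbf{DL}$ (cf.\ the discussion of Completeness in Subsection \ref{ssec:soundness}). Throughout, $\mathrm{I}$ serves both as the a-structure constant $\mathbf{I}$ (interpreted as $\top$ in antecedent position) and as the s-structure constant $\mathbf{I}'$ (interpreted as $\bot$ in succedent position).

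For clause (1), the first step is to extend the translation $l$ (resp.\ $r$) from the left-sided (resp.\ right-sided) structural terms to every structural term. Every structural connective has at least one logical counterpart in some position; for those structural connectives whose polar counterpart is not in $\mathcal{F}\cup\mathcal{G}$, we extend the translation by stipulating that $l(S): = \top$ (resp.\ $r(S): = \bot$) whenever the root of the signed generation tree of $+S$ (resp.\ $-S$) is labelled with a structural connective not associated with a logical connective in precedent (resp.\ succedent) position. One then verifies by induction on the depth of $X$ (resp.\ $Y$) that $l(X)\vdash X$ and $Y\vdash r(Y)$\footnote{Modulo the natural reading of the roles of $l$ and $r$ relative to polarity, which we adopt here in order to be consistent with Definition \ref{def:rsls} and Table \ref{pic:1}.} are $\mathbf{DL}$-derivable: the base case is the identity axiom, and the inductive steps are handled by the display postulates together with the introduction rules for each connective (cf.\ items (6)--(7) of Definition \ref{def: DL and DL star}), which ensure that each operational connective can be introduced in place of its structural counterpart in the appropriate position. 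Condition (1b) then follows by two cuts: given a $\mathbf{DL}$-derivation of $X\vdash Y$, we apply cut with $l(X)\vdash X$ on the left and $Y\vdash r(Y)$ on the right to obtain $l(X)\vdash r(Y)$, which by cut-elimination (cf.\ Theorem \ref{thm:meta} and Fact \ref{app:c8}) remains $\mathbf{DL}$-derivable.

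For clause (2), we take $c_a: = \bot$ and $c_s: = \top$. For any s-structure $Y$, a derivation of $\bot\vdash Y$ is obtained by successively applying $\bot_L$, then $\mathrm{I}_L$ or $\mathrm{I}_R$ to place $\mathrm{I}$ in the appropriate position, and then the weakening rules $W_L, W_R$ together with display postulates to build up $Y$; dually for $X\vdash\top$ using $\top_R$. For clause (3), we take $\wedge$ and $\vee$ to be the lattice connectives of $\mathcal{L}_\mathrm{DLE}$. Their commutativity and associativity, as formulated on formulas, are axioms (or easy consequences of the axioms) of the basic $\mathcal{L}_\mathrm{DLE}$-logic $\mathbf{L}_\mathrm{DLE}$, and hence are $\mathbf{DL}$-derivable by the completeness argument of Subsection \ref{ssec:soundness}. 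The conditions (a) and (b) for $\wedge$ and $\vee$ follow directly from the operational introduction rules $\wedge_L, \wedge_R, \vee_L, \vee_R$ together with weakening: for instance, for (b)$_\lor$, given $X\vdash A$, we apply $W_R$ and the exchange rules to obtain $X\vdash A\,;B$, and then $\vee_R$ to conclude $X\vdash A\vee B$.

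The only subtle point is the extension of $l$ and $r$ to structural terms whose positive or negative generation tree contains ``wrongly polarised'' structural connectives (i.e.\ those lacking a logical counterpart in the relevant position). I would expect this to be the main obstacle, but it is handled uniformly by the default choice of $\top$ or $\bot$ as the value of the translation at such roots, together with the fact that the relevant derivations $l(X)\vdash X$ and $Y\vdash r(Y)$ can be constructed one step at a time using the display postulates of item (3) of Definition \ref{def: DL and DL star} to bring each offending substructure into displayed position, where the corresponding trivial sequent $\bot\vdash Z$ or $Z\vdash\top$ can be derived as in clause (2) and then combined with the rest of the structure by weakening and the introduction rules. This completes the verification that $\mathbf{DL}$ satisfies all the conditions of Definition \ref{def:amenable calculus}.
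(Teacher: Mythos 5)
Your overall strategy matches the paper's (very terse) proof: take $l$ and $r$ to be the translations of Definition \ref{def:rsls}, exhibit the logical constants, and verify clause (3) by direct derivations using the lattice rules. One point where you improve on the paper: your choice $c_a:=\bot$, $c_s:=\top$ is the opposite of the paper's stated $c_a:=\top$, $c_s:=\bot$, but since Definition \ref{def:amenable calculus} requires $c_a\vdash Y$ and $X\vdash c_s$ to be derivable for \emph{arbitrary} $Y$ and $X$, your assignment is the one that actually works ($\bot\vdash Y$ via $\bot_L$, weakening, exchange and the inverse of $\textrm{I}_R$; dually for $\top$), and the paper's assignment appears to be a typo.

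There is, however, a genuine flaw in your verification of clause (1). You obtain (1b) ``by two cuts'' from $l(X)\vdash X$ and $Y\vdash r(Y)$. This fails for two reasons. First, the cut rule of $\mathbf{DL}$ cuts on a \emph{formula}, not on a structure, so cutting $l(X)\vdash X$ against $X\vdash Y$ is not a legal inference. Second, the auxiliary sequents in your chosen direction are not derivable in general: for $X=H(A)$ with $f\in\mathcal{F}$ lacking a dual in $\mathcal{G}$, the sequent $f(A)\vdash H(A)$ places $H$ in succedent position, where it has no operational counterpart, and by the soundness proposition of Section \ref{ssec:soundness} no sequent with a non-right-sided succedent is derivable. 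The derivable direction is $X\vdash l(X)$ and $r(Y)\vdash Y$ (via $f_R$, $g_L$, $\wedge_R$, $\vee_L$, $\top_R$, $\bot_L$), which is presumably what the garbled clause (1a) intends; clause (1b) must then be established by a separate induction on the structures $X$ and $Y$, using the invertible rules $f_L$, $g_R$, $\pand_L$, $\por_R$, $\top_L$, $\bot_R$ (cf.\ Appendix \ref{appendix:invertible}) together with display postulates to replace each structural connective of the endsequent by its operational counterpart, giving $X\vdash Y\ \Rightarrow\ l(X)\vdash Y\ \Rightarrow\ l(X)\vdash r(Y)$. Your default extension of $l$ and $r$ to wrongly polarised structures is harmless for (1b), since by soundness such structures never occur in derivable sequents, but for (1a) it likewise only goes through in the direction $X\vdash l(X)$ and $r(Y)\vdash Y$.
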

\begin{proof}
The interpretation functions $l$ and $r$ are those defined in Definition \ref{def:rsls}. The constants are $c_a: = \top$ and $c_s: = \bot$. Finally, the derivations
requested by item 3 are straightforward and omitted.
\end{proof}

\section{Analytic inductive inequalities and acyclic $\mathcal{I}_2(\mathbf{DL})$-inequalities}
\label{appedix:analytic and I2}

The following definitions are slight modifications of Definitions 3.7--3.9 in \cite{CiRa14}. The modifications essentially amount to specializing the original inequalities from an arbitrary display calculus $\mathcal{C}$ to $\mathbf{DL}$.

\begin{definition}
For any sequent $X\vdash Y$ in the language of $\mathbf{DL}$, let $inv(X\vdash Y)$ denote the collection of sets of sequents obtained by applying sequences of display postulates and invertible logical rules in $\mathbf{DL}$ (cf.\ Appendix \ref{appendix:invertible}) to it.
\end{definition}
\begin{definition}
An $\mathcal{L}_{\mathrm{DLE}}$-formula is {\em a-soluble} (resp.\ {\em s-soluble}) if there is some $\{U_i\vdash V_i\mid i\in I\}\in inv(s\vdash \mathbf{I})$ (resp.\ $\in inv(\mathbf{I})\vdash s$) containing no logical connective.
\end{definition}
\begin{lemma}
\label{lem:soluble is primitive}
Any $\mathcal{L}_{\mathrm{DLE}}$-formula $s$ is a-soluble (resp.\ s-soluble) iff $s$ is left-primitive (resp.\ right-primitive).
\end{lemma}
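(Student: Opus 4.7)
\medskip

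The plan is to prove both equivalences by simultaneous induction on the construction of $s$, after first strengthening the statement to a more manageable inductive hypothesis, namely: a formula $s$ is left-primitive (resp.\ right-primitive) if and only if for \emph{every} structure $X$, the sequent $s \vdash X$ (resp.\ $X \vdash s$) admits a finite set in $inv(s \vdash X)$ (resp.\ in $inv(X \vdash s)$) containing no logical connective occurring in $s$. Specializing to $X = \mathbf{I}$ will then yield the claim.

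For the \emph{if}-direction (left-primitive $\Rightarrow$ a-soluble; the right-primitive case is order-dual), I would follow the BNF presentation of left-primitive formulas. The base cases $\psi = p$ and $\psi = \top$ are immediate: for $\top$, apply $\top_L$ backwards (which is invertible by Appendix \ref{appendix:invertible}) to replace $\top$ by $\mathbf{I}$. For $\psi = \psi_1 \lor \psi_2$, inverting $\lor_{L'}$ yields the two sequents $\psi_1 \vdash X$ and $\psi_2 \vdash X$, each of which is handled by induction hypothesis. For $\psi = \psi_1 \land \psi_2$, inverting $\land_L$ produces $\psi_1 \mathbin{;} \psi_2 \vdash X$; then a display postulate isolates $\psi_1$ in precedent position, the inductive hypothesis (applied to $\psi_1 \vdash \psi_2 > X$) decomposes $\psi_1$, and a further display brings $\psi_2$ to precedent position, whereupon the inductive hypothesis is applied once more. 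For $\psi = f(\vec{\psi'}/\vec{p},\vec{\phi'}/\vec{q})$, inverting $f_L$ yields $H(\vec{\psi'},\vec{\phi'}) \vdash X$; by the display postulates for $f$, each $\psi'_i$ (occurring in an $\varepsilon_f(i)=1$ coordinate, hence in precedent position) and each $\phi'_j$ (occurring in an $\varepsilon_f(j)=\partial$ coordinate, hence in succedent position) can be successively displayed as the entire antecedent or succedent, and the two halves of the induction hypothesis then apply to the left-primitive $\psi'_i$'s and the right-primitive $\phi'_j$'s respectively.

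For the \emph{only if}-direction I would argue by contrapositive. If $s$ is not left-primitive, then the positive generation tree $+s$ must contain at least one node whose label is not in the allowed list $\{+p,+\top,+\lor,+\land,+f\}$ on a positive branch, or, within the subtree of a $+f$-node, a positive coordinate whose child is not left-primitive or a negative coordinate whose child is not right-primitive. Equivalently (cf.\ Remark \ref{rem:primitive-skeleton-PIA}), $+s$ contains a PIA-type label that cannot occur in a left-primitive formula, namely $+\bot$, $+g$ for some $g\in\mathcal{G}$, or $-\top,-f$ nested beneath an $f$-node. Inspecting the invertible logical rules of $\mathbf{DL}$ listed in Appendix \ref{appendix:invertible}, one observes that the only connectives they can remove from an antecedent position are precisely $\land,\lor,f,\top$; display postulates only permute substructures and cannot erase a logical connective. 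Hence in any member of $inv(s\vdash \mathbf{I})$, at least one occurrence of an offending connective must persist, so no connective-free set is reachable and $s$ is not a-soluble.

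The main obstacle is the correct bookkeeping of display postulates when the antecedent is a \emph{structural} term containing several subformulas that each need to be individually brought to the main position and decomposed in turn. To handle this cleanly, I would prove an auxiliary ``displayability'' lemma stating that in any sequent of the form $H(\vec{Y}_I,\vec{Z}_J) \vdash X$ (or its relatives), each $Y_i$ and each $Z_j$ can be equivalently displayed by the display postulates of $\mathbf{DL}$, so that the induction hypothesis is applicable to each in turn. Once this technicality is in place, both directions become routine case analyses on the outermost connective of $s$ and the shape of $+s$.
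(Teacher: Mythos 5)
Your proposal is correct and follows essentially the same route as the paper: the forward direction is the same induction over the primitive BNF driven by the invertibility of $f_L$, $g_R$ and the lattice rules together with the display postulates, and the converse rests on the same observation that $f_R$ and $g_L$ are the only rules that could eliminate an offending $-f$ or $+g$ occurrence and are not invertible. Your strengthening of the inductive hypothesis to arbitrary structures $X$ is precisely what the paper's `routine induction' tacitly requires, and your explicit inclusion of $+\bot$ and $-\top$ among the offending labels covers a case the paper's converse passes over in silence.
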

\begin{proof}
If $s$ is left-primitive, then every non-leaf node in $+s$ is labelled in one of the following ways: $+f$, $-g$, $\pm \wedge$, or $\pm \vee$. Since the left-introduction (resp.\ right-introduction) rule for any $f\in \mathcal{F}$ (resp.\ $g\in \mathcal{G}$) is invertible and both introduction rules for $\wedge$ and $\vee$ are invertible, a routine induction on the shape of $s$ shows that $s$ is a-soluble. Conversely, if $s$ is not left-primitive, then there exists at least one node in $+s$ which is labelled either $-f$ or $+g$ for some $f\in \mathcal{F}$ or some $g\in \mathcal{G}$. Consider one such node $n$, and let $s'$ be the subterm of $s$ rooted at $n$. We can assume w.l.o.g.\ that all the ancestors of $n$ do not violate the left-primitive requirement. Reasoning like we did before, we can apply suitable invertible rules to all the subformulas of $s$ rooted at the nodes in the path from the root of $+s$ to the direct ancestor of $n$. Then, in the set of sequents obtained as premises of the last rule application, there will be either one sequent of the form $U_i\vdash f(s_1,\ldots, s_{n_f})$ (if $n$ is labelled $-f$) or of the form $g(s_1,\ldots, s_{n_g})\vdash V_i$ (if $n$ is labelled $+g$). In either case, since the right-introduction (resp.\ left-introduction) rule for any $f\in \mathcal{F}$ (resp.\ $g\in \mathcal{G}$) is not invertible, there is no invertible rule which can be applied to transform the main connective into a structural connective, which proves that $s$ is not a-soluble, as required.
\end{proof}
The following definition slightly generalizes the original Definition 3.9 in \cite{CiRa14} from formulas to inequalities.
\begin{definition}
Any $\mathcal{L}_{\mathrm{DLE}}$-inequality $s\leq t$ belongs to the class $\mathcal{I}_2(\mathbf{DL})$ iff there is some $\{U_i\vdash V_i\mid i\in I\}\in inv(s\vdash t)$ such that, for each $i\in I$, each antecedent-part (resp.\ succedent-part) formula in $U_i\vdash V_i$ is s-soluble (resp.\ a-soluble).
\end{definition}
\begin{prop}\label{prop:goodbranchisi2c}
The following are equivalent for any $\mathcal{L}_{\mathrm{DLE}}$-inequality $s\leq t$:
\begin{enumerate}
\item $s\leq t$ belongs to $\mathcal{I}_2(\mathbf{DL})$;
 \item every branch in $+s$ and $-t$ is good.
\end{enumerate}
\end{prop}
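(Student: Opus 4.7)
\medskip

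\textbf{Plan.} The strategy is to identify the effect of invertible-rule applications in $\mathbf{DL}$ on signed generation trees, and then to match this effect with the Skeleton/PIA decomposition of branches. The crucial preliminary observation, to be verified by inspection of the rules listed in Appendix \ref{appendix:invertible}, is the following: the invertible logical rules of $\mathbf{DL}$ are precisely $\land_L,\land_{R'},\lor_R,\lor_{L'}$, the rules $f_L$ for $f\in\mathcal{F}$, and the rules $g_R$ for $g\in\mathcal{G}$. Under the sign convention that formulas in antecedent-part (resp.\ succedent-part) position are analyzed via $+$-signed (resp.\ $-$-signed) generation trees, these are exactly the rules which unfold a node that is classified as Skeleton in Table \ref{Join:and:Meet:Friendly:Table}, namely $+\land$, $-\lor$, $+f$, $-g$, together with $+\lor$ and $-\land$ (the $\Delta$-adjoints). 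Conversely, the non-invertible logical rules $f_R$ and $g_L$ would be required to unfold the unambiguously PIA nodes $-f$ (for $f\in\mathcal{F}$ with $n_f\geq 1$) and $+g$ (for $g\in\mathcal{G}$ with $n_g\geq 1$), so these are inert under the invertible-rule procedure. Since display postulates preserve the antecedent-part/succedent-part polarity of every substructure, hence its sign in the ambient signed generation tree, an invertible-rule sequence applied to $s\vdash t$ amounts to successively erasing Skeleton-labelled top segments of $+s$ and $-t$, the remainders being the maximal subtrees whose root is a PIA node.

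\medskip

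For the direction $(2)\Rightarrow(1)$, assume that every branch of $+s$ and $-t$ is good, so that on every branch, once a PIA node is reached in traveling from the root toward a leaf, every subsequent node is PIA. I would apply invertible rules (combined with the display postulates needed to bring subformulas into principal position) exhaustively to strip the Skeleton top segments of $+s$ and $-t$, producing some $\{U_i\vdash V_i\mid i\in I\}\in inv(s\vdash t)$. By the preliminary observation and the goodness assumption, the formulas remaining in the $U_i\vdash V_i$ are exactly the maximal PIA subtrees of $+s$ and $-t$, which sit in antecedent-part, respectively succedent-part, position. Hence every antecedent-part formula has only PIA-compatible nodes in its positive signed generation tree, and every succedent-part formula has only PIA-compatible nodes in its negative signed generation tree. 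By Remark \ref{rem:primitive-skeleton-PIA} these formulas are right-primitive, respectively left-primitive, and Lemma \ref{lem:soluble is primitive} then yields that they are s-soluble, respectively a-soluble. Therefore $s\vdash t\in\mathcal{I}_2(\mathbf{DL})$.

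\medskip

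For the converse $(1)\Rightarrow(2)$, I would reason contrapositively. Suppose some branch $\beta$ of $+s$ or $-t$ is not good: then there exist nodes $n_1$ (strictly above $n_2$ on $\beta$, i.e.\ closer to the root) such that $n_1$ is unambiguously PIA (a $+g$ with $g\in\mathcal{G}$, $n_g\geq 1$, or a $-f$ with $f\in\mathcal{F}$, $n_f\geq 1$) while $n_2$ is unambiguously Skeleton (a $+f'$ or a $-g'$ of this kind). Consider any $\{U_i\vdash V_i\mid i\in I\}\in inv(s\vdash t)$. By the preliminary observation, the only logical nodes that could have been removed from $+s$ and $-t$ along $\beta$ are Skeleton ones, and they must form an upward-closed segment starting at the root; since $n_1$ lies above $n_2$ on $\beta$ and $n_1$ is not a Skeleton node, no rule application can have reached past $n_1$, so the subformula rooted at $n_1$ survives as a formula in some $U_i\vdash V_i$, and the occurrence of $n_2$ inside it is preserved. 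This surviving formula has a PIA node at its root but a Skeleton node below, so by Remark \ref{rem:primitive-skeleton-PIA} it is neither right-primitive nor left-primitive; by Lemma \ref{lem:soluble is primitive} it is neither s-soluble nor a-soluble. Hence $s\vdash t\notin\mathcal{I}_2(\mathbf{DL})$.

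\medskip

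\textbf{Main obstacle.} The delicate point is the treatment of the lattice connectives $\pm\land,\pm\lor$, which occur in both the Skeleton and the PIA columns of Table \ref{Join:and:Meet:Friendly:Table}. One must check that such a node can simultaneously be left unstripped (so as to belong to a PIA subtree, which is needed to certify primitivity in the $(\Leftarrow)$ direction) \emph{and} be stripped by an invertible rule whenever this is required to proceed upward along a Skeleton segment, without this double role ever disturbing the classification of branches as good or not good. A secondary technicality, to be discharged by a careful bookkeeping of display postulates, is that the polarity of every formula (and hence its signed generation tree) is preserved throughout any application of invertible rules, so that the correspondence between antecedent-part/succedent-part positions on one hand and $+$/$-$ signs on the other remains uniform along the whole reduction.
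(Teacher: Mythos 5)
Your proof is correct and takes essentially the same route as the paper's: both directions rest on Lemma \ref{lem:soluble is primitive} together with the observation that the invertible logical rules of $\mathbf{DL}$ unfold exactly the Skeleton-signed nodes, so that exhaustive stripping leaves precisely the maximal PIA subtrees in the positions where solubility is required. Your explicit contrapositive argument for $(1)\Rightarrow(2)$ simply spells out what the paper dispatches with ``each step in the reasoning above can be reversed''.
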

\begin{proof}
By Lemma \ref{lem:soluble is primitive}, a term $s$ is left-primitive (resp.\ right-primitive) if and only if $s$ is a-soluble (s-soluble). Moreover, left-primitive (resp.\ right-primitive) terms coincide with positive (resp.\ negative) Skeleton and negative (resp.\ positive) PIA terms (cf.\ discussion at the beginning of Section \ref{ssec:type3}). If $s\leq t$ is such that every branch is good, then $s\leq t$ is of the form illustrated in the picture below:

\begin{center}
	\begin{tikzpicture}
	\draw (-5,-1.5) -- (-3,1.5) node[above]{\Large$+$} ;
	\draw (-5,-1.5) -- (-1,-1.5) ;
	\draw (-3,1.5) -- (-1,-1.5);
	\draw (-3,-0.5) node{\large{Skeleton}} ;
	\draw (-4,-1.5) --(-4.8,-3);
	\draw (-4.8,-3) --(-3.2,-3);
	\draw (-3.2,-3) --(-4,-1.5);
	\draw (-2,-1.5) --(-2.8,-3);
	\draw (-2.8,-3) --(-1.2,-3);
	\draw (-1.2,-3) --(-2,-1.5);
	\draw (-4,-2.5) node{PIA} ;
	\draw (-2,-2.5) node{PIA} ;
	\draw (0,1.8) node{$\leq$};
	\draw (5,-1.5) -- (3,1.5) node[above]{\Large$-$} ;
	\draw (5,-1.5) -- (1,-1.5) ;
	\draw (3,1.5) -- (1,-1.5);
	\draw (3,-0.5) node{\large{Skeleton}} ;
	\draw (2,-1.5) --(2.8,-3);
	\draw (2.8,-3) --(1.2,-3);
	\draw (1.2,-3) --(2,-1.5);
	\draw
	(4,-1.5) -- (4.8,-3) -- (3.2,-3) -- (4, -1.5);
	\draw (4,-2.5) node{PIA} ;
	\draw (2,-2.5) node{PIA} ;
	\end{tikzpicture}
\end{center}

Then it is clear that $s\leq t$ belongs to $\mathcal{I}_2(\mathbf{DL})$. Indeed, after applying exhaustively all possible invertible rules to the Skeleton nodes, the PIA parts are ``moved to the premises'' via an application of the Ackermann rule, as discussed in Section \ref{sec:comparison}. It is straightforward but tedious to show that, when occurring in the premises, each PIA part is guaranteed to occur on the side on which it is soluble. By definition, this implies that $s\leq t$ is in $\mathcal{I}_2(\mathbf{DL})$.

As to the converse direction, notice that each step in the reasoning above can be reversed.
\end{proof}

To finish the comparison, we need to report on some definitions from \cite{CiRa14}. The following one is a slight modification of \cite[Definition 3.18]{CiRa14}, motivated by the purpose of highlighting its similarity with sets of inequalities in Ackermann shape:

\begin{definition}
A nonempty set $\mathcal{S}$ of sequents {\em respects multiplicities}
w.r.t.\ a propositional variable $p$ occurring in any of its sequents if $\mathcal{S}$ can be written in one of the following forms via application of display rules:
\[\{p \vdash U \mid p \mbox{ does not occur in } U\}\cup\{ S \vdash T\mid p \mbox{ only occurs as s-part in } S \vdash T \} \]
\[\{U \vdash p \mid p \mbox{ does not occur in } U\}\cup\{ S \vdash T\mid p \mbox{ only occurs as a-part in } S \vdash T\}.\]
\end{definition}
If $\mathcal{S}$ is a set of sequents respecting multiplicities wrt $p$, then $\mathcal{S}$ can be equivalently transformed into the set $\mathcal{S}_p$ not
containing $p$, and the transformation consists essentially in an application of Ackermann lemma.
\begin{definition}(cf.\ \cite[Definition 3.20]{CiRa14}) (the set $\mathcal{S}_p$). Let $\mathcal{S}$ be a set of sequents respecting multiplicities w.r.t.\ $p$. If $\mathcal{S}$ is uniform in $p$, in the sense that $p$ occurs always as an s-part or an a-part in each sequent of $\mathcal{S}$,
 then let $\mathcal{S}_p: = \{S\vdash T\mid S\vdash T\in \mathcal{S}$ and $p$ does not occur in $S\vdash T \}$. Otherwise, define $\mathcal{S}_p$ as the union of $\{S\vdash T\mid S\vdash T\in \mathcal{S}$ and $p$ does not occur in $S\vdash T \}$ and the set of sequents $S'\vdash T'$ obtained by substituting $p$ for any $U$ such that $p\vdash U$ is in $ \mathcal{S}$ (resp.\ $U\vdash p$ is in $\mathcal{S}$) in each sequent $S\vdash T$ in $\mathcal{S}$.
\end{definition}
The first case of the definition above corresponds to the situation in which a given variable occurring only positively or negatively is eliminated via Ackermann by suitably replacing it by $\bot$ or $\top$. Clearly, if $\mathcal{S}$ respects multiplicities w.r.t.\ $p$, then $p$ does not occur in $\mathcal{S}_p$ (cf.\ \cite[Lemma 3.21]{CiRa14}).
\begin{definition}
(cf.\ \cite[Definition 3.22]{CiRa14})(acyclic set). Let $\mathcal{C}$ display calculus. A finite set $\mathcal{S}$ of sequents built from
structure variables, structure constants and propositional variables using structural connectives is
acyclic if (i) the sequents in $\mathcal{S}$ do not contain any variables; or (ii) there exists a variable $p$ such that $\mathcal{S}$ respects multiplicities w.r.t.\ $p$ and $\mathcal{S}_p$ is acyclic.
\end{definition}
\begin{lemma}\label{lem:intermediatething}
Let $\mathcal{S}$ be an acyclic set of sequents in the variables $p_1,\ldots,p_n$ containing no logical connectives, such that for each variable $p_i$ there exist $s_1, s_2\in \mathcal{S}$ such that $p_i$ occurs in antecedent (resp.\ succedent) position in $s_1$ (resp.\ in $s_2$). Then there exists a $p$ such that $\mathcal{S}$ can be written in one of the following forms via application of display rules:
\[\{p \vdash U \mid \mbox{ no logical variable occurs in } U\}\cup\{ S \vdash T\mid p \mbox{ only occurs as s-part in } S \vdash T \} \]
\[\{U \vdash p \mid \mbox{ no logical variable occurs in } U\}\cup\{ S \vdash T\mid p \mbox{ only occurs as a-part in } S \vdash T\}.\]
\end{lemma}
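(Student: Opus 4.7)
I will prove this lemma by strong induction on the number $n$ of propositional variables occurring in $\mathcal{S}$.

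For the base case $n=1$, the unique variable $p$ appears on both sides of $\mathcal{S}$ by hypothesis, and by acyclicity $\mathcal{S}$ respects multiplicities wrt $p$, so $\mathcal{S}$ already has one of the two forms in the definition of ``respects multiplicities''. Since $n=1$, the structures $U$ appearing in the isolated-$p$ sequents $\{p\vdash U\mid p\notin U\}$ (resp.\ $\{U\vdash p\mid p\notin U\}$) contain no variable at all, and the remaining sequents have $p$ only on the specified side. This gives exactly the desired conclusion.

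For the inductive step $n>1$, I would use the acyclicity of $\mathcal{S}$ to pick a variable $q$ such that $\mathcal{S}$ respects multiplicities wrt $q$ and $\mathcal{S}_q$ is acyclic, and WLOG write $\mathcal{S}$ in the first form: $\{q\vdash U_i\mid i\in I\}\cup\{S_j\vdash T_j\mid q\text{ occurs only as s-part in }S_j\vdash T_j\}$, where by bothside-ness of $q$ both parts of the union are nonempty. If every $U_i$ is variable-free, then $p:=q$ immediately satisfies the required conclusion (first form). Otherwise, I would apply the induction hypothesis to $\mathcal{S}_q$, which has strictly fewer variables. To do so, I must first check that every variable $r\neq q$ in $\mathcal{S}_q$ still occurs on both sides. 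This follows from the key observation that the Ackermann-style substitution preserves the polarities of all non-$q$ occurrences: occurrences of $r$ in unchanged sequents $S_j\vdash T_j$ are trivially preserved, and occurrences of $r$ inside some $U_i$ are transported into substituted sequents $(S_j\vdash T_j)[U_i/q]$ where $U_i$ replaces $q$ in a position (s-part) identical in polarity to its original position in $q\vdash U_i$; the bothside-ness of $q$ guarantees that at least one such substitution actually occurs, so no $r$-occurrence is lost. Combining with the bothside-ness of $r$ in $\mathcal{S}$, we obtain bothside-ness of $r$ in $\mathcal{S}_q$.

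The induction hypothesis then yields some variable $p'$ satisfying one of the two forms in $\mathcal{S}_q$. I would lift $p'$ back to a witness for $\mathcal{S}$ by tracing each sequent of $\mathcal{S}_q$ back to its origin in $\mathcal{S}$: each sequent is either an unchanged $S_j\vdash T_j$ not containing $q$, or a substituted sequent $(S_j\vdash T_j)[U_i/q]$, and in each case one checks that the form property required of $p'$ translates to the corresponding form in $\mathcal{S}$, possibly after applying display postulates to rearrange the sequents.

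The main obstacle is the lifting step. The delicate case is when an ``isolated-$p'$'' sequent $p'\vdash V$ of $\mathcal{S}_q$ with variable-free $V$ arises from a sequent $p'\vdash T_j$ in $\mathcal{S}$ where $T_j$ still contains $q$ (so that $T_j[U_i/q]=V$ is variable-free only because both $T_j$ outside $q$ and $U_i$ are variable-free). In such a configuration $p'$ need not directly satisfy the first form in $\mathcal{S}$, and one must either invoke display postulates to rewrite the sequent or switch to the dual second form for $p'$. A cleaner alternative is to unfold the acyclicity recursion into a full elimination order $q_1,\ldots,q_n$ and argue about the last-eliminated variable, whose ``isolated'' occurrences in $\mathcal{S}$ can be shown by induction on the elimination order to already involve only variable-free structures.
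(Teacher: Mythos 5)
There is a genuine gap, and you have correctly located it yourself: the lifting step. Your direct induction requires transporting a witness $p'$ that is ``good'' for $\mathcal{S}_q$ back to a witness for $\mathcal{S}$, and this fails precisely in the configuration you describe, where an isolated sequent $p'\vdash V$ of $\mathcal{S}_q$ with $V$ variable-free originates from a sequent of $\mathcal{S}$ whose succedent still contains $q$ (and where, moreover, $p'$ may occur inside some $U_i$ of a discarded sequent $q\vdash U_i$, in a polarity that spoils the required form for $p'$ in $\mathcal{S}$). Your proposed repair via a ``last-eliminated variable'' runs into the same difficulty: a variable whose isolated sequents are variable-free at the end of the elimination chain need not have variable-free isolated sequents in $\mathcal{S}$ itself. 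So as written the inductive step does not close.

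The paper's proof avoids the lifting problem entirely by arguing in the contrapositive direction. Suppose the conclusion fails for \emph{every} variable $p$ with respect to which $\mathcal{S}$ respects multiplicities, i.e.\ each such $p$ has some isolated sequent $p\vdash U$ (or $U\vdash p$) with $U$ containing a variable. Then for each such $p$ the set $\mathcal{S}_p$ inherits the same defect: the Ackermann substitution replaces occurrences of $p$ by structures $U$ that themselves contain variables, so it can only \emph{add} variable occurrences to the isolated sequents of the remaining variables, never produce a variable-free one. By the induction hypothesis (in contrapositive form) each $\mathcal{S}_p$ is therefore not acyclic, whence $\mathcal{S}$ is not acyclic --- contradicting the hypothesis. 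The asymmetry is the whole point: propagating ``badness'' forward through the substitution is easy (substituting a variable-containing $U$ cannot create variable-free structures), whereas lifting ``goodness'' backward is exactly the delicate case you could not resolve. If you reorganize your inductive step as a proof by contradiction along these lines, your base case and your (useful, and in fact more careful than the paper's) verification that the bothsidedness hypothesis is preserved by passing to $\mathcal{S}_q$ can be kept as they are.
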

\begin{proof}
By induction on the number of variables appearing in $\mathcal{S}$. If it contains only one variable, $p$, then the statement immediately follows from the fact that $\mathcal{S}$ respects multiplicities w.r.t.\ $p$.

Assume that the statement holds for sets of sequents $\mathcal{S}$ on $n$ variables, and let $\mathcal{S}$ contain $n+1$ variables. Assume for contradiction that the statement is false for each variable $p$ such that $\mathcal{S}$ respects multiplicities w.r.t.\ $p$. This means that for every such $p$, there is a sequent $p\vdash U$ (or $U\vdash p$) as above such that $U$ contains a propositional variable $q$.

Then for every such $p$, the set $\mathcal{S}_p$ inherits the same issue: Indeed substituting $U$ for $p$ cannot possibly create terms free from propositional variables, given that $U$ contains $q$. The induction hypothesis implies that each $\mathcal{S}_p$ is not acyclic. Then $\mathcal{S}$ is not acyclic, a contradiction.
\end{proof}

The following definition is aimed at adapting \cite[Definition 3.23]{CiRa14} to the setting of DLE-logics.
\begin{definition}\label{def:acyclicineq}
	(acyclic inequality). An inequality $s\leq t$ in $\mathcal{I}_2(\mathbf{DL})$ is \emph{acyclic} if there is a set $\{\rho_i\}_{i\in I}$ of semi-structural rules\footnote{A \emph{semi-structural} is a rule whose conclusion is constructed from structure variables and structure constants using structural connectives, and whose premises might additionally contain propositional variables.} which is obtained by applying the procedure described in Section \ref{sec:comparison} to $s\leq t$ such that the set of premises of
	each $\rho_i$ is acyclic.
\end{definition}

\begin{prop}
The following are equivalent for any inequality $s\leq t$:
\begin{enumerate}
\item $s\leq t$ is acyclic and belongs to $\mathcal{I}_2(\mathbf{DL})$;
\item $s\leq t$ is analytic inductive.
\end{enumerate}
\end{prop}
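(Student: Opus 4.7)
The plan is to prove both implications by leveraging Proposition \ref{prop:goodbranchisi2c} to reduce the equivalence to showing that, for an inequality $s \leq t$ whose signed generation trees $+s$ and $-t$ consist entirely of good branches, acyclicity in the sense of Definition \ref{def:acyclicineq} corresponds exactly to the existence of an order type $\epsilon$ and a strict partial order $\Omega$ witnessing the $(\Omega, \epsilon)$-inductive condition of Definition \ref{Inducive:Ineq:Def}. Thus the overall structure will be: first invoke Proposition \ref{prop:goodbranchisi2c} to handle the ``every branch is good'' half of both directions, and then devote the bulk of the argument to the back-and-forth between acyclic elimination orders and dependency orders on critical variables.

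For the direction (2)$\Rightarrow$(1), suppose $s \leq t$ is analytic $(\Omega, \epsilon)$-inductive. Then every branch of $+s$ and $-t$ is good, so by Proposition \ref{prop:goodbranchisi2c} the inequality lies in $\mathcal{I}_2(\mathbf{DL})$. To see that it is acyclic, I would apply the preprocessing procedure of Section \ref{sec:comparison} (exhaustively reversing invertible rules, then displaying the maximal PIA subterms as premises via Ackermann-style cuts), obtaining a semi-structural rule whose premises are inequalities of the form $\nomj \leq \phi$ or $\psi \leq \cnomn$, one for each maximal PIA subtree. I would then enumerate the variables $p_1, \ldots, p_n$ in a linear extension of $\Omega^{-1}$ (so $\Omega$-maximal variables come first); condition 2(b) of Definition \ref{Inducive:Ineq:Def} guarantees that when $p_i$ is eliminated, every sequent in which it occurs as the ``head'' of an Ackermann substitution contains only variables $p_j$ with $p_j \leq_\Omega p_i$, so after eliminating $p_1, \ldots, p_{i-1}$, the reduced set of premises respects multiplicities w.r.t.\ $p_i$. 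Iterating this eliminates all variables, showing acyclicity.

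For the direction (1)$\Rightarrow$(2), suppose $s \leq t$ is acyclic and in $\mathcal{I}_2(\mathbf{DL})$. By Proposition \ref{prop:goodbranchisi2c}, every branch of $+s$ and $-t$ is good, so I can decompose each of $+s$ and $-t$ into Skeleton/PIA parts as in the picture at the end of Appendix \ref{appedix:analytic and I2}. Applying the preprocessing of Section \ref{sec:comparison} produces a semi-structural rule whose acyclic set of premises can be successively reduced by a sequence of variable eliminations; this elimination sequence induces a candidate strict partial order $\Omega$ on the proposition variables (with $p <_\Omega q$ whenever $p$ must be eliminated before $q$). The order type $\epsilon$ is then forced by the position (antecedent or succedent) in which each critical variable occurs after display. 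The key verification is that, with these $\Omega$ and $\epsilon$, the signed generation trees $+s$ and $-t$ satisfy condition 2 of Definition \ref{Inducive:Ineq:Def}: for each $\epsilon$-critical branch running through an SRR node $\circledast(\gamma_1, \ldots, \gamma_m)$ via $\gamma_j$, the remaining subtrees $\gamma_h$ ($h \neq j$) must be $\epsilon^\partial$-uniform and contain only $\Omega$-earlier variables. Here I would use Lemma \ref{lem:intermediatething} and an induction on the number of variables: the base case of the induction (supplied by the lemma) identifies a variable $p$ whose elimination premise $p \vdash U$ (or $U \vdash p$) is pure, and the $\epsilon^\partial$-uniformity of such $U$ translates back, via reversing the preprocessing, into the required uniformity of the non-critical immediate subtrees at SRR nodes.

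The main obstacle will be the precise bookkeeping in the last paragraph: the procedure of Section \ref{sec:comparison} interleaves invertible rules, display postulates, and adjunction rules, so the correspondence between the order in which variables become eliminable in the semi-structural rule and the dependency order $\Omega$ on the original signed generation trees is not completely transparent. In particular, I expect the delicate point to be showing that when $p$ is chosen by Lemma \ref{lem:intermediatething} at a given inductive stage, the sequents of the form $p \vdash U$ (or $U \vdash p$) arise precisely from the PIA subtrees sitting beneath the $\epsilon$-critical occurrences of $p$, and that the absence of later-eliminated variables in $U$ is exactly condition 2(b) of Definition \ref{Inducive:Ineq:Def} for those PIA subtrees. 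Once this dictionary between the two vocabularies is set up, both directions will be rather mechanical, but writing it up rigorously in the full generality of an arbitrary DLE-signature is where the real work lies.
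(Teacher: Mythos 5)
Your proposal follows the paper's proof essentially step for step: both directions are reduced, via Proposition \ref{prop:goodbranchisi2c}, to translating between an acyclic elimination of variables from the premises of the semi-structural rule and a dependency order $\Omega$ witnessing inductiveness; and your treatment of (1)$\Rightarrow$(2) --- iterating Lemma \ref{lem:intermediatething} to build $\Omega$ level by level, reading $\epsilon$ off the displayed position of each variable, and then checking condition 2 of Definition \ref{Inducive:Ineq:Def} --- is exactly what the paper does (the paper likewise leaves that final check as ``routine'').

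The one place where you deviate, and where your justification does not go through as written, is the elimination order in (2)$\Rightarrow$(1). You propose to eliminate along a linear extension of $\Omega^{-1}$, i.e.\ $\Omega$-maximal variables first, and then argue that ``after eliminating $p_1,\ldots,p_{i-1}$, the reduced set of premises respects multiplicities w.r.t.\ $p_i$'' because the displayed premises of $p_i$ contain only variables $p_j$ with $p_j <_{\Omega} p_i$. But in your order the variables occurring in the displayed premises $U\vdash p_i$ are precisely the $\Omega$-\emph{smaller} ones, which are eliminated \emph{later}; eliminating $p_1,\ldots,p_{i-1}$ (the larger ones) does nothing to clean up those $U$'s, and moreover each of your elimination steps substitutes a variable-laden $U$ into the $\epsilon^\partial$-uniform premises, so you would have to track separately that all the propagated occurrences land in non-critical position at every later stage. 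The paper's order is the opposite --- $\Omega$-minimal first --- and that is what makes the step immediate: condition 2(b) of Definition \ref{Inducive:Ineq:Def}, together with the preprocessing that leaves exactly one critical occurrence per PIA-subterm, guarantees that the displayed premise of an $\Omega$-minimal variable contains \emph{no} propositional variables at all, so it respects multiplicities for the trivial reason, the Ackermann substitution introduces no new variable occurrences, and the induction on $\Omega$ closes. Your maximal-first order can probably be repaired (only $p_i$-freeness of $U$, not variable-freeness, is literally required, and polarities are preserved under display and substitution), but as stated the inductive step is unsupported; switching to the minimal-first order --- which is also the order your own (1)$\Rightarrow$(2) direction implicitly presupposes when it declares the first-eliminated variables $\Omega$-minimal --- fixes this.
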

\begin{proof}
Let $s\leq t$ be analytic $(\Omega,\epsilon)$-inductive. Then by Proposition \ref{prop:goodbranchisi2c}, $s\leq t$ is in $\mathcal{I}_2(\mathbf{DL})$. To finish the proof we need to show that it is acyclic. This amounts to proving that the set of premises obtained by applying the Ackermann rule in the procedure described in Section \ref{sec:comparison} is acyclic. By assumption, $s\leq t$ has the following shape:

\[\xi_1(\vec{\phi}_1/\vec{x}_1,\vec{\psi}_1/\vec{y}_1, \vec{\gamma}_1/\vec{z}_1, \vec{\theta}_1/\vec{w}_1)\leq \xi_2(\vec{\psi}_2/\vec{x}_2,\vec{\phi}_2/\vec{y}_2, \vec{\theta}_2/\vec{z}_2,\vec{\gamma}_2/\vec{w}_2),\] where
$\xi_1(\vec{!x}_1, \vec{!y}_1, \vec{!z}_1, \vec{!w}_1)$ and $\xi_2(\vec{!x}_2, \vec{!y}_2, \vec{!z}_2, \vec{!w}_2)$ respectively are a positive and a negative Skeleton-formula ---cf.\ page \pageref{page: positive negative PIA}---(hence $\xi_1$ is left-primitive and $\xi_2$ is right-primitive) which are scattered, monotone in $\vec{x}$ and $\vec{z}$ and antitone in $\vec{y}$ and $\vec{w}$. Moreover, the formulas in $\vec{\phi}$ and $\vec{\gamma}$ are positive PIA (and hence right-primitive), and the formulas in $\vec{\psi}$ and $\vec{\theta}$ are negative PIA (and hence left-primitive). Finally, every $\phi$ and $\psi$ contains at least one $\varepsilon$-critical variable, whereas all $+\gamma$ and $-\theta$ are $\varepsilon^\partial$-uniform. Without loss of generality we may assume that all formulas in $\vec{\phi_i}$, $\vec{\psi_i}$, $\vec{\gamma_i}$ and $\vec{\theta_i}$ for $i\in\{1,2\}$ are definite PIA (cf.\ Footnote \ref{footnote: def definite PIA}). 

\noindent Let us apply the procedure described in \cite{CiRa13, CiRa14} to the calculus $\mathbf{DL}$ and the inequality above, seen as a sequent. By exhaustively applying in reverse all invertible rules of $\mathbf{DL}$ which are applicable to the sequent we get the following: \[\Xi_1(\vec{\phi}_1/\vec{x}_1,\vec{\psi}_1/\vec{y}_1, \vec{\gamma}_1/\vec{z}_1, \vec{\theta}_1/\vec{w}_1)\vdash \Xi_2(\vec{\psi}_2/\vec{x}_2,\vec{\phi}_2/\vec{y}_2, \vec{\theta}_2/\vec{z}_2,\vec{\gamma}_2/\vec{w}_2),\] where $\Xi_1$ and $\Xi_2$ denote the structures associated with $\xi_1$ and $\xi_2$ respectively.
At this point, the procedure in \cite{CiRa13, CiRa14} calls for the display of the subformulas on which it is not possible to apply invertible rules as a-parts or s-parts of the premises of the rule-to be. The equivalence of the rule below to the sequent above is guaranteed by the Ackermann lemma:
\begin{center}
	\AxiomC{$\vec X_1\vdash\vec\phi_1\quad \quad \vec Y_2\vdash\vec\phi_2 \quad\quad \vec\psi_1\vdash\vec Y_1\quad\quad \vec\psi_2\vdash\vec X_2\quad\quad\vec Z_1\vdash\vec\gamma_1\quad \quad \vec W_2\vdash\vec\gamma_2 \quad\quad \vec\theta_1\vdash\vec W_1\quad\quad \vec\theta_2\vdash\vec Z_2$}
	\RightLabel{.}
	\UnaryInfC{$\Xi_1(\vec X_1,\vec Y_1,\vec Z_1,\vec W_1)\vdash\Xi_2(\vec X_2,\vec Y_2,\vec Z_2,\vec W_2)$}
	\DisplayProof
\end{center}
\noindent On each of the premises of the rule above, more invertible rules of $\mathbf{DL}$ can be applied in reverse. Applying them exhaustively yields
\begin{center}
	\AxiomC{$\vec X_1\vdash\vec\Phi_1\quad \quad \vec Y_2\vdash\vec\Phi_2 \quad\quad \vec\Psi_1\vdash\vec Y_1\quad\quad \vec\Psi_2\vdash\vec X_2\quad\quad\vec Z_1\vdash\vec\Gamma_1\quad \quad \vec W_2\vdash\vec\Gamma_2 \quad\quad \vec{\Theta}_1\vdash\vec W_1\quad\quad \vec{\Theta}_2\vdash\vec Z_2$}
	\RightLabel{.}
	\UnaryInfC{$\Xi_1(\vec X_1,\vec Y_1,\vec Z_1,\vec W_1)\vdash\Xi_2(\vec X_2,\vec Y_2,\vec Z_2,\vec W_2)$}
	\DisplayProof
\end{center}

By the definition of inductive inequality, if some $\Omega$-minimal variable occurs in any $\Phi$ or $\Psi$ subterm, then no other variable can occur in that subterm. Hence, the premises of the rule respect multiplicities w.r.t.\ these variables, which can then be eliminated. Likewise, one can show, by induction on $\Omega$, that all variables can be eliminated, that is, $s\leq t$ is acyclic, as required.

For the converse direction, assume that $s\leq t$ is acyclic and belongs to $\mathcal{I}_2(\mathbf{DL})$. We may assume without loss of generality that all variables in $s\leq t$ occur both positively and negatively, 
since otherwise they can be eliminated by replacing them with $\top$ and $\bot$. By Proposition \ref{prop:goodbranchisi2c}, every branch of the signed generation trees $+s$ and $-t$ is good, and by Definition \ref{def:acyclicineq} the following set is acyclic: $$\vec X_1\vdash\vec\Phi_1\quad \quad \vec Y_2\vdash\vec\Phi_2 \quad\quad \vec\Psi_1\vdash\vec Y_1\quad\quad \vec\Psi_2\vdash\vec X_2\quad\quad\vec Z_1\vdash\vec\Gamma_1\quad \quad \vec W_2\vdash\vec\Gamma_2 \quad\quad \vec{\Theta}_1\vdash\vec W_1\quad\quad \vec{\Theta}_2\vdash\vec Z_2.$$

The assumption that each variable occurs both positively and negatively implies that each variable occurs both in antecedent and in consequent position in the sequents above. Hence, by Lemma \ref{lem:intermediatething}, there exists a propositional variable $p$ such that the above set can be written in one of the following forms via application of display rules:
\[\{p \vdash U \mid \mbox{ no logical variable occurs in } U\}\cup\{ S \vdash T\mid p \mbox{ only occurs as s-part in } S \vdash T \} \]
\[\{U \vdash p \mid \mbox{ no logical variable occurs in } U\}\cup\{ S \vdash T\mid p \mbox{ only occurs as a-part in } S \vdash T\}.\]
 Let us define a strict partial order $\Omega$ and an order-type $\epsilon$ on the variables occurring in the set of premises as follows:
 We declare these $p$ as $\Omega$-minimal elements and we let $\epsilon(p):=1$ the set of premises is of the second form and $\epsilon(p):=\partial$ otherwise. Clearly, the set of premises respects multiplicities w.r.t.\ $p$ which can then be eliminated. In the new set of sequents produced the same reasoning applies. The new variable will be placed above all the $\Omega$-minimal elements. Since the set is acyclic, this process is guaranteed to end after a finite number of rounds, defining an $\epsilon$ and $\Omega$ for all the variables present. It is routine to check that $s\leq t$ is analytic $(\Omega,\epsilon)$-inductive.
\end{proof}
\end{document}